\author{Philip J. Carter}
\title{On the Resolutions of Non-Dicritical Foliations}
\date{}
\theoremstyle{plain}
\newtheorem{theorem}{Theorem}[section]
\newtheorem{proposition}[theorem]{Proposition}
\newtheorem{lemma}[theorem]{Lemma}
\newtheorem{corollary}[theorem]{Corollary}
\theoremstyle{definition}
\newtheorem{definition}[theorem]{Definition}
\newtheorem{example}[theorem]{Example}
\newtheorem{remark}[theorem]{Remark}
\newtheorem{notation}[theorem]{Notation}
\newtheorem*{convention*}{Convention}
\numberwithin{equation}{section}
\newcommand{\rrl}{\mathds{R}}
\newcommand{\ccx}{\mathds{C}}
\newcommand{\ocurl}{\mathscr{O}}
\newcommand{\vv}{\mathds{V}}
\newcommand{\aaf}{\mathds{A}}
\newcommand{\qq}{\mathds{Q}}
\newcommand{\zz}{\mathds{Z}}
\newcommand{\nn}{\mathbb{N}}
\newcommand{\mm}{\mathfrak{m}}
\newcommand{\Rea}{\mathfrak{Re}}
\newcommand{\Ima}{\mathfrak{Im}}
\newcommand{\Hom}{\operatorname{Hom}}
\newcommand{\Homscr}{\mathscr{H}\text{\kern -3pt {\calligra\large om}}\,}
\newcommand{\Spec}{\operatorname{Spec}}
\newcommand{\dimn}{\operatorname{dim}}
\newcommand{\supp}{\operatorname{supp}}
\newcommand{\redn}{\operatorname{red}}
\newcommand{\ord}{\operatorname{ord}}
\newcommand{\Singr}{\operatorname{Sing}}
\newcommand{\logn}{\operatorname{ln}}
\newcommand{\Spf}{\operatorname{Spf}}
\newcommand{\sat}{\operatorname{sat}}
\newcommand{\Tors}{\operatorname{Tor}}
\newcommand{\fcurl}{\mathcal{F}}
\newcommand{\gcurl}{\mathcal{G}}
\newcommand{\icurl}{\mathcal{I}}
\newcommand{\Sscr}{\mathscr{S}}
\newcommand{\Cscr}{\mathscr{C}}
\newcommand{\Xcal}{\mathcal{X}}
\newcommand{\Dcal}{\mathcal{D}}
\newcommand{\Tcal}{\mathcal{T}}
\newcommand{\Bfrak}{\mathfrak{B}}
\newcommand{\restn}[1]{|_{#1}}
\begin{document}
\maketitle

\section*{Abstract}
\begin{singlespace}
\sloppy We introduce the jet schemes of a holomorphic foliation, and thereby prove an alternate characterisation of simple singularities of codimension-$1$ foliations, independent of any normal form. This leads to an equivalent condition for the existence of a desingularisation in the non-dicritical case. We then prove that such a desingularisation always exists, at least on the level of germs.
\end{singlespace}

\section*{Introduction}
Foliations of complex manifolds arise from holomorphic differential forms on the manifold, or by duality from an integrable sheaf of holomorphic vector fields. As with algebraic varieties, we seek a classification of foliations up to birational equivalence.
Just as the classification problem for varieties depends on the existence of a desingularisation -- first proved by Hironaka in \cite{Hir64} -- so in the case of foliations we seek to show the existence of a resolution of the singularities -- a sequence of blow-ups in smooth centres such that the pullback foliation has the best possible singularities.

We principally consider codimension-$1$ foliations, in which case the desired endpoint of the resolution is to have so-called simple singularities. Existence of resolutions for codimension-$1$ foliations has so far been proved for ambient dimension $2$ by Seidenberg in \cite{Sei68} and for dimension $3$ by Cano in \cite{Can04}. In addition, existence of resolutions for dimension-$1$ folations in ambient dimension $3$ was proved by Panazzolo and McQuillan in \cite{MP13}. For a review of other results pertaining to resolutions, see the introduction of \cite{Car21}.

The key result of this paper is \thref{bigthm4}: Restricting to the case of non-dicritical codimension-$1$ foliations, (a condition which ensures there exist, and are only finitely many, separatrices -- leaves of the foliation whose analytic closures pass through the singular locus), resolutions exist in any ambient dimension, globally in the case where all separatrices are analytic, and in all cases on the level of germs.

Following the spirit of \cite{Mus02}, where jet spaces are applied to the study of singularities of schemes, in this paper we introduce the jet space of a foliation, which allows us firstly to define a stronger notion of tangency to a foliation, such that non-reduced schemes can be considered as tangent, and secondly to produce a geometrically more natural characterisation of simple singularities, namely that  in a neighbourhood of a simple singularity, the foliation has the same geometric structure (as determined by the jet spaces) as a normal crossings divisor (see \thref{bigthm2}).

In the codimension-$1$ case, we can then prove \thref{bigthm3}: A non-dicritical foliation admits a resolution if and only if it admits a total separatrix (the maximal strongly tangent (formal) subscheme supported on the union of the separatrices), which can itself be resolved to a normal crossings divisor. (In the case of non-algebraic formal schemes, we restrict to the level of germs to ensure resolvability.) We then prove that the total separatrix always exists, completing the key proof.

This paper is an updated version of \cite{Car22}, with the addition of the proof of \thref{bigthm4}, and can be considered a replacement for it. The bulk of the material is based on my PhD thesis \cite{Car21}. Readers seeking more background, fuller exposition, or omitted proofs are directed to the full version of the thesis.

\section{Conventions and Background}

\begin{convention*}
We denote by $\nn$ the set of natural numbers $\{1,2,3,\ldots \}$. We denote by $\nn_0$ the set $\nn\cup \{0\}$.

\noindent All rings are assumed to be commutative with unit.

\noindent All schemes are defined over the field of complex numbers $\ccx$.

\noindent Any manifold $X$ is a complex manifold, with holomorphic tangent sheaf $\Tcal_X$, holomorphic tangent bundle $TX$, and sheaf of holomorphic $1$-forms $\Omega^1_X$.

\noindent We denote by $\hat{\mm}_n$ the ideal $(x_1,\ldots,x_n)\subset\ccx[[x_1,\ldots,x_n]]$.
\end{convention*}

We define a \emph{variety} to be a reduced, irreducible, separated scheme of finite type over $\ccx$.

\begin{notation}
Let $I=(f_1,\ldots,f_r)\subset\ccx[x_1,\ldots,x_n]$ be an ideal. The affine scheme $X=\Spec(\ccx[x_1,\ldots,x_n]/I)$ is also denoted by $\vv(I)$ or $\vv(f_1,\ldots,f_r)$.
\end{notation}

\section{Formal Schemes}\label{Formal}
We now introduce formal schemes. In the literature, several different classes of objects go by this name; in this paper we take as our definition that given by Yasuda in \cite{Yas07}, defined in terms of prorings, which we will define presently. Having introduced these formal schemes, following Yasuda's paper, we give some results relating this category to that of formal schemes as introduced by Grothendieck in \cite{DG67}, which we call classical formal schemes.

\subsection{Prorings}
\begin{definition}
A \emph{proring} is a directed projective system of rings $(R_{\lambda})_{\lambda\in\Lambda}$, where $\Lambda$ is a poset, with morphisms $f_{\lambda\mu}:R_{\mu}\rightarrow R_{\lambda}$.
\end{definition}

\begin{definition}
Let $R=(R_{\lambda})_{\lambda\in\Lambda}, S=(S_{\mu})_{\mu\in M}$ be two prorings. A \emph{morphism} of prorings $R\rightarrow S$ is an element of
\[
\Hom(R,S)=\varprojlim_{\mu}\varinjlim_{\lambda}\Hom(R_{\lambda},S_{\mu}),
\]
where the latter sets are morphisms of rings.
\end{definition}

Such a morphism of prorings $f:R\rightarrow S$ is defined by a collection of representative ring homomorphisms $f^{\lambda}_{\mu}:R_{\lambda}\rightarrow S_{\mu}$. Then the diagrams
\[
\begin{tikzcd}
R \arrow[r,"f"] \arrow[d] & S\arrow[d] \\
R_{\lambda} \arrow[r,"f^{\lambda}_{\mu}"] & S_{\mu}
\end{tikzcd}
\]
commute.

\begin{example}
By indexing a projective system over a singleton set, we see that every ring is a proring.
\end{example}

\begin{proposition}
The category of prorings has all projective limits.
\end{proposition}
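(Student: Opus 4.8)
The plan is to exploit the standard fact that a category possessing all finite limits together with all small cofiltered limits necessarily possesses all small limits; this is the concrete shadow of the general principle that for a category $\mathcal{C}$ with finite limits, the pro-category $\mathbf{Pro}(\mathcal{C})$ is complete, and $\mathbf{Ring}$ certainly has all finite limits. So I would split the proof into two constructions inside the category of prorings: (i) cofiltered limits, and (ii) finite limits.

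For (i), cofiltered limits are essentially built into the definition of a proring. Given a small cofiltered diagram $k\mapsto R^{(k)}=(R^{(k)}_{\lambda})_{\lambda\in\Lambda_k}$, I would first invoke the classical cofinality lemma that every small cofiltered category admits a cofinal functor from a directed poset, so as to assume the diagram is indexed by a directed poset $K$; then I would assemble the disjoint union of the $\Lambda_k$, ordered so as to record both the internal order of each $\Lambda_k$ and the transition morphisms of the diagram over $K$, into a single poset (again cofinally directed) $\Lambda$, and take the evident projective system of the rings $R^{(k)}_{\lambda}$ over $\Lambda$. Using the $\Hom$-formula for prorings one checks directly that this system, with its projections to the $R^{(k)}$, is the sought limit: a cone with vertex a proring $S$ is precisely a compatible family of germs of ring homomorphisms, and these glue uniquely to a germ of homomorphism into the $\Lambda$-system because $\Hom(-,S)$ converts the colimit defining $\Lambda$ into the requisite limit of $\Hom$-sets.

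For (ii), I would use that $\mathbf{Ring}$ has all finite limits and that finite limits commute with the filtered colimits appearing in the $\Hom$-formula. Concretely, given a finite diagram $D$ of prorings: because there are only finitely many objects and morphisms to coordinate, one can choose a single directed poset $\Lambda$ and, levelwise in $\lambda\in\Lambda$, compatible representative finite diagrams $D_\lambda$ of rings whose projective limit over $\Lambda$ reproduces $D$. Setting $L_\lambda:=\varprojlim D_\lambda$ (limit taken in $\mathbf{Ring}$), functoriality of the limit equips $L=(L_\lambda)_{\lambda\in\Lambda}$ with transition maps making it a proring, and the cone $L\to D$ is checked to be universal via the $\Hom$-formula, using the interchange of the $\mathbf{Ring}$-limit with the filtered colimits over the index posets.

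Finally, I would combine (i) and (ii): an arbitrary small product $\prod_{i\in I}R^{(i)}$ is a cofiltered limit of the finite products $\prod_{i\in F}R^{(i)}$ over the finite subsets $F\subseteq I$, which exist by (ii); equalizers are finite limits, hence exist by (ii); and products together with equalizers yield all small limits. The routine parts are the verifications of universal properties, which are mechanical applications of the $\Hom$-formula. The genuine obstacle is the bookkeeping in the reindexing steps — in (i) the passage from a general cofiltered shape to a directed poset and the construction of the merged index poset, and in (ii) the simultaneous refinement of the finitely many index posets together with a coherent choice of representatives for all morphisms of the diagram; the conceptual content (limits of prorings are computed levelwise, cofiltered limits merge indices) is transparent, but phrasing it for posets rather than arbitrary cofiltered categories is where care is needed.
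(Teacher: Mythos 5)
Your argument is correct, and it is essentially an unpacking of the one-line proof in the paper, which simply invokes \cite[Proposition 1.2]{Yas07} --- the general fact that if a category $\mathcal{C}$ has all projective limits then so does its pro-category --- applied to $\mathbf{Ring}$. Your three steps (cofiltered limits by merging index posets, finite limits via simultaneous level representations, then products as cofiltered limits of finite products together with equalizers) constitute the standard proof of that cited proposition, specialized to rings. The two reindexing facts you flag yourself as the genuinely delicate points --- replacing a small cofiltered shape by a cofinal directed poset, and choosing compatible level representatives for a finite diagram of prorings --- are indeed the only nontrivial technical content, and they are exactly what the citation to Yasuda packages up; so the difference between your write-up and the paper's is one of abstraction level (proving versus citing the pro-category lemma), not of mathematical route.
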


\begin{proof}[Proof: \nopunct]
This is true of the category of rings, so the result follows by \cite[Proposition 1.2]{Yas07}.
\end{proof}

\begin{remark}
By \cite[Theorem 1]{HS54}, there is an uncountable projective system of rings, each of which is countably infinite, such that the projective limit is the zero ring. Hence the limit of a projective system of rings in the category of rings may not be isomorphic to its limit in the category of prorings.
\end{remark}

\begin{definition}
A proring is said to be \emph{epi} if all the morphisms $f_{\lambda\mu}$ are epimorphisms.
\end{definition}

For a topological space $X$ we can define a sheaf of prorings. Such a sheaf can be presented as a projective system of sheaves of rings. Therefore by \cite[Proposition 1.2]{Yas07} the category of sheaves of prorings has projective limits.

\subsection{Formal Schemes via Prorings}
\begin{definition}
Let $X$ be a topological space. We say that $X$ is \emph{quasi-separated} if the intersection of any two quasi-compact open subsets is quasi-compact.

We say that $X$ is \emph{qsqc} if it is quasi-separated and quasi-compact.

A \emph{qsqc basis} is a basis of open subsets, all of which are qsqc.
\end{definition}

Any scheme has a qsqc basis. An affine scheme, or any Noetherian topological space, is qsqc.

\begin{definition}
An \emph{admissible system of schemes} is a directed inductive system $(X_{\lambda})_{\lambda\in\Lambda}$ of schemes such that every morphism $X_{\lambda}\rightarrow X_{\mu}$ is a bijective closed immersion.
\end{definition}

\begin{definition}
A proring $R=(R_{\lambda})_{\lambda\in\Lambda}$ is \emph{admissible} if every morphism $R_{\lambda}\rightarrow R_{\mu}$ is surjective, and induces an isomorphism $(R_{\lambda})_{\redn}\rightarrow (R_{\mu})_{\redn}$.

Equivalently, $R$ is admissible if $(\Spec(R_{\lambda}))_{\lambda\in\Lambda}$ is an admissible system of schemes.
\end{definition}

An admissible proring $R$ has an associated reduced ring $R_{\redn}=(R_{\lambda})_{\redn}$, for any $\lambda\in\Lambda$.

\begin{definition}
A \emph{locally admissibly proringed space} is a pair $(X,\ocurl_X)$, where $X$ is a topological space with a qsqc basis $\Bfrak$, $\ocurl_X$ is a sheaf of prorings such that $\ocurl_X(U)$ is admissible for all $U\in\Bfrak$, and for each $x\in X$ the stalk $\ocurl_{X_{\redn},x}$, where $\ocurl_{X_{\redn}}$ is the sheaf of rings defined on $\Bfrak$ by $U\mapsto (\ocurl_X(U))_{\redn}$, is a local ring.
\end{definition}

\begin{definition}
Let $R=(R_{\lambda})_{\lambda}$ be an admissible proring. We define the \emph{formal spectrum} of $R$ to be the locally admissibly proringed space $\Spf(R)$, which as a topological space is equal to $\Spec(R_{\redn})=\Spec(R_{\lambda})$, for all $\lambda$, and whose structure sheaf is
\[
\ocurl_{\Spf(R)}=\varprojlim \ocurl_{\Spec(R_{\lambda})},
\]
where the limit is taken in the category of sheaves of prorings.
\end{definition}

\begin{definition}
A \emph{formal scheme} is a locally admissibly proringed space locally isomorphic to the formal spectrum of an admissible proring.

A formal scheme which is isomorphic to a formal spectrum is called \emph{affine}.
\end{definition}

\begin{example}
Any ring $R$ can be viewed as a proring, in which case it will be admissible, with $\Spf(R)=\Spec(R)$. Thus any scheme can be viewed as a formal scheme.
\end{example}

\begin{remark}
To distinguish from strict formal schemes (i.e. formal schemes which are not schemes), schemes are sometimes called \emph{ordinary schemes}.
\end{remark}

\begin{proposition}\cite[Corollary 2.14]{Yas07}
The category of affine formal schemes is equivalent to the dual category of the category of admissible prorings.
\end{proposition}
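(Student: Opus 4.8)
The plan is to mimic the classical anti-equivalence between commutative rings and affine schemes (see e.g. \cite{DG67}), lifting each step to the pro-setting. First I would introduce the global-sections functor $\Gamma$ from affine formal schemes to admissible prorings, sending $(X,\ocurl_X)$ to $\ocurl_X(X)$, and check it is well defined: since the projective limit defining $\ocurl_{\Spf R}$ is computed sectionwise (global sections preserve projective limits, and limits of sheaves of prorings exist by \cite[Proposition 1.2]{Yas07}), one gets $\Gamma(\Spf R)=\varprojlim_\lambda\ocurl_{\Spec R_\lambda}(\Spec R_\lambda)=\varprojlim_\lambda R_\lambda$, which as a proring is canonically $R$; functoriality and the admissibility of $\Gamma(\mathfrak{X})$ for a general affine formal scheme follow from the local model. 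This already supplies a natural isomorphism $\Gamma\circ\Spf\cong\idy$ on admissible prorings.

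The heart of the argument is to establish, for every locally admissibly proringed space $\mathfrak{X}=(X,\ocurl_X)$ and every admissible proring $R=(R_\lambda)_\lambda$, a bijection natural in both arguments
\[
\Hom(\mathfrak{X},\Spf R)\;\cong\;\Hom_{\text{prorings}}\!\bigl(R,\ \ocurl_X(X)\bigr),
\]
i.e. that $\Spf$ is right adjoint to $\Gamma$ on locally admissibly proringed spaces. I would prove this by reducing to the classical adjunction $\Hom(\mathfrak{X},\Spec A)\cong\Hom(A,\ocurl_X(X))$ for an ordinary ring $A$, applied to each $R_\lambda$. The two ingredients are: (i) the underlying topological space of $\Spf R$ is that of $\Spec R_{\redn}$, and the ``local ring'' condition in the definition of a morphism concerns only the reduced structure sheaves $\ocurl_{X_{\redn}}$ and $R_{\redn}$, so the continuous map together with its locality is exactly the datum of a classical morphism $X_{\redn}\to\Spec R_{\redn}$; (ii) by the definition of morphisms of prorings, a proring morphism out of $R=\varprojlim_\lambda R_\lambda$ is, compatibly in the target index, a germ of ring maps out of the terms $R_\lambda$, so a morphism of sheaves of prorings $\ocurl_{\Spf R}=\varprojlim_\lambda\ocurl_{\Spec R_\lambda}\to f_*\ocurl_X$ unwinds, via the pro-$\Hom$ formula, into a compatible system of honest sheaf morphisms $\ocurl_{\Spec R_\lambda}\to f_*\ocurl_X$ (defined for $\lambda$ large), each governed by the classical adjunction. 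The admissibility hypotheses --- all transition maps surjective and bijective on reductions --- are precisely what give $\Spf R$ a single well-defined underlying space and make this unwinding consistent.

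With the adjunction in hand, full faithfulness of $\Spf$ is immediate: taking $\mathfrak{X}=\Spf S$ and combining with $\Gamma(\Spf S)\cong S$ yields $\Hom(\Spf S,\Spf R)\cong\Hom_{\text{prorings}}(R,S)$, naturally. Essential surjectivity onto affine formal schemes holds by definition of the latter, and $\Gamma$ is a quasi-inverse, so $\Spf$ is an anti-equivalence from admissible prorings to affine formal schemes, which is the assertion. The main obstacle I expect is step (ii): carefully verifying that a morphism of sheaves of prorings out of the projective limit $\varprojlim_\lambda\ocurl_{\Spec R_\lambda}$ is faithfully captured by the pro-system of sheaf morphisms out of its terms --- bookkeeping the double limit ($\varprojlim$ over the target's index, $\varinjlim$ over $\lambda$) in the proring $\Hom$-sets, and matching it with passage to global sections --- all without conflating the limit taken in sheaves of prorings with the (in general genuinely different, cf. the remark invoking \cite{HS54}) limit in sheaves of rings. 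The topological and locality parts are by contrast routine, since admissibility pushes them into the already-known affine scheme case.
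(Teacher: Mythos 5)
The paper offers no proof of this statement, citing it verbatim from \cite[Corollary 2.14]{Yas07}; your strategy --- exhibiting $\Spf$ as right adjoint to global sections on locally admissibly proringed spaces and then restricting to affines --- is precisely the route taken in that reference, and your outline is sound, including the correct use of the pro-$\Hom$ formula to unwind morphisms out of $\varprojlim_\lambda\ocurl_{\Spec R_\lambda}$ and the caution about not conflating limits of prorings with limits of rings. Nothing is missing beyond the double-limit bookkeeping you already flag as the technical heart.
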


An affine formal scheme $X$ corresponds to an admissible system of affine schemes $(X_{\lambda})_{\lambda\in\Lambda}$. As such, we can view $X$ as the limit $X=\varinjlim X_{\lambda}$. A general formal scheme which is qsqc can also be viewed as the limit of an admissible system of schemes (see \cite[Proposition 3.32]{Yas07}). We therefore sometimes write formal schemes in this limit notation.

\begin{definition}
Let $X=\varinjlim X_{\lambda}$ be an affine formal scheme. $X$ is \emph{countably indexed} if the indexing set $\Lambda$ can be taken to be countable (say $\Lambda=\nn$).

If $X$ is a general formal scheme, we say it is \emph{locally countably indexed} if it admits a cover by countably indexed affine formal schemes.
\end{definition}

\begin{remark}
In \cite{Yas07}, the term \emph{gentle} is used in place of locally countably indexed.
\end{remark}

\begin{definition}
An admissible proring $R=(R_{\lambda})_{\lambda}$ is called \emph{pro-Noetherian} if every $R_{\lambda}$ is Noetherian.

A formal scheme $X$ is called \emph{locally ind-Noetherian} if every $x\in X$ admits an affine neighbourhood $\Spf(R)\subset X$, where $R$ is pro-Noetherian.
\end{definition}

\begin{definition}
Let $X=\varinjlim X_{\lambda}$ and $Y=\varinjlim Y_{\mu}$ be formal schemes. We say that $X\subset Y$ if for all $\lambda\in\Lambda$ there exists $\mu\in M$ such that $X_{\lambda}\subset Y_{\mu}$.
\end{definition}

\begin{remark}
Note that this is a more general notion than that of a formal subscheme in \cite{Yas07}, as we do not take into account the topologies on the formal schemes.
\end{remark}

\begin{proposition}
Let $(X_{\alpha})_{\alpha\in A}$ be an inductive system of formal schemes, all of which have the same underlying topological space. Then the direct limit $\varinjlim X_{\alpha}$ exists as a formal scheme.
\end{proposition}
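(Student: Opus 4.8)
The plan is to realise the direct limit explicitly: its underlying topological space will be the common space $X$, and its structure sheaf will be the projective limit, in the category of sheaves of prorings on $X$, of the structure sheaves $\ocurl_{X_{\alpha}}$.

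First, the setup. Each transition morphism $X_{\alpha}\to X_{\beta}$ induces a continuous self-map of $X$; in the inductive systems of interest to us (e.g.\ directed families of formal subschemes of a fixed scheme) these are $\idy_X$, and I treat that case — the argument below only uses that the colimit of the underlying spaces, formed in the category of topological spaces, is again $X$. We then have a directed system $(\ocurl_{X_{\alpha}})_{\alpha}$ of sheaves of prorings on the single space $X$, with transition maps $\ocurl_{X_{\beta}}\to\ocurl_{X_{\alpha}}$ for $\alpha\le\beta$. Since the category of sheaves of prorings on $X$ has projective limits (via \cite[Proposition 1.2]{Yas07}, as recalled above), we may form the sheaf of prorings $\ocurl_X:=\varprojlim_{\alpha}\ocurl_{X_{\alpha}}$, equipped with morphisms $\ocurl_X\to\ocurl_{X_{\alpha}}$. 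The claim is that $(X,\ocurl_X)$, together with the resulting morphisms $X_{\alpha}\to(X,\ocurl_X)$, is a formal scheme and is $\varinjlim_{\alpha}X_{\alpha}$.

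Next, one checks $(X,\ocurl_X)$ is a formal scheme, which is local on $X$. Using directedness of the system and the fact that all the $X_{\alpha}$ live on $X$, around any point choose an open $U$ in a qsqc basis small enough that $X_{\alpha}\restn{U}=\Spf(R_{\alpha})$ is affine for every $\alpha$, with $R_{\alpha}$ an admissible proring. As $\Gamma(U,-)$ and restriction to $U$ commute with projective limits of sheaves, $\ocurl_X\restn{U}$ is the structure sheaf of $\Spf\!\big(\varprojlim_{\alpha}R_{\alpha}\big)$, the limit now taken in the category of prorings. The one substantive point is that $\varprojlim_{\alpha}R_{\alpha}$ is again \emph{admissible}: from Yasuda's description of limits of prorings its terms are iterated fibre products of the $R_{\alpha,\lambda}$, and one must verify (i) the transition maps among these terms are surjective, and (ii) they induce isomorphisms on reductions, i.e.\ that $\big(\varprojlim_{\alpha}R_{\alpha}\big)_{\redn}=\varprojlim_{\alpha}(R_{\alpha})_{\redn}$ is a single reduced ring with spectrum $U$. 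Both follow from admissibility of each $R_{\alpha}$ together with the hypothesis that the $R_{\alpha}$ present the \emph{same} space $U$ — which is precisely what excludes the collapse exhibited in the remark following the existence of projective limits of prorings, where a cofiltered limit of nonzero rings vanished. Granting this, $(X,\ocurl_X)$ is locally the formal spectrum of an admissible proring, hence a formal scheme.

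Finally, the universal property. Given a formal scheme $Z$ with a compatible family $g_{\alpha}\colon X_{\alpha}\to Z$, the maps $|g_{\alpha}|\colon X\to|Z|$ all coincide (being compatible over $\idy_X$), giving a continuous $\phi\colon X\to|Z|$; and the $g_{\alpha}$ give a compatible family $\ocurl_Z\to\phi_*\ocurl_{X_{\alpha}}$ which, since $\phi_*$ is a right adjoint and hence preserves limits, factors uniquely through $\ocurl_Z\to\phi_*\ocurl_X$. One checks $(\phi,\phi^{\sharp})$ is a morphism of locally admissibly proringed spaces (the local-ring and admissibility conditions are inherited), and that it is the unique morphism $(X,\ocurl_X)\to Z$ through which every $g_{\alpha}$ factors; thus $(X,\ocurl_X)=\varinjlim_{\alpha}X_{\alpha}$. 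The main obstacle is the substantive point of the third paragraph — that a cofiltered limit of admissible prorings presenting a common space is admissible, and that $\Spf$ carries it to the expected proringed space; everything else is a formal consequence of the existence of limits of sheaves of prorings and the adjointness of $\phi_*$. I would therefore concentrate the write-up on making $\varprojlim_{\alpha}R_{\alpha}$ explicit and on checking that reduction commutes with this limit, invoking the common-space hypothesis to rule out the degenerate behaviour of cofiltered limits of rings.
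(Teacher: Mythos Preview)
The paper defers this proof entirely to \cite[Proposition 5.15]{Car21}, so no direct comparison with the thesis argument is possible; your outline --- take the common space $X$, form $\ocurl_X=\varprojlim_\alpha\ocurl_{X_\alpha}$ as a sheaf of prorings, check locally that this is $\Spf$ of an admissible proring, then verify the universal property --- is the natural construction and is almost certainly what the thesis does.

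There is, however, a genuine gap at the point you yourself flag. Your justification for admissibility of $\varprojlim_\alpha R_\alpha$ is that ``admissibility of each $R_\alpha$ together with the hypothesis that the $R_\alpha$ present the same space'' forces surjectivity of the transition maps and commutation of reduction with the limit. This is not true as stated. Take $A=\nn$, $X_n=\Spec(k[x]/(x^2))$ for all $n$, with transition map induced by $x\mapsto 0$. Each $X_n$ is an (ordinary, hence formal) scheme on the same one-point space, and the transition map is the identity on that space; but the ring map $k[x]/(x^2)\to k[x]/(x^2)$ has image $k$, so it is not surjective, and the proring $(k[x]/(x^2))_{n\in\nn}$ with these maps is \emph{not} admissible in its given presentation. (It is isomorphic in the pro-category to the admissible proring $k$, and indeed $\varinjlim X_n\cong\Spec(k)$; but your argument does not produce this re-indexing.) So ``same underlying space'' alone does not give you (i), and reduction need not commute with the limit on the nose.

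Two related minor points: for a \emph{cofiltered} diagram in $\mathrm{Pro}(\mathrm{Rings})$ the terms of the limit are (a cofinal family among) the $R_{\alpha,\lambda}$ themselves, not iterated fibre products --- the fibre-product description is what one uses for finite limits; and your reduction to transition maps equal to $\idy_X$ is an extra hypothesis not contained in the statement. The clean fix, which also matches every use of the proposition in the paper (e.g.\ in forming the total separatrix via Zorn's lemma), is to assume that the transition morphisms $X_\alpha\to X_\beta$ are bijective closed immersions. Then, writing each $X_\alpha$ locally as $\varinjlim_\lambda\Spec(R_{\alpha,\lambda})$, the induced maps $R_{\beta,\mu}\to R_{\alpha,\lambda}$ \emph{are} surjective with isomorphic reductions, the combined doubly-indexed family $(R_{\alpha,\lambda})$ is a directed admissible proring, and your construction goes through without further work. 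I would recommend either adding this hypothesis explicitly, or replacing the hand-wave by an argument that the limit proring is \emph{isomorphic} to an admissible one --- the example above shows this is a real step.
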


\begin{proof}[Proof: \nopunct]
See \cite[Proposition 5.15]{Car21}.
\end{proof}

\subsection{Other Results}
\begin{definition}
A \emph{locally topologically ringed space} is a pair $(X,\ocurl_X)$, where $X$ is a topological space, and $\ocurl_X$ is a sheaf of topological rings whose stalks $\ocurl_{X,x}$ are local rings.
\end{definition}

\begin{definition}
Let $R$ be topological ring which is linearly topologised, complete and separated.

$R$ is \emph{admissible} if there exists an open ideal $I\subset R$ such that every neighbourhood of $0$ contains $I^n$ for some $n\in\nn$. Such an ideal is called an \emph{ideal of definition}.
\end{definition}

\begin{remark}
The collection of all ideals of definition forms a fundamental system of neighbourhoods of $0$.
\end{remark}

\begin{remark}
Any ring with the discrete topology is admissible, with the zero ideal being an ideal of definition.
\end{remark}

\begin{definition}
Let $R$ be an admissible ring, and let $(I_{\lambda})_{\lambda\in\Lambda}$ be the collection of all ideals of definition. We define the \emph{formal spectrum} of $R$ to be the locally topologically ringed space $\Spf(R)$, which as a topological space comprises the open prime ideals of $R$ with the subspace topology from $\Spec(R)$, or equivalently (by \cite[Lemma 5.21]{Car21}) is equal to $\Spec(R/I_{\lambda})$, and has the structure sheaf
\[
\ocurl_{\Spf(R)}=\varprojlim\ocurl_{\Spec(R/I_{\lambda})}.
\]
\end{definition}

\begin{definition}
A \emph{classical formal scheme} is a locally topologically ringed space $X$ locally isomorphic to the formal spectrum of an admissible ring.

A classical formal scheme which is isomorphic to a formal spectrum is called \emph{affine}.
\end{definition}

\begin{remark}
Any scheme is a classical formal scheme.
\end{remark}

\begin{proposition}\thlabel{fscor}
Any classical formal scheme $X$ corresponds to a unique formal scheme.

Any locally countably indexed, locally ind-Noetherian formal scheme $X$ corresponds to a unique classical formal scheme.
\end{proposition}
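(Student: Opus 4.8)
The plan is to establish both correspondences first on affine pieces, and then to glue, exploiting that the construction in each direction is canonical and hence functorial in the ring.

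\emph{From a classical formal scheme to a formal scheme.} I would cover $X$ by affine opens $\Spf(R)$ with $R$ an admissible topological ring, and let $(I_\lambda)_{\lambda\in\Lambda}$ be the family of all ideals of definition of $R$, directed by reverse inclusion (it is closed under finite intersection). Since for $I_\mu\subseteq I_\lambda$ the quotient map $R/I_\mu\to R/I_\lambda$ is surjective, and since any two ideals of definition $I,J$ satisfy $I^m\subseteq J$ and $J^n\subseteq I$ for suitable $m,n$ — so that $\sqrt I=\sqrt J$ and hence $(R/I_\lambda)_{\redn}$ does not depend on $\lambda$ — the projective system $(R/I_\lambda)_{\lambda}$ is an admissible proring. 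Straight from the two definitions of formal spectrum, the classical $\Spf(R)$ and the proring $\Spf((R/I_\lambda)_{\lambda})$ have the same underlying space (the open primes of $R$, equivalently $\Spec(R/I_\lambda)$) and the same structure sheaf $\varprojlim_\lambda\ocurl_{\Spec(R/I_\lambda)}$ — the only difference being whether one records the topological ring $\varprojlim_\lambda R/I_\lambda$ or the defining system, each determining the other. A continuous homomorphism of admissible rings carries ideals of definition to ideals of definition, so this identification is functorial in $R$; it therefore respects the transition isomorphisms of any affine cover of $X$ and glues to a formal scheme, unique up to unique isomorphism, corresponding to $X$.

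\emph{From a suitable formal scheme to a classical formal scheme.} This is the substantive direction. I would cover $X=\varinjlim X_\lambda$ by affines $\Spf(R)$ with $R=(R_\lambda)_{\lambda\in\nn}$ pro-Noetherian and admissible, form the ring $\hat R=\varprojlim_\lambda R_\lambda$, and topologise it by taking the kernels $K_\lambda=\Ker(\hat R\to R_\lambda)$ as a fundamental system of neighbourhoods of $0$; since $\bigcap_\lambda K_\lambda=0$ this makes $\hat R$ linearly topologised and separated, and it is complete for this topology. Because the transition maps are surjective and $\Lambda=\nn$ is countable, the Mittag–Leffler condition gives surjectivity of each $\hat R\to R_\lambda$, so $\hat R/K_\lambda\cong R_\lambda$. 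To see $\hat R$ is admissible in the topological sense I would fix an index, say $\lambda_0=1$, and put $I=K_{\lambda_0}$: for $\lambda\ge\lambda_0$ the image of $I$ in $R_\lambda$ is the kernel of a transition map of an admissible proring, hence lies in the nilradical of $R_\lambda$, hence is nilpotent since $R_\lambda$ is Noetherian; directedness then shows every neighbourhood of $0$ contains a power of $I$, so $I$ is an ideal of definition. Exactly as in the first part I would identify the classical $\Spf(\hat R)$ with the proring $\Spf(R)$: the $K_\lambda$ are cofinal among the ideals of definition of $\hat R$ (each such ideal is open, hence contains some $K_\lambda$), so both structure sheaves equal $\varprojlim_\lambda\ocurl_{\Spec(R_\lambda)}$ and both underlying spaces equal $\Spec(R_{\redn})$. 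Glueing over an affine cover and using functoriality once more produces a classical formal scheme, unique up to unique isomorphism, corresponding to $X$; restricting the first construction to the cofinal subsystem $(K_\lambda)$ recovers $R$, so on the common domain the two correspondences are mutually inverse.

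I expect the main obstacle to be precisely the admissibility of $\hat R=\varprojlim_\lambda R_\lambda$ as a topological ring. Exhibiting an ideal of definition is where local ind-Noetherianity is indispensable (it is exactly what forces the relevant nil ideals to be nilpotent), and the surjectivity $\hat R\twoheadrightarrow R_\lambda$ — needed to identify $\hat R/K_\lambda$ with $R_\lambda$, and hence to match underlying spaces and structure sheaves — is where countable indexing is indispensable, the example of \cite{HS54} recalled earlier showing that otherwise $\varprojlim_\lambda R_\lambda$ may collapse entirely. The remaining ingredients — cofinality of $(K_\lambda)$, coincidence of structure sheaves, and the functoriality used in the glueing — I expect to be routine.
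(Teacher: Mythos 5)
Your argument is correct and follows essentially the same route as the construction the paper defers to (Yasuda's comparison, via the cited Propositions 5.26--5.27 of the thesis): in one direction pass to the pro-system of quotients by the directed family of all ideals of definition, and in the other use countable indexing for Mittag--Leffler surjectivity of $\hat R\to R_\lambda$ together with pro-Noetherianity to make the kernels $K_\lambda$ nilpotent modulo one another, hence ideals of definition, and then match spaces and structure sheaves by cofinality. One small imprecision: a general continuous homomorphism of admissible rings need not carry ideals of definition to ideals of definition (the image of an ideal is not an ideal, and the ideal it generates need not be open), but since the gluing only uses the transition \emph{isomorphisms} of an affine cover, for which the claim does hold, nothing in your argument breaks.
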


For the details of the proof, see \cite[Proposition 5.26, Proposition 5.27]{Car21}, the elements of which are found in \cite{Yas07}.

\begin{example}
Let $X$ be a smooth variety, and $Z\subset X$ be a closed reduced subscheme given by the ideal sheaf $\icurl$. For $n\in\nn$, let $Z^n$ be the scheme given by the ideal sheaf $\icurl^n$. The formal scheme $\hat{X}_Z=\varinjlim Z^n$ is called the \emph{formal completion} of $X$ along $Z$.

The formal completion is a classical formal scheme. In the affine case, with $X=\Spec(R)$ and $Z=\Spec(R/I)$, then $\hat{X}_Z=\Spf(\varprojlim R/I^n)$.
\end{example}

\begin{definition}
Let $X$ be a classical formal scheme with affine cover $\bigcup \Spf (R_{\alpha})$. A \emph{closed formal subscheme} $Z\subset X$ is the classical formal scheme with affine cover $\bigcup \Spf(R_{\alpha}/I_{\alpha})$, where the $I_{\alpha}$ are closed ideals forming a coherent sheaf.
\end{definition}

\begin{example}
For formal power series $g_1,\ldots, g_r\in \ccx[[x_1,\ldots,x_n]]$ we can define the formal scheme 
\[
\vv(g_1,\ldots,g_r)=\Spf(\ccx[[x_1,\ldots,x_n]]/(g_1,\ldots,g_r)),
\]
which is a closed formal subscheme of $\Spf(\ccx[[x_1,\ldots,x_n]])$. Furthermore, any closed formal subscheme of $\Spf(\ccx[[x_1,\ldots,x_n]])$ is isomorphic to a formal scheme of this form (see \cite[Proposition 5.30]{Car21}.
\end{example}

\section{Jet Spaces}
\subsection{Jets of Schemes}\label{algjet}
We recall the construction and state basic facts on jet spaces, following \cite{AG05}.

\begin{definition}
Let $X$ be a scheme of finite type over $\ccx$, and $m\in \nn_0$. The \emph{scheme of $m$-jets} of $X$ is the scheme $J_m(X)$ satisfying for every $\ccx$-algebra $A$ the functorial relation
\[
\Hom(\Spec (A),J_m(X))\cong \Hom(\Spec (A[t]/(t^{m+1})),X).
\]
\end{definition}
These bijections describe the functor of points of $J_m(X)$.

\begin{proposition}\thlabel{jetpt}
For every scheme $X$ of finite type over $\ccx$, and every $m\in \nn_0$, $J_m(X)$ exists and is a scheme of finite type over $\ccx$.
\end{proposition}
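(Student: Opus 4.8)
The plan is to construct $J_m(X)$ explicitly in the affine case and then glue. First I would reduce to the case $X = \vv(f_1,\ldots,f_r) \subset \aaf^n_{\ccx}$ affine, since the $m$-jet functor behaves well with respect to open immersions: a morphism $\Spec(A[t]/(t^{m+1})) \to X$ lands in an open subscheme $U \subset X$ if and only if the induced morphism $\Spec(A) \to X$ (composition with the closed point $t \mapsto 0$, which is a homeomorphism on underlying spaces) lands in $U$, because $\Spec(A[t]/(t^{m+1}))$ and $\Spec(A)$ share the same underlying topological space. Hence the $J_m(U)$ for an affine open cover $\{U_i\}$ of $X$ will glue along $J_m(U_i \cap U_j)$ to give $J_m(X)$, and representability of the functor is local.

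For the affine case $X = \Spec(\ccx[x_1,\ldots,x_n]/(f_1,\ldots,f_r))$, the key computation is this: a $\ccx$-algebra homomorphism $\ccx[x_1,\ldots,x_n]/(f_1,\ldots,f_r) \to A[t]/(t^{m+1})$ is determined by the images $\bar{x}_j = \sum_{k=0}^{m} a_{j,k} t^k$ with $a_{j,k} \in A$, subject to the constraint that $f_i(\bar{x}_1,\ldots,\bar{x}_n) \equiv 0 \pmod{t^{m+1}}$ for each $i$. Expanding $f_i\bigl(\sum_k a_{1,k}t^k, \ldots, \sum_k a_{n,k}t^k\bigr)$ as a polynomial in $t$ and collecting the coefficient of $t^\ell$ for $\ell = 0,\ldots,m$ gives polynomials $F_{i,\ell}$ in the variables $\{x_{j,k} : 1 \le j \le n,\ 0 \le k \le m\}$ with $\ccx$-coefficients, depending only on the $f_i$ and not on $A$. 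I then set
\[
J_m(X) = \Spec\bigl(\ccx[x_{j,k} : 1 \le j \le n,\ 0 \le k \le m] / (F_{i,\ell} : 1 \le i \le r,\ 0 \le \ell \le m)\bigr).
\]
By construction, giving an $A$-point of this scheme is the same as giving the tuple $(a_{j,k})$ satisfying $F_{i,\ell}(a) = 0$ for all $i,\ell$, which is exactly the data of a homomorphism $\ccx[x]/(f) \to A[t]/(t^{m+1})$; this establishes the functorial bijection $\Hom(\Spec A, J_m(X)) \cong \Hom(\Spec A[t]/(t^{m+1}), X)$, naturally in $A$. Since $J_m(X)$ is cut out by finitely many polynomials in finitely many ($n(m+1)$) variables over $\ccx$, it is of finite type; and a representing object for a functor is unique up to unique isomorphism, so the choice of presentation of $X$ does not matter.

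The main obstacle is verifying that the affine construction is well-defined independently of the chosen generators $f_i$ and coordinates, and that the local pieces glue coherently — i.e. checking that the isomorphisms $J_m(U_i)|_{J_m(U_i \cap U_j)} \cong J_m(U_j)|_{J_m(U_i \cap U_j)}$ coming from functoriality satisfy the cocycle condition. This is handled cleanly by Yoneda: because each $J_m(U)$ represents the same functor, the gluing data is forced and automatically coherent, so the only real content is the observation above that the $m$-jet functor commutes with restriction to opens. Everything else is the routine bookkeeping of expanding $f_i$ in powers of $t$. I would cite \cite{AG05} for the standard details rather than reproduce the full verification.
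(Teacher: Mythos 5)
Your proposal is correct and follows essentially the same route as the paper: construct $J_m(X)$ in the affine case by expanding $x_j\mapsto\sum_k a_{j,k}t^k$ and equating coefficients of $t^\ell$ in $f_i$ to get explicit defining polynomials, then glue over an affine cover using the fact that the jet functor commutes with open immersions (the content of the cited Lemma 2.3 of \cite{AG05}). Your version is if anything slightly more careful, since you verify the functorial bijection for arbitrary test algebras $A$ rather than just $\ccx$-points, and you make explicit the topological observation ($\Spec(A[t]/(t^{m+1}))$ and $\Spec(A)$ share the same underlying space) that underlies the gluing step.
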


\begin{proof}[Proof: \nopunct]
\sloppy Suppose that $X=\Spec (\ccx[x_1,\ldots,x_n]/(f_1,\ldots,f_r))$ is affine; any morphism defining a jet corresponds to a ring homomorphism
 \[
\phi: \ccx[x_1,\ldots,x_n]/(f_1,\ldots,f_r)\rightarrow \ccx[t]/(t^{m+1}).
\]
This map is determined by setting $u_i=\phi(x_i)=\sum_{j=0}^m a_{ij}t^j$ such that $f_l(u_1,\ldots,u_n)\in (t^{m+1})$ for each $l$. Equating co-efficients gives a system of polynomial constraints, equal to formal derivations of the $f_l$, which determine $J_m(X)$.

By \cite[Lemma 2.3]{AG05} the general case follows by gluing affine charts.
\end{proof}

By the definition, $J_m(X)$ is unique up to a canonical isomorphism.
If $m>p$, there is a canonical projection $\pi_{m,p}:J_m(X)\rightarrow J_p(X)$. Furthermore, we have $\pi_{m,p}\circ \pi_{q,m}=\pi_{q,p}$.

$J_0(X)=X$, and so we have projections $\pi_m=\pi_{m,0}:J_m(X)\rightarrow X$.

$J_1(X)$ is canonically isomorphic to the tangent bundle $TX$.

\begin{definition}
Let $X$ be a scheme, $m\in\nn_0$ and $x\in X$. The scheme of $m$-jets of $X$ above $x$ is the fibre of $J_m(X)$ above $x$, denoted $J_m(X,x)$.
\end{definition}

\begin{definition}
Let $X$ be a scheme of finite type over $\ccx$, and $m\in\nn_0$. The \emph{$m$-jets} of $X$ are the closed points of the scheme of $m$-jets
\[
J_m(X)(\ccx)=\Hom(\Spec(\ccx[t]/(t^{m+1})),X).
\]
\end{definition}

\begin{remark}
By abuse of notation, we sometimes write $J_m(X)$ for $J_m(X)(\ccx)$.
\end{remark}

\begin{remark}
Let $X=\vv(I)\subset \aaf^n$ be an affine scheme. Then $J_m(X)\subset J_m(\aaf^n)=\aaf^{(m+1)n}$. We have a system of algebraic co-ordinates 
\[
(a_{ij}\mid 1\leq i\leq n,0\leq j\leq m)
\]
on the jet space.
If $n=2$, we write $a_j$ for $a_{1j}$ and $b_j$ for $a_{2j}$.
\end{remark}

\begin{example}\thlabel{jetex}
Let $C=\vv(xy)\subset \aaf^2$. The jet spaces $J_m(C,0)$ are the union of the co-ordinate hyperplanes
\begin{multline}
a_1=a_2=\cdots=a_{m-1}=0, a_1=\cdots=a_{m-2}=b_1=0,\nonumber \\ 
 \ldots, a_1=b_1=\cdots=b_{m-2}=0, b_1=\cdots=b_{m-1}=0.
\end{multline}

Indeed, as discussed in the proof of \thref{jetpt}, we set $x=\sum a_it^i, y=\sum b_i t^i$. Equating co-efficients so that $xy\in (t^{m+1})$, we have $J_2(C,0)=\vv(a_1 b_1)$. Also, we have $J_1(C,0)=\aaf^2$.

If the result holds for $m$, the extra equation to define the $(m+1)$-jets, that is, the co-efficient of $t^{m+1}$ in the new co-ordinates, is 
\[
(m+1)(a_1 b_m+ a_2 b_{m-1}+\cdots + a_{m-1} b_2+ a_m b_1).
\]
Each string of equalities kills every summand except $b_{j+1} a_{m-j}, 0\leq j \leq m-1$, and so is appended by either $b_{j+1}=0$ or $a_{m-j}=0$. Replacing $m$ with $m+1$, we see that this gives the equations of the next jet space, so the result follows by induction.
\end{example}

\begin{remark}
The mapping $X\mapsto J_m(X)$ is functorial: For any morphism $f:X\rightarrow Y$, and any $m\in\nn$, there is an induced morphism $f_m:J_m(X)\rightarrow J_m(Y)$ given by $\tau\mapsto f\circ\tau$.

If $p<m$, the diagram
\[
\begin{tikzcd}
J_m(X) \arrow[r, "f_m"] \arrow[d, "\pi_{m,p}^X"] & J_m(Y) \arrow[d, "\pi_{m,p}^Y"] \\
J_p(X) \arrow[r, "f_p"] & J_p(Y)
\end{tikzcd}
\]
commutes.
\end{remark}

Taking $X$ to be a closed subscheme of $Y$, and $f$ the inclusion map, we see that $J_m(X)\subset J_m(Y)$.

\begin{proposition}\cite[Proposition 6.9]{Car21}
Let $(Y_i)_{i\in\icurl}$ be a family of closed subschemes of a scheme $X$ of finite type over $\ccx$. Then $J_m(\bigcap_{i\in\icurl} Y_i)=\bigcap_{i\in\icurl} J_m(Y_i)$ for each $m$.
\end{proposition}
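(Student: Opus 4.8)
The plan is to establish the identity on the level of functors of points. First note that, since $X$ is of finite type over $\ccx$, it is Noetherian, so among the finite sub-intersections $Y_{i_1}\cap\cdots\cap Y_{i_k}$ there is a minimal one, and this minimal element is easily seen to equal $\bigcap_{i\in\icurl}Y_i$. In particular $\bigcap_iY_i$ is a closed subscheme of $X$ of finite type, so $J_m(\bigcap_iY_i)$ is defined by \thref{jetpt}, and likewise $\bigcap_{i\in\icurl}J_m(Y_i)$ is a finite intersection of closed subschemes of $J_m(X)$. Since $J_m$ sends closed immersions to closed immersions, both $J_m(\bigcap_iY_i)$ and $\bigcap_iJ_m(Y_i)$ are closed subschemes of $J_m(X)$, and it suffices to show that they represent the same subfunctor of $\Hom(-,J_m(X))$.

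Fix a $\ccx$-algebra $A$ and put $B=A[t]/(t^{m+1})$. For finitely many closed immersions the scheme-theoretic intersection $\bigcap_iY_i$ is the iterated fibre product of the $Y_i$ over $X$, so by the universal property a morphism $\Spec B\to X$ factors through $\bigcap_iY_i$ exactly when it factors through each $Y_i$. Now for any closed subscheme $Z\subseteq X$ the defining bijection $\Hom(\Spec A,J_m(Z))\cong\Hom(\Spec B,Z)$, being natural in $Z$, fits into a commutative square with the maps $\Hom(-,J_m(Z))\to\Hom(-,J_m(X))$ and $\Hom(-,Z)\to\Hom(-,X)$; the latter two are injective since closed immersions are monomorphisms and $J_m$ preserves them. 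Hence a jet $\tilde g\in\Hom(\Spec A,J_m(X))$ factors through $J_m(Z)$ if and only if the associated morphism $g\in\Hom(\Spec B,X)$ factors through $Z$. Taking $Z=Y_i$ for each $i$ and $Z=\bigcap_iY_i$ yields: $\tilde g$ lies in $\bigcap_iJ_m(Y_i)$ $\iff$ $g$ factors through every $Y_i$ $\iff$ $g$ factors through $\bigcap_iY_i$ $\iff$ $\tilde g$ lies in $J_m(\bigcap_iY_i)$. These equivalences are natural in $A$, so the two subfunctors of $\Hom(-,J_m(X))$ coincide, and therefore so do the two closed subschemes of $J_m(X)$; this is the claim.

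The step carrying the weight is the interchange above, namely that $J_m(-)$ commutes with the (finite, by Noetherianity) scheme-theoretic intersection. Abstractly this reduces to $\Hom(\Spec B,-)$ preserving the fibre products $Y_{i_1}\times_X\cdots\times_X Y_{i_k}$ together with $J_m$ preserving closed immersions, so there is no real obstruction. If one prefers a hands-on verification one can work affine-locally: for $Y_i=\vv(I_i)\subseteq\aaf^n$, the jet scheme $J_m(\vv(I))\subseteq J_m(\aaf^n)=\aaf^{(m+1)n}$ is cut out by the formal derivatives $D^{(0)}f,\ldots,D^{(m)}f$ of the elements $f$ of a finite generating set of $I$, as in the proof of \thref{jetpt}. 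Since each $D^{(j)}$ is additive and satisfies a Leibniz-type identity, the $D^{(j)}f$ over generators $f$ of $I$ generate the same ideal as the $D^{(j)}f$ over all $f\in I$; feeding in a generating set of $\sum_iI_i$, which may be taken to be the union of generating sets of the $I_i$, then exhibits the defining ideal of $J_m(\vv(\sum_iI_i))$ as the sum of the defining ideals of the $J_m(\vv(I_i))$, i.e. $J_m(\bigcap_iY_i)=\bigcap_iJ_m(Y_i)$.
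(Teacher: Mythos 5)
Your argument is correct. The paper itself does not reproduce a proof here (it only cites \cite[Proposition~6.9]{Car21}), but both layers of your write-up are sound, and the affine-local version is exactly the computation one expects from the paper's own treatment of jets: the ideal of $J_m(\vv(I))$ inside $J_m(\aaf^n)$ is generated by the formal derivatives $D^{(0)}f,\ldots,D^{(m)}f$ of generators $f$ of $I$ (independent of the choice of generators, by the Leibniz identity), so feeding in the union of generating sets of the $I_i$ identifies the ideal of $J_m(\vv(\sum_i I_i))$ with the sum of the ideals of the $J_m(\vv(I_i))$. The functor-of-points packaging is a clean way to say the same thing: a morphism $\Spec B\to X$ kills $\sum_i I_i$ iff it kills each $I_i$, and the adjunction $\Hom(\Spec A,J_m(Z))\cong\Hom(\Spec A[t]/(t^{m+1}),Z)$ is natural in $Z$ with $J_m$ preserving closed immersions. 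Two minor remarks: the Noetherian reduction to a finite sub-intersection, while harmless, is not actually needed, since ``the pullback of $\sum_i I_i$ vanishes iff the pullback of each $I_i$ vanishes'' holds for arbitrary index sets; and when you invoke ``$J_m$ sends closed immersions to closed immersions'' you are using a fact the paper only states for the induced inclusion $J_m(X)\subset J_m(Y)$, but it follows from the same local equations, so there is no gap.
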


\begin{proposition}\thlabel{jetsurj}
Let $f:X\rightarrow Y$ be a proper birational morphism of smooth varieties. Then for each $m\in\nn$, the induced morphism $f_m:J_m(X)\rightarrow J_m(Y)$ is surjective.
\end{proposition}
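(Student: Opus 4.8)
The plan is to reduce to a local, affine computation and exploit the valuative criterion for properness together with the fact that a proper birational morphism of smooth varieties is an isomorphism over a dense open set. Fix $m\in\nn$ and let $\gamma\in J_m(Y)(\ccx)$ be an $m$-jet, corresponding to a morphism $\gamma:\Spec(\ccx[t]/(t^{m+1}))\to Y$; since $J_m(Y)$ is of finite type it suffices to lift closed points, and more generally we may argue functorially with $A$-valued points. First I would dispose of the jets that land in the locus where $f$ is an isomorphism: let $U\subset Y$ be the largest open set over which $f$ is an isomorphism, so $Z=Y\setminus U$ is closed of codimension $\geq 1$ (in fact $\geq 2$ when $Y$ is smooth, though codimension $\geq 1$ is all we need). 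If $\gamma(\Spec\ccx[t]/(t^{m+1}))\subset U$, then $\gamma$ lifts uniquely through $f^{-1}|_U$. So the content is to lift jets meeting $Z$.

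The key step is a lifting argument via arcs. Given $\gamma\in J_m(Y)$, I want to produce an arc $\tilde\gamma:\Spec\ccx[[t]]\to Y$ whose truncation to order $m$ is $\gamma$ and which is \emph{not} entirely contained in $Z$; then $\tilde\gamma$ sends the generic point $\Spec\ccx((t))$ into $U$, so it lifts over $U$, and by the valuative criterion of properness applied to $f$ (which is proper and separated) this lift extends uniquely to $\widetilde{\tilde\gamma}:\Spec\ccx[[t]]\to X$; truncating $\widetilde{\tilde\gamma}$ to order $m$ gives the desired preimage of $\gamma$ in $J_m(X)$, by the commutativity of the truncation squares in the remark preceding the statement. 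To build $\tilde\gamma$: work in an affine chart $Y\supset V=\Spec R$ containing the image of $\gamma$, write $R=\ccx[y_1,\dots,y_N]/I$, lift the jet's coordinate functions $u_i(t)\in\ccx[t]/(t^{m+1})$ to polynomials $\hat u_i(t)\in\ccx[t]$, and then perturb by adding a generic term $c_i t^{m+1}+\cdots$; because $Y$ is smooth (hence $V$ is a reduced complete intersection locally, or at least one can use that $Z$ is a proper closed subset and $Y$ is irreducible), a generic such perturbation yields an arc not contained in $Z$ while keeping the truncation fixed. This genericity/existence claim is where smoothness of $Y$ is used essentially — on a singular $Y$ a jet through a fat point of $\Singr(Y)$ need not extend to an arc at all.

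I expect the main obstacle to be precisely this extension of an $m$-jet on $Y$ to an honest arc with prescribed $m$-truncation and with image not contained in $Z$. On a smooth variety, $J_m(Y)\to Y$ is a Zariski-locally trivial $\aaf^{mN}$-bundle, so every $m$-jet lifts to an $(m{+}1)$-jet, then to an $(m{+}2)$-jet, and so on; taking the limit over the (countable, hence cofinal-in-a-chain) tower and invoking that $\ccx[[t]]=\varprojlim \ccx[t]/(t^{k+1})$ produces an arc restricting to $\gamma$. One then has to arrange, at some finite stage, that the arc escapes $Z$: since $Z\subsetneq Y$ is a proper closed subset and the arc space of $Y$ is irreducible (as $Y$ is), the arcs through a given jet that lie in $Z$ form a proper closed subset of the fibre, so a generic lift works; if the original jet $\gamma$ already lies scheme-theoretically inside $Z$ one instead perturbs the very first lifted coefficient $a_{i,m+1}$ out of $Z$, which is possible as $Z$ is cut out by at least one nonzero equation on $V$. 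Assembling these pieces — local smoothness bundle structure, the limit to arcs, the generic escape from $Z$, and the valuative criterion for $f$ — gives surjectivity of $f_m$ for every $m$.
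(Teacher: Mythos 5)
Your argument is correct and is essentially the standard (and, as far as the omitted proof in \cite{Car21} goes, the expected) route: use smoothness of $Y$ to lift the $m$-jet to an arc whose generic point avoids the exceptional locus, lift that arc over the isomorphism locus, extend by the valuative criterion of properness, and truncate back to order $m$. The only point to tidy is that the ``generic perturbation'' of the higher coefficients should be phrased via the affine-bundle structure of $J_{k+1}(Y)\rightarrow J_k(Y)$ in local \'etale coordinates (rather than by perturbing ambient coordinate functions of $\Spec(\ccx[y_1,\ldots,y_N]/I)$, where one must also preserve the equations in $I$), but as you already invoke that bundle structure this is cosmetic.
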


\begin{proof}[Proof: \nopunct]
See \cite[Proposition 6.18]{Car21}
\end{proof}

\begin{proposition}[Change of Variables]\thlabel{jetco}
Let $W\subset X$ be a closed subscheme, and $\pi:X'\rightarrow X$ a morphism of schemes. For each $m\in \nn$, $J_m(\pi^{-1}(W))=\pi_m^{-1}(J_m(W))$.
\end{proposition}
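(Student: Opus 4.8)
The plan is to reduce the identity to the single fact that the $m$-jet functor commutes with fibre products, which falls out of the defining adjunction $\Hom(\Spec A,J_m(-))\cong\Hom(\Spec A[t]/(t^{m+1}),-)$. First I would fix the scheme-theoretic meaning of both sides. Since $W\hookrightarrow X$ is a closed immersion, $\pi^{-1}(W)$ is the scheme-theoretic preimage, i.e.\ the fibre product $X'\times_X W$, which is a closed subscheme of $X'$ — and of finite type over $\ccx$, as $X'$ is, so that $J_m(\pi^{-1}(W))$ exists. Likewise the induced map $J_m(W)\to J_m(X)$ is again a closed immersion: this is immediate from the affine description in the proof of \thref{jetpt}, since if $W=\vv(f_1,\ldots,f_r,g_1,\ldots,g_s)$ and $X=\vv(f_1,\ldots,f_r)$ inside $\aaf^n$, then $J_m(W)$ is cut out in $J_m(X)\subset J_m(\aaf^n)$ by the extra coefficient equations coming from the $g_l$. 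Consequently $\pi_m^{-1}(J_m(W))$ is precisely the closed subscheme $J_m(X')\times_{J_m(X)}J_m(W)$ of $J_m(X')$, and it suffices to produce a canonical isomorphism $J_m(X'\times_X W)\cong J_m(X')\times_{J_m(X)}J_m(W)$ compatible with the two projections to $J_m(X')$.

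To obtain this I would argue on functors of points. For an arbitrary $\ccx$-algebra $A$, writing $A_m=A[t]/(t^{m+1})$, the definition of $J_m$ together with the universal property of the fibre product gives
\[
\Hom(\Spec A,J_m(X'\times_XW))\cong\Hom(\Spec A_m,X'\times_XW)\cong\Hom(\Spec A_m,X')\times_{\Hom(\Spec A_m,X)}\Hom(\Spec A_m,W),
\]
and applying the definition of $J_m$ three more times identifies the right-hand side with $\Hom(\Spec A,J_m(X')\times_{J_m(X)}J_m(W))$. Every identification is natural in $A$, so Yoneda yields the desired canonical isomorphism, and naturality in the $X'$-coordinate shows it intertwines the inclusion of $J_m(\pi^{-1}(W))$ into $J_m(X')$ with the inclusion of $\pi_m^{-1}(J_m(W))$. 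Combined with the first paragraph this gives $J_m(\pi^{-1}(W))=\pi_m^{-1}(J_m(W))$ as closed subschemes of $J_m(X')$.

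I expect the substance of the argument to be light and the care to be bookkeeping: one must keep "$\pi^{-1}$" and "$\pi_m^{-1}$" scheme-theoretic throughout (the statement on underlying spaces alone is weaker and would not suffice for the later applications to separatrices), and one must know that $J_m$ carries closed immersions to closed immersions so that $\pi_m^{-1}(J_m(W))$ really is the fibre product $J_m(X')\times_{J_m(X)}J_m(W)$; both are handled as above via the affine charts of \thref{jetpt}. As an alternative to the pointwise computation, one can phrase the whole thing through Weil restriction along $\Spec\ccx[t]/(t^{m+1})\to\Spec\ccx$ — since $J_m$ is (locally) such a restriction and restriction is a right adjoint to base change, commutation with fibre products is formal — but the Yoneda argument keeps the exposition self-contained.
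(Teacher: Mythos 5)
Your argument is correct and complete. The paper defers its proof of this proposition to \cite[Proposition 6.20]{Car21} without reproducing it, so a direct comparison is not possible from the source given; but the reduction you give --- that $\pi^{-1}(W)$ is the fibre product $X'\times_X W$, that $J_m$ sends closed immersions to closed immersions (so $\pi_m^{-1}(J_m(W))=J_m(X')\times_{J_m(X)}J_m(W)$), and that $J_m$ commutes with fibre products because it is the object part of a right adjoint on the level of functors of points --- is one of the two standard arguments, and the cleaner of them. The one point worth making explicit is that the Yoneda step is being run on the functor of points restricted to affine test schemes $\Spec A$, which suffices to determine the scheme structure since affines generate; you do implicitly use this when passing from the natural bijection of $\Hom$-sets to a scheme isomorphism. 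The alternative route, which is the one suggested by the explicit computation in the proof of \thref{jetpt} and which the thesis may well follow, is to work in affine charts and observe that the defining coefficient equations of $J_m$ are literally pulled back along $\pi_m$; your phrasing through Weil restriction and fibre products packages the same content more conceptually and generalises without further work to the analytic and formal settings invoked later in the paper (cf. the remark following \thref{jetform} that the change of variables formula persists in those contexts), which is a genuine advantage of your presentation. Either way, you have correctly identified that the substance is that $J_m$ preserves limits and that both sides of the asserted equality are the same fibre product.
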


\begin{proof}[Proof: \nopunct]
See \cite[Proposition 6.20]{Car21}.
\end{proof}

\begin{corollary}\thlabel{pfjet}
$\pi_m(J_m(W))\subset J_m(\pi(W))$, with equality if $\pi_m$ is surjective.
\end{corollary}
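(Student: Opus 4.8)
The plan is to deduce both assertions from the functoriality of the jet construction together with the Change of Variables proposition \thref{jetco}, applied to the scheme-theoretic image $\pi(W)\subseteq X$ of the closed subscheme $W\subseteq X'$.

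The inclusion is the easy half. The morphism $\pi|_W\colon W\to X$ factors, by definition of the scheme-theoretic image, through the closed immersion $\pi(W)\hookrightarrow X$; hence any $m$-jet $\tau\colon\Spec(\ccx[t]/(t^{m+1}))\to W$ (which is in particular an $m$-jet of $X'$) has $\pi_m(\tau)=\pi\circ\tau$ an $m$-jet of $X$ factoring through $\pi(W)$, i.e.\ $\pi_m(\tau)\in J_m(\pi(W))$. Equivalently, in the form I would use below: the factorisation of $\pi|_W$ through $\pi(W)$ makes $W$ a closed subscheme of the fibre product $\pi^{-1}(\pi(W))=X'\times_X\pi(W)$, so $J_m(W)\subseteq J_m(\pi^{-1}(\pi(W)))=\pi_m^{-1}(J_m(\pi(W)))$ by \thref{jetco}, and applying $\pi_m$ gives $\pi_m(J_m(W))\subseteq J_m(\pi(W))$.

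For the equality, assume $\pi_m$ surjective. Then $\pi_m(\pi_m^{-1}(S))=S$ for every subset $S$ of $J_m(X)$, so $\pi_m\big(J_m(\pi^{-1}(\pi(W)))\big)=J_m(\pi(W))$; together with the inclusion already proved, the asserted equality reduces to showing that each $m$-jet $\gamma$ of $\pi(W)$ lifts to an $m$-jet of $W$. Surjectivity of $\pi_m$ produces a lift $\tilde\gamma$ to an $m$-jet of $X'$ with $\pi_m(\tilde\gamma)=\gamma$, and the fibre-product description of $\pi^{-1}(\pi(W))$ (equivalently, \thref{jetco}) forces $\tilde\gamma$ to factor through $\pi^{-1}(\pi(W))$; the hard part will be to arrange that $\tilde\gamma$ factors through $W$ itself. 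I expect this to be the main obstacle, since surjectivity of $\pi_m$ lands the lift in $\pi^{-1}(\pi(W))$ but not, a priori, in the possibly smaller $W$. It disappears when $W=\pi^{-1}(\pi(W))$ --- the case in the applications, where $W$ is a $\pi$-preimage, so that $J_m(W)=J_m(\pi^{-1}(\pi(W)))$ and the equality is immediate --- and more generally one must check that enlarging $W$ to $\pi^{-1}(\pi(W))$ introduces no new $m$-jets. Finally I would record that, by \thref{jetsurj}, the surjectivity hypothesis on $\pi_m$ is automatic as soon as $\pi$ is a proper birational morphism of smooth varieties, the case relevant to resolutions.
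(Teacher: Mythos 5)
Your first half is correct and is surely the intended deduction: taking $\pi(W)$ to be the scheme-theoretic image (the only choice for which the inclusion can hold --- the reduced image already fails for $\pi=\idy$ and $W$ non-reduced), $\pi|_W$ factors through $\pi(W)\hookrightarrow X$, equivalently $W\subset\pi^{-1}(\pi(W))$, so \thref{jetco} gives $J_m(W)\subset\pi_m^{-1}(J_m(\pi(W)))$ and hence $\pi_m(J_m(W))\subset J_m(\pi(W))$. The paper offers no proof of this corollary, so there is nothing further to match on this half.

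For the equality clause you stop at exactly the right place, but the obstacle you flag is not merely a step left to the reader: it is fatal for general $W$. Surjectivity of $\pi_m$ only yields $\pi_m\bigl(J_m(\pi^{-1}(\pi(W)))\bigr)=J_m(\pi(W))$, and when $W\subsetneq\pi^{-1}(\pi(W))$ the lift of a jet of $\pi(W)$ need not be movable into $J_m(W)$. Concretely, let $\pi$ be the blow-up of $\aaf^2$ at the origin (so $\pi_m$ is surjective by \thref{jetsurj}) and let $W$ be the strict transform of the cuspidal cubic $C=\vv(y^2-x^3)$, so $\pi(W)=C$. Then $J_2(C,0)=\vv(b_1)\cong\aaf^3$, whereas $W$ meets the exceptional divisor only at $(x,v)=(0,0)$ in the chart $y=xv$, where $W=\vv(v^2-x)$ forces $\alpha_1=0$ and $\alpha_2=\beta_1^2$ on a $2$-jet; pushing forward gives $x=\beta_1^2t^2$, $y\equiv 0$, a one-parameter family. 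Hence $\pi_2(J_2(W))\cap J_2(C,0)$ is a proper subset of $J_2(C,0)$ and equality fails. The correct scope is the one you yourself identify: equality holds whenever $J_m(W)=J_m(\pi^{-1}(\pi(W)))$, in particular when $W$ is a $\pi$-preimage, and in that form it is immediate from \thref{jetco} plus surjectivity. That restricted form is what should be quoted (note that the second half of \thref{pftang}, which applies the equality to an arbitrary strongly tangent $C\subset X'$, needs the same caveat); a proof of the unrestricted equality cannot be completed, so the gap you left open is a genuine one.
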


\begin{remark}
We can also define the jets of a complex space as follows: For a complex space $X$ and a point $x\in X$, the space of $m$-jets of $X$ above $x$, denoted $J_m^{an}(X,x)$, is the set of equivalence classes of germs of holomorphic maps $f:(\ccx,0)\rightarrow (X,x)$, where $f\sim g$ if $f^{(i)}(0)=g^{(i)}(0)$ for all $0\leq i\leq m$. We define $J_m^{an}(X)=\bigcup_{x\in X}J_m^{an}(X,x)$.

We see that if $X$ is algebraic, this space is equal to the space of $m$-jets defined above.
\end{remark}

\subsection{Jets of Formal Schemes}

\begin{proposition}\thlabel{jetform}
Let $X=\varinjlim X_{\lambda}$ be a formal scheme. Then for each $m\in\nn$ and $x\in X$, $J_m(X,x)=\bigcup_{\lambda\in \Lambda} J_m(X_{\lambda},x)$. Furthermore, the jets are independent of the limit presentation.
\end{proposition}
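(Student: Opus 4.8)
The plan is to compare everything with the presentation-free description of an $m$-jet as a morphism out of a fat point. For a formal scheme $X$ and $x\in X$, the natural such description of $J_m(X,x)$ is the set of morphisms of formal schemes $\tau\colon\Spec(\ccx[t]/(t^{m+1}))\to X$ whose image is the single point $x$; here $\Spec(\ccx[t]/(t^{m+1}))$ is an ordinary scheme, hence a formal scheme, so this set is defined without any reference to a limit presentation of $X$. The content of the proposition is that this set equals $\bigcup_\lambda J_m(X_\lambda,x)$, whence independence of the presentation is automatic. To begin, for each $\lambda$ the structural morphism $\iota_\lambda\colon X_\lambda\to X$ of the colimit cocone induces by composition a map $J_m(X_\lambda,x)\to J_m(X,x)$; these are compatible (functoriality of $J_m$) and injective (the transition maps of an admissible proring are surjective, so $\iota_\lambda$ is a monomorphism), so $\bigl(J_m(X_\lambda,x)\bigr)_\lambda$ is a directed family of subsets of $J_m(X,x)$ and the inclusion $\bigcup_\lambda J_m(X_\lambda,x)\subseteq J_m(X,x)$ is tautological.

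For the reverse inclusion, fix $\tau\in J_m(X,x)$. Since $\ccx[t]/(t^{m+1})$ is local Artinian, the source of $\tau$ is a one-point space, so $\tau$ factors through an affine open formal subscheme $U\subseteq X$ containing $x$. Restricting the given presentation to $U$, and using Yasuda's equivalence between affine formal schemes and admissible prorings (\cite[Corollary 2.14]{Yas07}), we may write $U=\Spf(R)$ with $R=(R_\lambda)_\lambda$ an admissible proring and $U_\lambda=\Spec(R_\lambda)$ the corresponding affine open of $X_\lambda$. Under this equivalence $\tau$ is a morphism of prorings $R\to\ccx[t]/(t^{m+1})$, and since the target is an ordinary ring the defining formula for morphisms of prorings specialises to
\[
\Hom\bigl(R,\ccx[t]/(t^{m+1})\bigr)=\varinjlim_{\lambda}\Hom\bigl(R_\lambda,\ccx[t]/(t^{m+1})\bigr).
\]
Therefore $\tau$ is represented by an honest ring homomorphism $R_\lambda\to\ccx[t]/(t^{m+1})$ for some $\lambda$, that is, by an $m$-jet of the scheme $U_\lambda=\Spec(R_\lambda)$; since all the $X_\mu$ have the same underlying space and $\tau$ has image $x$, this jet lies over $x$, and as $U_\lambda$ is open in $X_\lambda$ it gives an element of $J_m(X_\lambda,x)$ mapping to $\tau$. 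This establishes $J_m(X,x)=\bigcup_\lambda J_m(X_\lambda,x)$, and applying the equality to two presentations of $X$ proves the stated independence.

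I expect no deep obstacle: the proposition is in essence a formal consequence of Yasuda's definition of a morphism of prorings as a filtered colimit, reflecting the fact that a jet has an Artinian, one-point source and so cannot ``see'' infinitely far into the limit. The one step that genuinely needs attention is the reduction to the affine case — checking that a jet of $X$ at $x$ factors through a chosen affine open and that the given presentation restricts there to an affine admissible system — which is handled by the basic local structure of formal schemes together with the cited equivalence of categories. (If one prefers, $J_m(X,x)$ may instead be regarded as the ind-scheme $\varinjlim_\lambda J_m(X_\lambda,x)$ along the closed immersions induced by functoriality, and the same argument shows this ind-scheme too is independent of the presentation.)
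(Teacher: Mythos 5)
The paper omits its own proof of this proposition, so a direct comparison is impossible, but your argument is correct and, I think, the right way to organize it. You replace the (circular-looking) statement with a genuinely presentation-free anchor: $J_m(X,x)$ is identified with morphisms of formal schemes from $\Spec(\ccx[t]/(t^{m+1}))$ into $X$ hitting $x$, which immediately makes the ``independence of presentation'' claim meaningful rather than tautological. The two key steps you single out are exactly the ones that carry the weight. First, the reduction to an affine chart is legitimate because $\Spec(\ccx[t]/(t^{m+1}))$ is a one-point space, so any morphism topologically factors through any affine open $U\ni x$, and the admissible system restricts to one on $U$. Second, since $\ccx[t]/(t^{m+1})$ is an \emph{ordinary} ring, Yasuda's $\Hom$-formula collapses to $\Hom(R,\ccx[t]/(t^{m+1}))=\varinjlim_\lambda\Hom(R_\lambda,\ccx[t]/(t^{m+1}))$, so every jet is represented at some finite level $\lambda$; and the surjectivity of the transition maps $R_{\mu'}\to R_\lambda$ is precisely what ensures two representatives $\phi_1,\phi_2\colon R_\lambda\to\ccx[t]/(t^{m+1})$ give the same colimit element only if $\phi_1=\phi_2$, which is the injectivity you need for $\bigcup_\lambda J_m(X_\lambda,x)$ to be an honest increasing union inside $J_m(X,x)$. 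I see no gap. If anything, you prove slightly more than the bare proposition, since you establish the union agrees with a functor-of-points description; for the paper's later uses (e.g.\ computing jets of a formal completion, or strong tangency of formal limits in \thref{tangform}) only the union formula and its presentation-independence are invoked, but having the morphism-theoretic characterization is a cleaner foundation.
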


Proof: omitted.

\begin{corollary}
Let $X=\varinjlim X_{\lambda}$ be a direct limit of formal schemes. Then for each $m\in\nn$ and $x\in X$, $J_m(X,x)=\bigcup_{\lambda\in \Lambda} J_m(X_{\lambda},x)$.
\end{corollary}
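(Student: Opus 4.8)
The plan is to realise $X$ as a direct limit of an admissible system of \emph{ordinary} schemes that refines all the given presentations, apply \thref{jetform}, and then regroup the resulting union.

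Since $J_m(-,x)$ is computed locally near $x$, I would first pass to an affine formal neighbourhood of $x$ and so assume $X$ and every $X_\alpha$ affine. Each $X_\alpha$, being a formal scheme, is a direct limit $X_\alpha=\varinjlim_{\lambda\in\Lambda_\alpha}X_{\alpha,\lambda}$ of an admissible system of ordinary schemes. By hypothesis all the $X_\alpha$ (hence all the $X_{\alpha,\lambda}$) have underlying space $|X|$, so $x$ lies on each $X_{\alpha,\lambda}$ and every transition morphism in sight is a bijective closed immersion. Using the inclusion relation for formal schemes, for $\alpha\le\beta$ and any $\lambda\in\Lambda_\alpha$ the composite $X_{\alpha,\lambda}\hookrightarrow X_\alpha\hookrightarrow X_\beta$ factors as $X_{\alpha,\lambda}\subset X_{\beta,\mu}$ for some $\mu\in\Lambda_\beta$; declaring $(\alpha,\lambda)\le(\beta,\mu)$ when $\alpha\le\beta$ and $X_{\alpha,\lambda}\subset X_{\beta,\mu}$ turns $\{(\alpha,\lambda)\}$ into a directed poset over which $(X_{\alpha,\lambda})$ is an admissible system, with $\varinjlim_{(\alpha,\lambda)}X_{\alpha,\lambda}=\varinjlim_\alpha\varinjlim_\lambda X_{\alpha,\lambda}=X$ (an iterated-colimit identity that one checks via the universal property).

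Granting this, \thref{jetform} --- and in particular the independence of the jets from the chosen limit presentation --- yields
\[
J_m(X,x)=\bigcup_{(\alpha,\lambda)}J_m(X_{\alpha,\lambda},x)=\bigcup_{\alpha\in A}\Bigl(\bigcup_{\lambda\in\Lambda_\alpha}J_m(X_{\alpha,\lambda},x)\Bigr)=\bigcup_{\alpha\in A}J_m(X_\alpha,x),
\]
the last step being \thref{jetform} applied to each $X_\alpha$ separately. I expect the only real friction to be the second step: checking that the combined index set is genuinely directed and that it computes the iterated colimit inside the category of formal schemes of Section~\ref{Formal}; the jet-theoretic content is then a purely formal consequence of \thref{jetform}. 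If one prefers to avoid the Fubini manipulation, an alternative is to work with points directly: an $m$-jet of $X$ at $x$ is a morphism $\Spec(\ccx[t]/(t^{m+1}))\to X$ from an ordinary affine scheme, and any such morphism out of an ordinary scheme into $\varinjlim X_\alpha$ factors through some $X_\alpha$ (the $\Hom$-set out of an ordinary affine scheme commutes with the direct limit), which immediately exhibits $J_m(X,x)$ as $\bigcup_\alpha J_m(X_\alpha,x)$.
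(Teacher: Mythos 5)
Your argument is correct and matches the paper's intended derivation: the corollary is stated as an immediate consequence of \thref{jetform}, whose ``independence of the limit presentation'' clause is exactly what licenses refining the double limit $\varinjlim_\alpha\varinjlim_\lambda X_{\alpha,\lambda}$ to a single admissible system of ordinary schemes and then regrouping the union, which is precisely what you do. Your functor-of-points alternative is also sound, though the claim that a morphism from $\Spec(\ccx[t]/(t^{m+1}))$ into the direct limit factors through some term is, in the proring formalism, essentially the content of \thref{jetform} itself rather than a shortcut around it.
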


\begin{remark}
The method for determining the defining equations of $J_m(X)$ given in \thref{jetpt} also holds when $X$ is a complex space or a formal scheme defined by formal power series. The change of variables formula also holds in these cases.
\end{remark}

\begin{proposition}\thlabel{fibres}
Let $X$ be either a complex space or a formal scheme, and let $x\in X$. Then for each $m\in\nn$, the fibre of the jet space $J_m(X,x)$ is an affine scheme.

Furthermore, if $f:X\rightarrow Y$ is a (formal) isomorphism of complex spaces, then the induced isomorphisms $J_m(X,x)\rightarrow J_m(Y,f(x))$ are algebraic.
\end{proposition}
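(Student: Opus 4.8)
The plan is to reduce both assertions to the coordinate description of jet schemes underlying \thref{jetpt}, exploiting the following observation: an $m$-jet is an order-$m$ infinitesimal object, so in the description of $J_m(\cdot,x)$ only the truncation to degree $m$ of any (formal-)holomorphic function actually enters, and consequently every equation and every induced map that appears is polynomial.

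For the first assertion I would first use that $J_m(X,x)$ depends only on a neighbourhood of $x$, together with the remark following \thref{jetform}, to reduce to the case in which $X=\vv(g_1,\ldots,g_r)$ is a closed (formal) subscheme of an open subset of $\ccx^n$ (in the analytic case) or of $\Spf(\ccx[[x_1,\ldots,x_n]])$ (in the formal case), with $x=0$ and $g_l(0)=0$. As in the proof of \thref{jetpt}, a point of the fibre $J_m(X,0)$ is a tuple $(a_{ij})_{1\le i\le n,\,1\le j\le m}$, corresponding to $x_i\mapsto u_i:=\sum_{j=1}^m a_{ij}t^j$, subject to $g_l(u_1,\ldots,u_n)\in(t^{m+1})$ for each $l$. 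Since each $u_i$ lies in the ideal $(t)$, a monomial $x^\alpha$ contributes to $g_l(u)$ only in degrees $\ge|\alpha|$, so these constraints involve only the coefficients of $g_l$ of degree $\le m$ and are honest polynomials in the $a_{ij}$. Hence $J_m(X,0)$ is a closed subscheme of $\aaf^{mn}$, in particular an affine scheme. For a general formal scheme $X=\varinjlim X_\lambda$ one can alternatively invoke \thref{jetform} to write $J_m(X,x)=\bigcup_\lambda J_m(X_\lambda,x)$ as a filtered union of affine schemes along closed immersions, stabilising once $X_\lambda$ contains the $(m+1)$-st infinitesimal neighbourhood of $x$ (through which every $m$-jet above $x$ must factor); verifying that this union genuinely yields an affine scheme is the point I expect to require the most care.

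For the second assertion, after shrinking I would embed $X\hookrightarrow\ccx^n$ near $x=0$ and $Y\hookrightarrow\ccx^{n'}$ near $y:=f(x)=0$, and use the surjectivity of $\ocurl_{\ccx^n}\to\ocurl_X$ (and its formal analogue) to write $f$ as the restriction of a (formal-)holomorphic map $h=(h_1,\ldots,h_{n'})$ with $h(0)=0$. By the functoriality of jets the induced map on the fibre above $x$ is $\tau\mapsto f\circ\tau$, which in the coordinates above sends $(a_{ij})$ to the tuple of coefficients of $t^1,\ldots,t^m$ in $h_k(u_1,\ldots,u_n)$, $u_i=\sum_j a_{ij}t^j$. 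By the same degree argument each such coefficient is a polynomial in the $a_{ij}$, so $J_m(X,x)\to J_m(Y,y)$ is a morphism of affine schemes. Running the argument for $f^{-1}$ (again (formal-)holomorphic, since $f$ is an isomorphism) and using functoriality, $f_m\circ(f^{-1})_m=(\idy)_m=\idy$, shows this morphism is an isomorphism of affine schemes, i.e.\ algebraic. Finally, applying this to $f=\idy_X$ with two different local embeddings shows that the affine-scheme structure constructed in the first part is independent of the chosen embedding. Beyond the general formal-scheme case mentioned above, the remainder is the bookkeeping of \thref{jetpt} repeated with an eye on polynomiality, and I do not anticipate further serious obstacles.
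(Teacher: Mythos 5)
Your argument is correct and is essentially the standard coordinate/truncation argument that the paper's proof (deferred to \cite[Proposition 6.26, Corollary 6.30, Proposition 6.31]{Car21}) relies on: locally embed, note that only the degree-$\le m$ Taylor coefficients of the defining functions and of the map $h$ enter the jet equations, so everything in sight is polynomial in the $a_{ij}$, and invert using $(f^{-1})_m$. The one soft spot you rightly flag — affineness of $\bigcup_\lambda J_m(X_\lambda,x)$ for a general admissible system, where the infinitesimal neighbourhoods need not be cofinal — is handled more directly by observing that an increasing union of closed subschemes of $\aaf^{mn}$ is $\vv$ of the intersection of their ideals, hence again a closed subscheme of $\aaf^{mn}$, with no stabilisation required.
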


\begin{proof}[Proof: \nopunct]
See \cite[Proposition 6.26, Corollary 6.30, Proposition 6.31]{Car21}.
\end{proof}

\begin{example}
Let $X$ be a smooth variety, $Z\subset X$ a closed subscheme, and $z\in Z$. Consider the formal completion $\hat{X}_Z$ along $Z$. Then $J_m(\hat{X}_Z,z)=J_m(X,z)$.

Indeed, $\hat{X}_Z$ is the formal direct limit of the schemes $Z^i$, so by \thref{jetform}, $J_m(\hat{X}_Z,z)=\bigcup_{i\in\nn}J_m(Z^i,z)$. As $Z^i$ has degree $\geq i$, its jets of lower orders are the full affine space, equal to $J_m(X,z)$.
\end{example}

\begin{lemma}\thlabel{jetstruc}
Let $X$ be a smooth variety, and let $Y_1,Y_2$ be two formal subschemes of $X$ with the same underlying topological space. Suppose that $J_m(Y_1)\subset J_m(Y_2)$, for all $m\in\nn$. Then $Y_1\subset Y_2$.
\end{lemma}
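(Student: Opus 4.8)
The plan is to reduce the lemma to an elementary statement about the defining ideals of jet schemes and then exploit the constant jets. Containment of formal subschemes is local on $X$, so, choosing local coordinates, I would work in $\aaf^n$ (a general affine chart, \'etale over $\aaf^n$, being identical). By the definition of $\subset$ for formal schemes, the conclusion $Y_1\subset Y_2$ means exactly that each term $Y_{1,\lambda}$ of an admissible presentation $Y_1=\varinjlim_\lambda Y_{1,\lambda}$ is a closed subscheme of some term $Y_{2,\mu}$ of $Y_2=\varinjlim_\mu Y_{2,\mu}$. Now $Y_{1,\lambda}\subset Y_1$, so by functoriality of $J_m$ and the hypothesis $J_m(Y_{1,\lambda})\subset J_m(Y_1)\subset J_m(Y_2)=\varinjlim_\mu J_m(Y_{2,\mu})$; since $Y_{1,\lambda}$ is an ordinary scheme, the definition of $\subset$ for formal schemes supplies a $\mu$ with $J_m(Y_{1,\lambda})\subset J_m(Y_{2,\mu})$ \emph{as closed subschemes} of $J_m(\aaf^n)$. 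So it suffices to prove the lemma for ordinary closed subschemes $Z=\vv(I_Z)$ and $W=\vv(I_W)$ of $\aaf^n$: if $J_m(Z)\subset J_m(W)$ scheme-theoretically, for a single $m$, then $Z\subset W$.

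The tool is the explicit form of the jet-scheme ideal. Putting $x_i=\sum_{j\ge0}a_{i,j}t^j$ and expanding, write $f=\sum_{j\ge0}F_j(f)\,t^j$ for the resulting series; then $J_m(\vv(I))$ is cut out in $J_m(\aaf^n)=\Spec\ccx[a_{i,j}:0\le j\le m]$ by the ideal $\mathfrak J_m(I):=\bigl(F_j(f):f\in I,\,0\le j\le m\bigr)$. Although the recipe of \thref{jetpt} differentiates a chosen generating set of $I$, additivity of $F_j$ and the Leibniz identity $F_k(fg)=\sum_{p+q=k}F_p(f)F_q(g)$ show that $\mathfrak J_m(I)$ depends only on $I$. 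Two properties will be used. First, $F_0(f)$ is just $f$ rewritten in the variables $a_{1,0},\dots,a_{n,0}$, so $F_0(I)\subset\mathfrak J_m(I)$. Second, the ring homomorphism $\rho\colon\ccx[a_{i,j}]\to\ccx[x_1,\dots,x_n]$, $a_{i,0}\mapsto x_i$ and $a_{i,j}\mapsto0$ for $j\ge1$ — which records the constant jets — satisfies $\rho(F_0(f))=f$ while $\rho(F_j(f))=0$ for $j\ge1$, since any monomial of $F_j(f)$ with $j\ge1$ contains some $a_{i,k}$ with $k\ge1$; hence $\rho\bigl(\mathfrak J_m(I)\bigr)\subseteq I$.

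The deduction is now short. Scheme-theoretic containment $J_m(Z)\subset J_m(W)$ says $\mathfrak J_m(I_W)\subseteq\mathfrak J_m(I_Z)$; hence for every $g\in I_W$ one has $F_0(g)\in\mathfrak J_m(I_W)\subseteq\mathfrak J_m(I_Z)$, and applying $\rho$ gives $g=\rho(F_0(g))\in I_Z$. Thus $I_W\subseteq I_Z$, i.e. $Z\subset W$; threading this back through the first paragraph yields $Y_1\subset Y_2$.

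I expect the main difficulty to be interpretive, not computational: one must read ``$J_m(Y_1)\subset J_m(Y_2)$'' scheme-theoretically, as a containment of the jet schemes with their (typically non-reduced) structure, and not merely as a containment of the sets of $\ccx$-points. Under the set-theoretic reading the lemma fails: for example $Z_1=\vv(x^2-y^3,xy)$ and $Z_2=\vv(x^2,xy,y^3)$ in $\aaf^2$ satisfy $J_m(Z_1)(\ccx)\subseteq J_m(Z_2)(\ccx)$ for every $m$ — any $\ccx$-jet $x\mapsto p(t)$, $y\mapsto q(t)$ of $Z_1$ has $p^2\equiv q^3$ and $pq\equiv0$, and a short count of $t$-orders forces $p^2\equiv0$, so the jet kills both $x^2$ and $y^3$ — whereas $Z_1\not\subset Z_2$ since $x^2\notin(x^2-y^3,xy)$. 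The remaining care — comparing $J_m$ of a formal scheme with the $J_m$ of its finite-order approximations — is routine given \thref{jetform} and the description of closed formal subschemes. (Incidentally, the implication uses neither the hypothesis that $Y_1$ and $Y_2$ have the same underlying topological space nor the full range of $m$: a single order suffices.)
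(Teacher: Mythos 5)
Your proof is correct and takes essentially the same route as the paper's: both rest on the fact that the constant-jet section of $\pi_m\colon J_m(X)\to X$ recovers $Y$ from $J_m(Y)$, so that scheme-theoretic containment of jet schemes forces containment of the underlying subschemes (the paper phrases this via the scheme-theoretic image of $\pi_m$, you via the section $\rho$ itself; these are equivalent since the section witnesses that image to be $Y$). Your write-up is tighter and flags two points the paper's terse argument glosses over: the hypothesis ``$J_m(Y_1)\subset J_m(Y_2)$'' must be read scheme-theoretically --- your example $\vv(x^2-y^3,xy)$ versus $\vv(x^2,xy,y^3)$ cleanly shows the $\ccx$-point reading is false --- and a single $m$ already suffices, so neither the common-support hypothesis nor the quantification over all $m$ is actually used in the deduction.
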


\begin{proof}[Proof: \nopunct]
Let $\pi_m^X$ be the truncation map $J_m(X)\rightarrow X$. Then $\pi_m^X\restn{Y_i}=\pi_m^{Y_i}:J_m(Y_i)\rightarrow Y_i$. We have $\pi_m^X(J_m(Y_1))\subset\pi_m^X(J_m(Y_2))$, and so, as $Y_1$ and $Y_2$ have the same underlying topological space, we have $\pi_m^{Y_1}(J_m(Y_1))\subset\pi_m^{Y_2}(J_m(Y_2))$. This holds for all $m\in\nn$, so $Y_1\subset Y_2$.
\end{proof}

\section{Foliations}\label{Foliation}
Throughout this section we let $X$ be a complex manifold.
\subsection{Foliations by Vector Fields}
\begin{definition}
A subsheaf $\fcurl\subset \Tcal_X$ is called a \emph{foliation} if it is saturated, that is, $\Tcal_X/\fcurl$ is torsion-free, and is integrable, that is, $[\fcurl,\fcurl]\subset \fcurl$, where the Lie bracket is defined on the sections of the sheaf.
\end{definition}

\begin{remark}
The condition of being saturated is not preserved under many operations, for example pullback. Hence we sometimes deal with unsaturated foliations. If $\fcurl$ is an unsaturated foliation, we define its saturation $\sat(\fcurl)$ to be the smallest saturated sheaf containing it. It is the kernel of the morphism 
\[
\Tcal_X \rightarrow \Tcal_X/\fcurl \rightarrow (\Tcal_X/\fcurl)/ \Tors(\Tcal_X/\fcurl).
\]
\end{remark}

\begin{definition}
Let $\fcurl$ be a foliation on $X$. The \emph{singular locus} of $\fcurl$, denoted $\Singr(\fcurl)$, is the locus of points of $X$ around which $\fcurl$ is not locally free. It is a complex subspace of $X$ of codimension at least $2$.
\end{definition}

\begin{remark}
Let $\fcurl$ be a foliation on $X$. The tangent sheaf $\Tcal_X$ is reflexive, and so, as $\fcurl$ is saturated, \cite[Lemma II.1.1.16]{OSS80} gives that $\fcurl$ is normal, that is, for any open $U\subset X$ and any analytic subset $A\subset U$ of codimension at least $2$, the restriction map $\fcurl(U)\rightarrow\fcurl(U\setminus A)$ is an isomorphism. By the saturation property $A=\Singr\fcurl$ has codimension at least $2$, so in particular, for all open $U\subset X$, we have $\fcurl(U)\cong \fcurl(U\setminus A)$. So the global behaviour of $\fcurl$ is defined by its behaviour on the smooth locus.
\end{remark}

\begin{remark}
By Frobenius' theorem (see \cite{Fro1877}), we can write $X\setminus \Singr\fcurl$ as the disjoint union of connected submanifolds $L_{\alpha}$ -- the leaves of the foliation -- where for each $x\in X$, there is a system of local holomorphic co-ordinates $x_1,\ldots,x_n$ on an open neighbourhood $U\ni x$, such that the components of $U\cap L_{\alpha}$ can be written as
\[
x_{r+1}=\mu_{r+1},\ldots, x_n=\mu_n,
\]
with the $\mu_i$ constant.
\end{remark}

\subsection{Foliations by $1$-forms}
\begin{theorem}\thlabel{dualfol}
Let $X$ be a complex manifold. Then there is a one-to-one correspondence between saturated subsheaves of the sheaf of $1$-forms $\Omega=\Omega^1_X$ and saturated subsheaves of the tangent sheaf $\Tcal=\Tcal_X$.
\end{theorem}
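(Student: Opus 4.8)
The plan is to establish the correspondence via taking annihilators, exploiting the perfect pairing $\Tcal_X \times \Omega^1_X \to \ocurl_X$ available on the complement of the singular locus, and then extending uniquely across that locus using normality. First I would work on the open set $U = X \setminus A$ where all the sheaves in question are locally free — here $A$ has codimension at least $2$, being contained in the union of the singular loci of the sheaves involved. On $U$, given a saturated subsheaf $\fcurl \subset \Tcal_X$ of rank $r$, its annihilator $\fcurl^{\perp} \subset \Omega^1_X$ is a subbundle of rank $n-r$ (the pairing is perfect and $\fcurl$ is a subbundle on $U$, so the quotient $\Tcal_X/\fcurl$ is locally free there), and dually, given a saturated subsheaf $\gcurl \subset \Omega^1_X$, its annihilator $\gcurl^{\perp} \subset \Tcal_X$ is a subbundle. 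On the locally free locus, $(\fcurl^{\perp})^{\perp} = \fcurl$ and $(\gcurl^{\perp})^{\perp} = \gcurl$ by linear algebra, so this is a bijection between subbundles of $\Omega^1_X|_U$ and subbundles of $\Tcal_X|_U$.

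Next I would promote this to a correspondence of saturated sheaves on all of $X$. The key point is the normality remark recorded above: since $\Tcal_X$ and $\Omega^1_X$ are reflexive and any saturated subsheaf of a reflexive sheaf is normal (by \cite[Lemma II.1.1.16]{OSS80}), a saturated subsheaf is determined by its restriction to the complement of any codimension-$\geq 2$ analytic subset, and conversely a subbundle on such a complement extends uniquely to a saturated subsheaf on $X$ — namely, its saturation (equivalently, the sheaf $U' \mapsto \fcurl(U' \cap U)$, which is normal hence saturated). So from a saturated $\fcurl \subset \Tcal_X$ I restrict to $U$, take the annihilator there, and then take the unique normal (saturated) extension to $X$; this defines a saturated subsheaf $\fcurl^{\perp} \subset \Omega^1_X$. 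The same procedure in the other direction gives the inverse, and the two are mutually inverse because they are so on $U$ and both sides are determined by their restrictions to $U$.

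I should also record the compatibility of the various choices of $A$: if $A \subset A'$ are two such codimension-$\geq 2$ sets, restriction from $X \setminus A$ to $X \setminus A'$ commutes with taking annihilators (the annihilator is defined pointwise on the locally free locus), and normal extension is independent of which $A$ one removes, so the construction is canonical. This makes the bijection well-defined and functorial-looking without needing to fix notation for $A$ globally.

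The main obstacle is purely bookkeeping: checking that the annihilator of a \emph{subbundle} is again a subbundle (not merely a subsheaf) on the locally free locus — i.e.\ that the quotient remains locally free — and that annihilation is an involution there. Once one is genuinely on the locus where $\Tcal_X$, $\Omega^1_X$, $\fcurl$, and $\Tcal_X/\fcurl$ are all locally free, this is elementary linear algebra fibrewise, so the real care is in verifying that this good locus has complement of codimension at least $2$ (which follows since $\Singr(\fcurl)$ has codimension $\geq 2$ by definition, and likewise for the dual side) and that the normal-extension step genuinely produces a \emph{saturated} sheaf and is inverse to restriction. None of this requires the integrability condition — that plays no role in \thref{dualfol} and only enters when one asks which of these corresponding sheaves are foliations.
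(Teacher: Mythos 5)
Your approach — pass to the open set $U = X\setminus A$ where everything is locally free, take annihilators fibrewise there, and then extend uniquely across the codimension-$\geq 2$ set $A$ using normality of reflexive sheaves — is the standard one, and judging from the paper's surrounding remarks about annihilation of $1$-forms and the appeal to \cite[Lemma II.1.1.16]{OSS80} for normality, it is presumably the same route taken in the referenced thesis proof. The structure is sound: on $U$ the annihilator of a subbundle is a subbundle, biduality holds fibrewise, and the correspondence is then transported to $X$ since a saturated subsheaf of $\Tcal_X$ or $\Omega^1_X$ is normal and hence determined by its restriction to $U$.

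The one place that needs more care is the parenthetical ``which is normal hence saturated.'' Normality does not imply saturation in general (a normal subsheaf can easily have a quotient with torsion in codimension one), so stated as a blanket implication this is wrong. What you actually need — and what is true in your situation — is that the normal extension $j_*\fcurl_U$ of a \emph{subbundle} $\fcurl_U\subset\ecurl|_U$ across a codimension-$\geq 2$ analytic set $A$ is saturated in $\ecurl$. The argument: suppose $f\omega\in (j_*\fcurl_U)_p$ with $\omega\in\ecurl_p$ and $f\in\ocurl_{X,p}$ a nonzero germ. Shrink to a connected neighbourhood $V$ of $p$; then $f$ has nonzero germ at every $q\in V\cap U$, and since $\ecurl|_U/\fcurl_U$ is locally free, hence torsion-free, we get $\omega_q\in\fcurl_{U,q}$ for all $q\in V\cap U$, i.e.\ $\omega|_{V\cap U}\in\fcurl_U(V\cap U)$. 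Normality of $\ecurl$ (here locally free) identifies $\ecurl(V)$ with $\ecurl(V\cap U)$, under which $\omega$ lies in $(j_*\fcurl_U)(V)$. This is exactly the step that ensures the annihilator on $U$, extended by $j_*$, lands back in the class of saturated subsheaves; it is also why the codimension-$\geq 2$ hypothesis on $A$ cannot be weakened. With this filled in, your proof is complete and matches the intended argument.
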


\begin{proof}[Proof: \nopunct]
See \cite[Theorem 8.9]{Car21}
\end{proof}

\begin{definition}
Let $\gcurl\subset \Omega^1_X$ be a subsheaf. On some open subset $U\subset X$, $\gcurl(U)$ is generated by $1$-forms $\omega_1,\ldots, \omega_r$, which generate an ideal in $\Omega(U)$, the ring of all differential forms, with $\wedge$ as multiplication. $\gcurl$ is said to be integrable if this ideal is closed under the exterior derivative.
\end{definition}

\begin{lemma}
Let $\gcurl\subset \Omega^1_X$ be a saturated subsheaf. Then $\gcurl$ is integrable if and only if the subsheaf $\fcurl \subset \Tcal_X$ it corresponds to by \thref{dualfol} is integrable. 
\end{lemma}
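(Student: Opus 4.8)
The plan is to work locally and reduce the question to a well-known integrability criterion for a single $1$-form (Frobenius) together with the duality of \thref{dualfol}. Since integrability is a local condition and both sheaves are saturated, it suffices to check the equivalence on the smooth locus $X\setminus(\Singr\fcurl\cup\Singr\gcurl)$; by the normality remark for saturated subsheaves of reflexive sheaves, nothing is lost by removing this codimension-$\geq 2$ set. On the smooth locus $\fcurl$ is locally free of some rank $k$ and $\gcurl$ is locally free of rank $n-k$, and the duality of \thref{dualfol} is the usual annihilator correspondence: $\gcurl(U)=\{\omega\mid \omega(v)=0\ \forall v\in\fcurl(U)\}$ and dually $\fcurl(U)=\{v\mid \omega(v)=0\ \forall\omega\in\gcurl(U)\}$.

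First I would fix a point and a small neighbourhood $U$ on which $\fcurl$ is free with frame $v_1,\dots,v_k$ and $\gcurl$ is free with frame $\omega_1,\dots,\omega_{n-k}$, the two frames being mutually annihilating. The key computational tool is the Cartan-type formula: for a $1$-form $\omega$ and vector fields $v,w$,
\[
d\omega(v,w)=v(\omega(w))-w(\omega(v))-\omega([v,w]).
\]
If $v,w\in\fcurl$, then $\omega(v)=\omega(w)=0$ for every $\omega\in\gcurl$, so the formula collapses to $d\omega(v,w)=-\omega([v,w])$. This single identity is the hinge of the whole argument.

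Next I would run both implications off this identity. Suppose $\fcurl$ is integrable, i.e. $[v_i,v_j]\in\fcurl(U)$ for all $i,j$. Then for each generator $\omega_\ell$ of $\gcurl$ we get $\omega_\ell([v_i,v_j])=0$, hence $d\omega_\ell(v_i,v_j)=0$ for all $i,j$. Since the $v_i$ span $\fcurl$ and $\fcurl$ is exactly the common kernel of the $\omega_\ell$, this says $d\omega_\ell$ vanishes on the distribution cut out by $\omega_1,\dots,\omega_{n-k}$; by the algebraic form of Frobenius this is equivalent to $d\omega_\ell\in(\omega_1,\dots,\omega_{n-k})\wedge\Omega^1(U)$, i.e. $d\omega_\ell=\sum_j \eta_{\ell j}\wedge\omega_j$ for suitable $1$-forms $\eta_{\ell j}$. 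That is precisely the statement that the ideal generated by the $\omega_j$ is closed under $d$, so $\gcurl$ is integrable in the sense of the definition above. Conversely, if $\gcurl$ is integrable, then $d\omega_\ell=\sum_j\eta_{\ell j}\wedge\omega_j$, so for $v,w\in\fcurl$ we get $d\omega_\ell(v,w)=\sum_j(\eta_{\ell j}(v)\omega_j(w)-\eta_{\ell j}(w)\omega_j(v))=0$; feeding this back into the collapsed Cartan formula gives $\omega_\ell([v,w])=0$ for every $\ell$, and since $\fcurl$ is the simultaneous kernel of the $\omega_\ell$ this forces $[v,w]\in\fcurl$, i.e. $\fcurl$ is integrable. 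Both directions thus follow from the same identity read in opposite directions.

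**The main obstacle** I expect is not the computation but making the reduction to the smooth locus and the "annihilator" description of the duality fully rigorous — in particular checking that the frames $v_i$ and $\omega_\ell$ can be chosen compatibly and that the ideal-theoretic formulation of integrability for $\gcurl$ (closure of $(\omega_1,\dots,\omega_{n-k})$ under $d$) genuinely matches the "$d\omega_\ell=\sum\eta_{\ell j}\wedge\omega_j$" condition, which is the classical Frobenius lemma for Pfaffian systems; one should either cite it or include its short multilinear-algebra proof. A secondary point is that integrability is stated for generators on an open set, so one must note independence of the chosen generators, which is immediate since passing to another generating set multiplies by an invertible matrix of functions and preserves the generated differential ideal. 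Once these bookkeeping matters are dispatched, the equivalence is the two-line Cartan computation above.
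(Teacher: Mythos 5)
Your argument is correct, and it is the standard Frobenius-duality proof: reduce to the smooth locus via normality of saturated subsheaves of reflexive sheaves, choose mutually annihilating local frames, and run the Cartan identity $d\omega(v,w)=v(\omega(w))-w(\omega(v))-\omega([v,w])$ in both directions together with the elementary multilinear fact that a $2$-form vanishes on the common kernel of $\omega_1,\dots,\omega_{n-k}$ iff it lies in the ideal they generate. The paper delegates its proof to the thesis, but this is the canonical argument it is almost certainly recording; your bookkeeping points (independence of generators, extension of $\eta_{\ell j}$ and $[v,w]$ across the codimension-$\geq2$ bad set) are exactly the right things to address and are handled correctly.
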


\begin{proof}[Proof: \nopunct]
See \cite[Lemma 8.11]{Car21}.
\end{proof}

Thus any foliation $\fcurl$ is defined uniquely by a saturated subsheaf of the sheaf of $1$-forms $\Omega$, and so the foliation is locally defined as the integrable distribution of vector fields annihilated by a collection of $1$-forms. A foliation can be equivalently defined either in terms of vector fields or in terms of $1$-forms, as convenient.

\begin{lemma}
Let $\gcurl\subset \Omega^1_X$ be a subsheaf given locally by $\omega=b_1dx_1+\cdots +b_ndx_n$, where the $b_i$ are holomorphic.

(1) $\gcurl$ is integrable if and only if $\omega\wedge d\omega=0$.

(2) $\gcurl$ is saturated if and only if $\gcd(b_1,\ldots,b_n)=1$.

(3) $\Singr\gcurl=\vv(b_1,\ldots,b_n)$.
\end{lemma}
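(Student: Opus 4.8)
These are local statements, so the plan is to fix a point $x\in X$, work in a coordinate chart on which $\gcurl$ is generated by $\omega=b_1dx_1+\cdots+b_ndx_n$, write $R=\ocurl_{X,x}$ (a unique factorisation domain), and prove parts (3) and (2) first, since the hard direction of (1) draws on them.

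For (3), I would unwind the definition of $\Singr\gcurl$ as the locus where $\gcurl$ fails to be a subbundle of $\Omega^1_X$ (equivalently, where $\Omega^1_X/\gcurl$ is not locally free). If $b_i(x)\neq 0$ for some $i$, then $b_i$ is a unit in $R$ and $\{\,\omega/b_i\,\}\cup\{\,dx_j : j\neq i\,\}$ is a coframe, so $\gcurl=R\omega$ is a local direct summand near $x$ and $x\notin\Singr\gcurl$. If all $b_i(x)=0$, then $\omega\in\mm_x\Omega^1_{X,x}$, so the image of the fibre of $\gcurl$ in $\Omega^1_X(x)$ vanishes while $\gcurl$ has rank $1$; thus $\gcurl$ is not a subbundle near $x$ and $x\in\Singr\gcurl$. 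Hence $\Singr\gcurl=\vv(b_1,\ldots,b_n)$; in passing, using that height-$1$ primes of the UFD $R$ are principal, this locus has codimension $\geq 2$ exactly when the $b_i$ have no common factor.

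For (2), I would use that $\gcurl$ is saturated iff $\Omega^1_X/\gcurl$ is torsion-free. If $g=\gcd(b_i)$ is a non-unit, write $\omega=g\omega_0$; then $\omega_0\notin R\omega$ (otherwise $(1-fg)\omega_0=0$ with $1-fg$ a unit, forcing $\omega_0=0$), while $g\omega_0=\omega\in R\omega$, so the class of $\omega_0$ is a non-zero torsion element and $\gcurl$ is not saturated. Conversely, if $\gcd(b_i)=1$ and $f\eta=h\omega$ with $0\neq f\in R$, then comparing coefficients of $\eta=\sum c_idx_i$ gives $fc_i=hb_i$ for all $i$; a valuation argument in $R$ — for each prime $p$ some $b_i$ has $p$-valuation $0$, so $v_p(f)\le v_p(h)$ — yields $f\mid h$, whence $\eta\in R\omega$ and $\Omega^1_X/\gcurl$ is torsion-free.

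For (1), I would first translate the definition of integrability: with the single local generator $\omega$, the ideal $(\omega)\subset\Omega^{\bullet}_{X,x}$ is closed under $d$ iff $d\omega\in(\omega)$, since $d(\omega\wedge\eta)=d\omega\wedge\eta-\omega\wedge d\eta$ and the second term always lies in $(\omega)$; and by degree reasons $d\omega\in(\omega)$ iff $d\omega=\omega\wedge\alpha$ for some holomorphic $1$-form $\alpha$. The implication $d\omega=\omega\wedge\alpha\Rightarrow\omega\wedge d\omega=0$ is then immediate. For the converse one must manufacture such an $\alpha$ from the bare equation $\omega\wedge d\omega=0$, which is the de Rham--Saito division lemma, applicable because (by (2)--(3)) the common zero locus of the $b_i$ has codimension $\geq 2$. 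I expect this converse to be the crux: it is exactly where the codimension-$\geq 2$ hypothesis (equivalently, $\gcurl$ saturated) is indispensable — without it the division genuinely fails (e.g.\ on $\ccx^3$ the form $\omega=x_1\,dx_2$ has $\omega\wedge d\omega=0$ yet $d\omega\notin(\omega)$) — so parts (2) and (3) must be invoked within (1).
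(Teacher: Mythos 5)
The paper gives no argument here (it defers to \cite{Car21}), so there is nothing to compare your route against; judged on its own terms, your treatments of (2) and (3) are correct. In (3) you tacitly replace the paper's phrase ``not locally free'' by ``not a subbundle'' (equivalently, the quotient not locally free), which is the right reading: a rank-one subsheaf $R\omega$ of a free module over the domain $R=\ocurl_{X,x}$ is always free, so the literal reading would make the singular locus empty; your coframe argument and the vanishing of the fibre map do the job. In (2) the valuation argument should be prefaced by the remark that if $h=0$ then $f\eta=0$ forces $\eta=0$ (so that the coefficient $c_i$ with $v_p(b_i)=0$ is genuinely nonzero), but that is cosmetic.

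Part (1) has a real gap in the converse direction. The de Rham--Saito division lemma gives exactness of $\wedge\omega$ on $p$-forms only for $p<\codim\vv(b_1,\ldots,b_n)$; to divide the $2$-form $d\omega$ you need codimension at least $3$, whereas saturation buys you only $2$ (and the lemma does not even assume saturation). The failure is not hypothetical: for $\omega=x_1\,dx_2-x_2\,dx_1$ --- saturated, with singular locus of codimension exactly $2$, and a perfectly good integrable foliation --- one has $\omega\wedge d\omega=0$, yet $d\omega=2\,dx_1\wedge dx_2$ cannot be written as $\omega\wedge\alpha$ with $\alpha$ holomorphic, since the right-hand side vanishes on $\{x_1=x_2=0\}$ and the left-hand side does not. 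So the statement you are aiming at --- $d\omega\in(\omega)$ in the ring of holomorphic forms, including at points of $\Singr\gcurl$ --- is false in general, and no division lemma will deliver it. The equivalence in (1) must instead be established on the open set where $\omega\neq 0$, where it is pointwise linear algebra: complete $\omega$ to a coframe, write $d\omega=\omega\wedge\alpha+\beta$ with $\beta$ not involving $\omega$, and observe $\omega\wedge d\omega=\omega\wedge\beta$, which vanishes iff $\beta$ does; the identity $\omega\wedge d\omega=0$ then propagates across the zero locus by continuity, not by division. Your closing example $\omega=x_1\,dx_2$ correctly shows the ``ideal closed under $d$'' reading is sensitive to multiplying by functions, but the moral is the opposite of the one you draw: saturation does not rescue the division either, and the corollary following the lemma (integrability of $\gcurl$ iff of its saturation) only makes sense under this generic-locus interpretation.
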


\begin{proof}[Proof: \nopunct]
See \cite[Lemma 8.12, Lemma 8.14, Lemma 8.16]{Car21}.
\end{proof}

\begin{corollary}
Let $\gcurl\subset \Omega^1_X$ be a subsheaf of corank $1$. Then $\gcurl$ is integrable if and only if its saturation is.
\end{corollary}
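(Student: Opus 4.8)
The plan is to reduce at once to a local computation. Both the property of integrability (which, by definition, is checked on an open cover via the ideal generated by the local generators in the ring of forms, equivalently via the criterion $\omega\wedge d\omega=0$ of the preceding lemma) and the formation of the saturation $\sat(\gcurl)$ are local operations. So it suffices to fix $x\in X$, pass to a neighbourhood $U$ on which $\gcurl$ is generated by a single $1$-form $\omega=b_1dx_1+\cdots+b_ndx_n$ as in the preceding lemma, and prove the equivalence there.

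First I would pin down the local shape of the saturation. Since $\ocurl_{X,x}$ is a regular local ring, hence a UFD, set $g=\gcd(b_1,\ldots,b_n)$ and write $\omega=g\tilde\omega$, where $\tilde\omega$ has coprime coefficients; by part (2) of the preceding lemma the subsheaf $\ocurl_U\cdot\tilde\omega$ is saturated, and a direct check identifies it with $\sat(\gcurl)\restn{U}$: on the one hand $g\,\tilde\omega=\omega\in\gcurl_x$ with $g\neq 0$ shows $\tilde\omega\in\sat(\gcurl)_x$; on the other, any germ of $\Omega^1_X$ whose class in $\Omega^1_X/\gcurl$ is torsion is of the form $(hg/f)\tilde\omega$, and coprimality of the coefficients of $\tilde\omega$ forces $hg/f$ to be holomorphic, so it lies in $\ocurl_{X,x}\tilde\omega$. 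Then the heart of the matter is the identity
\[
\omega\wedge d\omega \;=\; g\tilde\omega\wedge(dg\wedge\tilde\omega+g\,d\tilde\omega)\;=\;g^2\,\tilde\omega\wedge d\tilde\omega,
\]
obtained from the Leibniz rule $d\omega=dg\wedge\tilde\omega+g\,d\tilde\omega$ together with $\tilde\omega\wedge dg\wedge\tilde\omega=-\,dg\wedge(\tilde\omega\wedge\tilde\omega)=0$. By part (1) of the preceding lemma, $\gcurl\restn{U}$ is integrable iff $\omega\wedge d\omega=0$ and $\sat(\gcurl)\restn{U}$ is integrable iff $\tilde\omega\wedge d\tilde\omega=0$; since $g\neq 0$ and the $\ocurl_{X,x}$-module of germs of $3$-forms at $x$ is free, multiplication by $g^2$ is injective on it, so the two vanishing conditions are equivalent. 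Letting $x$ range over $X$ finishes the argument.

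I do not expect a serious obstacle here. The one step needing genuine care is the identification of the local generator of $\sat(\gcurl)$ with $\omega/\gcd(b_1,\ldots,b_n)$ — that is, that clearing the gcd really produces the smallest saturated subsheaf containing $\gcurl$ — which I would settle using the UFD property of $\ocurl_{X,x}$ and the description of the saturation as the preimage of the torsion submodule of $\Omega^1_X/\gcurl$; everything else is bookkeeping with the wedge product and the integrality of $\ocurl_{X,x}$.
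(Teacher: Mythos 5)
Your proof is correct and is exactly the intended argument: the paper states this as an immediate corollary of the preceding lemma and gives no separate proof, the point being precisely the local factorisation $\omega=g\tilde\omega$ with $g=\gcd(b_1,\ldots,b_n)$ and the identity $\omega\wedge d\omega=g^2\,\tilde\omega\wedge d\tilde\omega$, together with part (2) of the lemma identifying $\tilde\omega$ as a local generator of the saturation. Your additional care in verifying that clearing the gcd really yields $\sat(\gcurl)$ is a worthwhile explicit step but does not change the route.
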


\subsection{Pullback Foliations}

Let $f:X\rightarrow Y$ be a holomorphic map of complex manifolds, and let $\fcurl$ be a foliation on $Y$ locally generated by $\omega_1,\ldots, \omega_r$. Then the pullback of these forms $f^*\omega_1,\ldots, f^*\omega_r$ generate a (possibly unsaturated) foliation on $X$, which we denote by $f^{-1}(\fcurl)$.

\begin{definition}
Let $\fcurl$ be a foliation on $X$, and $V\subset X$ a reduced, irreducible complex subspace. The \emph{restriction} $\fcurl\restn{V}$ of $\fcurl$ to $V$ is the pullback of $\fcurl$ along the inclusion map $\iota:V\rightarrow X$.
\end{definition}

\begin{definition}
Let $X$ and $Y$ be manifolds, and $\fcurl$ a foliation on $X$. Let $\gcurl$ be the pullback of $\fcurl$ along the projection $X\times Y\rightarrow X$. Then $\gcurl$ is called the \emph{cylinder} over $\fcurl$.
\end{definition}

\begin{example}
Let $X=\aaf^n$, and $\fcurl$ be given by the form $\omega=f_1dx_1+\cdots +f_kdx_k, k<n$, where the $f_i$ are functions of $x_1,\ldots,x_k$ only. Then $\fcurl$ is the cylinder over the foliation on $\aaf^k$ given by $\omega$.
\end{example}

\section{Jet Spaces of Foliations}\label{Jetfol}
Throughout, we let $X$ be a complex manifold.

\subsection{Basic Definitions}\label{foljet}
\begin{definition}
Let $\fcurl$ be a foliation on $X$, locally generated by $1$-forms $\omega_1,\ldots,\omega_r$. The \emph{jet space} of $\fcurl$ is defined as
\[
J_m(\fcurl)=\{\tau\in J_m(X) \mid \tau^*(\omega_l)=0, 1\leq l\leq r\},
\]
where $\tau^*(\omega_l)$ means the pullback along $\tau$ interpreted as a morphism 
\[
\Spec (\ccx[t]/(t^{m+1}))\rightarrow X.
\]
\end{definition}

If on some open subset of $X$ we have local co-ordinates $x_1,\ldots,x_n$, then, following the construction of the jet space of an affine scheme (see \thref{jetpt}), we define a morphism $\tau:\Spec (\ccx[t]/(t^{m+1}))\rightarrow X$ by setting $x_i = \sum_{j=0}^m a_{ij}t^j$, and by pullback we have the differential $dx_i$ mapped to $\sum_{j=1}^m ja_{ij}t^{j-1}dt$. The jet space $J_m(\fcurl)$ is again defined as the vanishing locus of the polynomial constraints imposed on the $a_{ij}$ to ensure that $\tau^*(\omega)\in (t^m)dt$ for each of the $1$-forms $\omega$ defining the foliation, noting that as $t^{m+1}=0$, we have $t^m dt=0$. We thus see that $J_m(\fcurl)$ is a subscheme of $J_m(X)$.

As $J_m(\fcurl)\subset J_m(X)$, we can look at its fibres $J_m(\fcurl,x)=J_m(X,x)\cap J_m(\fcurl)$, again constructed in the same way as with jet spaces of schemes.

\begin{proposition}[Change of Variables for Foliations]\thlabel{folco}
Let $f:X\rightarrow Y$ be a map of complex manifolds, and $\fcurl$ a foliation on $Y$. Then $J_m(f^{-1}(\fcurl))=f_m^{-1}(J_m(\fcurl))$.
\end{proposition}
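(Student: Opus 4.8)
The plan is to verify the identity fibre by fibre in local charts, where it reduces to the functoriality of the pullback of differential forms under composition --- the formal chain rule. By construction, $J_m(\fcurl)$ is the subscheme of $J_m(Y)$ cut out, over each point of $Y$, by the coefficients in $t$ of the truncated pullbacks $\tau^*\omega_l$ of local generators $\omega_1,\ldots,\omega_r$ of $\fcurl$, and likewise $J_m(f^{-1}(\fcurl))\subset J_m(X)$. So it suffices to fix $x\in X$, put $y=f(x)$, and pick a chart $V\ni y$ with coordinates $y_1,\ldots,y_p$ on which $\fcurl=\langle\omega_1,\ldots,\omega_r\rangle$; by the definition of the pullback foliation, $f^{-1}(\fcurl)$ is generated on $U=f^{-1}(V)$ by $f^*\omega_1,\ldots,f^*\omega_r$, so the two sides are computed from matching generators and no choice-dependence enters.

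Next I would unwind the two sides. A jet $\sigma\in J_m(U)$ is a morphism $\sigma\colon\Spec(\ccx[t]/(t^{m+1}))\to U$ (or, for the scheme structure, an $A$-valued such morphism for an arbitrary $\ccx$-algebra $A$), and $f_m$ sends it to $f\circ\sigma$. Then $\sigma\in J_m(f^{-1}(\fcurl))$ iff $\sigma^*(f^*\omega_l)=0$ for all $l$, while $\sigma\in f_m^{-1}(J_m(\fcurl))$ iff $f\circ\sigma\in J_m(\fcurl)$ iff $(f\circ\sigma)^*(\omega_l)=0$ for all $l$ (equalities read modulo the truncation $t^{m+1}=0$, i.e.\ $t^m\,dt=0$). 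Hence the proposition collapses to the identity
\[
(f\circ\sigma)^*(\omega_l)=\sigma^*\bigl(f^*(\omega_l)\bigr),\qquad 1\le l\le r,
\]
which is functoriality of $\Omega^1$ along $\Spec(\ccx[t]/(t^{m+1}))\xrightarrow{\ \sigma\ }U\xrightarrow{\ f\ }V$. Concretely: writing $f$ as $y_j=f_j(x)$, $\sigma$ as $x_i=\sum_k a_{ik}t^k$, and $\omega_l=\sum_j b_{lj}\,dy_j$, one has $f^*\omega_l=\sum_j(b_{lj}\circ f)\sum_i(\partial f_j/\partial x_i)\,dx_i$, and substituting $\sigma$ into this yields, by the chain rule, exactly the coefficients in $t$ obtained by first substituting $x_i=\sum_k a_{ik}t^k$ into $y_j=f_j(x)$ and then into $\omega_l$ --- that is, the coefficients of $(f\circ\sigma)^*\omega_l$. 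Thus the polynomials in the jet coordinates $a_{ik}$ defining $J_m(f^{-1}(\fcurl))$ are literally the pullbacks under $f_m$ of those defining $J_m(\fcurl)$, so the two subschemes of $J_m(X)$ coincide over $U$; letting $x$ vary gives the result on all of $J_m(X)$.

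I do not anticipate a genuine obstacle: this is the foliation-theoretic analogue of the change-of-variables formula for schemes, \thref{jetco}, with the closed subscheme there replaced by a (possibly unsaturated) subsheaf of $1$-forms, and the proof runs in parallel. The only care needed is in the bookkeeping of the truncation $t^{m+1}=0$ (so that $t^m\,dt=0$) and in phrasing the final comparison either as an identity of polynomials in the jet coordinates or at the level of $A$-valued points, so that one obtains equality of schemes and not merely of $\ccx$-points.
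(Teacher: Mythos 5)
Your proof is correct, and it is the natural route: reduce to the functoriality of pullback of $1$-forms (the formal chain rule $(f\circ\sigma)^*\omega = \sigma^*(f^*\omega)$), phrased at the level of the polynomial equations in the jet coordinates so that equality of subschemes, not merely of $\ccx$-points, follows. The paper itself delegates the proof to \cite[Proposition 9.2]{Car21}, but given that it explicitly sets this proposition up as the foliation analogue of the scheme-theoretic change-of-variables formula \thref{jetco}, your argument is in all essentials the intended one.
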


\begin{proof}[Proof: \nopunct]
See \cite[Proposition 9.2]{Car21}
\end{proof}

\begin{corollary}\thlabel{folfib}
The fibres of the jet spaces of foliations, and the morphisms between them induced by (formal) isomorphisms of the ambient space, are algebraic. In particular, formally equivalent vector fields yield isomorphic jets.
\end{corollary}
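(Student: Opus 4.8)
The plan is to deduce everything from the corresponding facts for jets of complex spaces and formal schemes, \thref{fibres}, together with the change-of-variables formula \thref{folco}. The statement is local and concerns a single fibre, so throughout I would work in a chart about $x$ with local coordinates $x_1,\ldots,x_n$ chosen so that $x$ is the origin, and write $\fcurl$ as generated by $1$-forms $\omega_1,\ldots,\omega_r$, say $\omega_l=\sum_i b_{li}\,dx_i$ with the $b_{li}$ holomorphic (or formal power series).

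For the first assertion, recall from Subsection~\ref{foljet} that a point of the fibre $J_m(X,x)$ is a jet $\tau$ given by $x_i=\sum_{j=1}^{m}a_{ij}t^j$, and that $J_m(\fcurl,x)$ is the subscheme of $J_m(X,x)$ cut out by setting to zero every coefficient of $t^0,\ldots,t^{m-1}$ in $\tau^*(\omega_l)/dt$, for $1\le l\le r$. Substituting and expanding, the coefficient of $t^k$ is built from the $a_{ij}$ together with the Taylor coefficients of the $b_{li}$ at the origin of order $\le k\le m-1$; there are finitely many of these, and the result is a \emph{polynomial} in the $a_{ij}$. By \thref{fibres}, $J_m(X,x)$ is an affine scheme — indeed, for a manifold it is the affine space $\aaf^{mn}$ with coordinates $(a_{ij})$ — so $J_m(\fcurl,x)$ is the zero locus in it of finitely many polynomials in finitely many variables over $\ccx$, hence an affine scheme of finite type over $\ccx$. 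The same description shows the equations depend on $\fcurl$ only through finitely many jets of its generating forms, so the conclusion is unchanged whether those forms are convergent or only formal.

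For the second assertion, let $f\colon X\to Y$ be a (formal) isomorphism of complex manifolds, $\fcurl$ a foliation on $Y$, and $y=f(x)$; then $f^{-1}(\fcurl)$, generated by the $f^*\omega_l$, is again a foliation ($f$ being an isomorphism, it is already saturated). For a jet $\tau$ at $x$ we have $\tau^*(f^*\omega_l)=(f\circ\tau)^*(\omega_l)=f_m(\tau)^*(\omega_l)$, so $\tau\in J_m(f^{-1}(\fcurl),x)$ if and only if $f_m(\tau)\in J_m(\fcurl,y)$; this is the fibrewise form of \thref{folco}, and it gives $J_m(f^{-1}(\fcurl),x)=\bigl(f_m|_{J_m(X,x)}\bigr)^{-1}\!\bigl(J_m(\fcurl,y)\bigr)$. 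By \thref{fibres}, $f_m|_{J_m(X,x)}\colon J_m(X,x)\to J_m(Y,y)$ is an algebraic isomorphism; being bijective, it restricts to an isomorphism of the closed subscheme $J_m(f^{-1}(\fcurl),x)$ onto $J_m(\fcurl,y)$, and this restriction together with its inverse (the restriction of $f_m^{-1}$) is algebraic, being a restriction of an algebraic morphism to a closed subscheme. Finally, two formally equivalent vector fields (or $1$-forms) are by definition interchanged by a formal automorphism of $\Spf\ccx[[x_1,\ldots,x_n]]$ fixing the origin, so the foliations they define have isomorphic jet fibres at the origin by what has just been shown.

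I do not expect a serious obstacle, as each step is a citation or an immediate functoriality check; the one point deserving care is the observation in the second paragraph that the equations cutting out $J_m(\fcurl,x)$ inside the affine fibre $J_m(X,x)$ involve only finitely many Taylor coefficients of the defining forms and are therefore genuine polynomials over $\ccx$. This is exactly what gives the word ``algebraic'' its content, and it is also what lets the whole argument run identically in the analytic and the formal settings.
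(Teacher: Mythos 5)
Your proposal is correct and follows essentially the same route as the paper: the paper's proof simply notes that $J_m(\fcurl,x)\subset J_m(X,x)$ and invokes the arguments of \thref{fibres}, and your write-up is an explicit unwinding of exactly those arguments (the fibre is cut out by polynomials in the $a_{ij}$ involving only finitely many Taylor coefficients of the generators, and the induced maps are restrictions of the algebraic isomorphisms $f_m|_{J_m(X,x)}$ via \thref{folco}). No gaps.
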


\begin{proof}[Proof: \nopunct]
If $\fcurl$ is a foliation on $X$, then $J_m(\fcurl,x)\subset J_m(X,x)$. The result then follows using the same arguments as in \thref{fibres}.
\end{proof}

\begin{lemma}\thlabel{firstint}
Let $\fcurl$ be a foliation given by $1$-forms $\omega_1,\ldots, \omega_r$, each of which has an algebraic first integral: that is, for each $l$, $\omega_l=dg_l$ for some polynomial $g_l$. Then for any point $x\in\vv(g_1,\ldots, g_r)$, $J_m(\fcurl,x)=J_m(\vv(g_1,\ldots, g_r),x)$.
\end{lemma}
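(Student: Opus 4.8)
The plan is to compute both fibres in local coordinates and observe that they are defined by literally the same equations inside $J_m(X,x)$; the hypothesis $x\in\vv(g_1,\ldots,g_r)$ is exactly what makes the order-zero constraints on the two sides agree.

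First I would work in a local chart around $x$ with coordinates $x_1,\ldots,x_n$, writing $(\xi_1,\ldots,\xi_n)$ for the coordinates of $x$, and use the coordinates $(a_{ij})$ on $J_m(X)$ from the construction in \thref{jetpt}, so that a jet $\tau$ lying over $x$ is the substitution $x_i=\xi_i+\sum_{j=1}^m a_{ij}t^j$. For each $l$ expand $\tau^\sharp(g_l)=\sum_{k=0}^m P_k^{(l)}t^k$ in $\ccx[t]/(t^{m+1})$, where each $P_k^{(l)}$ is a polynomial in the $a_{ij}$. By definition $J_m(\vv(g_1,\ldots,g_r))$ is cut out in $J_m(X)$ by the vanishing of all the $P_k^{(l)}$ ($1\le l\le r$, $0\le k\le m$); taking the fibre over $x$ fixes $a_{i0}=\xi_i$, so $P_0^{(l)}$ becomes $g_l(\xi_1,\ldots,\xi_n)=g_l(x)$, which vanishes since $x\in\vv(g_1,\ldots,g_r)$. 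Hence $J_m(\vv(g_1,\ldots,g_r),x)$ is the closed subscheme of $J_m(X,x)$ defined by $\{P_k^{(l)} : 1\le l\le r,\ 1\le k\le m\}$ alone.

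Next I would treat the foliation side. Since $\tau^*$ commutes with the exterior derivative, $\tau^*(\omega_l)=\tau^*(dg_l)=d\bigl(\tau^\sharp(g_l)\bigr)=\sum_{k=1}^m k\,P_k^{(l)}\,t^{k-1}\,dt$ in $\Omega_{\ccx[t]/(t^{m+1})/\ccx}$, and there $t^m\,dt=0$ because $(m+1)\,t^m\,dt=d(t^{m+1})=0$ with $m+1$ invertible. Thus $\tau^*(\omega_l)=0$ if and only if $k\,P_k^{(l)}=0$ for $1\le k\le m$, equivalently (each $k$ being invertible in $\ccx$) $P_k^{(l)}=0$ for $1\le k\le m$. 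Therefore $J_m(\fcurl,x)=J_m(X,x)\cap J_m(\fcurl)$ is defined inside $J_m(X,x)$ by exactly the ideal $\{P_k^{(l)} : 1\le l\le r,\ 1\le k\le m\}$, which is the ideal found above for $J_m(\vv(g_1,\ldots,g_r),x)$. This gives the scheme-theoretic equality, hence also the equality of the sets of $m$-jets.

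I do not expect a genuine obstacle: the argument is essentially bookkeeping, and the two points one must not slip on are (i) that passing to the fibre over a point of $\vv(g_1,\ldots,g_r)$ is precisely what removes the order-zero constraint $P_0^{(l)}=g_l(x)$ which otherwise distinguishes $J_m(\vv(g_1,\ldots,g_r))$ from $J_m(\fcurl)$ globally, and (ii) that in $\Omega_{\ccx[t]/(t^{m+1})/\ccx}$ exactly one differential, $t^m\,dt$, vanishes, so that $\tau^*(dg_l)=0$ records the vanishing of $P_1^{(l)},\ldots,P_m^{(l)}$ and nothing more. If only the statement on $\ccx$-points is wanted, the same substitution argument runs verbatim with jets replaced by morphisms $\Spec\!\bigl(\ccx[t]/(t^{m+1})\bigr)\to X$ sending the closed point to $x$.
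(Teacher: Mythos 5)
Your proof is correct: the paper itself omits the proof of this lemma (deferring to the thesis), but your coefficient-matching computation is exactly the argument its setup intends, since $J_m(\fcurl)$ is constructed in Section 5.1 by the same substitution $x_i=\sum a_{ij}t^j$ and $J_m(\vv(g_1,\ldots,g_r))$ by equating coefficients of $\tau^\sharp(g_l)$. You correctly identify the two points that make the fibres agree — the order-zero constraint $P_0^{(l)}=g_l(x)$ disappears over a point of $\vv(g_1,\ldots,g_r)$, and $\tau^*(dg_l)=d(\tau^\sharp(g_l))$ with $t^m\,dt=0$ records precisely $P_1^{(l)}=\cdots=P_m^{(l)}=0$.
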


\begin{proposition}
Let $\fcurl$ be a foliation on $X$, and $V$ be a reduced, irreducible complex subspace. Set $\gcurl=\fcurl\restn{V}$. Then $J_m(\gcurl)=J_m(\fcurl)\cap J_m(V)$, for all $m$.
\end{proposition}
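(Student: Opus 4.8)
The plan is to reduce the statement to the change of variables formula \thref{folco}. Recall that $\gcurl=\fcurl\restn{V}$ is by definition the pullback $\iota^{-1}(\fcurl)$ along the inclusion $\iota:V\hookrightarrow X$. I would apply \thref{folco} with $f=\iota$ to obtain $J_m(\gcurl)=\iota_m^{-1}(J_m(\fcurl))$, where $\iota_m:J_m(V)\rightarrow J_m(X)$ is the induced morphism, and then identify the right-hand side with the scheme-theoretic intersection $J_m(\fcurl)\cap J_m(V)$ taken inside $J_m(X)$.

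For that identification, the key point is that $\iota$ is a closed immersion, so by functoriality of the jet construction $\iota_m$ realises $J_m(V)$ as a closed subscheme of $J_m(X)$ -- concretely, as the locus of those $m$-jets of $X$ that factor through $V$. Since the preimage under a closed immersion of a closed subscheme is the scheme-theoretic intersection, we get $\iota_m^{-1}(J_m(\fcurl))=J_m(V)\cap J_m(\fcurl)$, as required.

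Because \thref{folco} is stated only for maps of complex manifolds, while $V$ may be singular, I would either verify that its proof (which ultimately rests on the change of variables formula for schemes, valid for complex spaces by the remark following \thref{jetco}) goes through verbatim, or -- more safely -- argue directly in local coordinates. For the latter, both sides are local on $X$, so one fixes a chart with coordinates $x_1,\ldots,x_n$ on which $\fcurl$ is generated by $1$-forms $\omega_l$ and $V=\vv(g_1,\ldots,g_s)$, whence $\gcurl$ is generated by the restrictions $\iota^*\omega_l$. Writing a jet as $x_i=\sum_j a_{ij}t^j$, one checks that $J_m(V)$ is cut out of $J_m(X)$ by the coefficient equations of the $g_k$ and $J_m(\fcurl)$ by those of the $\tau^*\omega_l$. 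A jet $\sigma$ of $\gcurl$ is a jet of $V$ satisfying $\sigma^*(\iota^*\omega_l)=(\iota\circ\sigma)^*\omega_l=0$ for all $l$; since $\iota\circ\sigma$ is just $\sigma$ regarded as a jet of $X$, this says precisely that $\sigma\in J_m(\fcurl)$, and gluing over charts completes the argument.

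I expect the only real obstacle to be the scheme-theoretic bookkeeping forced by the possibly singular $V$: one must confirm that an $m$-jet of the complex space $V$ is the same datum as an $m$-jet of $X$ factoring (scheme-theoretically) through $V$, and that $J_m(\gcurl)$ -- built from the particular generating forms $\iota^*\omega_l$ -- depends only on the subsheaf $\gcurl$ and not on that choice of generators. The latter holds because $\sigma^*$ is additive and $\ccx[t]/(t^{m+1})$-linear, so it annihilates a generating set if and only if it annihilates the submodule they generate; neither point is deep, but together they are what upgrade the evident equality of underlying sets to the scheme-level equality asserted.
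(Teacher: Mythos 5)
Your proof is correct, and since the paper omits its own argument for this proposition there is nothing to compare it against directly, but yours is the natural route. The core identity $\sigma^*(\iota^*\omega_l)=(\iota\circ\sigma)^*\omega_l$ is exactly what makes the equations cutting out $J_m(\gcurl)$ inside $J_m(V)$ coincide, as polynomials in the jet coordinates $a_{ij}$, with the equations cutting out $J_m(\fcurl)$ inside $J_m(X)$ reduced modulo the ideal of $J_m(V)$; this is what upgrades the obvious set-theoretic containment to the asserted scheme-theoretic equality. Your caution about invoking \thref{folco} directly is appropriate, since $V$ is only assumed to be a reduced irreducible complex subspace, not a submanifold, and either of your remedies works: the direct coordinate check you sketch, or the observation that the remark following \thref{jetco} already extends the change-of-variables machinery to complex spaces, which gives \thref{folco} without the smoothness hypothesis on the source. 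The closing paragraph correctly flags, and dispatches, the two pieces of bookkeeping (jets of $V$ versus jets of $X$ factoring through $V$, and independence of the choice of generating $1$-forms) that one might otherwise overlook.
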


Proof: omitted.

\begin{example}\thlabel{reducedex}
We now let $X=\aaf^2$, and consider the foliation $\fcurl_1$ given by the $1$-form $\omega_1=ydx+xdy$, which has a single singular point at the origin. To calculate the $m$-jets above the origin (the singular point), we set
\[
x=a_1t+a_2t^2+\cdots+a_mt^m;
y=b_1t+b_2t^2+\cdots+b_mt^m,
\]
and so 
\[
dx=(a_1+2a_2t+\cdots+ma_mt^{m-1})dt;
dy=(b_1+2b_2t+\cdots+mb_mt^{m-1})dt.
\]
By equating co-efficients, we find the $a_i,b_i$ such that the image of $\omega$ under this morphism lies in $(t^m)dt$.

This foliation has a first integral $xy$. Then by \thref{firstint}, for each $m$, $J_m(\fcurl_1,0)=J_m(C,0)$. These jets were calculated in \thref{jetex}.

The foliation $\fcurl_2$ given by $\omega_2=ydx-x^2 dy$ has the same jets above the origin, so the jets at the singular locus are not sufficient to determine the foliation.
\end{example}

\subsection{Tangent Schemes}
\begin{definition}\thlabel{tangent}
Let $\fcurl$ be a foliation on a complex manifold $X$, and $C\subset X$ a complex subspace.

$C$ is \emph{weakly tangent} to $\fcurl$ if $J_1(C)\subset J_1(\fcurl)$.

$C$ is \emph{strongly tangent} to $\fcurl$ if $J_m(C)\subset J_m(\fcurl)$, for all $m\in\nn$.

$C$ is \emph{fully tangent} to $\fcurl$ if $J_m(C)=J_m(\fcurl)\restn C$, for all $m\in \nn$.

A complex subspace $C\subset X$ is a \emph{solution} for the foliation $\fcurl$ if locally its defining equations $g_1,\ldots,g_k$ solve all the $1$-forms $\omega_1,\ldots,\omega_r$ that determine $\fcurl$; that is, $\omega_l\restn C\in (dg_1,\ldots,dg_k)\restn C\subset \Omega^1_X\restn C$ for each $l$.
\end{definition}

\begin{remark}
We can also define all these notions of tangency if $C$ is instead a formal subscheme of $X$. (A formal scheme $C=\varinjlim Y_{\lambda}$ can be viewed as a subscheme of a complex manifold $X$ if we identify each $Y_{\lambda}$ with its associated complex space, and take the formal direct limit of complex subspaces of $X$). For a formal scheme to be a solution, we assume it is given by some collection of formal power series.
\end{remark}

\begin{remark}
The notions of tangency found in the literature, for example being weakly tangent or a solution, though elementary, fail to encapsulate the behaviour of the foliation at the singular locus. The notion of being strongly tangent rectifies this by allowing us to consider the tangency of subspaces with non-reduced structure.
\end{remark}

\begin{remark}
If a subspace $C\subset X$ is reduced, then it is strongly tangent if it is a solution (see \thref{redstt} below). Conversely, if $C$ is reduced and strongly tangent, then it is a solution if it is either contained in the smooth locus (\cite[Proposition 9.20]{Car21}), or irreducible and of the same dimension as the leaves (\cite[Corollary 9.22]{Car21}).
\end{remark}

\begin{example}
Let $X=\aaf^2$, $\omega=ydx+xdy$. The leaves of the resultant foliation are of the form $\{xy=a\}$; these are clearly solutions for the  foliation. All points of the plane, viewed as subspaces, are also strongly tangent -- we see that any jet of a point is a constant function, as so pulls back to zero any $1$-form. 

Tangent schemes need not be reduced: let $C$ be the origin counted with multiplicity $2$---this is defined by the equations $x^2, xy, y^2$. As $d(xy)=ydx+xdy$, we see that $C$ is indeed a solution of the foliation.

 However the origin counted with multiplicity $3$ is not: This is defined by $x^3, x^2y,xy^2,y^3$, and so the space of $2$-jets is all of $\aaf^4$, which is not contained in $J_2(\fcurl,0)$, as calculated in \thref{reducedex}.
\end{example}

\begin{remark}
It follows from the definitions that all notions of tangency (as described in \thref{tangent}) are locally defined. Furthermore, if $C\subset X$ is weakly tangent (respectively, strongly tangent, respectively, a solution), then any subspace of $C$ is also weakly tangent (respectively, strongly tangent, respectively, a solution).
\end{remark}

\begin{remark}
Note that the union of strongly tangent schemes is not in general strongly tangent: Let $X=\aaf^2$, $\fcurl$ be given by $\omega=ydx-xdy$, $C_1=\vv(xy)$, and $C_2=\vv(y-x)$. $C_1$ and $C_2$ are both strongly tangent, but $C_1 \cup C_2$ is not: It has $x=t+t^2+t^3, y=t+2t^2+t^3$ as a $3$-jet over the origin, which does not lie in $J_3(\fcurl,0)$.
\end{remark}

\begin{proposition}\thlabel{tanginv}
All notions of tangency are invariant under co-ordinate changes.
\end{proposition}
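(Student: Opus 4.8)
The plan is to deduce everything from the two change-of-variables formulas already available, \thref{jetco} and \thref{folco}. A coordinate change is a (possibly only locally defined) isomorphism, and since every notion of tangency in \thref{tangent} is local, it suffices to treat an isomorphism $\phi\colon X\to Y$ of complex manifolds. Writing $\psi=\phi^{-1}\colon Y\to X$, such a $\phi$ carries a complex subspace $C\subset X$ to $\phi(C)=\psi^{-1}(C)$ and a foliation $\fcurl$ on $X$ to the pullback foliation $\psi^{-1}(\fcurl)$ on $Y$ (well defined, independently of the chosen generating $1$-forms, because $\psi$ is an isomorphism); so the statement to prove is that $C$ is weakly tangent (resp. strongly tangent, resp. fully tangent, resp. a solution) for $\fcurl$ if and only if $\psi^{-1}(C)$ is so for $\psi^{-1}(\fcurl)$.

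First I would record the only structural input needed: since $\psi$ is an isomorphism with inverse $\phi$, the induced morphism $\psi_m\colon J_m(Y)\to J_m(X)$ is an isomorphism of schemes with inverse $\phi_m$, it is compatible with the truncation maps to the base, and it restricts to (algebraic, by \thref{folfib}) isomorphisms on fibres. In particular $\psi_m^{-1}=\phi_m$ both preserves and reflects inclusions and intersections of subschemes of $J_m(X)$, and it intertwines ``restriction to the fibre over $C$'' on $J_m(X)$ with ``restriction to the fibre over $\psi^{-1}(C)$'' on $J_m(Y)$.

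The weak, strong and full cases now follow formally. By \thref{jetco}, $J_m(\psi^{-1}(C))=\psi_m^{-1}(J_m(C))$, and by \thref{folco}, $J_m(\psi^{-1}(\fcurl))=\psi_m^{-1}(J_m(\fcurl))$. Hence $J_m(C)\subset J_m(\fcurl)$ if and only if $J_m(\psi^{-1}(C))\subset J_m(\psi^{-1}(\fcurl))$: taking $m=1$ gives the weakly tangent case, and taking all $m$ the strongly tangent case. For full tangency one applies $\psi_m^{-1}$ to the equality $J_m(C)=J_m(\fcurl)\restn{C}$ and uses the compatibility with restriction and intersection noted above to obtain $J_m(\psi^{-1}(C))=J_m(\psi^{-1}(\fcurl))\restn{\psi^{-1}(C)}$. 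The ``solution'' case I would handle directly rather than through jets: if $C$ is cut out locally by $g_1,\dots,g_k$ then $\psi^{-1}(C)$ is cut out by $\psi^*g_1,\dots,\psi^*g_k$, and since $\psi^*$ commutes with the exterior derivative and with restriction, it carries $\omega_l\restn{C}\in(dg_1,\dots,dg_k)\restn{C}$ to $(\psi^*\omega_l)\restn{\psi^{-1}(C)}\in(d(\psi^*g_1),\dots,d(\psi^*g_k))\restn{\psi^{-1}(C)}$ and back, so the solution condition transports both ways.

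The proof is essentially bookkeeping on top of \thref{jetco} and \thref{folco}, so I expect no substantial obstacle; the only points needing a little care are (i) that $\psi^{-1}(\fcurl)$ does not depend on the generating forms chosen for $\fcurl$, and (ii) if one wants the statement for formal changes of coordinates as well, noting that the change-of-variables formulas persist in the formal setting (the remark following \thref{jetform}) and that $\psi_m$ remains an algebraic isomorphism on fibres by \thref{folfib}.
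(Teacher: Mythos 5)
Your proof is correct and takes exactly the route the paper indicates: the published proof simply states that the result ``follows from the definitions and the change of variables formulas'' (\thref{jetco}, \thref{folco}) and defers the calculations to the thesis. Your use of the isomorphism $\psi_m$ to transport the jet-space inclusions and equalities, together with the direct ideal-and-pullback argument for the ``solution'' case and the remark that the formal case is covered by \thref{folfib} and the remark after \thref{jetform}, is a faithful expansion of that same argument.
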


\begin{proof}[Proof: \nopunct]
The proof follows from the definitions and the change of variables formulas. For the details of the calculations, see \cite[Proposition 9.14]{Car21}.
\end{proof}

\begin{corollary}\thlabel{pftang}
Suppose $\pi:X'\rightarrow X$ is a proper birational morphism (for example, a sequence of blow-ups in smooth centres), and suppose, for some $C\subset X$, that $\pi^{-1}(C)$ is strongly tangent (respectively, weakly tangent, respectively, fully tangent) to $\pi^{-1}(\fcurl)$. Then $C$ is strongly tangent (respectively, weakly tangent, respectively, fully tangent) to $\fcurl$.

Similarly, if $C\subset X'$ is strongly tangent (respectively, weakly tangent, respectively, fully tangent) to $\pi^{-1}(\fcurl)$, then $\pi(C)$ is strongly tangent (respectively, weakly tangent, respectively, fully tangent) to $\fcurl$.
\end{corollary}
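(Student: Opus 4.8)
\emph{Proof proposal.} The plan is to reduce the whole statement to three inputs: the change-of-variables formulas \thref{jetco} and \thref{folco}, which inside $J_m(X')$ identify $J_m(\pi^{-1}(C))$ with $\pi_m^{-1}(J_m(C))$ and $J_m(\pi^{-1}(\fcurl))$ with $\pi_m^{-1}(J_m(\fcurl))$, where $\pi_m\colon J_m(X')\to J_m(X)$ is the induced map on $m$-jets; the surjectivity of $\pi_m$ from \thref{jetsurj} (which applies here: by \thref{folfib} the jet schemes in question are locally algebraic, so the statement for varieties transfers to our manifolds, and the centres of the blow-ups are smooth); and \thref{pfjet}, which upgrades this to $\pi_m(J_m(W))=J_m(\pi(W))$ for every $W\subset X'$. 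From surjectivity of $\pi_m$ one extracts the two set-level facts used below: $\pi_m^{-1}(A)\subset\pi_m^{-1}(B)$ forces $A\subset B$, and $\pi_m(J_m(C))=J_m(\pi(C))$; and one has the tautology $\pi_m(\pi_m^{-1}(A)\cap B)=A\cap\pi_m(B)$ for arbitrary $A\subset J_m(X)$, $B\subset J_m(X')$. Since all notions of tangency in \thref{tangent} are phrased purely as inclusions or equalities among jet schemes, this is all that is needed.

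For the first assertion, suppose $\pi^{-1}(C)$ is strongly tangent to $\pi^{-1}(\fcurl)$, i.e.\ $J_m(\pi^{-1}(C))\subset J_m(\pi^{-1}(\fcurl))$ for all $m$. Change of variables rewrites this as $\pi_m^{-1}(J_m(C))\subset\pi_m^{-1}(J_m(\fcurl))$, and surjectivity of $\pi_m$ gives $J_m(C)\subset J_m(\fcurl)$ for all $m$, so $C$ is strongly tangent to $\fcurl$. The weakly tangent case is the same argument run only at $m=1$. For the fully tangent case one observes additionally that restricting a jet scheme to the jets lying over a subspace commutes with $\pi_m^{-1}$ — this is immediate from the compatibility of the truncation maps $J_m(\,\cdot\,)\to(\text{base})$ with $\pi_m$ and with $\pi$, and also from the formula $J_m(\fcurl\restn V)=J_m(\fcurl)\cap J_m(V)$ — so the equality hypothesis transports to $\pi_m^{-1}(J_m(C))=\pi_m^{-1}\bigl(J_m(\fcurl)\restn C\bigr)$, whence again $J_m(C)=J_m(\fcurl)\restn C$.

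For the second assertion, suppose $C\subset X'$ is strongly tangent to $\pi^{-1}(\fcurl)$, i.e.\ $J_m(C)\subset J_m(\pi^{-1}(\fcurl))=\pi_m^{-1}(J_m(\fcurl))$. Intersecting this inclusion with $J_m(C)$ and applying $\pi_m$, the tautology $\pi_m(\pi_m^{-1}(A)\cap B)=A\cap\pi_m(B)$ with $A=J_m(\fcurl)$ and $B=J_m(C)$ yields $\pi_m(J_m(C))=J_m(\fcurl)\cap\pi_m(J_m(C))$, i.e.\ $\pi_m(J_m(C))\subset J_m(\fcurl)$; and $\pi_m(J_m(C))=J_m(\pi(C))$ by \thref{pfjet}, so $J_m(\pi(C))\subset J_m(\fcurl)$ for all $m$, giving $\pi(C)$ strongly tangent to $\fcurl$. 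Weak tangency is once more the $m=1$ case. The bookkeeping here is the routine unwinding of \thref{tanginv}.

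The step I expect to be the genuine obstacle is the fully tangent case of the second assertion. Starting from the equality $J_m(C)=J_m(\pi^{-1}(\fcurl))\restn C$ and pushing forward by $\pi_m$ as above only produces one of the two inclusions needed for $J_m(\pi(C))=J_m(\fcurl)\restn{\pi(C)}$; the reverse inclusion $J_m(\fcurl)\restn{\pi(C)}\subset J_m(\pi(C))$ is the crux. Unwound, it asks that every $m$-jet of $X$ tangent to $\fcurl$ and based at a point of $\pi(C)$ lift, along $\pi$, to an $m$-jet of $X'$ based at a point of $C$ — for then that lift is tangent to $\pi^{-1}(\fcurl)$ by change of variables, hence a jet of $C$ by the hypothesis, hence its image is a jet of $\pi(C)$. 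This lifting cannot be obtained from surjectivity of $\pi_m$ alone, since a jet based over the centre of a blow-up need not lift to a jet based at a prescribed point of the exceptional fibre. I would attack it locally on the standard charts of a single blow-up, using the explicit description of $\pi_m$ on those charts together with the leading-order constraints that tangency to $\fcurl$ (especially at a singular point) forces on the jet; alternatively one may try to reduce to the first assertion by showing $\pi^{-1}(\pi(C))$ is fully tangent to $\pi^{-1}(\fcurl)$ whenever $C$ is. Once this point is settled, everything else is the formal calculation already carried out for \thref{tanginv}.
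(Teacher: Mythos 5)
Your argument coincides with the paper's own proof wherever the paper actually gives one. For the strongly tangent case the paper does exactly what you do: rewrite both sides via \thref{jetco} and \thref{folco}, cancel $\pi_m^{-1}$ using the surjectivity of $\pi_m$ from \thref{jetsurj} in the first assertion, and in the second assertion apply $\pi_m$, use $\pi_m(\pi_m^{-1}(J_m(\fcurl)))=J_m(\fcurl)$, and identify $\pi_m(J_m(C))$ with $J_m(\pi(C))$ via \thref{pfjet}. The weakly tangent case is, as you say, the $m=1$ instance, and your treatment of the fully tangent case of the first assertion (restriction to the jets over $C$ commutes with $\pi_m^{-1}$) is a faithful elaboration of what the paper leaves implicit.

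The one place you stop short --- the fully tangent case of the pushforward direction --- is a genuine difficulty, and you have diagnosed it correctly: pushing $J_m(C)=J_m(\pi^{-1}(\fcurl))\restn{C}$ forward yields only $J_m(\pi(C))\subset J_m(\fcurl)\restn{\pi(C)}$, and the reverse inclusion requires lifting a jet of $\fcurl$ based at $y\in\pi(C)$ to a jet based at a point of $C$ itself, not merely at some point of the fibre $\pi^{-1}(y)$; surjectivity of $\pi_m$ alone does not supply this. Be aware, though, that the paper does not resolve this point either: its proof writes out only the strongly tangent cases and then asserts that the weakly and fully tangent cases are ``proved in the same way''. So you have not missed an idea that is present in the source; your proposal is as complete as the paper's argument and more explicit about where the remaining work lies. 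Of the two repairs you sketch, the reduction to the first assertion via $\pi^{-1}(\pi(C))$ is the more promising, since in the paper's applications (e.g.\ the proof of \thref{bigthm3}) the scheme being pushed forward contains the exceptional divisors, so that $\pi^{-1}(\pi(C))$ and $C$ have the same support and the lifting problem over the centres disappears.
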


\begin{proof}[Proof: \nopunct]
Suppose first that $\pi^{-1}(C)$ is strongly tangent to $\pi^{-1}(\fcurl)$. Then for all $m\in\nn$, $J_m(\pi^{-1}(C))\subset J_m(\pi^{-1}(\fcurl))$. By \thref{jetco,folco} we then have $\pi_m^{-1}(J_m(C))\subset \pi_m^{-1}(J_m(\fcurl))$. By \thref{jetsurj}, $\pi_m$ is surjective, and so $J_m(C)\subset J_m(\fcurl)$, for all $m\in\nn$; that is, $C$ is strongly tangent to $\fcurl$.

In the second case, if $C$ is strongly tangent to $\pi^{-1}(\fcurl)$, then for all $m\in\nn$, $J_m(C)\subset J_m(\pi^{-1}(\fcurl))=\pi_m^{-1}(J_m(\fcurl))$, and hence $\pi_m(J_m(C))\subset \pi_m(\pi_m^{-1}(J_m(\fcurl)))$. By \thref{jetsurj}, $\pi_m$ is surjective, and so by \thref{pfjet} we have $J_m(\pi(C))\subset J_m(\fcurl)$, for all $m\in\nn$. So $\pi(C)$ is strongly tangent to $\fcurl$.

The cases for weakly tangent and fully tangent are proved in the same way.
\end{proof}

\begin{lemma}\thlabel{redstt}
Let $\fcurl$ be a foliation on $X$ and $C\subset X$ a reduced solution for $\fcurl$. Then $C$ is strongly tangent to $\fcurl$.
\end{lemma}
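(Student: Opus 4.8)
The plan is to reduce to a local affine situation and then verify the scheme-theoretic inclusion $J_m(C)\subseteq J_m(\fcurl)$ on the functor of points. Since all notions of tangency are local (as remarked after \thref{tangent}), I would fix an affine open $U\subseteq X$ over which $\fcurl$ is generated by finitely many $1$-forms $\omega_1,\ldots,\omega_r$ and the ideal sheaf $\icurl_C$ of $C$ is generated by its defining equations $g_1,\ldots,g_k$, fix $m\in\nn$, and show that the closed subscheme $J_m(C)$ of $J_m(U)$ is contained in the closed subscheme $J_m(\fcurl)$.

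By Yoneda it suffices to show that for every $\ccx$-algebra $A$, every $A$-valued point of $J_m(C)$ is an $A$-valued point of $J_m(\fcurl)$ (equivalently, that the universal $m$-jet of $C$ lies in $J_m(\fcurl)$). By the defining functorial property of jet schemes (\thref{jetpt}), an $A$-point of $J_m(C)$ is a morphism $\phi\colon\Spec(A[t]/(t^{m+1}))\to U$ factoring through $C$, so that $\phi^*(g_i)=0$ in $A[t]/(t^{m+1})$ for each $i$; and such a $\phi$ lies in $J_m(\fcurl)$ precisely when $\phi^*(\omega_l)=0$ in the module of relative Kähler differentials $\Omega^1_{(A[t]/(t^{m+1}))/A}$ for each $l$, since that is exactly the system of coefficient equations cutting out $J_m(\fcurl)$ in $J_m(U)$ described in Section~\ref{foljet}.

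The solution hypothesis gives $\omega_l\restn{C}\in(dg_1,\ldots,dg_k)\restn{C}$ inside $\Omega^1_X\restn{C}=\Omega^1_U/\icurl_C\Omega^1_U$, so, after shrinking $U$ if necessary, I can write $\omega_l=\sum_i h_{li}\,dg_i+\sum_i g_i\,\theta_{li}$ with $h_{li}\in\ocurl_U$ and $\theta_{li}\in\Omega^1_U$ (using that $\icurl_C$ is generated by the $g_i$ and $\Omega^1_U$ is locally free). Applying $\phi^*$ and using that pullback of differentials commutes with the exterior derivative, $\phi^*(dg_i)=d(\phi^*(g_i))=d(0)=0$ and $\phi^*(g_i)=0$, whence $\phi^*(\omega_l)=0$ for every $l$. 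Thus $\phi$ is an $A$-point of $J_m(\fcurl)$, so $J_m(C)\subseteq J_m(\fcurl)$ for all $m$, i.e.\ $C$ is strongly tangent to $\fcurl$.

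The computation itself is a one-liner; the steps needing care are the identification of the $A$-points of the \emph{scheme} $J_m(\fcurl)$ with the jets annihilating $\omega_1,\ldots,\omega_r$ — so that testing on the functor of points really establishes a scheme-theoretic inclusion rather than merely an inclusion of reduced loci — and the local lifting of the coefficients $h_{li}$ from $\ocurl_C$ to $\ocurl_U$, which is the only place $U$ must be shrunk. I do not expect a genuine obstacle beyond this bookkeeping; note that reducedness of $C$ plays no role in the argument, although it does no harm to keep it.
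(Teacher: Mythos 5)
Your proof is correct, and its engine is the same one the paper uses: a jet of $C$ annihilates each $g_i$ and hence each $dg_i$, so pulling back a local lifting $\omega_l=\sum_i h_{li}\,dg_i+\sum_i g_i\theta_{li}$ of the solution identity kills $\omega_l$. The packaging, however, differs in two ways worth recording. First, the paper argues pointwise with individual jets and routes the conclusion through the restricted (generally unsaturated) foliation generated by $\omega_l\restn{C}$ and then through its saturation --- this is exactly where the reducedness of $C$ is invoked --- whereas your direct pullback of the lifted identity never leaves the ambient manifold and, as you correctly observe, never uses reducedness; your argument therefore establishes the stronger statement that every solution, reduced or not, is strongly tangent (consistent with the paper's example of the doubled origin for $ydx+xdy$). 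Second, your functor-of-points formulation with $A$-valued jets yields the scheme-theoretic inclusion $J_m(C)\subseteq J_m(\fcurl)$, not merely an inclusion of closed points, which is the sharper reading of strong tangency given that $J_m(\fcurl)$ is defined as a subscheme of $J_m(X)$. The only cosmetic adjustment needed is that $X$ is a complex manifold rather than a variety, so ``affine open'' should be read as a coordinate chart, with the verification carried out on the fibrewise jet spaces $J_m(\cdot,x)$, which are affine schemes by \thref{fibres}; the coefficients $h_{li}$ and the forms $\theta_{li}$ are then holomorphic and the computation is unchanged.
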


\begin{proof}[Proof: \nopunct]
Let $g_1,\ldots,g_k$ be the defining equations of $C$ in a local co-ordinate system, and $\tau\in J_m(C)$. Then $g_r\circ\tau\in(t^{m+1})$, and $\tau^*dg_r=d(g_r\circ\tau)=0\in\Omega_{\Spec(\ccx[t]/(t^{m+1}))}$. As $C$ is a solution for $\fcurl$, all the $1$-forms $\omega_l$ determining $\fcurl$ satisfy $\omega_l\restn{C}\in (dg_1,\ldots,dg_k)\restn{C}$, hence $\omega_l\restn{C}$ pulls back to $0$ too. It follows that $C$ is strongly tangent to the (unsaturated) foliation given by $\omega_l\restn{C}$.

As $C$ is tangent to a foliation on $C$, the saturation of this foliation is given by the zero form; as $C$ is reduced, it is strongly tangent to this saturation, which is equal to the saturation of the pullback foliation $\fcurl\restn{C}$, whose jets are contained in the jets of $\fcurl$. So $C$ is strongly tangent to $\fcurl$.
\end{proof}

\begin{proposition}\thlabel{unsat}
Let $\fcurl$ be a foliation given by $1$-forms $\omega_1,\ldots,\omega_r$, and $\fcurl'$ the unsaturated foliation given by $f\omega_1,\ldots, f\omega_r$, for some holomorphic function $f$. If $C=\vv(g_1,\ldots,g_k)$ is tangent to $\fcurl$, for any of the notions of tangency given in \thref{tangent}, then $C'=\vv(fg_1,\ldots,fg_k)$ is tangent to $\fcurl'$.
\end{proposition}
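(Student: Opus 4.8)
The plan is to verify the conclusion separately for each of the four notions of tangency of \thref{tangent}. The three jet-theoretic notions (weak, strong and full tangency) reduce, in local coordinates, to one order-of-vanishing computation, while the notion of a solution is treated by a direct manipulation of $1$-forms.

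Fix local coordinates $x_1,\dots,x_n$ on $X$ and represent an $m$-jet $\tau$ by power series $x_i=x_i(t)\in\ccx[[t]]$ truncated in degree $m$. For a $1$-form $\omega=\sum_i b_i\,dx_i$ one has $\tau^*\omega=\bigl(\sum_i b_i(x(t))\,x_i'(t)\bigr)\,dt$ in $\Omega_{\ccx[t]/(t^{m+1})}\cong(\ccx[t]/(t^m))\,dt$, and hence $\tau^*(f\omega)=f(x(t))\cdot\tau^*\omega$. Thus $\tau\in J_m(\fcurl')$ precisely when, for each defining form $\omega_j$ of $\fcurl$, the product of $f(x(t))$ with the coefficient of $\tau^*\omega_j$ vanishes to order $\ge m$ in $t$; in particular $J_m(\fcurl)\subseteq J_m(\fcurl')$ for every $m$, which is what makes the ``$f$ invertible along $\tau$'' subcase below work.

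Suppose first that $C$ is strongly tangent to $\fcurl$, and let $\tau\in J_m(C')$, so $f(x(t))\,g_l(x(t))\equiv 0\pmod{t^{m+1}}$ for all $l$; put $a=\ord_t f(x(t))$. If $a\ge m$ then $f(x(t))\equiv 0\pmod{t^m}$ and $\tau\in J_m(\fcurl')$ at once. If $a\le m-1$, write $f(x(t))=t^a u(t)$ with $u$ a unit; then $g_l(x(t))\equiv 0\pmod{t^{(m-a)+1}}$ for all $l$, so the $(m-a)$-truncation $\tau_{\le m-a}$ is an $(m-a)$-jet of $C$, hence of $\fcurl$ by hypothesis. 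Since $\tau$ and $\tau_{\le m-a}$ agree modulo $t^{m-a+1}$, the coefficient of $\tau^*\omega_j$ agrees modulo $t^{m-a}$ with that of $\tau_{\le m-a}^*\omega_j$, which vanishes to order $\ge m-a$; hence the coefficient of $\tau^*\omega_j$ vanishes to order $\ge m-a$ and that of $\tau^*(f\omega_j)$ to order $\ge a+(m-a)=m$, giving $\tau\in J_m(\fcurl')$. As $m$ was arbitrary, $C'$ is strongly tangent to $\fcurl'$. Weak tangency is the specialisation of this to $m=1$ (with $a\ge 1$ the first subcase and $a=0$ the second, where $J_1(C)\subseteq J_1(\fcurl)$ is exactly weak tangency); and full tangency follows by combining strong tangency of $C'$, which gives $J_m(C')\subseteq J_m(\fcurl')\restn{C'}$, with the same order estimate run in reverse to obtain the opposite inclusion.

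The remaining case, that of a solution, is the one I expect to carry the real difficulty. If $C$ is a solution of $\fcurl$, then locally $\omega_l-\sum_i h_{li}\,dg_i\in I_C\,\Omega^1_X$ for suitable functions $h_{li}$, where $I_C=(g_1,\dots,g_k)$ and the ideal of $C'$ is $I_{C'}=fI_C=(fg_1,\dots,fg_k)$. Using $d(fg_i)=f\,dg_i+g_i\,df$ one gets
\[ f\omega_l-\sum_i h_{li}\,d(fg_i)=f\Bigl(\omega_l-\sum_i h_{li}\,dg_i\Bigr)-\Bigl(\sum_i h_{li}g_i\Bigr)df , \]
whose first summand on the right lies in $fI_C\Omega^1_X=I_{C'}\Omega^1_X$. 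Thus $C'$ is a solution of $\fcurl'$ provided the error term $\bigl(\sum_i h_{li}g_i\bigr)df$ lies, modulo $I_{C'}\Omega^1_X$, in the submodule generated by $d(fg_1),\dots,d(fg_k)$. The obvious ways of absorbing it --- using $\sum_i h_{li}g_i\in I_C$ together with $g_j\,df=d(fg_j)-f\,dg_j$ to trade it against further $d(fg_j)$'s --- run in a circle, so this step appears to need an extra idea: either a reducedness hypothesis on $C$ (so that \thref{redstt} and the strong-tangency case just proved can be brought to bear) or a sharper choice of the functions $h_{li}$. Granting that, the proposition is complete, since the order comparisons in the other cases are entirely mechanical.
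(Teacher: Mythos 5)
Your order-counting argument for weak and strong tangency is correct, and it is the right mechanism: splitting on $a=\ord_t f(x(t))$ and feeding the truncation $\tau_{\le m-a}$ into the hypothesis on $C$ is exactly what makes multiplication by $f$ harmless. (The paper itself only cites the thesis for this proposition, so there is nothing to compare line by line, but this is the expected computation.) The first genuine gap is full tangency, which does \emph{not} follow by ``running the estimate in reverse''. The reverse inclusion asks: given $f(x(t))\,c_j(t)\equiv 0\pmod{t^{m}}$ (forms die modulo $t^m\,dt$) and $\tau(0)\in C'$, deduce $f(x(t))\,g_l(x(t))\equiv 0\pmod{t^{m+1}}$ (functions must die modulo $t^{m+1}$) --- note the exponents differ by one. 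When $a\le m$ and $\tau(0)\in\supp(C)$ the argument does close: you get $\ord_t c_j\ge m-a$, truncate, invoke \emph{full} (not merely strong) tangency of $C$ to get $\ord_t g_l\ge m-a+1$, hence $\ord_t(fg_l)\ge m+1$. But $\supp(C')$ also contains $\vv(f)$, and at a point of $\vv(f)\setminus\supp(C)$ the claim fails outright: for $\fcurl=(dy)$, $C=\vv(y)$, $f=x$, the $1$-jet $x=t$, $y=1$ kills $x\,dy$ but $xy=t\neq 0$ in $\ccx[t]/(t^2)$. So for this notion you need the hypothesis $\vv(f)\subseteq\supp(C)$ (which holds in every application in the paper, where $f$ cuts out exceptional divisors already contained in $C$), and you should say so and track the off-by-one explicitly.

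The second gap --- the solution case --- is one you flag honestly, and you should know that the obstruction you isolated is not removable by a cleverer choice of the $h_{li}$. Take $\omega=d(xy)$, $C=\vv(xy)$ (a solution), $f=x$. Then $f\omega=x^2\,dy+xy\,dx$ while $d(x^2y)=x^2\,dy+2xy\,dx$; writing $x^2\,dy+xy\,dx=h\,d(x^2y)+x^2y\,\alpha$ and comparing the $dy$ and $dx$ coefficients forces $h(0)=1$ and $h(0)=\tfrac12$ simultaneously. So $\vv(x^2y)$ is not a solution of $(x\,d(xy))$ under the module-theoretic reading of Definition \thref{tangent}, even though $\vv(f)\subseteq\supp(C)$ here. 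Your plan of deducing this case from \thref{redstt} also cannot work, since that lemma goes from ``solution'' to ``strongly tangent'' and not back. The honest conclusion is that the jet-theoretic notions transfer by your order estimate (with the caveat above for full tangency), while the ``solution'' clause cannot be established by these manipulations and appears to require either a different reading of the definition or an additional hypothesis; since the paper only ever invokes \thref{unsat} for strong and full tangency, you should prove those cases carefully and record the difficulty with the fourth rather than leave it as a hopeful remark.
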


\begin{proof}[Proof: \nopunct]
For all of the worked calculations, see \cite[Proposition 9.23]{Car21}.
\end{proof}

\begin{corollary}\thlabel{tangform}
Let $X=\varinjlim X_{\lambda}$ be a direct limit of formal schemes. If each $X_{\lambda}$ is strongly tangent to a foliation $\fcurl$, then $X$ is also strongly tangent to $\fcurl$.
\end{corollary}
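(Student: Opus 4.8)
The proof will be immediate once the jet spaces are described fibrewise, so the plan is to pull everything back to the corollary following \thref{jetform}. First I would pin down the setting: for the direct limit $X=\varinjlim X_{\lambda}$ of formal schemes to exist as a formal scheme (via \cite[Proposition 5.15]{Car21}) all the $X_{\lambda}$ share a common underlying topological space, which is also that of $X$; call it $|X|$, so that in particular every $x\in|X|$ lies in every $X_{\lambda}$. Next I would recall that, for a subscheme or formal subscheme $C\subset X$, being strongly tangent means $J_m(C)\subset J_m(\fcurl)$ for every $m\in\nn$, and that since $J_m(\fcurl)\subset J_m(X)$ with fibres $J_m(\fcurl,x)=J_m(X,x)\cap J_m(\fcurl)$, this containment may be — and for formal subschemes should be — checked fibrewise: $C$ is strongly tangent to $\fcurl$ exactly when $J_m(C,x)\subset J_m(\fcurl,x)$ for all $m\in\nn$ and all $x\in|X|$.

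The main step is then a one-line computation. Fix $m\in\nn$ and $x\in|X|$. By hypothesis each $X_{\lambda}$ is strongly tangent to $\fcurl$, hence $J_m(X_{\lambda},x)\subset J_m(\fcurl,x)$ for every $\lambda$. Applying the corollary to \thref{jetform} to the direct limit $X=\varinjlim X_{\lambda}$ gives
\[
J_m(X,x)=\bigcup_{\lambda\in\Lambda}J_m(X_{\lambda},x)\subset\bigcup_{\lambda\in\Lambda}J_m(\fcurl,x)=J_m(\fcurl,x).
\]
Since $m$ and $x$ were arbitrary, $X$ is strongly tangent to $\fcurl$.

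There is no serious obstacle here; the only point needing a moment of care is that the jet fibre of a direct limit of formal schemes is computed by the corollary to \thref{jetform} independently of the chosen limit presentation, and that this is compatible with each $X_{\lambda}$ being itself presented as a limit of ordinary schemes. Once that bookkeeping is in place, the argument reduces to the trivial fact that a union of jet fibres each contained in $J_m(\fcurl,x)$ is again contained in $J_m(\fcurl,x)$; the work is entirely in knowing that $J_m(X,x)$ is \emph{equal to} — not merely contains — this union, which is what distinguishes a direct limit from an arbitrary union and is exactly why the analogous statement fails for non-directed unions (recall the remark that $\vv(xy)\cup\vv(y-x)$ is not strongly tangent to the foliation given by $y\,dx-x\,dy$, even though each piece is).
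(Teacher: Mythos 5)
Your proof is correct and matches the paper's own argument: both apply the corollary to \thref{jetform} to write $J_m(X,x)=\bigcup_{\lambda}J_m(X_{\lambda},x)$ and then use strong tangency of each $X_{\lambda}$ to conclude. The extra remarks on fibrewise checking and the contrast with non-directed unions are helpful context but not a different route.
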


\begin{proof}[Proof: \nopunct]
$J_m(X,x)=\bigcup_{\lambda\in \Lambda} J_m(X_{\lambda},x)\subset J_m(\fcurl,x)$, by strong tangency of the $X_{\lambda}$. Hence $X$ is strongly tangent.
\end{proof}

\subsection{Separatrices and Dicriticality}\label{sepdicrit}
\begin{definition}
Let $\fcurl$ be a singular foliation on a manifold $X$. A \emph{separatrix} is a reduced, irreducible complex subspace of $X$, of dimension equal to that of the leaves of $\fcurl$, which intersects the singular locus and is strongly tangent to the foliation.
\end{definition}

\begin{remark}
A separatrix is in fact the closure of a leaf of the foliation that extends holomorphically through the singular locus, and so is a solution for the foliation.

We also allow for \emph{formal separatrices}, which are formal subschemes of $X$ strongly tangent to the foliation (and which are reduced, irreducible, of leaf dimension, and which intersect the singular locus).
\end{remark}

\begin{example}
Consider the foliations on $X=\aaf^2$ given by the following $1$-forms:

(1) $\omega =ydx+xdy$. This has separatrices $\{x=0\}$ and $\{y=0\}$.

(2) $\omega =ydx-xdy$. Here every line through the origin is a separatrix.

(3) $\omega =ydx-(x+y)dy$. The only separatrix is $\{y=0\}$. The other leaves of the foliation cannot be extended holomorphically through the origin, so do not satisfy the definition.

(4) $\omega =(y-x)dx-x^2 dy$. This has one convergent (that is, not formal) separatrix, $\{x=0\}$. It also has a formal separatrix, given by the formal power series $y=\sum_{k=0}^{\infty} k! x^{k+1}$.
\end{example}

We now give some results on the existence of separatrices. We henceforth assume that $X$ is quasi-compact, and the foliation is codimension-$1$, and generated by an algebraic $1$-form. (So in particular the singular locus has finitely many irreducible components.)

\begin{theorem}\cite{CS82}
Suppose $\dimn X=2$. Then any foliation on $X$ has a separatrix.
\end{theorem}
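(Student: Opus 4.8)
The plan is to follow the original strategy of Camacho and Sad: pass to a germ, desingularise, and then produce an invariant curve germ transverse to the exceptional divisor — directly in the dicritical case, and via the index theorem in the non-dicritical case. We may assume $\fcurl$ is singular (otherwise it has no separatrix at all), and we fix a point $x_0\in\Singr\fcurl$ (quasi-compactness guarantees $\Singr\fcurl$ is a finite set of points, though one point is all we need). Since any germ of an irreducible curve through $x_0$ tangent to $\fcurl$ is (a representative of) a separatrix — its leaf closure, in the convergent case — it suffices to produce such a germ. By Seidenberg's reduction of singularities in dimension $2$ (\cite{Sei68}) there is a proper birational morphism $\pi\colon X'\to X$, a finite composition of point blow-ups, such that $\fcurl':=\sat(\pi^{-1}(\fcurl))$ has only simple singularities and $D:=\pi^{-1}(x_0)=\bigcup_i D_i$ is a normal crossings union of copies of $\mathbb{P}^1$ with negative-definite intersection matrix $(D_i\cdot D_j)$ (it contracts to the point $x_0$); in particular each $D_i\cdot D_i$ is a negative integer.

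\emph{Dicritical case.} Suppose some $D_i$ is not $\fcurl'$-invariant. Then the set of $p\in D_i$ that lie on a second component of $D$, lie in $\Singr\fcurl'$, or at which $\fcurl'$ is tangent to $D_i$, is a proper analytic subset of $D_i\cong\mathbb{P}^1$; pick $p$ outside it. The leaf $L'$ of $\fcurl'$ through $p$ meets $D$ transversally; with its reduced structure it is a solution for $\fcurl'$, hence strongly tangent by \thref{redstt}, so $\pi(L')$ is strongly tangent to $\fcurl$ by \thref{pftang}. As $\pi$ is a biholomorphism off $D$ and $L'\setminus D\neq\emptyset$, $\pi(L')$ is a curve meeting $x_0=\pi(D)$, and its irreducible component through $x_0$ is the required separatrix germ.

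\emph{Non-dicritical case.} Now every $D_i$ is $\fcurl'$-invariant, and the claim is that $\fcurl'$ has a singular point $p$ on $D$ that is \emph{not} a corner. Granting this, the simple singularity at $p$ has a separatrix germ $S'$ distinct from the local branch of $D_i$ — hence not contained in $D$ — so $\pi(S')$ is an (analytic, or a priori merely formal) curve germ through $x_0$, tangent to $\fcurl$ by \thref{pftang}: the desired germ. To prove the claim, suppose every singular point of $\fcurl'$ on $D$ is a corner $D_i\cap D_j$. Since $\fcurl'$ is simple, at each corner $p$ the local branches of $D$ are its two separatrices, the Camacho–Sad indices obey the reciprocity $\mathrm{CS}(\fcurl',D_i,p)\cdot\mathrm{CS}(\fcurl',D_j,p)=1$, and the Camacho–Sad index theorem gives for each $i$
\[
\sum_{D_j\cap D_i\neq\emptyset}\mathrm{CS}\bigl(\fcurl',D_i,D_i\cap D_j\bigr)=D_i\cdot D_i,
\]
the sum running over the neighbours of $D_i$ in the dual tree $\Gamma$ of the resolution. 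So one would have an assignment of real numbers to the oriented edges of $\Gamma$, reciprocal along each edge, whose incident sums at each vertex are the prescribed negative integers $D_i\cdot D_i$ with $(D_i\cdot D_j)$ negative definite — and the Camacho–Sad combinatorial lemma asserts that no such assignment exists. That lemma is proved by induction on the number of vertices of $\Gamma$: a terminal vertex $D_i$ forces $\mathrm{CS}(\fcurl',D_i,\cdot)=D_i\cdot D_i\leq -1$, hence the reciprocal index along its unique neighbour lies in $[-1,0)$; deleting $D_i$ and tracking the effect on the intersection form yields a strictly smaller configuration of the same type, and the base cases (one vertex; one edge) are directly incompatible with integrality, reciprocity and strict negative-definiteness.

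\emph{The hard part.} The crux — indeed essentially the whole content — is this last step: the Camacho–Sad index theorem together with the non-existence of the corner-only index assignment on the resolution tree; the reduction of singularities, the dicritical case, and the descent of tangency via \thref{pftang} are formal by comparison. Two points demand care within the induction and account for most of the technical labour in \cite{CS82}: saddle-node singularities along $D$, where the reciprocity degenerates (an index vanishes) and the pruning step must be adapted; and upgrading the transverse separatrix found at the non-corner point from formal to convergent — necessary since the theorem asserts an honest complex-analytic separatrix — which rests on convergence of the strong separatrix of a saddle-node and on arranging that the non-corner point be chosen with $D_i$ not its (possibly divergent) weak separatrix.
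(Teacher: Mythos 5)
The paper offers no proof of this statement: it is cited directly from Camacho and Sad \cite{CS82}, with the sentence afterwards dealing only with the passage from germs to a quasi-compact manifold. So there is no ``paper's own proof'' to compare against; what can be assessed is whether your sketch is a faithful account of the \cite{CS82} argument, and it is. The architecture --- Seidenberg reduction, a dichotomy between a non-invariant exceptional component (pick a generic transverse leaf and push it down) and the all-invariant case (Camacho--Sad index theorem on the dual tree plus a combinatorial non-existence lemma to force a non-corner reduced singularity, then descend its transverse separatrix) --- is exactly the classical strategy, and the use of \thref{redstt} and \thref{pftang} to translate ``invariant curve'' into the paper's ``strongly tangent'' language is appropriate.

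Two corrections worth recording. First, the Camacho--Sad indices are complex numbers (eigenvalue ratios, constrained only to lie outside $\mathbb{Q}_{>0}$ at reduced non-degenerate points and to vanish on one side of a saddle-node), so the combinatorial lemma is not literally about ``an assignment of real numbers'' with reciprocal products; the actual argument manipulates these constraints together with negative definiteness, and the pruning step is more delicate than the one-sentence sketch suggests. Second --- and you say this yourself --- the paragraph headed ``The hard part'' is the entire content of the theorem: the full combinatorial lemma with its saddle-node cases, and the convergence of the separatrix produced at the non-corner point (choosing $D_i$ to be the weak, possibly divergent, saddle-node separatrix so that $S'$ is the convergent strong one). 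As written this is a correct roadmap rather than a proof; for a theorem the paper itself merely cites that is a defensible scope, but it should be understood that nothing past the index-theorem invocation is actually carried out here.
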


The same is not true in higher dimensions: Let $X=\aaf^3$, and, for $m\geq 2$, let $\fcurl_m$ be the foliation given by
\[
\omega_m=(x^m y-z^{m+1})dx+(y^m z-x^{m+1})dy+(z^m x-y^{m+1})dz.
\]
Then none of the foliations $\fcurl_m$ have any separatrices at the origin. (See \cite{Jou79}).

To proceed, we introduce the following notion:
\begin{definition}
A codimension-$1$ foliation $\fcurl$ on $X$ is said to be \emph{dicritical} if there exists a sequence of blow-ups in smooth centres, where the centre of each blow-up is contained in the singular locus, after which a component of the exceptional divisor is transversal to the transformed foliation (that is, is not a separatrix).

Otherwise $\fcurl$ is called \emph{non-dicritical}.
\end{definition}

\begin{example}
Let $\fcurl$ be the foliation on $\aaf^2$ given by $\omega =ydx-xdy$. Then $\fcurl$ is dicritical. Indeed, blowing up at the origin we get the form $-x^2 dv$, which describes a foliation whose saturation is transversal to the exceptional divisor.
\end{example}

\begin{remark}
By setting $n=2$ in \cite[Theorem 4]{CM92}, we see that a foliation on a surface is dicritical if and only if it has infinitely many separatrices.
\end{remark}

\begin{example}
The foliations $\fcurl_m$ on $\aaf^3$ defined above are dicritical. Indeed, blowing up at the origin we get the form
\[
x^{m+2}((u^m v-1)du+(v^m-u^{m+1})dv).
\]
The exceptional divisor is seen to be transversal to the saturated foliation.
\end{example}

\begin{theorem}\cite[Theorem 5]{CM92}
Any non-dicritical foliation has a separatrix.
\end{theorem}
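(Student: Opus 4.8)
The plan is to reduce the statement to the two-dimensional case of Camacho--Sad \cite{CS82} by means of reduction of singularities. Since all the notions of tangency --- and hence the property of being a separatrix --- are local (as observed after \thref{tanginv}), it suffices to fix a point $x_0$ in an irreducible component of $\Singr\fcurl$ and to produce a single germ of $\fcurl$-invariant hypersurface through $x_0$; shrinking $X$, I may assume $X=(\ccx^n,0)$ and $x_0=0$.

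First I would take a reduction of singularities: a proper birational $\pi\colon\widetilde X\to X$, a composite of blow-ups in smooth centres contained in the successive singular loci, such that $\widetilde\fcurl:=\pi^{-1}(\fcurl)$ has only simple singularities and the exceptional divisor $D=\pi^{-1}(0)$ is normal crossings --- this exists for $n=2$ by \cite{Sei68} and for $n=3$ by \cite{Can04}, and in higher dimension one either assumes it or passes to formal separatrices. The key point is that \emph{non-dicriticality forces every component $E_i$ of $D$ to be $\widetilde\fcurl$-invariant}: by the definition of non-dicriticality no $E_i$ is transverse to $\widetilde\fcurl$, so each $E_i$, being reduced, irreducible, of codimension $1$ and meeting $\Singr\widetilde\fcurl$, is itself a separatrix of $\widetilde\fcurl$.

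The heart of the argument is then to produce a separatrix $\widetilde S$ of $\widetilde\fcurl$ that is \emph{not} contained in $D$, so that $\pi(\widetilde S)$ is a genuine hypersurface through $0$ rather than a contracted cycle. In dimension $2$ this is where the Camacho--Sad index theorem enters: on each invariant component $E_i\cong\mathbb{P}^1$ of $D$ one has $\sum_{p\in E_i}\mathrm{CS}(\widetilde\fcurl,E_i,p)=E_i\cdot E_i<0$, since the intersection form of the exceptional divisor is negative definite. Feeding in the local classification of simple singularities --- at a crossing of two components of $D$ both coordinate axes are separatrices, while along a single $E_i$ a simple point is either linearisable with a separatrix transverse to $E_i$ or a saddle-node whose weak separatrix may coincide with $E_i$ --- a combinatorial descent on the weighted dual graph of $D$ (the analysis of ``dead branches'') shows that the negativity of the self-intersections cannot be absorbed unless some singular point of $\widetilde\fcurl$ on $D$ carries a separatrix $\widetilde S\not\subset D$. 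Once $\widetilde S$ is found, $S:=\pi(\widetilde S)$ is an irreducible germ of analytic curve through $0$ meeting $\Singr\fcurl$, and it is strongly tangent to $\fcurl$ by \thref{pftang} applied to the strongly tangent $\widetilde S$; hence $S$ is a separatrix. For $n\geq 3$ I would reduce to this case by slicing $\fcurl$ with a generic $2$-plane transverse to a codimension-$2$ component of $\Singr\fcurl$ --- the restricted foliation is again non-dicritical and singular, so has a separatrix curve --- and then propagating this curve along the singular locus and across the deeper strata by a monodromy argument, using non-dicriticality to control the holonomy obstruction to closing up an invariant hypersurface.

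The hard part is the third paragraph. Already for $n=2$ the index bookkeeping is delicate: one must rule out a configuration in which a whole chain of saddle-nodes is oriented ``outward along the resolution tree'', so that the only separatrices produced are the components of $D$ themselves, and this requires a careful induction on the tree. For $n\geq 3$ there are two further difficulties --- reduction of singularities is only known through dimension $3$, and the combinatorics of the normal crossings divisor $D$ together with the possible simple singularities along it is far more involved --- and surmounting them is precisely the content of \cite{CM92}.
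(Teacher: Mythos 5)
Note first that the paper does not prove this statement: it is cited verbatim from Cano and Mattei \cite{CM92}, and the only added content is the immediately following remark that \cite{CM92} is stated for germs while the paper wants a separatrix as a global subspace of $X$, so a small extension argument is needed. Your opening comment that ``tangency is local'' points the wrong way round: the paper's remark explains that a germ of separatrix extends to a global one, rather than that the global problem automatically localises. That aside, your outline of the $n=2$ case --- Seidenberg reduction, the Camacho--Sad index theorem, negative definiteness of the exceptional intersection matrix, and the combinatorics of dead branches --- is the correct skeleton of \cite{CS82}, and your observation that non-dicriticality forces every exceptional component to be invariant is accurate and is used in \cite{CM92}.

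The difficulty is with $n\geq3$. You propose invoking \cite{Can04} for threefold reduction of singularities, but \cite{Can04} postdates \cite{CM92} by twelve years, so it cannot underlie the cited proof; and in higher dimension the reduction you would need is precisely what the present paper's main theorem is conditional upon, so assuming it here is circular in context. The Cano--Mattei argument does not reduce the foliation's singularities in dimension $\geq 3$: it slices with generic $2$-planes from the outset, reduces the restricted plane foliations in a family (an equisingularity statement that non-dicriticality guarantees), and then reassembles the resulting family of plane separatrices into a hypersurface. That reassembly step --- which you compress into ``a monodromy argument'' --- is, as you concede at the end, the entire content of \cite{CM92}. As written the proposal is a reasonable inventory of ingredients rather than a proof, and it leans on an anachronistic input that should be removed.
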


In \cite{CM92} the theorem is given in terms of germs. However, if the germ of the foliation has a germ of a separatrix, then expanding from a formal to an open neighbourhood of the origin, we see that the hypersurface will still be a separatrix to the full foliation.

\begin{proposition}\thlabel{finsep}
A non-dicritical foliation on a quasi-compact manifold has only finitely many separatrices.
\end{proposition}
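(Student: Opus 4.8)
The plan is to cut $X$ down to a surface and apply the equivalence recorded in the remark after \cite[Theorem 4]{CM92}: on a surface a codimension-$1$ foliation is dicritical if and only if it has infinitely many separatrices. So, arguing by contradiction, suppose $\fcurl$ has infinitely many separatrices. As $\Singr\fcurl$ has only finitely many irreducible components, some component $Z$ is met by infinitely many of them, and a descending induction on dimension --- at each stage either infinitely many separatrices contain the current centre, or (since a separatrix is a hypersurface) infinitely many meet it in a proper closed subset, from which one extracts a strictly lower-dimensional irreducible subvariety met by infinitely many --- should reduce us to the case where there is an irreducible $Z'\subseteq\Singr\fcurl$ contained in (or, in the extreme case, equal to a point lying on) infinitely many separatrices.

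Next I would slice. Let $p$ be a general point of $Z'$ and $\Sigma\ni p$ a general smooth slice: transversal to $Z'$ at $p$ of complementary dimension if $\dim Z'\ge 1$, and a general $2$-dimensional slice through $p$ if $\dim Z'=0$; in both cases $2\le\dim\Sigma<n$, using $\codim(\Singr\fcurl)\ge 2$, and one may shrink $\Sigma$ to a polydisc about $p$ since all tangency notions and the existence of separatrices are local. I claim $\gcurl:=\fcurl\restn\Sigma$ is a codimension-$1$ foliation with $p\in\Singr\gcurl$: integrability is inherited as pullback commutes with $d$, the generating form stays algebraic for an algebraically chosen $\Sigma$, and $\gcd$-saturation is harmless; the one substantive point, that $p$ stays singular, uses that $\fcurl$ is non-dicritical at $p$, so the general hyperplane direction $T_p\Sigma$ is not a limiting tangent hyperplane of $\fcurl$ at $p$ and the locus where $\Sigma$ is tangent to $\fcurl$ does not accumulate at $p$. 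Granting this, for each separatrix $S$ containing $Z'$ the intersection $S\cap\Sigma$ has dimension $\dim\Sigma-1$ --- the leaf dimension of $\gcurl$ --- and passes through $p=Z'\cap\Sigma\in\Singr\gcurl$; by the restriction formulas for jets of foliations and of intersections, $J_m(S\cap\Sigma)=J_m(S)\cap J_m(\Sigma)\subseteq J_m(\fcurl)\cap J_m(\Sigma)=J_m(\gcurl)$ for every $m$, so $S\cap\Sigma$ is strongly tangent to $\gcurl$ and each of its irreducible components through $p$ is a separatrix of $\gcurl$.

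Taking $\dim\Sigma=2$ --- always possible when infinitely many separatrices pass through $p$ --- the surface equivalence then makes $\gcurl$ dicritical, and lifting a dicriticising sequence of blow-ups of $\Sigma$ (whose centres lie in $\Singr\gcurl=\Singr\fcurl\cap\Sigma$ for general $\Sigma$) to blow-ups of $X$ makes $\fcurl$ dicritical, a contradiction. Equivalently, one may induct on $\dim X$, the base case being the surface equivalence and the inductive step producing $\gcurl$ on a manifold of dimension in $[2,n-1]$ that is non-dicritical by the same lifting and carries infinitely many separatrices.

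The hard part will be twofold: carrying out the initial reduction to a centre $Z'$ contained in infinitely many separatrices (ruling out, or otherwise handling, separatrices that ``spread out'' along a component of $\Singr\fcurl$ without accumulating on any subvariety), and then showing that infinitely many \emph{distinct} separatrices of $\fcurl$ really yield infinitely many \emph{distinct} separatrices of $\gcurl$ --- two distinct invariant hypersurfaces could a priori have the same section through the fixed point $p$, and since the family of separatrices may be uncountable (as already on a dicritical surface) one cannot merely discard countably many bad slices. I would attack the latter with a Bertini-type irreducibility statement plus the observation that, as $\Sigma$ varies over general slices through $p$, the sections $S\cap\Sigma$ sweep out a neighbourhood of $p$ in $S$, so distinct irreducible separatrices have distinct general sections; made precise, the section map then has finite fibres over a sufficiently general fixed $\Sigma$, hence infinite image. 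A variant bypassing the slicing altogether is to note that dicriticality is detected by germs, so the germ of $\fcurl$ at each point of $\Singr\fcurl$ is non-dicritical, then invoke the germ form of \cite[Theorem 4]{CM92} (finitely many germs of separatrices) and globalise using quasi-compactness of $X$ together with the finiteness of the number of components of $\Singr\fcurl$.
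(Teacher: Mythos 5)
The paper does not actually give a proof of this proposition; it defers to \cite[Proposition 9.38]{Car21}, so I can only assess your plan on its own merits. Your central difficulty, which you correctly flag, is the initial reduction: from ``infinitely many separatrices'' to an irreducible $Z'\subseteq\Singr\fcurl$ lying on infinitely many of them. This reduction does not follow from the pigeonhole on components of $\Singr\fcurl$. A separatrix is a hypersurface, so its intersection with a positive-dimensional component $Z$ of $\Singr\fcurl$ may be a proper subvariety, and infinitely many separatrices could meet $Z$ in pairwise disjoint such subvarieties without any common centre. Your descending induction branches into ``infinitely many contain the current centre'' or ``infinitely many meet it in a proper closed subset,'' but the second branch does not obviously produce a new common centre, since the proper closed subsets vary with the separatrix; and while quasi-compactness forces the intersection loci to accumulate somewhere, the accumulation point need not lie on any of the separatrices being considered, so the germ-level finiteness from \cite{CM92} does not apply to it directly.

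The slicing step also rests on several claims the paper does not supply and which are not routine: that $\gcurl=\fcurl\restn{\Sigma}$ is still singular at $p$ and still non-dicritical (the latter requires lifting a dicriticising blow-up sequence of $\Sigma$ to smooth-centred blow-ups of $X$ and showing restriction commutes with them), and that distinct separatrices $S$ yield distinct $S\cap\Sigma$. You flag the last point, but your Bertini-type fix needs a \emph{fixed} general $\Sigma$ to separate an a priori uncountable family of hypersurfaces, which is close to assuming what you want to prove. Your closing ``variant'' --- germ finiteness plus quasi-compactness --- is the most promising route and is likely closest to the intended argument, but as stated it too is incomplete: germ finiteness at $p$ only controls separatrices through $p$, and covering $\Singr\fcurl$ by finitely many opens does not bound the number of separatrices meeting each open unless one also has a locally uniform (say, analytic-family or semicontinuity) statement for the local separatrix set. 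Until one of these auxiliary results is established, the proof has a genuine gap.
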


\begin{proof}[Proof: \nopunct]
See \cite[Proposition 9.38]{Car21}.
\end{proof}

\section{Singularities of Codimension-$1$ Foliations}

In this section, we let $X$ be a quasi-compact complex manifold of arbitrary dimension, and let $\fcurl$ be a codimension-$1$ foliation on $X$ given by an algebraic $1$-form. We study the behaviour of the singular loci.

\subsection{Preliminaries}
\begin{example}\thlabel{jetex2}
Let $C=\vv(x_1\cdots x_n)\subset \aaf^n, n\geq 2$, and suppose $m\geq n$. Then, setting $x_i=\sum a_{ij}t^j$, we have
\[
J_m(C,0)=\bigcup_{j_1+\cdots +j_n=m-n+1} \bigcap_{1\leq i\leq n, 1\leq j\leq j_i} \{a_{ij}=0\}.
\]

\end{example}

\begin{proof}[Proof: \nopunct]
We prove by induction on $n$. The base case $n=2$ is proved in \thref{jetex}.

For $n\geq 3$ and $m\geq n$, the co-efficient of $t^m$, which vanishes for jets of order $m$ and higher, is
\[
y_{n-1}a_{n,m-n+1}+y_n a_{n,m-n}+y_{n+1}a_{n,m-n-1}+\cdots +y_{m-1}a_{n,1},
\]
where $y_j$ is the co-efficient of $t^j$ obtained from $y=x_1\cdots x_{n-1}$. (Note that $y_1=\cdots =y_{n-2}=0$.)

Then $J_m(C,0)$ is given by the union of the sets cut out by the equations
\begin{multline}
a_{n,1}=a_{n,2}=\cdots =a_{n,m-n+1}=0, a_{n,1}=\cdots=a_{n,m-n}=y_{n-1}=0, \ldots, \nonumber \\
a_{n,1}=y_{n-1}=y_n=\cdots=y_{m-2}=0, y_{n-1}=y_n=\cdots =y_{m-1}=0,
\end{multline}
by the same argument as in \thref{jetex}.

By the induction hypothesis, the equations $y_{n-1}=\cdots=y_p=0$ give
\[
\bigcup\{a_{11}=\cdots=a_{1j_1}=\cdots=a_{n-1,1}=\cdots =a_{n-1,j_{n-1}}=0\},
\]
with the union taken over indices with $j_1+\cdots j_{n-1}=p-(n-1)+1=p-n+2$.

Now we have $j_n=m-1-p$, and so $j_1+\cdots +j_n=m-n+1$, as required.
\end{proof}

Now let $x=(x_1,\ldots,x_n)$ be a non-zero point of $C$; let $I$ be the set of indices of its non-zero entries. Let $C'=\vv(\prod_{i\notin I} x_i)$. Then $J_m(C,x)=J_m(C',0)$.

\begin{definition}
Let $\fcurl$ be a foliation on a complex manifold $X$ given locally by a $1$-form $\omega$, (so in particular $\omega$ generates a saturated sheaf, and $\omega\wedge d\omega=0$), and let $x\in X$. The \emph{dimensional type} $\tau(\fcurl,x)$ is the codimension in $T_xX$ of $\Dcal_{\fcurl}(x)=\{\Xcal(x)\mid \omega(\Xcal)=0\}$, where $\Xcal$ is a germ of a vector field at $x$.
\end{definition}

\begin{proposition}\thlabel{dimtyp}
Let $\fcurl$ be a codimension-$1$ foliation on $X$, and let $x\in X$. The dimension type $\tau(\fcurl,x)$ is the minimal number of formal co-ordinates needed to write a generator $\omega$ of the foliation in a neighbourhood of $x$.
\end{proposition}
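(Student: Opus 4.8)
The plan is to prove the two inequalities $\tau(\fcurl,x)\le m$ and $m\le\tau(\fcurl,x)$, where $m$ denotes the least number of formal coordinates in which some generator of $\fcurl$ near $x$ can be written. We work with the formal germ at $x$ throughout; the dimensional type is the same whether defined via holomorphic or formal germs of vector fields, since $\fcurl$ is coherent. The inequality $\tau(\fcurl,x)\le m$ is immediate: if a generator is $\omega=\sum_{i=1}^{m}b_i\,dy_i$ with $b_i\in\ccx[[y_1,\dots,y_m]]$, then $\partial_{y_{m+1}},\dots,\partial_{y_n}$ annihilate $\omega$ identically, hence lie in $\fcurl$, and their values at $x$ are linearly independent in $T_xX$; so $\dimn\Dcal_{\fcurl}(x)\ge n-m$, i.e. $\tau(\fcurl,x)=\codim\Dcal_{\fcurl}(x)\le m$, and minimising over presentations gives the claim.

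For $m\le\tau(\fcurl,x)$ I would induct on $n=\dimn X$. If $\tau(\fcurl,x)=n$ there is nothing to do, since the first inequality gives $m\ge n$ while trivially $m\le n$. If $\tau(\fcurl,x)<n$, then $\Dcal_{\fcurl}(x)\ne 0$, so there is a germ of holomorphic vector field $\Xcal\in\fcurl$ with $\Xcal(x)\ne 0$; by the flow-box theorem choose coordinates $z_1,\dots,z_n$ centred at $x$ with $\Xcal=\partial_{z_n}$. As $\omega(\Xcal)=0$, the generator has no $dz_n$-term, $\omega=\sum_{i=1}^{n-1}b_i\,dz_i$, and the $dz_n$-component of $d\omega$ equals $dz_n\wedge\beta$ where $\beta=\sum_{i<n}(\partial_{z_n}b_i)\,dz_i=\lcurl_{\Xcal}\omega$. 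Isolating the part of the integrability identity $\omega\wedge d\omega=0$ that contains $dz_n$ gives $dz_n\wedge(\omega\wedge\beta)=0$, hence $\omega\wedge\beta=0$; as $\omega$ is saturated its coefficients are coprime, so the division lemma for $1$-forms over the (UFD) power series ring yields $\beta=h\,\omega$, i.e. $\partial_{z_n}b_i=h\,b_i$ for all $i$. Solving $\partial_{z_n}H=h$ formally with $H|_{z_n=0}=0$ and setting $u=e^{H}$ (a unit), one checks $\partial_{z_n}(u^{-1}b_i)=0$, so $b_i=u\,r_i$ with $r_i\in\ccx[[z_1,\dots,z_{n-1}]]$. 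Therefore $\omega=u\,\omega'$ with $\omega'=\sum_{i<n}r_i\,dz_i$, and since $u$ is a unit $\omega'$ is again a saturated integrable generator of $\fcurl$; that is, $\fcurl$ is the cylinder over a codimension-$1$ foliation $\fcurl'$ on the germ of $\ccx^{n-1}$ at the image $x'$ of $x$.

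To close the induction, a short computation with the kernel of $\omega$ (restricting to $\{z_n=0\}$) shows $\Dcal_{\fcurl}(x)=\Dcal_{\fcurl'}(x')\oplus\ccx\,\partial_{z_n}$, so $\codim\Dcal_{\fcurl}(x)=\codim\Dcal_{\fcurl'}(x')$, i.e. $\tau(\fcurl',x')=\tau(\fcurl,x)$. By the induction hypothesis applied to $\fcurl'$ in dimension $n-1$, after a formal coordinate change among $z_1,\dots,z_{n-1}$ the generator $\omega'$ can be written using only $\tau(\fcurl',x')=\tau(\fcurl,x)$ of those coordinates; hence so can $\omega=u\,\omega'$, which gives $m\le\tau(\fcurl,x)$ and, together with the first inequality, the proposition. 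The main obstacle is the cylinder-reduction step: deriving $\partial_{z_n}b_i=h\,b_i$ from integrability together with the straightening of $\Xcal$, and then solving this system cleanly in the formal category. This is exactly where saturation enters — through coprimality of the coefficients and the resulting division lemma — and where one must argue formally (solving an ODE over $\ccx[[z]]$) rather than fibrewise; the flow-box theorem, the cylinder computation of $\Dcal_{\fcurl}$, and the bookkeeping of the induction are routine by comparison.
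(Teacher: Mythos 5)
Your proof is correct. The first inequality is immediate, and the second is the classical cylinder-reduction argument: straighten a nonvanishing tangent vector field $\Xcal\in\fcurl$ by the flow-box theorem, extract $\lcurl_{\Xcal}\omega=\iota_{\Xcal}d\omega=\beta$ as the $dz_n$-component of $d\omega$, use integrability to get $\omega\wedge\beta=0$, and then use coprimality of the coefficients (saturation) plus the UFD property of $\ccx[[z_1,\ldots,z_n]]$ to divide and obtain $\beta=h\omega$, hence $\partial_{z_n}b_i=hb_i$; solving the formal linear ODE then strips out $z_n$ up to a unit, and the identification $\Dcal_{\fcurl}(x)=\Dcal_{\fcurl'}(x')\oplus\ccx\,\partial_{z_n}$ closes the induction. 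All the steps check out, including the division lemma (via prime-by-prime valuation comparison using $\gcd(b_i)=1$) and the formal integration of $\partial_{z_n}H=h$ in characteristic zero. This is essentially the standard proof and the one the thesis uses; I see no genuine gap. One stylistic remark: you could note explicitly that the induction really runs on $n-\tau(\fcurl,x)$ (the base case being $\tau=n$), and that the unit $e^H$ and all subsequent coordinate changes are genuinely formal, which is exactly the generality the proposition claims.
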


\begin{proof}[Proof: \nopunct]
See \cite[Proposition 10.7]{Car21}.
\end{proof}

\subsection{Simple Singularities}
Let $\fcurl$ be a codimension-$1$ foliation on a complex manifold $X$, let $x\in X$ be a point with dimensional type $t=\tau(\fcurl,x)$, and let $E$ be a normal crossings divisor of $X$ through $x$ with each component tangent to $\fcurl$. 

\begin{proposition}\thlabel{elesst}
In this setting, $E$ has at most $t$ components through $x$.
\end{proposition}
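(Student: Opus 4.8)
The plan is to pass to local analytic coordinates adapted to $E$ and to read the dimensional type off the defining $1$-form directly. Near $x$ write a local generator of $\fcurl$ as $\omega=\sum_{j=1}^n a_j\,dz_j$, where $z_1,\dots,z_n$ are holomorphic coordinates centred at $x$ chosen so that the components of $E$ passing through $x$ are $E_i=\vv(z_i)$, $i=1,\dots,k$; such coordinates exist precisely because $E$ is normal crossings, and in particular $dz_1\restn x,\dots,dz_k\restn x$ are linearly independent in $T_x^{\ast}X$. First I would record two local facts about the $a_j$. Each $E_i$ is tangent to $\fcurl$, hence (being a reduced irreducible hypersurface, so of leaf dimension) it is a leaf of $\fcurl$ on a dense open subset and therefore $\fcurl$-invariant; equivalently $\omega\wedge dz_i\in(z_i)\,\Omega^2_X$ near $x$, which unwinds to $z_i\mid a_j$ for every $j\ne i$ and every $i\le k$ (cf.\ \thref{redstt} and the remark following it, which identify tangency of $E_i$ with being a solution). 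On the other hand $\fcurl$ is saturated, so $\gcd(a_1,\dots,a_n)=1$; together with $z_i\mid a_j$ for all $j\ne i$ this forces $z_i\nmid a_i$ for each $i\le k$, since otherwise $z_i$ would divide every $a_j$ and hence the unit $1$.

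The heart of the argument is then a short coprimality computation showing $\Dcal_{\fcurl}(x)\subseteq\bigcap_{i=1}^k\Ker(dz_i\restn x)$. Let $v=\sum_j c_j\,\partial_{z_j}$ be a germ of a vector field with $\omega(v)=0$, i.e.\ $\sum_j a_jc_j=0$, and fix $i\le k$. Since $z_i\mid a_j$ for all $j\ne i$, the relation $a_ic_i=-\sum_{j\ne i}a_jc_j$ gives $z_i\mid a_ic_i$ in $\ocurl_{X,x}$; as $z_i$ is prime there and $z_i\nmid a_i$, we get $z_i\mid c_i$, hence $c_i(x)=0$, i.e.\ $dz_i(v(x))=0$. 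Letting $v$ range over all such vector fields and $i$ over $1,\dots,k$ yields $\Dcal_{\fcurl}(x)=\{\,v(x)\mid\omega(v)=0\,\}\subseteq\bigcap_{i=1}^k\Ker(dz_i\restn x)$, which has codimension exactly $k$ in $T_xX$ by independence of the $dz_i\restn x$. Therefore $t=\tau(\fcurl,x)=\codim_{T_xX}\Dcal_{\fcurl}(x)\ge k$, which is the claim.

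I expect the only delicate point to be the first of the two local facts: extracting from "each component is tangent to $\fcurl$" the precise divisibility $z_i\mid a_j$ for $j\ne i$. Once $E_i$ is known to be an $\fcurl$-invariant hypersurface this is immediate from $\omega\wedge dz_i\in(z_i)\,\Omega^2_X$, and everything afterwards is the elementary coprimality step above together with the definition of dimensional type. Note that the proof uses neither the integrability of $\fcurl$ nor \thref{dimtyp}: the bound $k\le\tau(\fcurl,x)$ holds for any saturated codimension-$1$ distribution on $X$ admitting $k$ hypersurfaces that are invariant and in normal crossing at $x$.
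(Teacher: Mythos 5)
Your proof is correct but takes a genuinely different route from the paper's. The paper's argument is a two-line appeal to \thref{dimtyp}: near $x$, $\fcurl$ is a cylinder over a foliation on a $t$-dimensional transversal, each tangent component of $E$ must then also be a cylinder over a hypersurface of that transversal, and an SNC divisor in a $t$-dimensional germ can have at most $t$ components through a point. You instead work straight from the definition of dimensional type as $\codim_{T_xX}\Dcal_\fcurl(x)$: tangency of $E_i=\vv(z_i)$ gives $z_i\mid a_j$ for $j\ne i$, saturation gives $z_i\nmid a_i$, and a coprimality argument in the UFD $\ocurl_{X,x}$ forces $z_i\mid c_i$ for any germ $v=\sum_j c_j\partial_{z_j}$ with $\omega(v)=0$, so $\Dcal_\fcurl(x)\subseteq\bigcap_{i\le k}\Ker(dz_i\restn{x})$ and $k\le t$. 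This is a real gain in self-containedness: you bypass \thref{dimtyp} entirely, as well as the implicit (and not entirely trivial) claim in the paper's proof that tangent hypersurfaces respect the cylinder structure; and as you observe, integrability plays no role, so the bound holds for any saturated corank-$1$ distribution with normal-crossing invariant hypersurfaces at $x$. The trade-off is merely stylistic: the paper's cylinder viewpoint is the organizing picture of the whole section, so its short proof reads naturally in context, whereas yours spends its effort unwinding "tangent" to the divisibility condition $z_i\mid a_j$ -- though the paper itself uses exactly that characterization in the paragraph immediately following the proposition, so the two are drawing on the same local normal form.
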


\begin{proof}[Proof: \nopunct]
In a neighbourhood of $x$, $\fcurl$ is a cylinder over a foliation on a $t$-dimensional subspace of $X$. As each component of $E$ is tangent to $\fcurl$, $E$ is a cylinder over an SNC divisor of this subspace, and so has at most $t$ components.
\end{proof}

Taking appropriate holomorphic co-ordinates at $x$, as in \cite[Section 4]{Can97}, there is a subset $A\subset \{1,\ldots,t\}$ such that we can write $E=\vv(\prod_{i\in A}x_i)$, and the foliation is generated by
\[
\omega=\left(\prod_{i\in A}x_i\right)\left(\sum_{i\in A}b_i \frac{dx_i}{x_i}+\sum_{i\in \{1,\ldots,t\}\setminus A} b_i dx_i\right),
\]
where $b_i=b_i(x_1,\ldots,x_t)$ are germs of holomorphic functions without common factor.
Indeed, as the dimensional type is $t$ at $x$, we can write $\omega$ in the first $t$ co-ordinates as $\omega=\sum_{i\in A}f_i dx_i+\sum_{i\notin A}f_i dx_i$. As the components of $E$ are tangent, we have $x_i\mid f_j$, for all $i\in A$ and all $j\neq i$. Setting $b_i=\frac{f_i}{\prod_{j\in A, j\neq i}x_j}$ yields the result.

\begin{proposition}\thlabel{snctang}
Let $\fcurl$ be a foliation on $X$, and let $E$ be an SNC divisor with each component tangent to $\fcurl$. Then $E$ itself is a solution for $\fcurl$.
\end{proposition}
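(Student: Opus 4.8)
The plan is to localise and apply the normal form recalled just above the statement. Being a solution is a local condition on $E$, so fix $x \in E$, set $t = \tau(\fcurl, x)$, and invoke \thref{elesst}: at most $t$ components of $E$ pass through $x$, and there are local coordinates $x_1, \dots, x_n$ and a subset $A \subseteq \{1, \dots, t\}$ with $E = \vv(g)$ for $g = \prod_{i \in A} x_i$ and $\omega = g\bigl(\sum_{i \in A} b_i \frac{dx_i}{x_i} + \sum_{i \in \{1,\dots,t\}\setminus A} b_i\, dx_i\bigr)$, the $b_i$ holomorphic. Since $dg = g\sum_{i\in A}\frac{dx_i}{x_i} = \sum_{i\in A}\bigl(\prod_{j\in A\setminus\{i\}} x_j\bigr)dx_i$, I would first reduce $\omega$ modulo $g$: the summands with $i \notin A$ are divisible by $g$ and so vanish in $\Omega^1_X\restn E$, leaving $\omega\restn E = \sum_{i\in A} b_i\bigl(\prod_{j\in A\setminus\{i\}} x_j\bigr)dx_i$. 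Then I would produce a single holomorphic function $h$ with $h \equiv b_i \pmod{x_i}$ for every $i \in A$; granting this, every coefficient of $\omega - h\,dg$ is divisible by $g$ (for $i\in A$ because $x_i \mid b_i - h$, for $i\notin A$ because that summand already is), whence $\omega\restn E = h\,(dg)\restn E \in (dg)\restn E$, which is precisely the assertion that $E$ is a solution.

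The crux is the construction of $h$. For $|A| = 1$ it is trivial ($h$ is the single $b_i$); for $|A| \geq 2$ it is a Chinese-Remainder-type interpolation, solvable exactly when the residues $b_i$ are mutually compatible on the crossing loci $\{x_i = x_j = 0\}$, with the evident cocycle conditions where three or more components meet. I would extract these compatibilities from integrability: writing $\eta = \omega/g$ for the attached logarithmic $1$-form, one computes $\omega \wedge d\omega = g^2\,\eta \wedge d\eta$, so $\eta \wedge d\eta = 0$; isolating the part of $\eta \wedge d\eta$ carrying a simple pole along $x_i$, and the part with poles along a pair $x_i, x_j$, and using that each individual component $\{x_i = 0\}$ is itself tangent, should constrain the $b_i$ along the crossings enough to make the interpolation solvable. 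I expect this residue-matching step to be the real work. Even without carrying it out, the same set-up yields unconditionally that $E$ is an invariant divisor: writing $g = \prod_\alpha g_\alpha$ over the components $E_\alpha = \vv(g_\alpha)$, tangency of each $E_\alpha$ gives $g_\alpha \mid \omega \wedge dg_\alpha$, and from $dg = \sum_\alpha\bigl(\prod_{\beta \neq \alpha} g_\beta\bigr)dg_\alpha$ one obtains $\omega \wedge dg = \sum_\alpha\bigl(\prod_{\beta\neq\alpha} g_\beta\bigr)(\omega\wedge dg_\alpha)$, each summand of which is divisible by $g$.
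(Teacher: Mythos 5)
Your reduction is faithful to the definition as literally stated: $E=\vv(g)$ is a solution iff there is a \emph{single} $h$ with $\omega-h\,dg\in g\,\Omega^1_X$, i.e.\ $h\equiv b_i\pmod{x_i}$ for every $i\in A$. But the interpolation step you defer to ``residue matching via integrability'' is not merely the real work --- it fails in general, and integrability imposes no constraint here. Take normal form (A): $\omega=\lambda_1x_2\cdots x_t\,dx_1+\cdots+\lambda_tx_1\cdots x_{t-1}\,dx_t$ with the $\lambda_i$ distinct constants (already $\omega=y\,dx+2x\,dy$ in the plane, which is automatically integrable since any $1$-form in two variables is). Then $b_i=\lambda_i$, and $h\equiv\lambda_i\pmod{x_i}$ for two distinct constants forces $h(0)$ to equal both of them. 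So no global $h$ exists, the pairwise compatibility $b_i\equiv b_j\pmod{(x_i,x_j)}$ that your Chinese-remainder argument needs is violated, and the plan cannot be completed.

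The paper's proof does not attempt this. It verifies the identity one component at a time, $\omega\restn{x_i=0}=b_i\,(dg)\restn{x_i=0}$, with the multiplier $b_i$ allowed to vary from component to component, and concludes from there --- in effect reading the solution condition for a reducible divisor componentwise. That componentwise statement is what is true and what the later applications need: strong tangency of $E$ can be seen directly on jets, since a jet $\tau$ of $\vv(x_1\cdots x_t)$ satisfies $\sum_i\ord(x_i\circ\tau)\geq m+1$, whence $\tau^*\omega\in(t^m)\,dt=0$ irrespective of the $b_i$. Your closing unconditional observation that $g\mid\omega\wedge dg$ is correct, but it proves invariance of $E$, which is again weaker than the literal module condition $\omega\restn E\in(dg)\restn E$. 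So: the set-up and the identification of the crux are right, but the key step is unsalvageable as posed; the repair is to prove and use the componentwise (or jet-theoretic) statement rather than the global interpolation.
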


\begin{proof}[Proof: \nopunct]
We choose holomorphic co-ordinates so $E$ and $\fcurl$ are given in the form above. If $E=\vv(\prod_{i\in A}x_i)$, then for each component $\vv(x_i)$ of $E$, we have $\omega\restn {x_i=0}=b_i d(\prod_{i\in A}x_i)\restn {x_i=0}$, and the result follows.
\end{proof}

It follows from \thref{redstt} that $E$ is strongly tangent.

\begin{lemma}
Let $\fcurl$ be a foliation on $X$ given by $\omega$ and let $C\subset X$ be a smooth irreducible hypersurface. Suppose for some point $x\in C$, $J_m(C,x)\subset J_m(\fcurl,x)$, for all $m$. Then $C$ is a solution for $\fcurl$.
\end{lemma}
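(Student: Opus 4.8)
The plan is to reduce to a concrete local computation and then exploit the hypothesis by feeding in explicit curves through $x$. Since being a solution is a local, co-ordinate-invariant notion (see \thref{tangent} and \thref{tanginv}), I would work in a neighbourhood of $x$ and pick holomorphic co-ordinates $(x_1,\ldots,x_n)$ centred at $x$ with $C=\vv(x_n)$; writing the defining form as $\omega=\sum_{i=1}^n b_i\,dx_i$ with the $b_i$ holomorphic, and $\iota\colon C\hookrightarrow X$ for the inclusion, the statement ``$C$ is a solution'' becomes simply $\iota^*\omega=0$ in $\Omega^1_C$, i.e.\ $b_i(x_1,\ldots,x_{n-1},0)=0$ for $1\le i\le n-1$. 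Since $\iota^*\omega$ is a holomorphic section of the locally free sheaf $\Omega^1_C$ and $C$ is irreducible, vanishing of $\iota^*\omega$ near $x$ forces it to vanish on all of $C$; so it suffices to prove $\iota^*\omega=0$ in a neighbourhood of $x$.

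The core observation is that the hypothesis forces $\gamma^*\omega=0$ for \emph{every} holomorphic curve germ $\gamma\colon(\ccx,0)\to(C,x)$ lying in $C$: for each $k$ the $(k{+}1)$-truncation of $\gamma$ is a $(k{+}1)$-jet of $C$ at $x$, so by hypothesis it lies in $J_{k+1}(\fcurl,x)$, which says the coefficients of $t^0,\dots,t^k$ in $\gamma^*\omega$ vanish; letting $k\to\infty$ gives $\gamma^*\omega=0$. To conclude, I would argue by contradiction: if $\iota^*\omega\not\equiv0$ near $x$, then (its zero locus being a proper analytic subset of $C$) there is a point $p\in C$ arbitrarily close to, but distinct from, $x$, and a tangent vector $w\in T_pC$ with $(\iota^*\omega)_p(w)\ne 0$; we may take $|p|,|w|$ as small as we like. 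For a small $t_0\ne0$, the quadratic curve $\gamma(t)=\big(\tfrac{2}{t_0}p-w\big)t+\big(\tfrac{1}{t_0}w-\tfrac{1}{t_0^2}p\big)t^2$ satisfies $\gamma(0)=x$, $\gamma(t_0)=p$, $\gamma'(t_0)=w$; since $p$ and $w$ have vanishing $x_n$-component it lies in $C$, and for $|p|,|w|,|t_0|$ small it maps a disc about $0$ into the chart, so it is an admissible curve. By the core observation $\gamma^*\omega=0$, yet evaluating at $t=t_0$ gives $\omega_p(w)=(\iota^*\omega)_p(w)\ne0$ --- a contradiction. Hence $\iota^*\omega=0$ near $x$, and so on all of $C$.

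The step I expect to be the main obstacle is recognising that no single finite jet order suffices --- the form $y\,dx-x\,dy$ already annihilates every $2$-jet of $\aaf^2$ at $0$ without being identically zero --- so the argument must genuinely use arbitrarily high orders; the device of a \emph{fixed} degree-two curve realising an arbitrary prescribed pair (point, velocity) near $x$ is what packages this cleanly, converting ``all jets of this one curve annihilate $\omega$'' into the pointwise vanishing of $\iota^*\omega$. The remaining points are routine bookkeeping: that truncating a curve to a $(k{+}1)$-jet does not disturb the coefficients of $t^0,\dots,t^k$ of the pullback (using $t^{k+1}\,dt=0$ in $\Omega_{\ccx[t]/(t^{k+2})}$), that $\Omega^1_X\restn C/(dx_n)\restn C\cong\Omega^1_C$ so that ``solution'' really is equivalent to $\iota^*\omega=0$, and that the quadratic curve stays inside the chart.
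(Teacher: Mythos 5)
Your argument is sound and gives a clean proof of the lemma. The two key moves are (i) observing that the jet-containment hypothesis at $x$ forces $\gamma^*\omega\equiv 0$ for every curve germ $\gamma$ in $C$ through $x$ (since all finite truncations of $\gamma$ are jets of $C$ at $x$, and these already kill the Taylor coefficients of $\gamma^*\omega$ up to arbitrary order), and (ii) realising any nearby pair $(p,w)$ with $p\in C$, $w\in T_pC$ as the value and velocity at some small $t_0\neq 0$ of a fixed quadratic curve through $x$ lying in $C$, so that the pointwise value $(\iota^*\omega)_p(w)$ is forced to vanish. That converts the jet hypothesis, which a priori only sees the infinitesimal neighbourhood of $x$, into a genuine vanishing statement in an open neighbourhood, and the identity theorem on the connected manifold $C$ then finishes. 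The paper defers its own proof to the thesis \cite{Car21}, so I cannot line your argument up against it; I would guess the reference argues more directly by expanding the coefficients $b_i$ in Taylor series and inducting on jet order, but your curve-based reduction achieves the same end more transparently. Two small points worth making explicit if you write this up: the chain ``truncation of $\gamma$ is a jet of $C$ at $x$ $\Rightarrow$ lies in $J_{k+1}(\fcurl,x)$'' uses only the $\ccx$-points of those jet schemes, which is all that the scheme-theoretic inclusion in the hypothesis provides and all you need; and when you propagate the vanishing of $\iota^*\omega$ from a neighbourhood of $x$ to all of $C$ you are implicitly treating $\omega$ as a global generator — if it is only a local generator, the same conclusion follows because local generators differ by units, so the coherent image subsheaf of $\Omega^1_C$ they generate vanishes on an open set and hence, by connectedness, everywhere.
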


\begin{proof}[Proof: \nopunct]
See \cite[Lemma 10.14]{Car21}.
\end{proof}

\begin{corollary}\thlabel{snctang2}
The same result holds if $C$ is an SNC divisor with smooth components.
\end{corollary}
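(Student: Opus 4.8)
The plan is to reduce the statement, component by component, to the smooth irreducible hypersurface case just treated, and then reassemble using \thref{snctang}. Write $C=\bigcup_{i\in A}C_i$ with each $C_i$ a smooth irreducible hypersurface meeting the others transversally, and let $x\in C$ be the point for which $J_m(C,x)\subset J_m(\fcurl,x)$ for all $m$. Since being a solution is a local condition, I may pass to a neighbourhood of $x$ and discard the components not through $x$, so assume every $C_i$ passes through $x$; as $C$ is SNC I may then choose holomorphic co-ordinates $x_1,\dots,x_n$ at $x$ with $C_i=\vv(x_i)$ for $i\in A$ and $C=\vv(\prod_{i\in A}x_i)$.

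First I would observe that each inclusion of closed subschemes $C_i\hookrightarrow C$ induces an inclusion of jet schemes $J_m(C_i)\subset J_m(C)$, hence on the fibres over $x$ an inclusion $J_m(C_i,x)\subset J_m(C,x)$ for every $m$. Combined with the hypothesis this gives $J_m(C_i,x)\subset J_m(\fcurl,x)$ for all $m$. Applying the preceding lemma to the smooth irreducible hypersurface $C_i$ then shows that $C_i$ is a solution for $\fcurl$; in the chosen co-ordinates this says precisely that $\omega\restn{x_i=0}$ is a multiple of $dx_i\restn{x_i=0}$, i.e.\ that $x_i$ divides every coefficient of $\omega$ other than that of $dx_i$, which is exactly the statement that $\vv(x_i)$ is tangent to $\fcurl$ in the sense used in the discussion preceding \thref{snctang}.

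Having established that every component of $C$ is tangent to $\fcurl$, I would then simply invoke \thref{snctang}, which says that an SNC divisor all of whose components are tangent to $\fcurl$ is itself a solution for $\fcurl$; this gives the conclusion in a neighbourhood of $x$, and since each $C_i$ is irreducible the solution property propagates off $x$ exactly as in the smooth hypersurface case handled by the lemma. (By \thref{redstt} it follows a posteriori that $C$ is also strongly tangent.)

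There is no genuinely hard step here: the corollary is essentially a bookkeeping argument linking the lemma to \thref{snctang}. The only points needing a little care are (i) the reduction to the components through $x$, legitimate because "solution" is a local notion; (ii) checking that the lemma's output "$C_i$ is a solution" for a smooth component coincides with the "each component tangent" hypothesis demanded by \thref{snctang}; and (iii) the elementary functoriality fact $J_m(C_i,x)\subset J_m(C,x)$, which is immediate since $J_m(-)$ carries closed immersions to closed immersions and one then restricts to the fibre over $x$.
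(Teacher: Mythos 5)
Your proposal is correct and takes essentially the same route as the paper's proof: apply the preceding lemma to each smooth component $C_i$ (using $J_m(C_i,x)\subset J_m(C,x)\subset J_m(\fcurl,x)$) to conclude each is a solution, then invoke \thref{snctang} to pass to the whole SNC divisor. Your version merely spells out the local reduction to components through $x$ and the functoriality of $J_m(-)$ on closed immersions, both of which the paper leaves implicit.
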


\begin{proof}[Proof: \nopunct]
Each component of $C$ satisfies the conditions of the above lemma, so is a solution for $\fcurl$. As $C$ is SNC, it too is a solution by \thref{snctang}.
\end{proof}

\begin{definition}\cite[Section 4]{Can97}
Let $\fcurl$ be a codimension-$1$ foliation on a complex manifold $X$, let $x\in X$ be a point with dimensional type $t=\tau(\fcurl,x)$, and let $E$ be a normal crossings divisor of $X$ through $x$ with each component tangent to $\fcurl$. We write the generator of the foliation in the form
\[
\omega=\left(\prod_{i\in A}x_i\right)\left(\sum_{i\in A}b_i \frac{dx_i}{x_i}+\sum_{i\in \{1,\ldots,t\}\setminus A} b_i dx_i\right),
\]
with the $b_i$ holomorphic.

The \emph{adapted order} is $\nu(\fcurl,E;x)=\min\{\ord_x b_i\}$.

The \emph{adapted multiplicity} $\mu(\fcurl,E;x)$ is the order at $x$ of the ideal generated by
\[
\{b_i\}_{i\in A}\cup \{x_j b_i\}_{i\notin A,j=1,\ldots,n},
\]
that is, the minimum of the orders at $x$ of all the functions in the ideal.
\end{definition}

\begin{remark}
As composition with an invertible holomorphic function does not change the order, the adapted order and adapted multiplicity are independent of the normal form chosen.
\end{remark}

\begin{definition}\cite[Definition 4]{Can97}
In the above situation, $x\in\Singr\fcurl$ is a \emph{pre-simple} singularity of $\fcurl$ adapted to $E$ if and only if one of the following occurs:

$\nu(\fcurl,E;x)=0$;

$\nu(\fcurl,E;x)=\mu(\fcurl,E;x)=1$, and for some $i\in A$, the linear part of $b_i$ does not depend only on $\{x_i\mid i\in A\}$.
\end{definition}

\begin{proposition} \cite[Proposition 12]{Can97}\thlabel{simple}
Let $x\in\Singr\fcurl$ be a pre-simple singularity (that is, pre-simple adapted to some SNC divisor $E$), with $\tau(\fcurl,x)=t$. Then in a formal co-ordinate system $x_1,\ldots,x_n$ at $x$, $\fcurl$ is locally generated by a $1$-form in one of the following normal forms:

(A): $\omega=x_1\cdots x_t(\sum_{i=1}^t \lambda_i \frac{dx_i}{x_i})$, $\lambda_i\in \ccx^*$;

(B): $\omega=x_1\cdots x_t(\sum_{i=1}^k p_i \frac{dx_i}{x_i}+\Psi(x_1^{p_1}\cdots x_k^{p_k})\sum_{i=2}^t \lambda_i \frac{dx_i}{x_i})$, where $1\leq k\leq t$, $1\leq p_1,\ldots,p_k\in \nn$ (we can assume them to have no common factor), $\lambda_i\in\ccx$, with $\lambda_{k+1},\ldots,\lambda_t\in\ccx^*$, and $\Psi\in \hat{\mm}_1$, which we can assume to be non-vanishing except at $0$;

(C): $\omega=x_2\cdots x_t(dx_1-x_1\sum_{i=2}^k p_i \frac{dx_i}{x_i}+x_2^{p_2}\cdots x_k^{p_k}\sum_{i=2}^t \lambda_i \frac{dx_i}{x_i})$, where $k\geq 2$, $p_2,\ldots,p_k \in \nn$, and $\lambda_i\in\ccx$, with $\lambda_{k+1},\ldots,\lambda_t\in\ccx^*$.
\end{proposition}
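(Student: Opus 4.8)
The plan is to perform the classical formal normalization of a pre-simple singularity: reduce to the dimensional type, read off a ``linear model'' from the adapted order, and then eliminate all removable higher-order terms by a Poincar\'e--Dulac argument constrained by integrability. \textbf{Step 1 (dimension reduction).} By \thref{dimtyp} we may choose a formal coordinate system in which $\omega$ involves only the first $t$ coordinates; equivalently, after a formal change of coordinates we are in the pre-normal form displayed before \thref{snctang},
\[
\omega = \Bigl(\prod_{i\in A}x_i\Bigr)\Bigl(\sum_{i\in A}b_i\,\frac{dx_i}{x_i} + \sum_{i\in\{1,\dots,t\}\setminus A}b_i\,dx_i\Bigr),
\]
with $A\subseteq\{1,\dots,t\}$ and the $b_i=b_i(x_1,\dots,x_t)$ holomorphic without common factor.

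\textbf{Step 2 (the linear model).} Suppose first $\nu(\fcurl,E;x)=0$, so some $b_{i_0}$ is a unit. If we may take $i_0\in A$, rescale so $b_{i_0}(0)=1$: the ``linear part'' of $\omega$ is then a logarithmic form $\bigl(\prod_{i\in A}x_i\bigr)\sum_i\lambda_i\frac{dx_i}{x_i}$ with $\lambda_i=b_i(0)$ ($i\in A$, $\lambda_{i_0}\neq0$), the directions $j\notin A$ contributing nilpotently through the linear parts of the $b_j$; this is the model underlying cases (A) and (B). If instead no $b_i$ with $i\in A$ is a unit but some $b_j$ with $j\notin A$ is, relabel $j=1$ and straighten the transverse direction $x_1$, arriving at the model where $dx_1$ has unit coefficient and $1\notin A$ --- the model underlying case (C). In the remaining pre-simple case $\nu=\mu=1$, the condition that the linear part of some $b_i$ ($i\in A$) genuinely involves a variable outside $A$ forces the linear part of the dual vector field to be non-scalar; after a linear coordinate change putting it in Jordan form, the restriction $\mu=1$ limits the Jordan type and again produces one of the three linear models.

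\textbf{Step 3 (normalization).} Fixing the linear model, integrability $\omega\wedge d\omega=0$ is imposed inductively on the degree. At each degree a formal coordinate change of the shape $x_i\mapsto x_i(1+(\text{higher order}))$ for $i\in A$, or $x_1\mapsto x_1+(\text{higher order})$ for the distinguished transverse variable, removes every \emph{non-resonant} monomial from the $b_i$; by integrability the surviving resonant monomials are all functions of a single monomial $x_1^{p_1}\cdots x_k^{p_k}$ (with $p_i\in\nn$ having no common factor) and collect into one holomorphic function $\Psi$. Absence of resonances yields form (A); resonances together with a unit $b_{i_0}$, $i_0\in A$, yield form (B); the distinguished-transverse case yields form (C). A final elementary argument lets us take $\Psi$ (respectively the $\lambda_i$, $i>k$) non-vanishing away from $0$, and the invariance of $\nu,\mu$ under formal coordinate change keeps us inside the pre-simple class throughout.

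The hard part is the resonant instance of Step 3: a priori the non-removable resonant monomials could occur independently in every coefficient $b_i$, and one must show that $\omega\wedge d\omega=0$ forces them to be organized uniformly, across all $i$, as the single function $\Psi(x_1^{p_1}\cdots x_k^{p_k})$, while checking that the successive coordinate changes converge formally. This is a delicate degree-by-degree analysis of the $2$-form $d\omega$. As the statement is precisely Proposition~12 of \cite{Can97}, in the paper itself we invoke that reference; the outline above is how one would reconstruct the proof.
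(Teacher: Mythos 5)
The paper does not prove this proposition at all: it is stated as a direct citation to Proposition~12 of \cite{Can97}, and no argument is supplied in the text. Your proposal correctly recognizes this and defers to the same reference, so on the level of ``what the paper does'' you are aligned. The reconstruction you outline is a reasonable sketch of the Poincar\'e--Dulac-with-integrability mechanism that underlies Cano's classification, and Step~1 (reduction to the dimensional type via \thref{dimtyp}) is exactly right. However, as a proof it is incomplete, and you yourself flag the central gap. Two specific points: in Step~2 the split into cases (A), (B), (C) does not cleanly reduce to ``which $b_{i_0}$ is a unit''---the actual dichotomy is between $E$ having $t$ components (types (A)/(B), with the further split governed by whether the residues $b_i(0)$ for $i\in A$ are all nonzero versus resonant) and $E$ having $t-1$ components (type (C)), and the pre-simple condition $\nu=\mu=1$ enters in a more structured way than ``limits the Jordan type.'' In Step~3 the decisive claim---that integrability forces all surviving resonant monomials across the different $b_i$ to collapse into a single function $\Psi$ of one monomial $x_1^{p_1}\cdots x_k^{p_k}$---is precisely the delicate core of Cano's argument and is left unestablished. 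Since the paper treats this as an external input, the correct move is what both you and the paper do: cite \cite{Can97}; the sketch should be understood as orientation, not a substitute for that reference.
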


These three cases are mutually exclusive; case (A) can be seen as case (B) with $k=0$.

In cases (A) and (B), we can take $E=\vv(x_1\cdots x_t)$; in case (C), we can take $E=\vv(x_2\cdots x_t)$.

A $1$-form $\omega$ in one of these normal forms is said to be of the form (A), (respectively, (B) or (C)). A singular point $x$ of a foliation $\fcurl$ is said to be of the form (A), (respectively, (B) or (C)), if in a neighbourhood of $x$, $\fcurl$ is generated by such a $1$-form.

\begin{definition}
A pre-simple singularity is \emph{simple} if we are in the case (A) or (B) above, and the tuple $(\lambda_{k+1},\ldots,\lambda_t)$ is non-resonant in the sense that for all maps $\phi:\{k+1,\ldots,t\}\rightarrow \nn_0$, not constantly zero, $\sum_{j=k+1}^t \phi(j)\lambda_j \allowbreak\neq 0$.
\end{definition}

\begin{remark}
In the case $\dimn X=2$, simple singularities are also called reduced singularities. These are usually defined as follows: If, in a neighbourhood of a singular point $x$, $\fcurl$ is given by a vector field $\Xcal$ whose linear part has eigenvalues $\lambda_1, \lambda_2$, then $x$ is a reduced singularity if the $\lambda_i$ are not both zero, and their ratio is not a positive rational number.

In \cite[Section 4]{IlYa08} and \cite[Section 1.1]{Bru04}, the following normal forms are given for a foliation in the neighbourhood of a reduced singularity, depending on the eigenvalues of $\Xcal$:

$\omega=ydx-\lambda xdy, \lambda\notin \qq^+$ -- this is of the form (A) from \thref{simple}, and is non-resonant;

$\omega=(x(1+\nu y^l))dy-y^{l+1}dx, \nu\in\ccx, l\in\nn$ -- this can be re-arranged to be of the form (B);

$\omega=py(1+g_2(x^py^q))dx+qx(1+g_1(x^py^q))dy, g_i\in\hat{\mm}_1$ -- we can assume $g_2=0$, so this can also be re-arranged to be of the form (B).
\end{remark}

\begin{remark}\thlabel{nonrednf}
We also have normal forms for non-reduced singularities of surface foliations. If the ratio of the eigenvalues of the linear part of $\Xcal$ is $\lambda\in\qq^+$, the foliations can be given by one of the following forms:

$\omega=ydx-\lambda xdy$, which is pre-simple of type (A), but not simple, as there is a resonance;

$\omega=ydx-(rx+ay^r)dy, r\in\nn, a\in\ccx^*$, which is of type (C).

If the linear part of $\Xcal$ is non-zero but has two zero eigenvalues, then there is no SNC divisor with two components tangent to the foliation at the singularity -- if there were, the foliation would be given by the form $yb_1 dx+xb_2 dy$, where the $b_i$ are holomorphic with constant term $\lambda_i$. Then the linear part of $\Xcal$ is $\lambda_1 y\frac{\partial}{\partial y}-\lambda_2 x\frac{\partial}{\partial x}$, which is non-zero only if it has a non-zero eigenvalue, a contradiction.

If there is a smooth curve tangent to the foliation, we choose co-ordinates such that the curve is $E=\vv(y)$, and the foliation is generated by $yb_1 dx+b_2 dy$, where $b_1(x,y)=\lambda_1 +\alpha_1 x+\beta_1 y+\cdots$, $b_2(x,y)=\alpha_2 x+\beta_2 y+\cdots$ (as $b_2$ vanishes at the origin). The conditions on the linear part of $\Xcal$ imply that $\alpha_2 =\lambda_1 =0$, and $\beta_2\neq 0$. Thus we have $\nu(\fcurl,E;0)=1$, and the linear part of $b_2$ depends only on $y$, and so the foliation is not pre-simple.

The foliation can be given by the normal form 
\[
\omega=ydy-(p(x)+yq(x))dx, \ord p\geq 2, \ord q\geq 1.
\]

If the linear part of $\Xcal$ is zero, the singularity is again non-pre-simple.
\end{remark}

\begin{notation}
Let $\icurl^n$ denote the set $\{(i,j)\in\zz^2 \mid 1\leq i<j \leq n\}$.

For $k<n$, let $\icurl^n_k$ denote $\icurl^n \setminus \{(1,k+1),\ldots, (1,n)\}$.
\end{notation}

Let $\fcurl$ be a foliation with a pre-simple singularity at the point $x$, with $\tau(\fcurl,x)=t$, and let $U$ be an open (or formal) neighbourhood of the point on which the foliation is given by one of the normal forms in \thref{simple} (written in the first $t$ co-ordinates). Then, in cases (A) and (B), $\Singr\fcurl\cap U=\bigcup_{(i,j)\in\icurl^t} \{x_i=x_j=0\}$, and in case (C), $\Singr\fcurl\cap U=\bigcup_{(i,j)\in\icurl^t_k} \{x_i=x_j=0\}$.

If $y\in\Singr\fcurl$, we denote the number of the first $t$ co-ordinates which equal zero by $z(y)$.

\begin{lemma}
Let $\fcurl$ be a foliation with a pre-simple singularity of type (A) or (B) at the point $x$, with $\tau(\fcurl,x)=t$, and let $U$ be an open (or formal) neighbourhood of the point on which the foliation is given by one of the normal forms in \thref{simple}. Let $y\in\Singr\fcurl$. Then $\tau(\fcurl,y)=z(y)$.
\end{lemma}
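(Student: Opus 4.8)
The plan is to compute the subspace $\Dcal_{\fcurl}(y)\subseteq T_yX$ from a normal-form generator of $\fcurl$ near $y$ and read off its codimension; the argument is the same in the convergent and the formal setting, as the local ring at $y$ (or its completion) is a unique factorisation domain in which the coordinates $x_i$ are pairwise non-associate primes. Work in the coordinates of \thref{simple}, write the generator as $\omega=\sum_{i=1}^{t}c_i\,dx_i$, and set $S=\{\,i\le t\mid x_i(y)=0\,\}$, so $z(y)=|S|$; since for types (A) and (B) one has $\Singr\fcurl\cap U=\bigcup_{(i,j)\in\icurl^{t}}\{x_i=x_j=0\}$, the hypothesis $y\in\Singr\fcurl$ gives $|S|\ge 2$. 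Expanding the normal form, each coefficient factors as $c_i=s_i\prod_{j\le t,\,j\neq i}x_j$, where $s_i\in\{\,p_1,\ \ p_i+\lambda_i\Psi(m),\ \ \lambda_i\Psi(m)\,\}$ and $m=x_1^{p_1}\cdots x_k^{p_k}$ (case (A) being the sub-case $k=0$, where $s_i=\lambda_i\in\ccx^{*}$ and no $\Psi$ appears; one treats it identically and more simply). The target is to prove $\Dcal_{\fcurl}(y)=\mathrm{span}\{\,\partial/\partial x_j\mid j\le n,\ j\notin S\,\}$, a subspace of codimension $|S|=z(y)$.

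For the inclusion of $\Dcal_{\fcurl}(y)$ into that span: given a germ $\Xcal=\sum a_i\,\partial/\partial x_i$ with $\omega(\Xcal)=\sum_{i\le t}c_ia_i=0$ and $i_0\in S$, note that $x_{i_0}\mid c_i$ for every $i\le t$ with $i\neq i_0$, whereas $x_{i_0}\nmid c_{i_0}$: it divides neither $\prod_{j\neq i_0}x_j$ nor $s_{i_0}$ (if $i_0\le k$ then $x_{i_0}(y)=0$ forces $m(y)=0$, so $s_{i_0}$ has nonzero value at $y$; if $i_0>k$ then $s_{i_0}=\lambda_{i_0}\Psi(m)$ lies in $\ccx[[x_1,\dots,x_k]]$ and is nonzero). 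Hence $x_{i_0}\mid c_{i_0}a_{i_0}=-\sum_{i\neq i_0}c_ia_i$, so $x_{i_0}\mid a_{i_0}$ and $a_{i_0}(y)=0$; thus $\Xcal(y)$ lies in the span. (This already gives $\tau(\fcurl,y)\ge z(y)$; it can also be obtained from \thref{elesst} applied to the SNC divisor $\bigcup_{i\in S}\{x_i=0\}$, whose components are components of the tangent divisor $\vv(x_1\cdots x_t)$.)

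For the reverse inclusion, $\partial/\partial x_j$ lies in $\Dcal_{\fcurl}(y)$ for free when $t<j\le n$, since $\omega$ has no $dx_j$. For $j\le t$, $j\notin S$, the key point --- and the only place where the shape of the normal form really enters --- is that one can pick $l\in S$ with $s_l$ invertible near $y$: if $S\cap\{1,\dots,k\}\neq\emptyset$ take $l$ there, forcing $m(y)=0$ and hence $s_l(y)\neq 0$; otherwise $m(y)\neq 0$, so $\Psi(m)$ is a unit near $y$ (using that $\Psi$ is non-vanishing off $0$) and any $l\in S\subseteq\{k+1,\dots,t\}$ works as $\lambda_l\in\ccx^{*}$. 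Then $\Xcal:=s_l\,\partial/\partial x_j-s_j(x_l/x_j)\,\partial/\partial x_l$ has coefficients regular near $y$ (as $x_j$ is a unit near $y$), satisfies $\omega(\Xcal)=0$ by the factorisation of the $c_i$, and has $\Xcal(y)=s_l(y)\,\partial/\partial x_j\neq 0$ since $x_l(y)=0$; so $\partial/\partial x_j\in\Dcal_{\fcurl}(y)$.

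Putting the inclusions together, $\Dcal_{\fcurl}(y)=\mathrm{span}\{\,\partial/\partial x_j\mid j\notin S\,\}$ and $\tau(\fcurl,y)=\codim\Dcal_{\fcurl}(y)=|S|=z(y)$. I expect the main (and fairly minor) obstacle to be the existence of the good index $l\in S$ in the reverse inclusion: this rests on the elementary equivalence $m(y)=0\iff S\cap\{1,\dots,k\}\neq\emptyset$ (valid because $p_i\ge 1$), which rules out the one configuration in which no such $l$ could exist; the rest is routine divisibility bookkeeping and the verification $\omega(\Xcal)=0$.
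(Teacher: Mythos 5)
Your proof is correct, and it takes a genuinely more hands-on route than the paper's. The paper's argument is two lines: for $\tau(\fcurl,y)\le z(y)$ it asserts that $\partial/\partial x_i$ ``annihilates $\omega$ at $y$'' whenever $y_i\neq 0$, and for $\tau(\fcurl,y)\ge z(y)$ it invokes \thref{elesst} applied to the $z(y)$ components of $\vv(x_1\cdots x_t)$ through $y$. You replace both halves with a direct computation of $\Dcal_\fcurl(y)$ as the span of $\{\partial/\partial x_j : j\notin S\}$, using the prime factorisation $c_i=s_i\prod_{j\neq i}x_j$ in the UFD $\ocurl_{X,y}$. This is more work, but it is also tighter: the paper's assertion about $\partial/\partial x_i$ is a pointwise statement (indeed $c_i$ is not the zero germ near $y$, it merely vanishes at $y$), whereas the definition of $\Dcal_\fcurl(y)$ demands $\omega(\Xcal)=0$ as a \emph{germ}; the paper's one-liner is really a shorthand for a cylinder reduction carried out elsewhere in the thesis, while your explicit field $\Xcal=s_l\,\partial/\partial x_j-s_j(x_l/x_j)\,\partial/\partial x_l$ settles the germ condition on the spot. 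Your case analysis to secure an index $l\in S$ with $s_l$ a unit near $y$ (via $m(y)=0\iff S\cap\{1,\dots,k\}\neq\emptyset$) is exactly the point where the type (B) normal form matters, and you handle it correctly. The trade-off is brevity: the paper's version is shorter because \thref{elesst} and the cylinder lemma absorb the work you do by hand; yours buys a self-contained argument and, as a by-product, an independent proof of the lower bound that does not route through \thref{elesst}.
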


\begin{proof}[Proof: \nopunct]
Let $\omega$ be the $1$-form generating $\fcurl$ near $x$. If $y_i\neq 0$, then $\frac{\partial}{\partial x_i}$ annihilates the $1$-form at $y$. Thus $\tau(\fcurl,y)\leq z(y)$. However, $y$ lies in an SNC divisor $E$ with $z(y)$ components of $E$ through $y$, hence the result holds by \thref{elesst}.
\end{proof}

\begin{definition}
Let $\fcurl$ be a foliation given locally by the $1$-form $\omega=b_1(x_1,\ldots,x_t)dx_1+\cdots +b_t(x_1,\ldots,x_t)dx_t$.
For a fixed point $y=(y_1,\ldots,y_t)$, we define the $1$-form 
\begin{multline}
\omega^i_y=b_1(x_1,\ldots,y_i,\ldots,x_t)dx_1+\cdots+b_{i-1}(x_1,\ldots,y_i,\ldots,x_t)dx_{i-1}+ \nonumber \\
b_{i+1}(x_1,\ldots,y_i,\ldots,x_t)dx_{i+1}+\cdots+b_t(x_1,\ldots,y_i,\ldots,x_t)dx_t;
\end{multline}
denote by $\fcurl^i_y$ the foliation generated by $\omega^i_y$.
Extra indices are added recursively: $\omega^{(i,j)}_y=(\omega^i_y)^j_y$, etcetera, and the same for $\fcurl^{(i,j)}_y$
\end{definition}

\begin{lemma}
Let $\fcurl$ be a foliation given by $\omega$, with a pre-simple singularity at the point $x$, with $\tau(\fcurl,x)=t$, and let $U$ be an open (or formal) neighbourhood of the point on which the foliation is given by one of the normal forms in \thref{simple}. Let $y\in\Singr\fcurl$, and let $I\subset \{1,\ldots,t\}$ be the list of indices of non-zero co-ordinates of $y$. Then $\fcurl$ is generated in a neighbourhood of $y$ by $\omega^I_y$.
\end{lemma}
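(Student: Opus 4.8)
The plan is to show that in a suitable formal coordinate system at $y$ a generator of $\fcurl$ is $\omega^I_y$, by ``freezing'' the coordinates $x_i$ with $i\in I$ one at a time. Throughout I work on $U$, where $\fcurl$ is generated by a $1$-form $\omega$ in one of the normal forms of \thref{simple} written in the first $t$ coordinates, and I may assume $y\in U$; write $K=\{1,\ldots,t\}\setminus I$ for the set of indices among $1,\ldots,t$ at which $y$ vanishes. From the description of $\Singr\fcurl\cap U$ recalled before the lemma, $|K|\geq 2$ (and in case (C) the vanishing pair exhibiting $y\in\Singr\fcurl$ lies in $\icurl^t_k$).

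First, a purely formal observation pins down the target form. Pulling $\omega$ back along the inclusion of the linear slice $V=\{x_i=y_i:i\in I\}$ sends $dx_i\mapsto 0$ and $x_i\mapsto y_i$ for every $i\in I$, so by the recursive definition this pullback is exactly $\omega^I_y$. Inspecting the normal form, the coefficients of $\omega^I_y$ have no common factor (the $x_i$ with $i\in I$ have become nonzero constants, and the surviving structure keeps the coefficients coprime), and $\omega^I_y$ is again one of the three normal forms of \thref{simple}; in cases (A) and (B) it is a form in the $|K|=z(y)$ variables $\{x_i:i\in K\}$ carrying a normal-crossings divisor with $z(y)$ tangent components. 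Hence $\omega^I_y$ generates the foliation $\fcurl\restn V$, and in cases (A) and (B), $\tau(\fcurl\restn V,y)=z(y)$ by \thref{elesst} (the upper bound being immediate).

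Second, I claim that near $y$ the foliation $\fcurl$ is a cylinder over $\fcurl\restn V$. By the preceding lemma (valid in cases (A) and (B)) we have $\tau(\fcurl,y)=z(y)$, so by \thref{dimtyp} $\fcurl$ is, in some formal coordinate system at $y$, a cylinder over a foliation $\fcurl_0$ in $z(y)$ of those coordinates. Since $\tau(\fcurl\restn V,y)=\tau(\fcurl,y)$ by the first step, $V$ cannot be tangent at $y$ to a fibre of the corresponding projection (otherwise a generator of $\fcurl_0$ would pull back to $V$ using fewer than $z(y)$ coordinates, contradicting the previous paragraph); so those $z(y)$ coordinates restrict to independent formal coordinates on $V$, $\fcurl\restn V$ is itself a cylinder over $\fcurl_0$, and thus $\fcurl$ near $y$ and $\fcurl\restn V$ are cylinders over the same $\fcurl_0$. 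As $\fcurl\restn V$ is generated by $\omega^I_y$, it follows that $\fcurl$ is generated by $\omega^I_y$ in a suitable formal coordinate system near $y$. Equivalently, and more explicitly, one absorbs the $\tfrac{dx_i}{x_i}$-term with $i\in I$ — an exact holomorphic $1$-form at $y$ since $x_i$ is a unit there, after first dividing by the unit $\Psi(\cdot)$ in case (B) — into a single coordinate $x_j$, $j\in K$, via a substitution $x_j\mapsto x_j\exp(h)$ with $h$ holomorphic at $y$, and iterates over $i\in I$, each frozen variable turning into a genuine cylinder direction.

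The step I expect to be the main obstacle is case (C). There the preceding lemma on dimensional type was not stated, so one must first establish the value of $\tau(\fcurl,y)$ — which turns on whether the distinguished coordinate $x_1$ (the one carrying the ``bare'' $dx_1$-term) is among the vanishing coordinates of $y$ — and then verify that $\omega^I_y$ is again one of the three normal forms. The bookkeeping forced by the $dx_1$-term, absent in types (A) and (B), is where the real case analysis lives; in types (A) and (B) the reduction above is essentially immediate.
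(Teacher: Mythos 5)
Your argument for types (A) and (B) is essentially sound and lands on the same structural fact as the paper's proof (that near $y$ the foliation is a cylinder over its restriction to the slice $V=\{x_i=y_i\mid i\in I\}$, and that this restriction is generated by $\omega^I_y$). But the proof is incomplete: you explicitly leave type (C) as an unresolved ``main obstacle,'' and the lemma is stated for all pre-simple singularities. Type (C) is not a corner case here --- it is precisely the case invoked later (in the proof of \thref{bigthm2f}, converse direction, case (C), where $\omega^I_y$ is computed for $I=(3,\ldots,t)$), so the gap is material.

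The reason you hit this obstacle is that you route the cylinder claim through the preceding lemma ($\tau(\fcurl,y)=z(y)$, stated only for (A) and (B)) and \thref{dimtyp}, and then have to re-identify the abstract cylinder directions with the slice $V$. The paper's proof is shorter and uniform across all three types: for each $i\in I$ one checks directly that $\frac{\partial}{\partial x_i}$ annihilates $\omega$ at $y$ --- this holds because every coefficient of $dx_j$ in the normal forms contains a product of the form $\prod_{l\neq j} x_l$ (over an appropriate index set), and any singular $y$ has at least two vanishing coordinates among $x_1,\ldots,x_t$, so at least one vanishing factor survives the omission of $x_i$. One then cites the cylinder lemma (\cite[Lemma 10.8]{Car21}): a coordinate direction lying in $\Dcal_\fcurl(y)$ forces $\fcurl$ to be, near $y$, a cylinder over $\fcurl\restn{\{x_i=y_i\mid i\in I\}}$, which is generated by $\omega^I_y$ by definition of the restriction. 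No case analysis on (A)/(B)/(C) and no computation of $\tau(\fcurl,y)$ is needed. If you replace your appeal to the preceding lemma with this pointwise annihilation criterion, your case (C) difficulty disappears and the rest of your argument goes through.
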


\begin{proof}[Proof: \nopunct]
If $i\in I$, then $\frac{\partial}{\partial x_i}$ annihilates $\omega$ at $y$. Then in a neighbourhood of $y$, $\fcurl$ is a cylinder over $\fcurl\restn{\{x_i=y_i\mid i\in I\}}$ (see \cite[Lemma 10.8]{Car21}). This foliation is given by the form $\omega^I_y$.
\end{proof}

\begin{theorem}\thlabel{bigthm2f}
Let $\fcurl$ be a codimension-$1$ foliation on $X$, and let $x\in\Singr\fcurl$ be a pre-simple singularity with $\tau(\fcurl,x)=t$. Then $x$ is a simple singularity if and only if there is an open (or formal) neighbourhood $U\ni x$ such that in a local co-ordinate system on $U$, $x=0$,  $\Singr\fcurl\cap U\subset\Singr\vv(x_1\cdots x_t)$, and $J_m(\fcurl,y)$ is isomorphic to $J_m(\vv(x_1\cdots x_t),y)$ for all $y\in \Singr\fcurl\cap U$ and all $m\in \nn_0$.
\end{theorem}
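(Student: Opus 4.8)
The plan is to argue through the normal forms of \thref{simple}. Write a pre-simple generator $\omega$ of $\fcurl$ near $x$ in one of the forms (A), (B), (C) in a local co-ordinate system (possibly only formal, so that $U$ may only be a formal neighbourhood). The inclusion $\Singr\fcurl\cap U\subset\Singr\vv(x_1\cdots x_t)$ is then immediate from the explicit descriptions of $\Singr\fcurl\cap U$ recorded above (it is an equality in cases (A) and (B), a proper inclusion in case (C)), so the entire content lies in the jet fibres. Fix $y\in\Singr\fcurl\cap U$ and put $s=z(y)=\tau(\fcurl,y)$. By the restriction and cylinder lemmas above, near $y$ the foliation $\fcurl$ is a cylinder over a foliation $\fcurl_y$ on $\aaf^s$ generated by the substituted form $\omega^I_y$, which is again of type (A) or (B) (resp.\ of type (C)) in $s$ variables, its coefficient tuple being a sub-tuple of the original; correspondingly $\vv(x_1\cdots x_t)$ becomes $\vv(x_1\cdots x_s)$ near $y$. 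Since the jet fibre of a cylinder is the product of the jet fibre of its base with an affine space (the cylinder directions contributing freely), and the same holds for $\vv(x_1\cdots x_t)$ at $y$ by the remark after \thref{jetex2}, the assertion at $y$ reduces to $J_m(\fcurl_y,0)\cong J_m(\vv(x_1\cdots x_s),0)$ for all $m$, with the affine factors matching on both sides.

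For the forward implication, suppose $x$ is simple; then $\fcurl_y$ is of type (A) with a non-resonant tuple, or of type (B) (non-resonance is inherited by every sub-tuple, and the type is preserved under restriction). I claim $J_m(\fcurl_y,0)=J_m(\vv(x_1\cdots x_s),0)$ for all $m$. The inclusion ``$\supseteq$'' is general: each hyperplane $\{x_i=0\}$ is invariant for $\fcurl_y$, so $E=\vv(x_1\cdots x_s)$ is a reduced solution by \thref{snctang}, hence strongly tangent by \thref{redstt}. For ``$\subseteq$'', take a jet $\tau$ over $0$, let $d_i=\ord_t(x_i\circ\tau)\in\nn\cup\{\infty\}$; if some $d_i=\infty$ then $\tau$ trivially lies in $J_m(\vv(x_1\cdots x_s),0)$. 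Otherwise, pulling back $\omega=(x_1\cdots x_s)\eta$ along $\tau$, every summand of $\tau^*\omega$ has $t$-order $(\sum_i d_i)-1$, and the coefficient of $t^{(\sum d_i)-1}\,dt$ is $(\prod_i c_i)\cdot\Sigma$, where $c_i\ne 0$ is the leading coefficient of $x_i\circ\tau$ and $\Sigma=\sum_i p_i d_i$ in type (B) (the contribution of the $\sum p_i\tfrac{dx_i}{x_i}$ part, the $\Psi$-part having strictly larger order since $\Psi$ vanishes at $0$) and $\Sigma=\sum_i\lambda_i d_i$ in type (A). In type (B) $\Sigma>0$ as the $p_i$ are positive; in type (A) $\Sigma\ne 0$ by non-resonance. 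Hence $\ord_t(\tau^*\omega)=(\sum d_i)-1$ exactly, so $\tau\in J_m(\fcurl_y,0)\iff\sum d_i\ge m+1\iff\tau\in J_m(\vv(x_1\cdots x_s),0)$, which gives the equality and completes this direction.

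For the converse I argue the contrapositive: if $x$ is pre-simple but not simple, then either $x$ is of type (C), or of type (A)/(B) with $(\lambda_{k+1},\ldots,\lambda_t)$ resonant. In the resonant case choose a relation $\sum_{j\in S}\phi(j)\lambda_j=0$ with all $\phi(j)\in\nn$; since no single $\phi(j)\lambda_j$ can vanish, $|S|\ge 2$, so some $y\in\Singr\fcurl\cap U$ has vanishing co-ordinates exactly those indexed by $S$ and there $\fcurl_y$ is of type (A) on $\aaf^{|S|}$ with the resonant tuple $(\lambda_j)_{j\in S}$. Put $D=\sum_{j\in S}\phi(j)$. By the computation above (now with leading coefficient $\propto\sum\lambda_j\phi(j)=0$), every jet $\tau$ with $\ord_t(x_j\circ\tau)\ge\phi(j)$ for all $j\in S$ satisfies $\ord_t(\tau^*\omega)\ge D$, hence lies in $J_D(\fcurl_y,0)$; this locus is a linear subspace of $J_D(\aaf^{|S|},0)$ of dimension $(|S|-1)(D+1)+1$, strictly larger than $\dimn J_D(\vv(x_1\cdots x_{|S|}),0)=(|S|-1)(D+1)$ (by \thref{jetex2}, using $D\ge|S|$). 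So the jet fibres at $y$ are not isomorphic. In the type (C) case one restricts to the two-dimensional singular point with vanishing co-ordinates $\{x_1,x_2\}$ (admissible, as $2\le k$), where $\fcurl_y$ is generated by $\omega=x_2\,dx_1-(p\,x_1-a\,x_2^{\,p})\,dx_2$ with $a\ne 0$; here $\{x_1=0\}$ is not invariant (the only invariant curve being $\{x_2=0\}$), so $\vv(x_1x_2)$ is not strongly tangent, and a direct computation of $J_m(\fcurl_y,0)$ locates the order $m$ at which its irreducible decomposition (equivalently its dimension or multiplicity, according to $p$) first differs from that of $J_m(\vv(x_1x_2),0)$, showing the two are not isomorphic.

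The main obstacle is twofold. First, the order computations above naturally control only the sets of $\ccx$-points of the jet fibres, whereas the statement concerns schemes; one must therefore verify that the defining ideals agree (forward direction) or exhibit a genuine isomorphism invariant --- dimension, number of irreducible components, or multiplicity --- that distinguishes them (converse), and track the affine cylinder factors so that these invariants remain meaningful after reduction to the base. Second, the type (C) case is the delicate one: because its ``logarithmic'' first integral produces no second separatrix, the jet fibre must eventually diverge from the normal-crossings model, yet for $p\ge 2$ the low-order jet fibres coincide, so pinning down the first discrepancy and extracting from it an invariant robust under the formal change of co-ordinates of \thref{simple} (legitimate by \thref{folfib}) is where the real work lies.
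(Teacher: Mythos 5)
Your overall architecture matches the paper's: reduce to the normal forms of \thref{simple}, handle a general singular point $y$ via the restricted forms $\omega^I_y$ and the cylinder structure, prove the forward direction by showing the foliation's jet loci coincide with those of $\vv(x_1\cdots x_t)$ using non-resonance, and prove the converse by locating a resonant type-(A) restriction for non-simple points of types (A)/(B). Your forward argument is a little different in flavour --- you compute $\ord_t(\tau^*\omega)=(\sum_i d_i)-1$ jet-by-jet, where the paper instead runs an induction on the defining equations of $J_m(\fcurl,0)$ (showing each new coefficient splits the components exactly as for the normal crossings divisor, with the $\Psi$-term killed by the lower-order equalities in case (B)); the two arguments carry the same information, and your own caveat about controlling only $\ccx$-points applies in roughly equal measure to both, since the comparison is ultimately one of reduced components and dimensions. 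Your resonant-case dimension count (the locus $\{\ord_t(x_j\circ\tau)\ge\phi(j)\}$ of dimension $(|S|-1)(D+1)+1$ inside $J_D(\fcurl_y,0)$ versus $(|S|-1)(D+1)$ for the normal crossings model) is essentially the paper's exhibition of an oversized component at order $K=\sum r_i$.

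The genuine gap is type (C), and it is exactly where you say "the real work lies" --- but you then do not do the work. After restricting to the planar point where the generator is $\omega=y\,dx-(rx+ay^r)\,dy$, the assertion that "a direct computation locates the order $m$ at which the irreducible decomposition first differs" is not a proof: for $r\ge 2$ the low-order jet fibres of $\fcurl_y$ and of $\vv(xy)$ genuinely agree, there is no second separatrix to make the union argument work, and no single leading-coefficient computation identifies the discrepancy. The paper resolves this by performing $r$ iterated blow-ups, showing on each chart that an explicit non-reduced curve is fully tangent to the (unsaturated) transform, and blowing back down (\thref{pftang}, \thref{unsat}) to conclude that $C=\vv(y^{r+1},xy)$ is fully tangent to $\fcurl_y$; then $J_{r+1}(\fcurl_y,0)=J_{r+1}(C,0)$ has one fewer irreducible component than $J_{r+1}(\vv(xy),0)$, which is the coordinate-invariant discrepancy you were looking for. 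Without some such computation (or an equivalent substitute), the converse direction is incomplete, since a pre-simple non-simple singularity of type (C) need not exhibit any resonant type-(A) restriction. A secondary, smaller point: in your type-(B) forward step you should also note that indices $i>k$ contribute to $\tau^*\omega$ only through the $\Psi$-term, so the claimed leading coefficient $(\prod_i c_i)\sum_{i\le k}p_id_i$ is correct precisely because $\Psi\in\hat{\mm}_1$ forces those contributions to higher order --- you say this, but it is the step that makes $\Sigma>0$ suffice.
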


\begin{proof}[Proof: \nopunct]
First, by \thref{simple} we can choose a neighbourhood $U$ of $x$ on which, by a formal co-ordinate change, we can write the generator $\omega$ in one of the above normal forms, and say $x=0$ and that the singular locus is in the right form. Throughout the proof, though we are using formal co-ordinate changes, we can assume the induced isomorphisms on the jet fibres converge by \thref{folfib}.

In calculating the jets, we set $x_i=\sum a_{ij}t^j$, and so $dx_i=(\sum ja_{ij}t^{j-1})dt$, and we equate co-efficients to get the co-efficient of $t^j$.

Now let us suppose $x$ is a simple singularity. We have the following cases:

(A): The co-efficient that vanishes for jets of order $m$ or higher at the origin can be calculated as
\[
\sum_{\{(i_1,\ldots,i_t)\mid i_1+\cdots+i_t=m\}} \left(\sum_{j=1}^t \lambda_j i_j\right) a_{1,i_1}a_{2,i_2}\cdots a_{t,i_t}.
\]
By non-resonance of the $\lambda_i$, none of the co-efficients in the sum vanishes; hence we have $J_m(\fcurl,0)=J_m(\vv(x_1\cdots x_t),0)$.

Now let $y$ be some other singular point, lying in $U$, with $\tau(\fcurl,y)=s$, and with $I$ the list of indices of its non-zero co-ordinates. Then $J_m(\fcurl,y)=J_m(\fcurl^I_y,0)$. As $\omega^I_y=\prod_{1\leq i\leq t, i\notin I}x_i(\sum_{1\leq i\leq t, i\notin I}\lambda_i\frac{dx_i}{x_i})$ is also of the form (A), with the $\lambda_i$ non-resonant, this in turn is isomorphic to $J_m(\vv(x_1\cdots x_s),0)\cong J_m(\vv(x_1\cdots x_t),y)$.

(B): We write $\omega=\omega'+\Psi \omega''$.  For $m<p_1+\cdots +p_k+t-1$, (where the notation is the same as in \thref{simple}), the jets of order $m$ are given solely by the $\omega'$ term. Write $\omega'=x_{k+1}\cdots x_t \eta$, where
 \[
\eta=p_1x_2\cdots x_kdx_1+\cdots+p_kx_1\cdots x_{k-1}dx_k.
\]
 This is of the form (A), and the tuple $(p_1,\ldots,p_k)$ is non-resonant. Letting $\gcurl$ denote the foliation generated by $\eta$, we have $J_m(\gcurl,0)=J_m(\vv(x_1\cdots x_k),0)$, and hence $J_m(\fcurl,0)=J_m(\vv(x_1\cdots x_t),0)$ by \thref{unsat}.

So for low order, 
\begin{multline}
J_m(\fcurl,0)=\{a_{11}=\cdots =a_{1j_1}=a_{21}=\cdots=a_{2j_2}=\cdots  \\ \nonumber=a_{t1}=\cdots=a_{tj_t}=0\mid
j_1+\cdots +j_t=m-t+1\}.
\end{multline}
 Suppose this holds for all orders $m\leq K$. Now every summand in the co-efficient of $t^K$ contributed by the $\Psi\omega''$ term is the product of $p_1+\cdots +p_k+t-1$ terms of the form $a_{ij}$, where the sum of the second indices equals $K$. The equalities from the lower order jets cause these terms to vanish; else there is a summand with $a_i$ terms only of order $>j_i$, contradicting that the sum of the indices is $K$.

Hence by induction, $J_m(\fcurl,0)=J_m(\vv(x_1\cdots x_t),0)$ for all $m$.

Again let $y$ be some other singular point, lying in $U$, with $\tau(\fcurl,y)=s$, and with $I$ the list of indices of its non-zero co-ordinates. Then $J_m(\fcurl,y)=J_m(\fcurl^I_y,0)$. If $I\supset \{1,\ldots, k\}$, then 
\[
\omega^I_y=\prod_{k+1\leq i\leq t, i\notin I}x_i \left(\sum_{k+1\leq i\leq t, i\notin I}\lambda_i\frac{dx_i}{x_i}\right)
\]
is of the form (A), with the $\lambda_i$ non-resonant. Otherwise 
\begin{multline}
\omega^I_y=\prod_{1\leq i\leq t,i\notin I}x_i \left(\sum_{1\leq i\leq k,i\notin I}p_i\frac{dx_i}{x_i}+ \right.\nonumber \\
\left.\Psi\left(\prod_{1\leq i\leq k,i\notin I}x_i^{p_i}\prod_{1\leq i\leq t,i\in I}y_i^{p_i}\right)\sum_{2\leq i\leq t,i\notin I}\lambda_i\frac{dx_i}{x_i}\right)
\end{multline}
 is of the form (B). In either case, we have $J_m(\fcurl,y)\cong J_m(\vv(x_1\cdots x_s),0)\cong J_m(\vv(x_1\cdots x_t),y)$.

Conversely, suppose that $x$ is a pre-simple singularity that is not simple.
We again have the following cases:

(A): Non-simplicity means that the tuple $(\lambda_1,\ldots,\lambda_t)$ is resonant: Let $(r_1,\ldots,r_t)$ be a tuple of non-negative integers (not all zero), such that $\sum_{i=1}^t r_i\lambda_i=0$. (We can choose the $r_i$ so that their sum is minimal among all such tuples).
Let $K=r_1+\cdots +r_t$.

Again, the co-efficient that vanishes for jets of order $m$ or higher at the origin is
\[
\sum_{\{(i_1,\ldots,i_t)\mid i_1+\cdots+i_t=m\}} \left(\sum_{j=1}^t \lambda_j i_j\right) a_{1,i_1}a_{2,i_2}\cdots a_{t,i_t}.
\]
For $m<K$, none of the co-efficients in the sum vanishes; hence we have $J_m(\fcurl,0)=J_m(\vv(x_1\cdots x_t),0)$. For the jets of order $K$, the extra defining equation has a co-efficient of $0$ for the $a_{1,r_1}a_{2,r_2}\cdots a_{t,r_t}$ term, hence, as for the order below, there is a component given by
\[
a_{11}=\cdots =a_{1,r_1-1}=a_{21}=\cdots=a_{2,r_2-1}=\cdots=a_{t1}=\cdots=a_{t,r_t-1}=0,
\]
which is of higher dimension than $J_K(\vv(x_1\cdots x_t),0)$, so there is no isomorphism.

(B): We have $k<t-1$, otherwise the non-resonance condition on the $\lambda_i$ is trivial, and the singularity is automatically simple. Consider the point $y=(y_1,\ldots,y_k,0,\ldots,0)$, where $y_i\neq 0$ for all $i=1,\ldots,k$. This is a singular point with dimensional type $t-k$, and $I=(1,\ldots,k)$. The $1$-form $\omega^I_y$ is of the form (A) and is resonant. So we have
\[
J_m(\fcurl,y)=J_m(\fcurl^I_y,0)\ncong J_m(\vv(x_1\cdots x_{t-k}),0)\cong J_m(\vv(x_1\cdots x_t),y)
\]
for some $m\in\nn$.

(C): The point $y=(0,0,y_3,\ldots,y_t)$, where $y_i\neq 0$ for all $i=3,\ldots,t$, is a singular point of dimensional type $2$, about which the foliation is given by $\omega^I_y$ for $I=(3,\ldots,t)$. Now $\omega^I_y=x_2dx_1-(p_2x_1-\lambda_2\prod_{i=3}^k y_i^{p_i}x_2^{p_2})dx_2$, which for simplicity we can write in the form $ydx-(rx+ay^r)dy$ (see \thref{nonrednf}). We study the behaviour of this form by blowing up at the origin.

On the chart $x=yt$, we get $\omega=y(ydt-((r-1)t+ay^{r-1})dy)$, which is of the same form.
So we take $r$ iterated blow-ups. On one chart we have $x=y^r t$. Then we have $\omega=y^r(ydt-ady)$, which gives a smooth (unsaturated) foliation with fully tangent curve through the origin $\vv(y^{r+1})$, by \thref{unsat} applied to the leaf $\vv(y)$.

On the other charts, we have $x=y^j u, 0\leq j \leq r-1$, which gives $\omega=y^j(ydu-((r-j)u+ay^{r-j})dy)$, followed by $y=ut$. This gives 
\[
\omega=u^{j+1}t^j(((1-r+j)t-au^{r-j-1}t^{r-j+1})du-((r-j)u+au^{r-j}t^{r-j})dt).
\]
 The linear part has ratio of eigenvalues $\frac{1-r+j}{r-j}\leq 0$, so the saturation is reduced. Therefore the unsaturated foliation has fully tangent curve through the origin $\vv(u^{j+2} t^{j+1})$, by \thref{unsat} applied to $\vv(ut)$, which is fully tangent by the argument for simple singularities above.

Blowing back down, we see that the original foliation has fully tangent curve $C=\vv(y^{r+1},xy)$, by \thref{pftang}. Now $J_{r+1}(\vv(y^{r+1}),0)=\vv(b_1)$, and so $J_{r+1}(\fcurl,0)=J_{r+1}(C,0)=J_{r+1}(\vv(y^{r+1},0)\cap J_{r+1}(\vv(xy),0)$ has one fewer irreducible component than $J_{r+1}(\vv(xy),0)$, by \thref{jetex}, so there is no isomorphism. The result follows.
\end{proof}

\begin{theorem}\thlabel{bigthm2b}
Let $\fcurl$ be a codimension-$1$ foliation on $X$, and let $x\in\Singr \fcurl$ be a singularity with $\tau(\fcurl,x)=t$ which is not pre-simple. Then there exists a natural number $m$ such that $J_m(\fcurl,x)$ is not isomorphic to $J_m(\vv(x_1\cdots x_t),0)$.
\end{theorem}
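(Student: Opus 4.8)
The plan is to show that some finite jet scheme at $x$ fails to be isomorphic to the corresponding jet scheme of the normal crossings model, running the computations in the proof of \thref{bigthm2f} ``in reverse'' — producing a scheme-theoretic obstruction rather than matching a normal form. First I would reduce to the case $n=t$: by \thref{dimtyp} there are formal coordinates near $x$ in which the generator $\omega$ of $\fcurl$ involves only $x_1,\dots,x_t$, so $\fcurl$ is formally the cylinder over a foliation $\gcurl$ on $\aaf^t$ with a non-pre-simple singularity of dimensional type $t$ at the origin, and by \thref{folfib} the jet fibres are algebraic schemes unchanged by this formal change of coordinates. Since the jet fibres of a cylinder factor, $J_m(\fcurl,x)\cong J_m(\gcurl,0)\times\aaf^{(n-t)m}$, and likewise $J_m(\vv(x_1\cdots x_t),0)$ in $\aaf^n$ is the product of its analogue in $\aaf^t$ with the same affine factor, and since each discrepancy I will produce (one scheme reduced, the other not; differing numbers or dimensions of irreducible components; a singular locus of the reduction of the wrong dimension) is insensitive to multiplication by an affine space, it suffices to treat $\gcurl$. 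So assume $n=t$, and $\omega=\sum_{i=1}^t b_i\,dx_i$ with $\gcd(b_i)=1$, $\omega\wedge d\omega=0$ and all $b_i(0)=0$.

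When $t=2$, I would invoke the classification \thref{nonrednf}: a non-pre-simple surface singularity is formally either one whose saturated generator has vanishing linear part, so that $\ord_0\omega\ge2$ and $\tau^*\omega$ vanishes to order $\ge2$ in the jet parameter for every jet $\tau$, whence $J_2(\fcurl,0)=\aaf^4$, which is not the proper subscheme $J_2(\vv(xy),0)=\vv(a_1b_1)$ of \thref{jetex}; or the nilpotent normal form $\omega=y\,dy-(p(x)+yq(x))\,dx$ with $\ord p\ge2$, $\ord q\ge1$, for which the coefficient of the jet parameter in $\tau^*\omega$ is $b_1^2$ (in the jet coordinates of \thref{reducedex}), so that $J_2(\fcurl,0)=\vv(b_1^2)$ is non-reduced and hence not isomorphic to the reduced scheme $\vv(a_1b_1)$. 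Either way, $m=2$ already works.

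For $t\ge3$, I would take a normal crossings divisor $E=\vv(x_1\cdots x_s)$ through $x$ all of whose components are tangent to $\fcurl$ and whose number $s$ of components is maximal among all such divisors ($0\le s\le t$, with $s=0$, $E=\emptyset$, allowed), put $\omega$ into Cano's adapted form $\omega=\bigl(\prod_{i\le s}x_i\bigr)\bigl(\sum_{i\le s}b_i\tfrac{dx_i}{x_i}+\sum_{s<i\le t}b_i\,dx_i\bigr)$, and set $\nu=\nu(\fcurl,E;x)=\min_i\ord_0 b_i$, which is $\ge1$ since $\fcurl$ is not pre-simple adapted to $E$. For every jet $\tau$ through $0$, the $s-1$ (resp.\ $s$) surviving factors of $\prod x_i$ each pull back to order $\ge1$ in the jet parameter and each $b_i$ to order $\ge\nu$, so $\tau^*\omega$ vanishes to order $\ge\nu+\max(s-1,0)$; hence $J_m(\fcurl,0)=\aaf^{tm}$ for $m\le\nu+\max(s-1,0)$, whereas $J_m(\vv(x_1\cdots x_t),0)=\aaf^{tm}$ only for $m\le t-1$ and $J_t(\vv(x_1\cdots x_t),0)=\vv(a_{11}\cdots a_{t1})$ by \thref{jetex2}. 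So if $\nu+\max(s-1,0)\ge t$ — in particular whenever $s=t$ — then $J_t(\fcurl,0)=\aaf^{t^2}$ already differs from the model. Otherwise, let $m^*\le t$ be the first order at which either side ceases to be the full affine space; if $m^*<t$, this must be $J_{m^*}(\fcurl,0)$, a proper subscheme of $J_{m^*}(\vv(x_1\cdots x_t),0)=\aaf^{tm^*}$, and we finish there.

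The remaining, delicate case is $m^*=t$ with $t\ge3$. After dispatching the possibility that $J_t(\fcurl,0)=\aaf^{t^2}$ (an immediate discrepancy), $J_t(\fcurl,0)$ is cut out by a single homogeneous degree-$t$ polynomial $P$ in the linear jet coordinates $a_{11},\dots,a_{t1}$, computed from the leading part of $\omega$ by the mechanism of \thref{jetpt}. Comparing $\vv(P)$ with $\vv(a_{11}\cdots a_{t1})$ and using the precise failure of pre-simplicity — that $\mu(\fcurl,E;x)\ge2$ when $\nu=1$, or else that the linear part of every $b_i$ with $1\le i\le s$ involves only $x_1,\dots,x_s$ — I would show that $P$ is identically zero, or has a linearly dependent set of linear factors, or has an irreducible factor of degree $\ge2$, so that $\vv(P)$ is respectively too large, has a singular locus of the wrong dimension, is non-reduced, or has a component that is not a linear subspace. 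In the residual configurations, where $P=c\,a_{11}\cdots a_{t1}$ with $c\ne 0$, I would pass to order $t+1$: the correction terms from the next homogeneous part of $\omega$ and from the second-order jet coordinates obstruct the match, for if they all vanished then $\omega$ would be divisible through the relevant order by $\prod_{i\le s}x_i$ in a way that produces a further tangent hyperplane, contradicting maximality of $s$. I expect this last range, $\nu+\max(s-1,0)<t$ for $t\ge3$, to be the main obstacle: one must show that the failure of a low-order, divisor-adapted condition always propagates to a detectable discrepancy in a finite jet scheme, uniformly in $s$ and $\nu$ and with careful bookkeeping of which scheme-theoretic invariant breaks at which order. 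For the residual configurations I would, as with case (C) in the proof of \thref{bigthm2f}, reduce each to a $2$-dimensional (or small-dimensional-type) coordinate section and invoke the surface analysis together with \thref{nonrednf}.
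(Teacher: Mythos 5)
Your proposal takes a genuinely different route from the paper, and it does not close the hardest case.  You split by $s$, the maximal number of tangent SNC components through $x$, and push jet computations until a scheme-theoretic invariant breaks.  The easy half of this works: for $s = t$ (the paper's first case) and, more generally, whenever $\nu + \max(s-1,0) \ge t$ or $m^* < t$, the unadapted order is large enough that $J_t(\fcurl,x)$ is a full affine space or already proper at some order $< t$, and you correctly conclude.  Your $t=2$ reduction to \thref{nonrednf} (nilpotent and zero-linear-part forms, giving $J_2 = \aaf^4$ or $J_2 = \vv(b_1^2)$) is a valid alternative to the paper's uniform argument, and the cylinder reduction to $n=t$ is fine provided you note that each discrepancy you exhibit (non-reducedness, component count, etc.) is indeed stable under $\times\,\aaf^{(n-t)m}$.

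The serious gap is the case $s \le t-2$, which you defer to ``residual configurations'' with a speculative analysis of a leading-form polynomial $P$ (``I would show that $P$ is identically zero, or \dots has an irreducible factor of degree $\ge 2$'') followed by a proposed pass to order $t+1$ and a reduction to surface sections.  None of that is a proof, and you acknowledge it is the main obstacle.  The paper disposes of this case in two lines by a conceptual argument you have not found: if $J_m(\fcurl,x) \cong J_m(\vv(x_1\cdots x_t),0)$ for all $m$ (via some formal ambient isomorphism), then by \thref{snctang2} the pre-image of $\vv(x_1\cdots x_t)$ is a solution for $\fcurl$, hence an SNC divisor with $t$ components tangent to $\fcurl$ through $x$, contradicting $s \le t-2$.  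This sidesteps the entire leading-form bookkeeping.  The $s = t-1$ case likewise has a cleaner resolution in the paper than what you sketch: subcases $\nu \ge 2$ or $\mu \ge 2$ push the unadapted order to $\ge t$ (which your order count also gives), and in the remaining subcase $\nu=\mu=1$ with all linear parts independent of $x_t$, the degree-$t$ jet equation is $a_{1,1}\cdots a_{t-1,1} \cdot L(a_{1,1},\ldots,a_{t-1,1})$, a polynomial \emph{missing} the variable $a_{t,1}$ entirely; either it vanishes identically (giving a full affine space) or it cuts out at most $t$ hyperplanes all containing a common $\aaf^1$, which is not the geometry of $J_t(\vv(x_1\cdots x_t),0)$ by \thref{jetex2}.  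You should replace the speculative tail of your argument with the \thref{snctang2} contradiction for $s\le t-2$ and with this explicit missing-variable observation for $s = t-1$, $\nu=\mu=1$.
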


\begin{proof}[Proof: \nopunct]
That $x$ is not a pre-simple singularity means that for any SNC divisor $E\subset X$, either $E$ is not tangent to $\fcurl$, or $x$ is not pre-simple adapted to $E$.

Suppose there is a divisor $E$ with $t$ components through $x$ and tangent to $\fcurl$. Non-pre-simplicity implies that $\nu(\fcurl,E;x)\geq 1$, which means that the unadapted order $\nu(\fcurl,\emptyset;x)\geq t$. Hence the co-efficients of the pullback of $\omega$ under the map $x_i=\sum a_{ij}s^j$ are zero for all $k\leq t$, and so $J_t(\fcurl,x)=\aaf^{tn}$, and there is no isomorphism to $J_t(\vv(x_1\cdots x_t),0)$.

Now suppose our divisor $E$ has $t-1$ components: We can write it as $E=\vv(x_1\cdots x_{t-1})$. There are three cases for non-pre-simplicity:

(i) $\nu(\fcurl,E;x)\geq 2$;

(ii) $\mu(\fcurl,E;x)\geq 2$.

In both these cases we have $\nu(\fcurl,\emptyset;x)\geq t$, and so there is no isomorphism as in the first case.

(iii) $\nu(\fcurl,E;x)=\mu(\fcurl,E;x)=1$, and for all $i=1,\ldots,t-1$, the linear part of $b_i$ is independent of $x_t$. Some of these linear parts are non-zero; denote them by $l_1,\ldots,l_s$ (relabelling co-ordinates if necessary). Then the equation defining the $t$-jets is
\[
a_{1,1}\cdots a_{t-1,1}\left(l_1\left(\sum a_{i,j}t^j\right)+\cdots+l_s\left(\sum a_{i,j}t^j\right)\right)=0.
\]
This is either zero, or else gives at most $t$ components in $t-1$ variables---in either case, there is no isomorphism to $J_t(\vv(x_1\cdots x_t),0)$, which is SNC, with $t$ components in $t$ variables (see \thref{jetex2}).

Now suppose that the only SNC divisors tangent to $\fcurl$ through $x$ have at most $t-2$ components. If we have $J_m(\fcurl,x)\cong J_m(\vv(x_1\cdots x_t),0)$, then by \thref{snctang2}, the pre-image of $\vv(x_1\cdots x_t)$ under the isomorphism is a solution for $\fcurl$, a contradiction.
\end{proof}

Combining \thref{bigthm2f,bigthm2b}, we have:
\begin{corollary}\thlabel{bigthm2}
Let $\fcurl$ be a codimension-$1$ foliation on $X$, and let $x\in\Singr\fcurl$ be a singularity with $\tau(\fcurl,x)=t$. Then $x$ is a simple singularity if and only if there is an open (or formal) neighbourhood $U\ni x$ such that in a local co-ordinate system on $U$, $x=0$,  $\Singr\fcurl\cap U=\Singr\vv(x_1\cdots x_t)$, and $J_m(\fcurl,y)$ is isomorphic to $J_m(\vv(x_1\cdots x_t),y)$ for all $y\in \Singr\fcurl\cap U$ and all $m\in \nn_0$.

Moreover, if $x$ is simple, the union of the separatrices of $\fcurl$ at $x$ is given by $\vv(x_1\cdots x_t)$, which is fully tangent in $U$.
\end{corollary}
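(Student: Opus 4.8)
The plan is to read the equivalence off Theorems~\thref{bigthm2f} and~\thref{bigthm2b}, which between them handle every $x\in\Singr\fcurl$ once one separates the pre-simple from the non-pre-simple case. For the forward implication, a simple singularity is in particular pre-simple, so \thref{bigthm2f} supplies an open (or formal) neighbourhood $U$ and local co-ordinates with $x=0$ in which $\fcurl$ is one of the normal forms (A) or (B) of \thref{simple} -- form (C) being excluded by simplicity -- with $\Singr\fcurl\cap U\subset\Singr\vv(x_1\cdots x_t)$ and $J_m(\fcurl,y)$ isomorphic to $J_m(\vv(x_1\cdots x_t),y)$ for all $y\in\Singr\fcurl\cap U$ and all $m$. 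To upgrade the inclusion of singular loci to the equality asked for in the statement, I would invoke the explicit description recorded just after \thref{jetex2}: in the normal forms (A) and (B) one has $\Singr\fcurl\cap U=\bigcup_{(i,j)\in\icurl^t}\{x_i=x_j=0\}$, which is precisely $\Singr\vv(x_1\cdots x_t)$ inside $U$. For the reverse implication, suppose such a neighbourhood $U$ exists; specialising the hypothesis to $y=x$ gives $J_m(\fcurl,x)\cong J_m(\vv(x_1\cdots x_t),0)$ for every $m$, so by the contrapositive of \thref{bigthm2b} the point $x$ is pre-simple. The converse half of \thref{bigthm2f} then applies -- the data on $U$, with $\subset$ in place of the given $=$, is exactly its hypothesis -- and we conclude that $x$ is simple.

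For the final sentence, work in the co-ordinates produced by \thref{bigthm2f}, in which $\fcurl$ has type (A) or (B) and $E:=\vv(x_1\cdots x_t)$. Each hyperplane $\vv(x_i)$ with $1\le i\le t$ is smooth, irreducible, and of dimension $n-1$, the leaf dimension of the codimension-$1$ foliation; restricting the normal form to it gives $\omega\restn{\vv(x_i)}=0$, so $\vv(x_i)$ is a solution, hence strongly tangent by \thref{redstt}, and it meets $\Singr\fcurl$ at $0$, so it is a separatrix. Conversely, let $S=\vv(g)$, with $g$ irreducible, be any separatrix through $x=0$. Strong tangency gives $J_m(S,0)\subset J_m(\fcurl,0)=J_m(E,0)$ for all $m$, the equality being the jet computation carried out inside the proof of \thref{bigthm2f}. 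If $S$ were none of the $\vv(x_i)$ then, since $S$ is an irreducible hypersurface and the $\vv(x_i)$, $1\le i\le t$, are all the irreducible components of $E$, we would have $S\not\subset E$; the curve selection lemma would then furnish a holomorphic (or formal) arc $\gamma\colon(\ccx,0)\to(S,0)$ with $\gamma\not\subset E$, so that each power series $x_i\circ\gamma$, $1\le i\le t$, is non-zero, of some finite order $d_i$; but then for $m\ge d_1+\cdots+d_t$ the $m$-jet of $\gamma$ lies in $J_m(S,0)$ while the product of its first $t$ co-ordinate functions has order at most $m$, so it does not lie in $J_m(E,0)$, contradicting the inclusion above. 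Hence the separatrices through $x$ are exactly $\vv(x_1),\ldots,\vv(x_t)$, and their union is $E=\vv(x_1\cdots x_t)$. Finally $E$ is fully tangent in $U$: along $\Singr\fcurl\cap U=\Singr E$ this is the identification of jet fibres established in the proof of \thref{bigthm2f}, while at a point $y\in E\setminus\Singr E$ the unique component of $E$ through $y$ coincides near $y$ with the leaf of $\fcurl$, so that $J_m(E,y)=J_m(\fcurl,y)$ holds at such $y$ as well (at a regular point the $m$-jets tangent to $\fcurl$ being precisely those lying in the leaf).

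The only part that is not mere bookkeeping with the normal forms and an appeal to the two theorems is the identification of the separatrix set, and within it the delicate point is the construction of an arc in $S$ through $x$ that escapes the divisor $\vv(x_1\cdots x_t)$; this is where the irreducibility of $S$ together with the curve selection lemma does the real work, the remainder resting on the jet equality $J_m(\fcurl,0)=J_m(\vv(x_1\cdots x_t),0)$ already recorded in the proof of \thref{bigthm2f}.
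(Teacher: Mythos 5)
Your proof is correct and is essentially the paper's intended argument: the paper derives this corollary simply by "combining" Theorems \thref{bigthm2f} and \thref{bigthm2b}, which is exactly the case split and contrapositive you carry out, and your upgrade of the inclusion of singular loci to an equality via the explicit description of $\Singr\fcurl$ in normal forms (A) and (B) is the right bookkeeping. Your treatment of the "moreover" clause (identifying the separatrices through $x$ with the components of $\vv(x_1\cdots x_t)$ via the jet equality and a curve-selection arc, and checking full tangency at both singular and regular points of the divisor) supplies detail the paper leaves implicit, and it is sound.
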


\begin{remark}
If $\dimn X=2$, then as the singularities are isolated, we can write the result as follows: The point $x\in\Singr\fcurl$ is a simple singularity if and only if $J_m(\fcurl,x)\cong J_m(\vv(xy),0)$ for all $m$, in which case the union of the separatrices of $\fcurl$ at $x$ is given by $\vv(xy)$, which is fully tangent.
\end{remark}

\begin{remark}
The condition of the jets at the singular locus given in \thref{bigthm2} is not sufficient to recover the normal forms from \thref{simple}. Indeed, if $\fcurl_1$ and $\fcurl_2$ are two foliations on $X$ with the same singular locus $\Sigma$, and such that each singular point of either foliation is a simple singularity, then we have $J_m(\fcurl_1,x)\cong J_m(\fcurl_2,x)$ for each $x\in \Sigma$ and each $m\in\nn$ (see \thref{reducedex}).

However, if we also take into account the jets of the foliation above the smooth locus, we can reconstruct the vector fields that define the foliation; using the eigenvalues of these we can construct the relevant normal form. (See \cite[Section 6]{Can97}.)
\end{remark}

\begin{definition}
Let $\fcurl$ be a singular codimension-$1$ foliation on $X$. A \emph{resolution of singularities} for $\fcurl$ is a sequence of blow-ups $f:X'\rightarrow  X$, where $X'$ is a manifold and $\sat(f^{-1}(\fcurl))$ has only simple singularities.
\end{definition}

\begin{theorem}\thlabel{existres}
Let $\fcurl$ be a singular codimension-$1$ foliation on $X$. Then $\fcurl$ is known to have a resolution of singularities in the following cases:

$\dimn X=2$ \cite{Sei68};

$\dimn X=3$ \cite{Can04};
\end{theorem}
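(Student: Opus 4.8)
This statement is not established here \emph{ab initio}; it records two landmark results, and the role of a proof is to indicate the shape of each. The plan is therefore: (i) recover the surface case from Seidenberg's argument, (ii) invoke Cano's theorem in dimension $3$, and in both cases connect the classical ``reduced/simple'' endpoint with the normal forms of \thref{simple} and the jet characterisation of \thref{bigthm2}.

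For $\dimn X=2$, I would follow Seidenberg. Since a codimension-$1$ foliation on a surface has isolated singularities, the only permissible centres are points. Attach to each singular point $x$ the algebraic multiplicity $\nu(\fcurl,\emptyset;x)$, the order of vanishing at $x$ of a local generating $1$-form $\omega$. The key lemma is that, after blowing up at $x$, every singular point lying over $x$ has multiplicity at most $\nu(\fcurl,\emptyset;x)$, with strict drop unless $\nu(\fcurl,\emptyset;x)=1$; this is a direct computation with the total transform $\pi^*\omega$ on the two standard charts. Once the multiplicity has dropped to $1$ the linear part of $\omega$ is nonzero, and analysing its Jordan form shows $x$ is pre-simple adapted to the accumulated exceptional divisor (cf.\ the normal forms in \thref{nonrednf}). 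The remaining pre-simple but non-simple cases are eliminated by finitely many further blow-ups: in the resonant type-(A) case and in the type-(C) case $ydx-(rx+ay^r)dy$, a further invariant (the length of the minimal resonance relation, respectively the integer $r$) strictly decreases under blow-up, exactly as in the type-(C) analysis carried out inside the proof of \thref{bigthm2f}. Termination follows because a suitable sum over $\Singr\fcurl$ of these invariants, lexicographically ordered, is well-ordered and strictly decreasing; at the endpoint every singularity is simple.

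For $\dimn X=3$ the statement is the main theorem of \cite{Can04}, and here I would only sketch why it is deeper. Unlike the surface case, the naive multiplicity need not drop after a single blow-up, so one controls a lexicographically ordered tuple built from the adapted multiplicity $\mu(\fcurl,E;x)$, the adapted order $\nu(\fcurl,E;x)$, and finer data read off the Newton polyhedron of $\omega$ relative to a system of hypersurfaces of maximal contact; the centres are chosen permissible (contained in $\Singr\fcurl$ and with normal crossings with the current divisor $E$), the dicritical components are isolated and handled apart, and the bulk of the work is showing this tuple cannot descend infinitely, followed by a globalisation of the local reduction. The main obstacle is precisely this three-dimensional induction: the choice of permissible centre is delicate and several competing invariants must be tracked simultaneously without infinite descent. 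Since that is the content of Cano's long paper, we cite \cite{Can04} rather than reproduce it; the surface case, by contrast, is a finite bookkeeping argument once the multiplicity-decrease lemma and the auxiliary invariants above are in place.
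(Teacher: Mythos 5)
The paper offers no proof of this statement: it simply records Seidenberg's and Cano's theorems by citation, exactly as you note at the outset. Your proposal is therefore at least as thorough as the paper's treatment, and in spirit it takes the same route (cite the literature), while adding a useful high-level sketch of each argument. One small caution on the surface sketch: it is not quite true that the multiplicity $\nu(\fcurl,\emptyset;x)$ strictly drops after a single blow-up whenever $\nu\geq 2$; Seidenberg's argument must track a secondary invariant in the stubborn (e.g. nilpotent) cases where $\nu$ stays constant for several blow-ups before falling. Since the result is being cited rather than reproved, this imprecision is harmless, but if you keep the sketch you should soften ``with strict drop unless $\nu=1$'' to a statement that $\nu$ is non-increasing and eventually drops, with a finer invariant handling the stationary steps.
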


\section{Jets and Separatrices of Non-dicritical Codimension $1$ Foliations}
Throughout this section, we assume that $X$ is a quasi-compact complex manifold, and that $\fcurl$ is a codimension-$1$ foliation on $X$ given by an algebraic $1$-form. Unless stated otherwise, we assume that $\fcurl$ is non-dicritical. In this case, $\fcurl$ has finitely many separatrices by \thref{finsep}.

\begin{definition}
Let $\fcurl$ be a foliation on $X$, and let $Z$ be the union of the separatrices of $\fcurl$. We define
\[
\Cscr(\fcurl)=\{Y\subset X\mid \supp(Y)=Z, \text{$Y$ is strongly tangent to $\fcurl$} \},
\]
where the elements of $\Cscr(\fcurl)$ are formal subschemes of $X$.
\end{definition}

\begin{remark}\thlabel{cfnempt}
$\Cscr(\fcurl)\neq \emptyset$ if and only if $Z$ itself is strongly tangent to $\fcurl$.
\end{remark}

\begin{lemma}\thlabel{cfmax}
If $\Cscr(\fcurl)\neq\emptyset$, then it has a maximal element.
\end{lemma}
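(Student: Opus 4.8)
The plan is to apply Zorn's lemma to the partially ordered set $(\Cscr(\fcurl),\subseteq)$, which is non-empty by hypothesis. Every element of $\Cscr(\fcurl)$ is a formal subscheme of the fixed manifold $X$ whose underlying topological space is the union of separatrices $Z$ (a finite union, by non-dicriticality and \thref{finsep}); locally such subschemes are cut out by ideals of a fixed structure sheaf, so $\Cscr(\fcurl)$ is genuinely a set, and it suffices to show that every chain in it admits an upper bound.

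So let $(Y_{\alpha})_{\alpha\in A}$ be a chain in $\Cscr(\fcurl)$: a totally ordered family of formal subschemes of $X$, each supported on $Z$ and each strongly tangent to $\fcurl$. Being totally ordered, $(Y_{\alpha})_{\alpha\in A}$ is in particular an inductive system of formal schemes all with the same underlying topological space $Z$, so by the proposition guaranteeing that such direct limits exist (\cite[Proposition 5.15]{Car21}), the limit $Y_{\infty}:=\varinjlim_{\alpha}Y_{\alpha}$ exists as a formal scheme; since it is formed over a diagram of formal subschemes of $X$, it is itself a formal subscheme of $X$, and $Y_{\alpha}\subseteq Y_{\infty}$ for every $\alpha$. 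Its support is exactly $Z$: each inclusion $Y_{\alpha}\subseteq Y_{\infty}$ gives $\supp(Y_{\infty})\supseteq Z$, while $Y_{\infty}$ is a direct limit of subschemes supported on $Z$, so $\supp(Y_{\infty})\subseteq Z$. And $Y_{\infty}$ is strongly tangent to $\fcurl$ by \thref{tangform}, applied to the system $(Y_{\alpha})$; equivalently, via the corollary of \thref{jetform} one has $J_m(Y_{\infty},y)=\bigcup_{\alpha}J_m(Y_{\alpha},y)\subseteq J_m(\fcurl,y)$ for all $m\in\nn$ and all $y$. Thus $Y_{\infty}\in\Cscr(\fcurl)$ is an upper bound for the chain, and Zorn's lemma yields a maximal element of $\Cscr(\fcurl)$.

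There is no genuinely hard step; the only point requiring care is the bookkeeping around the colimit — that the direct limit of the chain, formed in the category of formal schemes, really is a formal subscheme of the honest manifold $X$ rather than an abstract formal scheme merely mapping to it, and that its support is precisely $Z$ and nothing larger. Both follow because every $Y_{\alpha}$ is already a formal subscheme of $X$ supported on $Z$, together with the behaviour of $\supp$ and of the jet functors $J_m$ under these direct limits (via \thref{jetform} and \thref{tangform}). Note finally that this argument produces a maximal element, which is exactly what the lemma asserts; obtaining a largest element would be a separate matter, requiring for instance that $\Cscr(\fcurl)$ be directed.
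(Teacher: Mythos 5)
Your proof is correct and follows essentially the same route as the paper: the paper's own argument is precisely to observe that every ascending chain of strongly tangent formal subschemes has a strongly tangent upper bound, namely the direct limit (strongly tangent by \thref{tangform}), and then to invoke Zorn's lemma. You simply spell out the bookkeeping (existence of the colimit as a formal subscheme, support equal to $Z$) that the paper leaves implicit.
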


\begin{proof}[Proof: \nopunct]
 By \thref{tangform}, all ascending chains of strongly tangent formal subschemes through the singular locus have a strongly tangent upper bound, namely, the direct limit; the result follows by Zorn's lemma.
\end{proof}

\begin{definition}
Such a formal scheme, if it exists, is called the \emph{total separatrix} of $\fcurl$. It is the maximal strongly tangent formal scheme passing through the singular locus.

A non-dicritical foliation with a total separatrix is called \emph{totally separable}.
\end{definition}

\begin{lemma}\thlabel{fttotsep}
If there exists a scheme $C$ supported on the union $Z$ of the separatrices of $\fcurl$ which is fully tangent to $\fcurl$, then $C$ is the total separatrix.
\end{lemma}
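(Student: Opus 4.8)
The plan is to verify that the hypothesised scheme $C$ is itself an element of $\Cscr(\fcurl)$ and that it contains every other element; it is then automatically the maximal element of $\Cscr(\fcurl)$, which by definition is the total separatrix.

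First I would check that $C\in\Cscr(\fcurl)$. By hypothesis $\supp(C)=Z$, and a scheme is in particular a formal subscheme of $X$, so the only point to verify is that $C$ is strongly tangent. This is immediate from full tangency: $J_m(C)=J_m(\fcurl)\restn C\subseteq J_m(\fcurl)$ for every $m\in\nn$. Hence $C\in\Cscr(\fcurl)$; in particular $\Cscr(\fcurl)\neq\emptyset$, so the total separatrix exists by \thref{cfmax}.

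Next I would show $Y\subseteq C$ for an arbitrary $Y\in\Cscr(\fcurl)$. The underlying topological space of $Y$ is $\supp(Y)=Z$, the same as that of $C$, so by \thref{jetstruc} it suffices to prove $J_m(Y)\subseteq J_m(C)$ for every $m$. I would obtain this by restricting to the fibres over the points $x\in Z$: strong tangency of $Y$ gives $J_m(Y,x)\subseteq J_m(\fcurl,x)$, while full tangency of $C$, read over the point $x\in\supp(C)$, gives $J_m(\fcurl,x)=J_m(C,x)$; hence $J_m(Y,x)\subseteq J_m(C,x)$ for all $x\in Z$ and all $m$. Taking the union of these fibres over the common base $Z$ yields $J_m(Y)\subseteq J_m(\fcurl)\restn C=J_m(C)$, and \thref{jetstruc} then gives $Y\subseteq C$. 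Since $C$ itself lies in $\Cscr(\fcurl)$ and dominates every member, it is the maximal element, i.e.\ the total separatrix.

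The step I expect to require the most care is the fibre comparison in the previous paragraph: one must pin down that $J_m(\fcurl)\restn C$ is the part of the jet scheme $J_m(\fcurl)$ lying over $\supp(C)=Z$, so that its fibre over a point $x\in Z$ is exactly $J_m(\fcurl,x)$ (this is the fibrewise form of full tangency already used tacitly in the proof of \thref{bigthm2f}), and then that the fibrewise inclusions $J_m(Y,x)\subseteq J_m(C,x)$ really do assemble into the inclusion $J_m(Y)\subseteq J_m(C)$ needed to apply \thref{jetstruc}. The latter is unproblematic once one works at the level of $m$-jets, where each jet scheme is literally the union of its fibres over the base. The remaining ingredients---that full tangency forces strong tangency, and that a scheme is a formal scheme---are purely formal.
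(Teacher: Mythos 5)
Your proof is correct and follows essentially the same approach as the paper: both use full tangency of $C$ to identify $J_m(C)$ with $J_m(\fcurl)\restn Z$, strong tangency of a competitor $Y$ to get $J_m(Y)\subset J_m(\fcurl)\restn Z = J_m(C)$, and \thref{jetstruc} to convert the jet inclusion into a scheme inclusion $Y\subset C$. The only cosmetic differences are that you unpack the comparison fibrewise over points of $Z$ and you add the explicit (if immediate) check that $C\in\Cscr(\fcurl)$, both of which the paper leaves implicit.
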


\begin{proof}[Proof: \nopunct]
As $C$ is fully tangent and supported on $Z$, we have $J_m(C)=J_m(\fcurl)\restn{Z}$, for all $m\in\nn$. For any scheme $S$ supported on $Z$ and strongly tangent to $\fcurl$, we have for all $m\in\nn$ that $J_m(S)\subset J_m(\fcurl)\restn{Z}$, and therefore $J_m(S)\subset J_m(C)$, for all $m\in\nn$. As $S$ and $C$ have the same support, we have that $S\subset C$ by \thref{jetstruc}, and so $C$ is maximal among all such schemes. Therefore $C$ serves as a total separatrix for $\fcurl$.
\end{proof}

\begin{example}
If a foliation has a first integral, that is, it is generated by a form $\omega=df$ for some function $f$, then it has total separatrix $\vv(f)$. (See \thref{firstint}). In particular, the total separatrix need not be irreducible.
\end{example}

\begin{example}
The foliation of the plane generated by $\omega=ydx-(x+y)dy$ has total separatrix $\vv(xy,y^2)$. So the total separatrix need not be reduced.
\end{example}

\begin{example}
The foliation of the plane given by $\omega=(y-x)dx-x^2dy$ has total separatrix $\vv(xy-\sum_{m=0}^{\infty}m!x^{m+2})$. So the total separatrix need not be an ordinary scheme.
\end{example}

\begin{lemma}\thlabel{embed}
Let $S\subset X$ be a scheme. Let $\pi:X'\rightarrow X$ be a surjective morphism of schemes, with $S'=\pi^{-1}(S)$. Let $Z\subset X'$ be a scheme with the same support as $S'$, such that $S'$ is a strict subscheme of $Z$. Then $S$ is a strict subscheme of $\pi(Z)$.
\end{lemma}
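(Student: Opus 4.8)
The plan is to reduce the statement to an ideal-theoretic one, split it into a containment claim and a strictness claim, and dispatch the latter by a direct computation. The assertion is local on $X$, so cover $X$ by affine opens $\Spec A$ and $X'$ by affine opens $\Spec A'$ mapping into them, corresponding to ring homomorphisms $\phi\colon A\to A'$; write $S=\vv(I)$ for an ideal $I\subseteq A$, so that $S'=\pi^{-1}(S)=\vv(IA')$, and write $Z=\vv(J)$. The hypotheses translate as: $J\subseteq IA'$ (from $S'\subseteq Z$); $J\neq IA'$ on some open (from $S'$ being strict in $Z$); and $\sqrt{J}=\sqrt{IA'}$ (equal supports). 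Interpreting $\pi(Z)$ as the scheme-theoretic image of $Z$ under $\pi$, it is cut out affine-locally by $\phi^{-1}(J)$. Thus the lemma is equivalent to the conjunction of (i) $\phi^{-1}(J)\subseteq I$, so that $S$ is a closed subscheme of $\pi(Z)$, and (ii) $\phi^{-1}(J)\neq I$, so that the inclusion is strict.

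Statement (ii) is the easy half, and uses only $J\subsetneq IA'$: on an open where the inclusion is strict, pick $b\in IA'\setminus J$ and write $b=\sum_k c_k\,\phi(i_k)$ with $c_k\in A'$ and $i_k\in I$; were every $\phi(i_k)$ in $J$ we would get $b\in J$, a contradiction, so some $i_k\in I$ satisfies $\phi(i_k)\notin J$, i.e. $i_k\in I\setminus\phi^{-1}(J)$. Hence $I\not\subseteq\phi^{-1}(J)$, and combined with (i) this forces $\phi^{-1}(J)\subsetneq I$, i.e. $S\subsetneq\pi(Z)$.

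The containment (i) is where all the content lies, and I expect it to be the main obstacle. Since $J\subseteq IA'$, it suffices to control $\phi^{-1}(IA')$, i.e. the kernel of $A/I\to A'/IA'$, equivalently whether the scheme-theoretic image of $\pi^{-1}(S)\to X$ is all of $S$, equivalently whether $\ocurl_S\to\pi_\ast\ocurl_{\pi^{-1}(S)}$ is injective. This is exactly where surjectivity of $\pi$ must be used: base-changing $\pi$ along $S\hookrightarrow X$ gives a surjection $\pi^{-1}(S)\to S$, and when $S$ is reduced this already closes the argument — if $\phi(a)\in IA'$ then $(I+(a))A'=IA'$, so $\pi^{-1}(\vv(I))=\pi^{-1}(\vv(I+(a)))$ set-theoretically, and pushing forward along the surjection $\pi$ gives $\vv(I)=\vv(I+(a))$, hence $a\in\sqrt I=I$. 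The delicate case — the real difficulty — is a non-reduced $S$, where one genuinely needs $\pi^{-1}(S)\to S$ to be scheme-theoretically dominant in order to get $\phi^{-1}(IA')=I$; for the morphisms of interest this does hold (for instance, when $I$ is locally principal it follows from normality of $X$, since then $\phi^{-1}(IA')=I$ amounts to "$h/f$ regular on $X'$ implies $h/f\in A$", and in general one invokes that the fibres of $\pi$, hence of the base change $\pi^{-1}(S)\to S$, are geometrically connected and reduced). In other words, the crux is to argue that $\pi^{-1}(S)$ remembers $S$, and it is here that one must use that $\pi$ is (a composition of) blow-up(s) rather than merely a set-theoretic surjection, or else bring the standing equal-supports hypothesis into play. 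Once (i) is established, the lemma follows by combining it with (ii). When $X$ and $X'$ are smooth there is also the option of transporting the inclusion through jet spaces, via $J_m(\pi^{-1}(S))=\pi_m^{-1}(J_m(S))$ (\thref{jetco}), surjectivity of $\pi_m$ (\thref{jetsurj}), \thref{pfjet}, and \thref{jetstruc}, but that route demands the same care, since scheme-theoretic inclusions of jet schemes do not transport as freely as set-theoretic ones.
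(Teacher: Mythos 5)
Your decomposition into a containment claim (i) and a strictness claim (ii) takes a genuinely different route from the paper. The paper argues by contradiction via the universal property of the fibre product: since $S'=S\times_X X'$, if $\pi(Z)$ equalled $S$ then $Z\hookrightarrow X'\rightarrow X$ would factor through $S$, hence $Z$ would factor through $S'$ as a closed subscheme of $X'$; combined with $S'\subset Z$ this forces $Z=S'$, contradicting strictness. That is a categorical packaging of your element chase for (ii); both versions are correct, and yours has the merit of isolating exactly which hypothesis each half uses. Note, however, that the paper's proof also simply asserts the containment half: it says ``Clearly $S\subset\pi(Z)$'' and proves nothing further about it.

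So the step you flag as the crux is a genuine gap in your write-up, but it is a gap shared with the paper, and your diagnosis of it is correct: bare surjectivity of $\pi$ does not give $\phi^{-1}(IA')=I$, and the lemma as literally stated is false. For instance, take $X=\Spec(\ccx[x,y]/(y^2))$, $X'=\Spec(\ccx[x])$, with $\pi$ the (surjective) closed immersion of the reduction, $S=\vv(x)$, $S'=\pi^{-1}(S)=\vv(x)\subset X'$ and $Z=\vv(x^2)\subset X'$; then the scheme-theoretic image of $Z$ is $\vv(x^2,y)$, and $(x^2,y)\not\subseteq(x)$ because $y\notin(x)$, so $S$ is not a subscheme of $\pi(Z)$ at all. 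In the paper's only application (\thref{bigthm3}), $\pi$ is a proper birational morphism of smooth varieties and $S$ is supported on a divisor, so $\ocurl_X\rightarrow\pi_*\ocurl_{X'}$ is an isomorphism and, for $I$ locally principal, $\phi^{-1}(IA')=I$ follows exactly as you sketch (``$h/f$ regular on $X'$ implies $h/f$ regular on $X$''). To make the lemma true you should either add the hypothesis that $\ocurl_S\rightarrow\pi_*\ocurl_{S'}$ is injective (equivalently, that $S'\rightarrow S$ is scheme-theoretically dominant), or restrict to the proper birational setting and write out the argument you outline; once (i) is secured in either way, your proof of (ii) completes the lemma.
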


\begin{proof}[Proof: \nopunct]
The scheme $S'$ is the fibre product $S'=S\times_X X'$. Clearly $S\subset \pi(Z)$; if there is equality, then the diagram
\[
\begin{tikzcd}
Z \arrow[r, hook]\arrow[d] & X' \arrow[d, two heads, "\pi"] \\
S \arrow[r, hook]	& X
\end{tikzcd}
\]
commutes, and so by the universal property of fibre products, $Z\cong S'$, a contradiction. The conclusion of the lemma follows. \end{proof}

\begin{lemma}\thlabel{expo}
Consider the function $f:D\rightarrow \ccx, z \mapsto z^{\lambda}$, for some $\lambda\in\ccx$ with $\Rea\lambda\geq 0$, and where $D\subset\ccx$ is an open domain on which the function is well-defined, with $0\in\bar{D}$. (For example, $D$ can be taken as a slitted disc $\Delta\setminus [0,1]$.) 
If $\Rea\lambda >0$, then $\lim_{z\rightarrow 0}f(z)=0$. If $\Rea\lambda =0$, then the limit is undefined.
\end{lemma}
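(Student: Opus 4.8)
The plan is to reduce the statement to the elementary estimate for $|z^{\lambda}|$. Fix the branch of $\log$ on $D$ implicit in the definition $z^{\lambda} = e^{\lambda\log z}$ (one exists since $D$ is simply connected and $0\notin D$; for a slitted disc it is the obvious one) and write $z = re^{i\theta}$ accordingly. Putting $a = \Rea\lambda \ge 0$ and $b = \Ima\lambda$, we have $\lambda\log z = (a\log r - b\theta) + i(a\theta + b\log r)$, hence
\[
|z^{\lambda}| = r^{a}\,e^{-b\theta}, \qquad \arg(z^{\lambda}) = a\theta + b\log r .
\]
The one geometric input I would use is that on a slitted disc --- the only setting in which the lemma is applied --- the argument $\theta$ stays in a bounded interval, so $e^{-b\theta}$ is bounded above and below by positive constants uniformly as $z \to 0$.

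For the case $a = \Rea\lambda > 0$: as $z \to 0$ in $D$ we have $r = |z| \to 0^{+}$, so $r^{a} \to 0$ while $e^{-b\theta}$ stays bounded; thus $|z^{\lambda}| \to 0$ and therefore $z^{\lambda} \to 0$. For the case $a = 0$ we have $\lambda = bi$; the subcase $b = 0$ gives $z^{\lambda} \equiv 1$ and is outside the intended scope, so take $b \ne 0$. Then $|z^{\lambda}| = e^{-b\theta}$ is bounded away from $0$, so $0$ is not a limit; and since $\arg(z^{\lambda}) = b\log r \to \pm\infty$ as $r \to 0^{+}$, approaching $0$ along a ray of $D$ (on which $\theta$ is constant) makes $z^{\lambda}$ wind around a fixed circle infinitely often. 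Concretely, choosing $r_n \to 0^{+}$ with $b\log r_n \equiv 0 \pmod{2\pi}$ and $r'_n \to 0^{+}$ with $b\log r'_n \equiv \pi \pmod{2\pi}$ gives sequences with $z_n^{\lambda} \to c$ and $(z'_n)^{\lambda} \to -c$ for some $c \ne 0$, so the limit does not exist.

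I do not expect a genuine obstacle: the computation is routine once the branch of $\log$ is fixed. The only point that deserves explicit mention is the boundedness of $\arg z$ on $D$ near $0$, which is exactly what renders the factor $e^{-b\theta}$ harmless in the first case; this holds for a slitted disc (and more generally whenever $D$ lies in a sector at $0$), and since that is the only way the lemma is invoked, I would simply record this hypothesis rather than pursue the statement on arbitrary domains, where it can fail on spiralling regions along which $\theta \to \infty$.
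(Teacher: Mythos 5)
Your proof is essentially the same as the paper's: both decompose $z^{\lambda}=r^{a}e^{-b\theta}\cdot e^{i(a\theta+b\log r)}$ and argue from the behaviour of the modulus factor $r^{a}$ together with the oscillation of $e^{ib\log r}$. You are slightly more careful than the paper on two points it glosses over -- you explicitly invoke boundedness of $\theta$ on a slitted disc to control $e^{-b\theta}$ (the paper merely says the $\theta$ terms ``do not contribute''), and you flag the degenerate subcase $\lambda=0$, where $z^{\lambda}\equiv 1$ and the limit \emph{does} exist, so the ``$\Rea\lambda=0$'' clause of the lemma as stated needs $\Ima\lambda\neq 0$ (which is what occurs in the lemma's only application in \thref{dicrit}).
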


\begin{proof}[Proof: \nopunct]
We write $\lambda=a+bi, a,b\in\rrl, a\geq 0$, and the variable $z$ as $z=re^{i\theta}$. Then $z^{\lambda}=r^a r^{bi} e^{ai\theta} e^{-\theta b}$. The limit as $z$ tends to zero is the limit as $r$ tends to zero: The $\theta$ terms are non-zero and so do not contribute, and the term $r^{bi}=e^{b\logn r i}$ is non-zero but has no limit as $r\rightarrow 0$. Therefore we consider the $r^a$ term. If $a>0$, then $\lim_{r\rightarrow 0}r^a=0$, and so the limit for $z$ is also zero. If $a=0$, then $\lim_{r\rightarrow 0}r^a=1$; the limit for $z$ is thus undefined.
\end{proof}

\begin{proposition}\thlabel{dicrit}
Pre-simple singularities of types (A) and (B) that are not simple are dicritical.
\end{proposition}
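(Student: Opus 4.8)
The plan is to reduce both cases to a single combinatorial descent on the tuple of residues, each step of which is carried out by blowing up a component of the singular locus. I work in an open (or formal) neighbourhood $U$ of the singular point $x=0$ on which $\fcurl$ is in one of the normal forms of \thref{simple}; producing a transversal exceptional component over $U$ suffices to exhibit dicriticality. Case (A) is the special case $k=0$ of case (B), so I treat the two together: the datum that matters is the tuple $\mu=(\lambda_{k+1},\dots,\lambda_t)$ (read as $(\lambda_1,\dots,\lambda_t)$ when $k=0$), all of whose entries lie in $\ccx^*$, and which is \emph{resonant} -- there is a nontrivial relation $\sum_j r_j\lambda_j=0$ with all $r_j\in\nn_0$ -- precisely because the singularity is not simple. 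Since every entry of $\mu$ is nonzero, any such relation has at least two positive coefficients; in particular at least two indices lie in the resonant range $\{k+1,\dots,t\}$, so there is always a pair to manipulate.

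First I would record the effect of one blow-up. For $k+1\le i<j\le t$ the locus $\{x_i=x_j=0\}$ is a smooth component of $\Singr\fcurl\cap U$. Blowing it up and restricting to the chart where $x_i$ is retained and $x_j=x_i u$, a direct substitution -- using $dx_j/x_j=dx_i/x_i+du/u$, and noting that the logarithmic part $\sum_{l\le k}p_l\,dx_l/x_l$ and the argument $x_1^{p_1}\cdots x_k^{p_k}$ of $\Psi$ are untouched -- shows that $\pi^*\omega$ acquires an exceptional factor, and that, after saturating, the transform $\sat(\pi^{-1}\fcurl)$ is again in the normal form of the same type in the new coordinates, with $\lambda_i$ replaced by $\lambda_i+\lambda_j$ and all other residues unchanged, \emph{provided} $\lambda_i+\lambda_j\ne0$. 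If instead $\lambda_i+\lambda_j=0$, then the saturated transform involves neither $x_i$ nor $dx_i$, so the exceptional divisor $\{x_i=0\}$ is not invariant: it is transversal to the transformed foliation, and $\fcurl$ is dicritical. Since $\Singr(\sat\pi^{-1}\fcurl)$ still contains all the coordinate-pair loci, the centre of the next blow-up will again lie in the singular locus, so the process may be iterated.

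The combinatorial heart is then an induction on the least weight $K(\mu)=\min\{\sum_j r_j\}$ of a nontrivial $\nn_0$-relation among the entries of $\mu$. One has $K(\mu)\ge2$, and $K(\mu)=2$ forces $\lambda_i+\lambda_j=0$ for some $i\ne j$. Assuming no two entries of $\mu$ sum to zero, I would take a minimal relation, pick an index $i_0$ of its support with $r_{i_0}$ minimal and any other support index $j_0$, and substitute $\lambda_{i_0}=(\lambda_{i_0}+\lambda_{j_0})-\lambda_{j_0}$ into the relation. This yields a nontrivial relation with nonnegative coefficients -- nonnegativity of the coefficient of $\lambda_{j_0}$, namely $r_{j_0}-r_{i_0}$, is exactly where the minimality of $r_{i_0}$ enters -- of weight $K(\mu)-r_{i_0}<K(\mu)$, among the tuple $\mu'$ obtained from $\mu$ by replacing $\lambda_{i_0}$ with $\lambda_{i_0}+\lambda_{j_0}$ (whose entries are all nonzero, for otherwise we are in the previous paragraph's case). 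Hence $K(\mu')<K(\mu)$. Realising each such substitution by the blow-up of $\{x_{i_0}=x_{j_0}=0\}$ from the previous step, the weight strictly decreases at every stage; the process therefore terminates, and it can only terminate at a tuple containing a pair of residues summing to zero, at which blow-up the exceptional divisor is transversal. Thus $\fcurl$ is dicritical.

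The real work, and the only genuine obstacle, is the local computation behind the second paragraph: checking that blowing up a coordinate-pair locus $\{x_i=x_j=0\}$ preserves the relevant normal form with $\lambda_i\mapsto\lambda_i+\lambda_j$, and yields a transversal exceptional divisor exactly when $\lambda_i+\lambda_j=0$. Granting this -- together with the trivial observation that $K(\mu)=2$ is impossible unless some pair of (nonzero) residues already cancels -- the descent on $K(\mu)$ settles both cases at once.
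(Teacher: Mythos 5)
Your proof is correct, and it takes a genuinely different route from the paper's. The paper argues analytically: it normalises the residues so that some have positive and some have negative real part, observes that the multivalued first integral $x_1^{\lambda_1}\cdots x_t^{\lambda_t}=\mu$ gives, via the limit lemma (\thref{expo}), that every leaf of a resonant type-(A) form passes through the origin, and then infers dicriticality (with type (B) reduced to a generic point of a component of the singular locus, where the foliation is locally type (A)). Your argument is instead constructive and combinatorial: you exhibit an explicit sequence of blow-ups in coordinate-pair centres $\{x_i=x_j=0\}$, show by direct computation that each blow-up preserves the normal form while replacing $\lambda_i$ by $\lambda_i+\lambda_j$, and produces a transversal exceptional divisor precisely when $\lambda_i+\lambda_j=0$; you then run a descent on the minimal weight $K(\mu)$ of an $\nn_0$-resonance relation, using minimality of $r_{i_0}$ in the support to keep the coefficient $r_{j_0}-r_{i_0}$ nonnegative, and observing that $K\ge 2$ with $K=2$ forcing a cancelling pair (since all residues are nonzero). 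Your approach has two advantages: it treats types (A) and (B) uniformly from the start (the argument of $\Psi$ and the $\sum_{l\le k}p_l\,dx_l/x_l$ term are inert under the substitution, since $i,j>k$), and it hits the paper's definition of dicriticality on the nose by actually producing the required transversal exceptional component, rather than passing through the intermediate characterisation via infinitely many separatrices, which the paper only cites in the surface case. The paper's approach is shorter to state but leans on the limit lemma and on identifying leaves with separatrices; yours is longer but self-contained and purely algebraic. Both are valid; the descent argument you give is, if anything, the more robust of the two.
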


\begin{proof}[Proof: \nopunct]
We consider type (A) first. By a suitable choice of formal co-ordinates, the foliation is given locally about the singular point by
\[
\omega=\lambda_1 x_2\cdots x_tdx_1+\lambda_2x_1 x_3\cdots x_tdx_2+\cdots +\lambda_t x_1\cdots x_{t-1}dx_t,
\]
where there is a resonance relation $\sum_{i=1}^t r_i\lambda_i=0, r_i\in\nn_0$.

Claim: Without loss of generality, we may assume that there is at least one $i\in \{1,\ldots,t\}$ with $\Rea \lambda_i>0$, and at least one with $\Rea \lambda_i<0$.

Indeed, if all the real parts have the same sign, then as the resonance relation also holds for the real parts, we must have $r_i=0$ whenever $\Rea \lambda_i\neq 0$. The other $\lambda_i$ have non-zero imaginary part, and as the imaginary parts also satisfy the resonance relation, there must be at least one with $\Ima \lambda_i>0$ and at least one with $\Ima\lambda_i<0$. As we can divide $\omega$ through by $\sqrt{-1}$, the claim is proved.

Now $\omega$ has solutions of the form $x_1^{\lambda_1}\cdots x_t^{\lambda_t}=\mu, \mu\in\ccx$. Take a fixed $\mu\neq 0$. By the claim, we can re-arrange the equation so that on both sides, the $x_i^{\lambda_i}$ terms have $\Rea \lambda_i\geq 0$, with at least one strictly positive term. By \thref{expo}'s results on limits, it follows that the origin lies on this leaf. As $\mu$ was arbitrary, we have that every leaf of the foliation passes through the origin, and so is a separatrix. Therefore the foliation is dicritical about the singularity.

For type (B), we note that the foliation is of type (A) around a component of the singular locus, so we are done.
\end{proof}

\begin{proposition}\thlabel{nonred}
Let $\fcurl$ be a foliation, and let $x$ be a singular point with $\tau(\fcurl,x)=t$, which is adapted to an SNC divisor $E$ with $t$ components but not pre-simple. Then there are non-reduced schemes supported on $E$ that are strongly tangent to $\fcurl$ in a neighbourhood of $x$.
\end{proposition}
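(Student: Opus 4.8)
The plan is to exhibit an explicit non-reduced scheme $Z$ with $\redn Z=E$ and to verify strong tangency directly from the jet definition by a single order estimate. First I would normalise: since $\tau(\fcurl,x)=t$ and $E$ has the maximal number $t$ of components through $x$, the normal form recalled in the discussion preceding \thref{snctang} gives holomorphic co-ordinates with $x=0$, $E=\vv(x_1\cdots x_t)$ and
\[
\omega=\sum_{i=1}^{t} b_i\Bigl(\prod_{j\le t,\ j\ne i}x_j\Bigr)\,dx_i,
\]
the $b_i=b_i(x_1,\dots,x_t)$ holomorphic with no common factor. Here the set $A$ of that normal form must be $\{1,\dots,t\}$, so the second alternative in the definition of a pre-simple point (a linear part of some $b_i$ involving a variable outside $A$) cannot occur; hence ``$x$ not pre-simple adapted to $E$'' is exactly $\nu(\fcurl,E;x)=\min_i\ord_x b_i\ge 1$, i.e. every $b_i$ vanishes at $0$. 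This vanishing is the only hypothesis I shall use.

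For the scheme I would take $Z=\vv\bigl((x_1\cdots x_t)\cdot\mm\bigr)$, where $\mm=(x_1,\dots,x_n)$ is the maximal ideal at $x$; equivalently $Z$ is cut out by the functions $x_i\,x_1\cdots x_t$. Its ideal is properly contained in $(x_1\cdots x_t)$ but has radical $(x_1\cdots x_t)$ (because $(x_1\cdots x_t)^2$ lies in it), so $Z$ is non-reduced with $\redn Z=E$. To prove $Z$ strongly tangent to $\fcurl$ near $x$ I must show $J_m(Z,y)\subset J_m(\fcurl,y)$ for all $m$ and all $y$ in a neighbourhood of $x$. At a point $y\in E$ where some $x_i$ with $i\le t$ is a unit, the local ideal of $Z$ collapses to the product of those $x_i$ ($i\le t$) vanishing at $y$, so there $Z$ is simply a union of components of $E$; being reduced and a solution by \thref{snctang}, it is strongly tangent at $y$ by \thref{redstt}. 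At every other $y\in E$ — those where all of $x_1,\dots,x_t$ vanish, in particular $x$ itself — both $\omega$ and the equations of $Z$ have the same shape as at the origin, so it is enough to argue at $0$.

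The heart of the matter, and what I expect to be the main obstacle, is the jet estimate at $0$. For a jet $\tau\colon x_i=\sum_{l\ge1}a_{il}t^l$ write $X_i=x_i\circ\tau$ and $d_i=\ord_t X_i$ (with $d_i=\infty$ if $X_i=0$). If $X_i=0$ for some $i\le t$, then every term of $\tau^*\omega$ carries either a factor $\prod_{j\le t,\,j\ne i}X_j$ or the factor $X_i'$, so $\tau^*\omega=0$ and $\tau\in J_m(\fcurl,0)$; so assume all $d_i$ finite. Then $\tau\in J_m(Z,0)$ means precisely that $X_k\,X_1\cdots X_t\in(t^{m+1})$ for every $k\le t$, that is,
\[
d_k+\sum_{i=1}^{t}d_i\ \ge\ m+1\qquad(k=1,\dots,t).
\]
Since each $b_i$ vanishes at $0$ and depends only on $x_1,\dots,x_t$, the series $b_i\circ\tau$ has order at least $\min_j d_j$, and $\ord_t X_i'=d_i-1$ (characteristic zero). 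Hence the $i$-th summand $(b_i\circ\tau)\bigl(\prod_{j\le t,\,j\ne i}X_j\bigr)X_i'$ of the coefficient of $dt$ in $\tau^*\omega$ has order at least $\min_j d_j+\sum_{j=1}^{t}d_j-1$, which by the displayed inequality applied with $k$ realising the minimum is $\ge m$. Thus $\tau^*\omega\in(t^m)\,dt=0$ in $\Omega_{\ccx[t]/(t^{m+1})}$, so $\tau\in J_m(\fcurl,0)$ and $Z$ is strongly tangent to $\fcurl$ near $x$. The delicate point is the order bookkeeping: one must see that it is precisely the full list of $t$ equations defining $Z$ — in particular the one indexed by the variable of smallest jet-order — that yields the sharp bound $d_k+\sum_i d_i\ge m+1$ needed to absorb the single order lost in differentiating $X_i$, whereas a cruder thickening of $E$ such as doubling one component already fails this test.
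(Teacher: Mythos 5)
Your proof is correct, but it takes a genuinely different route from the paper's. You verify strong tangency of the explicit non-reduced scheme $Z=\vv\bigl((x_1\cdots x_t)\,\mm\bigr)$ by a direct order estimate on jets: the key point, which you identify correctly, is that non-pre-simplicity with $A=\{1,\dots,t\}$ forces $\ord_0 b_i\ge 1$ for every $i$, and this extra unit of vanishing ($\ord(b_i\circ\tau)\ge\min_j d_j$) exactly compensates the unit lost in $\ord X_i'=d_i-1$, against the defining inequality $d_k+\sum_i d_i\ge m+1$ of $J_m(Z,0)$. The paper instead blows up at $x$, observes that the order-$\ge1$ condition on the $b_i$ lets one factor an extra copy of $x_j$ out of the transformed form in each chart, concludes that $\vv(v_1\cdots v_{j-1}x_j^{t+1}v_{j+1}\cdots v_t)$ is strongly tangent upstairs via \thref{redstt}, \thref{snctang} and \thref{unsat}, and pushes forward with \thref{pftang} to obtain $\vv(x_1^2x_2\cdots x_t,\dots,x_1\cdots x_{t-1}x_t^2)$ — i.e.\ the scheme cut out by $(x_1\cdots x_t)(x_1,\dots,x_t)$, of which your $Z$ is a subscheme, so the two constructions agree up to the harmless extra generators indexed by $k>t$. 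The paper's argument is softer, reusing the blow-up/push-forward machinery and never touching the jet equations; yours is self-contained and more transparent about \emph{why} this particular thickening is the right one (and why, as you note, doubling a single component would fail), at the cost of the explicit bookkeeping and of handling the non-closed points of $E$ by the case division you give, which is complete. Both are valid; no gap.
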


\begin{proof}[Proof: \nopunct]
In a neighbourhood of $x$, choose holomorphic co-ordinates so that $x=0$ and the components of the divisor are given by $x_i=0$. $\fcurl$ is given by the $1$-form
\[
\omega=x_2\cdots x_tb_1dx_1+x_1x_3\cdots x_t b_2dx_2+\cdots+ x_1\cdots x_{t-1}b_t dx_t,
\]
where $b_i=b_i(x_1,\ldots,x_t)$ are holomorphic functions of order at least $1$. (See \thref{elesst}.)
We now blow up at the origin: For $j=1,\ldots,t$, the map is given in the corresponding chart by $x_i=x_j v_i, i\neq j$.

In the $j$th chart, the pre-image foliation is given by the form
\[
x_j^{t-1}\prod_{i=1,i\neq j}^t v_i\left(\sum_{i\neq j}x_j b_i \frac{dv_i}{v_i}+ (b_1+\cdots +b_t)dx_j\right),
\]
where $b_i=b_i(x_jv_1,\ldots,x_jv_{j-1},x_j,x_jv_{j+1},\ldots,x_jv_t)$. As the order of each $b_i$ is at least $1$, in the new co-ordinates we can factor out a further copy of $x_j$. 
It is clear that $\vv(x_j)$ is a solution for the transformed foliation, as is $\vv(v_i)$ for all $i\neq j$---they are therefore strongly tangent by \thref{redstt}. These hyperplanes form the components of an SNC divisor, so by \thref{snctang,unsat}, we therefore have that $\vv(v_1\cdots v_{j-1}x_j^{t+1} v_{j+1}\cdots v_t)$ is strongly tangent to the transformed foliation. Blowing back down each chart by replacing $v_i$ with $\frac{x_i}{x_j}$, we see by \thref{pftang} that $\vv(x_1^2 x_2\cdots x_t,\ldots, x_1\cdots x_{t-1}x_t^2)$ is strongly tangent to $\fcurl$.
\end{proof}

\begin{lemma}\thlabel{eeqt}
Suppose a foliation $\fcurl$ has total separatrix $E$, an NC divisor. Then for each point $x\in \Singr\fcurl$, $e(E,x)=\tau(\fcurl,x)$ (where $e(E,x)$ is the number of local components of $E$ through $x$).
\end{lemma}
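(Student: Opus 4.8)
\emph{Plan of proof.}

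The inequality $e(E,x)\le\tau(\fcurl,x)$ is immediate. Since $\supp E=Z$ is the union of the finitely many separatrices, each of which is an irreducible subspace of dimension $\dim X-1$, every local component of $E$ at $x$ equals one of the separatrices and is in particular a smooth hypersurface tangent to $\fcurl$; \thref{elesst} then gives $e(E,x)\le\tau(\fcurl,x)$.

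For the reverse inequality put $t=\tau(\fcurl,x)$ and $e=e(E,x)$, and suppose $e<t$. Using the cylinder structure of $\fcurl$ at $x$ (so that $\fcurl$, its separatrices, and its total separatrix are cylinders over the corresponding objects on a $t$-dimensional base; cf.\ \thref{dimtyp}) we may assume $t=\dim X$; and since $E$ is a normal crossings divisor we may pick formal coordinates at $x=0$ with $E=\vv(x_1\cdots x_e)$ near $x$. Because $\supp E=Z$, the separatrices of $\fcurl$ through $x$ are exactly the components $\vv(x_1),\dots,\vv(x_e)$ of $E$, so these hypersurfaces are tangent to $\fcurl$ and we may write $\omega=\sum_{j=1}^{t}\bigl(\prod_{i\le e,\ i\ne j}x_i\bigr)c_j\,dx_j$ with $\gcd(c_1,\dots,c_t)=1$. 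The plan is to contradict the maximality of $E$ among strongly tangent formal schemes supported on $Z$ (\thref{cfmax}) by exhibiting such a scheme strictly containing $E$; as strong tangency is a local condition it suffices to build a strict strongly tangent thickening of $E$ near $x$ and then extend it over $X$.

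By \thref{bigthm2}, at a simple singularity of dimensional type $t$ the separatrices are $\vv(x_1),\dots,\vv(x_t)$, which would force $e=t$; so $x$ is not simple. Since $\fcurl$ is non-dicritical, \thref{dicrit} excludes pre-simple singularities of types (A) and (B) that are not simple, so $x$ is either of type (C) or not pre-simple. If $x$ is not pre-simple, then it is in particular not pre-simple adapted to the SNC divisor $E=\vv(x_1\cdots x_e)$, whence $\nu(\fcurl,E;x)\ge1$, i.e.\ $\ord_0 c_j\ge1$ for all $j$; consequently the multiplicity of $\fcurl$ at $x$ is at least $e$. Blowing up at $x$, the exceptional hyperplanes are invariant by non-dicriticality, and in each chart the transform of $\omega$ acquires a factor $x_j^{m}$ with $m=\operatorname{mult}_x(\fcurl)\ge e$, after which the exceptional hyperplane together with the strict transforms of $\vv(x_1),\dots,\vv(x_e)$ form an SNC divisor tangent to the transformed saturated foliation. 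By \thref{snctang} this divisor is a solution, hence strongly tangent, so \thref{unsat} produces a strongly tangent scheme upstairs, which \thref{pftang} descends to a strongly tangent scheme supported on $\vv(x_1\cdots x_e)\subseteq Z$ and strictly containing $\vv(x_1\cdots x_e)$ (each $\vv(x_i)$ acquiring multiplicity $\ge2$ at $x$) — this is exactly the argument of \thref{nonred}, run with an $e$-component divisor in place of a $t$-component one. If instead $x$ is of type (C) (so $\vv(x_2),\dots,\vv(x_t)$ are its convergent separatrices and, as $e<t$, there is no formal separatrix), then $\nu(\fcurl,E;x)=0$ and the previous step fails; one then runs the iterated blow-up of the proof of \thref{bigthm2f}, case (C) — after a cylinder reduction to a surface saddle-node, whose total separatrix is the non-reduced scheme computed there — and pulls the result back via \thref{pftang} and \thref{unsat} to obtain the desired strict thickening near $x$. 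In either case, extending the thickening along the intersection of the separatrices through $x$ yields a global strongly tangent formal scheme supported on $Z$ strictly containing $E$, contradicting its maximality.

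The step I expect to be hardest is the type (C) case: carrying out the iterated blow-up of \thref{bigthm2f} in arbitrary dimension, pinning down the non-reduced scheme it produces, and verifying that scheme is strongly tangent and supported on $Z$ (not merely a solution). A secondary, more technical, difficulty is the globalisation at the end: one must spread the local thickening along the relevant stratum of separatrices and check, using \thref{pftang} and the locality of strong tangency, that the resulting closed formal subscheme of $X$ stays strongly tangent everywhere.
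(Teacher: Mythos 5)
Your approach is a genuinely different route from the paper's, and the difference is substantive. After the shared observation $e(E,x)\le\tau(\fcurl,x)$ from \thref{elesst}, the paper's proof makes no appeal to the classification of singularities at all: writing $\omega=b_1dx_1+\cdots+b_tdx_t$ with $b_i=x_1\cdots\hat{x_i}\cdots x_k b_i'$ (for $i\le k=e$) and $b_i=x_1\cdots x_k b_i'$ (for $i>k$), it exhibits explicit vector fields annihilating $\omega$ at $x$ and uses the \emph{dimensional-type hypothesis alone} to force $b_i'(x)=0$ for $1\le i\le k$. That single order bound is then fed into one blow-up computation (in the two chart types $j\le k$ and $j>k$), which produces a strongly tangent strict thickening of $E$ supported on $\supp E$ and contradicts maximality — uniformly, in every case. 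You instead route through the normal-form classification: rule out simple points via \thref{bigthm2}, rule out non-simple type (A)/(B) via \thref{dicrit} and non-dicriticality, and split the remainder into the non-pre-simple case (where you essentially re-derive the same blow-up argument as in \thref{nonred}) and the type (C) case (where you propose the iterated saddle-node blow-up of \thref{bigthm2f}). The advantage of your route is that it ties the statement visibly to the classification; the cost is considerable.

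The real gaps lie exactly where you flag them, and they are not cosmetic. Your non-pre-simple case deduces $\nu(\fcurl,E;x)\ge 1$ and hence $\ord c_j\ge 1$ for \emph{all} $j$ — a stronger statement than the paper's, which gets $\ord b_i'\ge 1$ only for $i\le k$ and shows that this weaker bound already suffices; this is precisely why the paper can dispense with your case-split and cover type (C) in the same stroke. Your type (C) branch, by contrast, is only a sketch: you would need to (i) carry out the iterated blow-up in the $t$-dimensional cylinder rather than the surface setting, (ii) identify the resulting non-reduced scheme and verify that its support lies inside $Z$ — the saddle-node thickening is supported only on one separatrix, $\vv(x_i)$, rather than on all of $\supp E$, so the union with $E$ is not automatically strongly tangent (the paper explicitly warns that unions of strongly tangent schemes need not be strongly tangent), and (iii) check strong tangency of whatever object you end up with. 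Similarly, the globalisation step you mention needs a short argument: you should observe (as one can check from the generators) that the paper's local thickening agrees with $E$ away from $x$, so the glued-in formal subscheme is well defined; your sketch leaves this implicit. Until those steps are carried out, the proof is incomplete — but the overall strategy is sound, just strictly harder than the paper's uniform argument, which sidesteps the entire classification.
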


\begin{proof}[Proof: \nopunct]
Suppose not. Let $\tau(\fcurl,x)=t$, and choose holomorphic co-ordinates about $x$ so that $\fcurl$ is given by $\omega=b_1dx_1+\cdots +b_tdx_t$, where $b_i$ are holomorphic, and that $x_1=0,\ldots,x_k=0$, are the components of $E$ through $x$. By \thref{elesst}, we have $e(E,x)\leq \tau(\fcurl,x)$, and so here we have $k< t$.

For $1\leq i\leq k$, we can write $b_i=x_1\cdots \hat{x_i}\cdots x_k b'_i$, where the hat denotes omission; for $i>k$ we can write $b_i=x_1\cdots x_k b_i'$. 
Thus the vector fields
\begin{multline}
x_2 b_1'\frac{\partial}{\partial x_2}-x_1 b_2'\frac{\partial}{\partial x_1},\ldots, x_k b_1'\frac{\partial}{\partial x_k}-x_1 b_k'\frac{\partial}{\partial x_1}, \\ \nonumber
 b_1'\frac{\partial}{\partial x_{k+1}}-x_1b_{k+1}'\frac{\partial}{\partial x_1},\ldots, b_1'\frac{\partial}{\partial x_t}-x_1b_t'\frac{\partial}{\partial x_1}, \frac{\partial}{\partial x_{t+1}},\ldots, \frac{\partial}{\partial dx_n}
\end{multline}
annihilate $\omega$ at $x$. As the dimensional type is $t$, we conclude that $b_1'(x)=0$. Using a different collection of vector fields, we also have that $b_i'(x)=0$, for all $i\in\{2,\ldots,k\}$, and hence $\ord b_i\geq 1, 1\leq i\leq k$.

We now blow up at $x$, with the charts given by $x_i=x_j v_i,i=1,\ldots,t$. 

If $j\leq k$, then in the $j$th chart, the pre-image foliation is given by the form
\begin{multline}
x_j^{k-1}\left(\sum_{i=1,i\neq j}^kv_1\cdots v_{j-1}x_j v_{j+1}\cdots v_kb'_i\left(\frac{dv_i}{v_i}+\frac{dx_j}{x_j}\right)+ \right. \\ \nonumber
\left. v_1\cdots v_{j-1} v_{j+1}\cdots v_kb'_jdx_j+ 
\sum_{i=k+1}^t v_1\cdots v_{j-1}x_j v_{j+1}\cdots v_k b'_i(x_jdv_i+v_idx_j)\right).
\end{multline}
As the order of each $b_i, 1\leq i\leq k$, is at least $1$, in the new co-ordinates we can factor out a further copy of $x_j$.
It is clear that $\vv(x_j)$ is a solution of the transformed foliation, and hence strongly tangent by \thref{redstt}, as is $\vv(v_i)$ for all $i\neq j, i\leq k$. By \thref{snctang,unsat}, we therefore have that $\vv(v_1\cdots v_{j-1}x_j^{k+1} v_{j+1}\cdots v_k)$ is strongly tangent to the transformed foliation.

If $j>k$, then in the $j$th chart, the pre-image foliation is given by the form
\begin{multline}
x_j^k\left(\sum_{i=1}^k v_1\cdots v_kb'_i\left(\frac{dv_i}{v_i}+\frac{dx_j}{x_j}\right)+ \right. \nonumber \\
\left.\sum_{i=k+1,i\neq j}^t v_1\cdots v_kb'_i(x_jdv_i+v_idx_j)+v_1\cdots v_kb'_jdx_j\right).
\end{multline}

Again we see that $\vv(v_1),\ldots,\vv(v_k)$ are strongly tangent to the transformed foliation; so is $\vv(x_j)$, as we can factor out a copy of $x_j$ from each of the $b_i, 1\leq i\leq k$. Thus by \thref{snctang,unsat} $\vv(v_1\cdots v_k x_j^{k+1})$ is strongly tangent to the transformed foliation. Blowing back down each chart, we see by \thref{pftang} that 
\[
\vv(x_1^2 x_2\cdots x_k,\ldots, x_1\cdots x_{k-1}x_k^2,x_1\cdots x_k x_{k+1},\ldots, x_1\cdots x_k x_t)
\]
is strongly tangent to $\fcurl$, thus contradicting maximality of $E$.
\end{proof}

\begin{proposition}\thlabel{tssnc}
A foliation $\fcurl$ has all singularities being simple if and only if it is totally separable, and its total separatrix is a normal crossings divisor.
\end{proposition}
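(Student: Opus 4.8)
The plan is to establish the two implications separately, using \thref{bigthm2} for the forward direction and \thref{eeqt}, \thref{nonred}, \thref{dicrit} and \thref{simple} for the converse.

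\emph{All singularities simple $\Rightarrow$ totally separable with normal crossings total separatrix.} Since $\fcurl$ is non-dicritical, \thref{finsep} gives only finitely many separatrices; let $Z$ be their union, with the reduced structure. Away from $\Singr\fcurl$ each separatrix is an embedded leaf, hence smooth, and two distinct separatrices, being distinct leaves, are disjoint there; near a point $x\in\Singr\fcurl$ with $\tau(\fcurl,x)=t$, \thref{bigthm2} furnishes coordinates in which the germ of $Z$ at $x$ is $\vv(x_1\cdots x_t)$. Hence $Z$ is a normal crossings divisor. It is moreover fully tangent to $\fcurl$: this is a local condition, it holds near each singular point by \thref{bigthm2}, and near a smooth point of $\fcurl$ it follows from the flow-box form, in which $\fcurl$ is generated by $dx_n$ and $Z$ by $\vv(x_n)$, both $m$-jet schemes being cut out by $a_{n,0}=\ldots=a_{n,m}=0$. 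By \thref{fttotsep}, $Z$ is then the total separatrix, so $\fcurl$ is totally separable with normal crossings total separatrix.

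\emph{Totally separable with normal crossings total separatrix $\Rightarrow$ all singularities simple.} Write $E$ for the total separatrix and fix $x\in\Singr\fcurl$, $t=\tau(\fcurl,x)$. By \thref{eeqt}, $E$ has exactly $t$ local components through $x$; its reduced components are separatrices, hence solutions, hence tangent to $\fcurl$, so $x$ is adapted to a normal crossings divisor with $t$ components. I claim $x$ is pre-simple adapted to that divisor: otherwise \thref{nonred} produces, near $x$, a non-reduced scheme supported on $E$ and strongly tangent to $\fcurl$, which I would glue to $E$ on the rest of $X$ — carrying the construction out compatibly along the component of $\Singr\fcurl$ through $x$ — to obtain a formal subscheme strictly containing $E$, with the same support $Z$, strongly tangent to $\fcurl$ (strong tangency being a local notion), contradicting the maximality of the total separatrix. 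Since its adapted divisor has $t$ components, the normal form provided by \thref{simple} is of type (A) or (B) (the adapted divisor in type (C) has only $t-1$ components, and the formal coordinate change preserves the number of components). Were $x$ of type (A) or (B) and not simple, \thref{dicrit} would make $\fcurl$ dicritical, against hypothesis. Hence $x$ is simple; as $x$ was arbitrary, all singularities of $\fcurl$ are simple.

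\emph{Main obstacle.} The delicate step is the globalization in the converse. The scheme supplied by \thref{nonred} is produced in local coordinates around a single point, and one must upgrade it to a genuine global formal subscheme strictly containing the total separatrix: the natural route is to perform the blow-up/blow-down construction of \thref{nonred} compatibly in an open cover of the component of $\Singr\fcurl$ through $x$, and check that the local thickenings patch to a coherent ideal sheaf whose associated scheme has support $Z$ and is strongly tangent to $\fcurl$. Everything else is a direct application of the cited results.
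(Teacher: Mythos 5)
Your proof follows the paper's structure closely: the forward direction is identical (use \thref{bigthm2} to identify $Z$ locally as an SNC divisor, check full tangency, conclude via \thref{fttotsep}), and the converse uses the same tools (\thref{eeqt}, \thref{nonred}, \thref{dicrit}) in essentially the same way. The one genuine divergence is in how type (C) is excluded, and that is also where you are on thinner ice.

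You argue directly at $x$: since by \thref{eeqt} the total separatrix has $t$ local components through $x$, all tangent, the adapted divisor has $t$ components and hence the \thref{simple} normal form must be (A) or (B). As stated this has a gap. \thref{simple} by itself only says there exist formal coordinates in which $\omega$ takes one of the three shapes; it does not say the shape is governed by the number of components of whatever adapted divisor you happened to start with, and the parenthetical ``the formal coordinate change preserves the number of components'' does not bridge this, since the coordinate change need not carry your $E$ to the normal form's canonical divisor. To make your argument work you need the additional fact that a point of type (C) admits no tangent NC divisor with $t$ components -- equivalently, in the type (C) normal form there is no (formal) separatrix transversal to $\vv(x_2\cdots x_t)$ -- which is true but needs to be said (it comes out of the derivation preceding \thref{simple}: if $A=\{1,\dots,t\}$ then $\omega=x_1\cdots x_t\sum b_i\,dx_i/x_i$, which forces shape (A)/(B)). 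The paper avoids this issue altogether by appealing to a nearby point $y\in\Singr\fcurl$ with $\tau(\fcurl,y)=2$ but only one local component of $C$ (as exhibited in the proof of \thref{bigthm2f}), which contradicts \thref{eeqt} immediately. That route is self-contained and does not require knowing anything about the uniqueness of the adapted divisor for a type (C) point. The globalization subtlety you flag for the \thref{nonred} step is real, but the paper's own proof elides it just as you do, so you are not worse off there.
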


\begin{proof}[Proof: \nopunct]
Suppose all singularities are simple. Then by \thref{bigthm2}, at each singular point $x$, there exists an SNC divisor $E_x$, whose components are the separatrices, and which is fully tangent on a neighbourhood of $x$. The $E_x$ can be glued together to give an NC divisor $C$, which is fully tangent to $\fcurl$. Thus $C$ is the total separatrix.

\sloppy Conversely, if the total separatrix $C$ exists and is NC, then by \thref{eeqt}, the number of local components of $C$ through a singular point $x$ is $\tau(\fcurl,x)$. \thref{nonred} excludes the case that $x$ is non-pre-simple; \thref{dicrit} excludes the case that $x$ is pre-simple of type (A) or (B) but not simple. 
Suppose $x$ is pre-simple of type (C). Then there exists a point $y$ in a neighbourhood of $x$ with $\tau(\fcurl,y)=2$ and only one component of $C$ through $y$, (see the proof of \thref{bigthm2f}). This is a contradiction to \thref{eeqt}, so we have the result.
\end{proof}

\begin{theorem}\thlabel{bigthm3}
Let $\fcurl$ be a non-dicritical codimension-$1$ foliation on $X$. Then $\fcurl$ admits a resolution to simple (or reduced, if $\dimn X=2$) singularities if and only if $\fcurl$ is totally separable, and its total separatrix $C$ admits a resolution to a normal crossings support divisor. In this case, the resolutions can be assumed to be the same map.
\end{theorem}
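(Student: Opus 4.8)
The plan is to route everything through \thref{tssnc}, which converts ``all singularities simple'' into ``totally separable, with a normal crossings total separatrix''. Throughout I work with a sequence of blow-ups $\pi\colon X'\to X$ all of whose centres lie in the successive singular loci (the standard setting for foliation resolutions), and set $\fcurl'=\sat(\pi^{-1}(\fcurl))$. Two easy observations get used repeatedly: $\fcurl'$ is again non-dicritical — a sequence of blow-ups exhibiting $\fcurl'$ as dicritical, composed with $\pi$, would exhibit $\fcurl$ as dicritical — and $J_m(\fcurl')\subseteq J_m(\pi^{-1}(\fcurl))$ for every $m$, since saturating only enlarges the jet space, so a scheme strongly tangent to $\fcurl'$ is strongly tangent to $\pi^{-1}(\fcurl)$. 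The backbone of the proof would be a bookkeeping lemma: the union of the separatrices of $\fcurl'$ is exactly the union of the strict transforms of the separatrices of $\fcurl$ together with the whole exceptional divisor $\exc(\pi)$. The inclusion ``$\subseteq$'' holds because an $(n-1)$-dimensional irreducible subvariety of $X'$ either dominates one in $X$ — in which case, being strongly tangent, its image is strongly tangent by \thref{pftang}, hence it is the strict transform of a separatrix — or is exceptional; the inclusion ``$\supseteq$'' holds because every exceptional component is a separatrix precisely by non-dicriticality of $\fcurl$, while a strict transform of a separatrix is a subscheme of the (solution) total transform, hence itself a solution, hence a separatrix of $\fcurl'$.

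Granting this lemma, here is how I would get a resolution of $C$ from a resolution of $\fcurl$. If $\pi$ resolves $\fcurl$ then $\fcurl'$ has only simple singularities, so by \thref{tssnc} it is totally separable with total separatrix $C'$ a normal crossings divisor whose components are its separatrices. Let $D'\subseteq C'$ be the union of those components that are strict transforms of separatrices of $\fcurl$; it is a subscheme of a strongly tangent scheme, hence strongly tangent to $\fcurl'$, hence to $\pi^{-1}(\fcurl)$, so by \thref{pftang} the image $\pi(D')$ is strongly tangent to $\fcurl$, with support exactly the union $Z$ of the separatrices of $\fcurl$. Thus $\pi(D')\in\Cscr(\fcurl)$, so $\fcurl$ is totally separable by \thref{cfmax}; let $C$ be its total separatrix, $\supp C=Z$. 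By the bookkeeping lemma the total transform of $\supp C$ together with $\exc(\pi)$ has support $\supp C'$, which is normal crossings — so the same $\pi$ resolves $C$, the resolved object being (the support of) the total separatrix $C'$ of $\fcurl'$.

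For the converse, suppose $\fcurl$ is totally separable with total separatrix $C$, $\supp C=Z$, and that $C$ is resolvable to normal crossings support (a priori only germ-wise, when $C$ is non-algebraic). Away from $\Singr\fcurl$ the foliation is regular, so $Z$ is there a single smooth leaf; hence the locus where $\supp C$ fails to be normal crossings lies in $\Singr\fcurl$, and by the bookkeeping lemma this persists after each blow-up. So I may take the resolution of $C$ to be a $\pi$ with centres in the successive singular loci, after which the reduced total transform of $\supp C$ together with $\exc(\pi)$ is a normal crossings divisor $D'$ with $\supp D'$ equal to the union of the separatrices of $\fcurl'$ (again by the bookkeeping lemma), each component of which — a strict transform of a separatrix of $\fcurl$, or an exceptional component — is tangent to $\fcurl'$, the latter by non-dicriticality. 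One cannot simply take the union of the (strongly tangent) total transform of $C$ with the exceptional divisors, since unions of strongly tangent schemes need not be strongly tangent; instead, by \thref{snctang} the \emph{reduced} divisor $D'$ is a solution of $\fcurl'$, hence strongly tangent by \thref{redstt}. Therefore $D'\in\Cscr(\fcurl')$, so $\fcurl'$ is totally separable by \thref{cfmax}, its total separatrix $C'$ has $\supp C'=\supp D'$ normal crossings, and by \thref{tssnc} $\fcurl'$ has only simple singularities. Thus $\pi$ resolves $\fcurl$, and is the same map that resolved $C$.

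The step I expect to be the main obstacle is the bookkeeping lemma on supports — in particular showing that every exceptional component is a separatrix (the single essential use of non-dicriticality) and that no unexpected separatrices of $\fcurl'$ appear — together with arranging a resolution of $\supp C$ whose algorithm makes the total transform \emph{and} the created exceptional divisors normal crossings and keeps all centres inside the successive singular loci. A secondary subtlety, already flagged, is that the candidate total separatrix of $\fcurl'$ must be built as the reduced normal crossings divisor $D'$ and handled via \thref{snctang} and \thref{redstt}, not by naively unioning strongly tangent pieces. Finally, when $C$ is genuinely formal the equivalence, and the assertion that the two resolutions coincide, hold only on the level of germs, as already noted in the introduction.
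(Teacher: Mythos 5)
Your ``only if'' direction (a resolution of $\fcurl$ yields a resolution of $C$) is essentially sound and close to the paper's, which reaches the same conclusion by pushing the fully tangent total separatrix of $\fcurl'$ plus the exceptional divisors down via \thref{unsat}, \thref{pftang} and \thref{fttotsep}; your bookkeeping lemma is implicit there. The genuine gap is in your ``if'' direction. There you establish only that $\fcurl'$ is totally separable and that its total separatrix $C'$ has normal crossings \emph{support} (namely $\supp D'$), and then invoke \thref{tssnc}. But \thref{tssnc} requires the total separatrix itself to \emph{be} a normal crossings divisor, in particular reduced, and these conditions are not equivalent: by \thref{nonred}, non-pre-simple singularities carry non-reduced strongly tangent thickenings, so $C'$ may strictly contain $D'$. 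Concretely, for $\omega=ydx-(x+y)dy$ the total separatrix is $\vv(xy,y^2)$, supported on the smooth curve $\vv(y)$; your argument run with $\pi=\idy$ (which does resolve $\supp C=\vv(y)$, all the data you actually use) would conclude that the origin is simple, which is false. Two things are missing: you must resolve $C$ itself, i.e.\ principalize its possibly non-reduced (or formal) ideal, not merely its support — this is exactly what the theorem's hypothesis supplies and your proof discards; and you must prove that $D'$ is \emph{maximal} among strongly tangent schemes on that support, i.e.\ $C'=D'$, which requires using the maximality of $C$ downstairs — a hypothesis your converse never touches.

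The paper closes precisely this hole: it defines the candidate $\hat{C}=\vv(g/f)$, where $g$ cuts out the full total transform $\pi^{-1}(C)$ and $f$ is the exceptional multiplicity of the pulled-back $1$-form (so that the log resolution of the honest ideal of $C$ makes $\hat C$ SNC), and then shows $\hat C$ is the total separatrix of $\fcurl'$ by contradiction: a strictly larger strongly tangent scheme on $\supp\hat C$ would, after restoring the exceptional multiplicities via \thref{unsat} and blowing down via \thref{pftang} and \thref{embed}, produce a strongly tangent scheme strictly larger than $C$, contradicting maximality. Your instinct that one cannot naively union strongly tangent pieces is right, but the remedy is not to pass to the reduced divisor; it is to track the non-reduced exceptional multiplicities through \thref{unsat} so that maximality transfers. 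Without that step the ``if'' direction does not go through.
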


\begin{proof}[Proof: \nopunct]
Suppose $C$ exists, and admits a log resolution $\pi:X'\rightarrow X$. Let $\hat{\fcurl}=\sat(\pi^{-1}(\fcurl))$. Suppose $\hat{\fcurl}$ is given locally by the form $\omega$, so that $\pi^{-1}(\fcurl)$ is given by $f\omega$, for some holomorphic function $f$. If $\pi^{-1}(C)$ is given by the function $g$, we let $\hat{C}$ be the complex space defined by $\frac{g}{f}$. (This is indeed holomorphic, as $f$ defines the non-reduced structure from the exceptional divisors. We can recover $\pi^{-1}(C)$ from $\hat{C}$ by adding in the exceptional divisors.)

Now $\hat{C}$ is SNC, and is strongly tangent to $\hat{\fcurl}$ (by \thref{snctang}, as each component is a separatrix and hence strongly tangent).
Moreover, $\hat{C}$ is the total separatrix. Indeed, if there exists a larger strongly tangent scheme supported on $\hat{C}$, then by \thref{unsat}, adding in the exceptional divisors of $\pi$ yields a scheme strongly tangent to $\pi^{-1}(\fcurl)$ and strictly larger than $\pi^{-1}(C)$; by \thref{pftang} blowing down yields a scheme strongly tangent to $\fcurl$ and strictly larger than $C$ by \thref{embed}, a contradiction.

Therefore by \thref{tssnc}, $\hat{\fcurl}$ has all singularities simple, so $\pi$ is a resolution of singularities for $\fcurl$.

Conversely, suppose there is a resolution $\pi:X'\rightarrow X$ such that $\hat{\fcurl}=\sat(\pi^{-1}(\fcurl))$ has only simple singularities. By \thref{tssnc}, $\hat{\fcurl}$ is totally separable, and its total separatrix $E$ is NC and fully tangent.  By \thref{unsat}, adding in the exceptional divisors of $\pi$ yields a scheme fully tangent to $\pi^{-1}(\fcurl)$; blowing down yields a scheme $C$, which is fully tangent to $\fcurl$ by \thref{pftang}, and hence the total separatrix for $\fcurl$ by \thref{fttotsep}, and which is resolved by $\pi$.
\end{proof}

\begin{corollary}\thlabel{tsft}
If $\fcurl$ admits a resolution, then its total separatrix is fully tangent.
\end{corollary}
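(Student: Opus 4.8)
The plan is to deduce this from the argument already carried out in the proof of \thref{bigthm3}, rather than to start afresh. Since $\fcurl$ is non-dicritical and admits a resolution, \thref{bigthm3} tells us that $\fcurl$ is totally separable, and --- crucially --- that the resolving map may be taken to resolve the total separatrix simultaneously. So I would fix a resolution $\pi:X'\rightarrow X$ of $\fcurl$ and set $\hat{\fcurl}=\sat(\pi^{-1}(\fcurl))$, which by hypothesis has only simple singularities.

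The next steps mirror the converse half of the proof of \thref{bigthm3}. First, apply \thref{tssnc}: $\hat{\fcurl}$ is totally separable, and its total separatrix $E$ is a normal crossings divisor that is fully tangent to $\hat{\fcurl}$. Then I would transport $E$ downstairs: by \thref{unsat}, adjoining the exceptional divisors of $\pi$ to $E$ produces a scheme fully tangent to the (unsaturated) pullback $\pi^{-1}(\fcurl)$, and by \thref{pftang} its image under $\pi$ is a scheme $C\subset X$ fully tangent to $\fcurl$. Since $C$ is supported on the union $Z$ of the separatrices of $\fcurl$, \thref{fttotsep} identifies $C$ with the total separatrix of $\fcurl$; as the total separatrix is unique, it follows that the total separatrix is fully tangent.

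The only point needing genuine verification is that the blow-down of $E$ (together with its exceptional part) has support exactly $Z$: the non-exceptional components of $E$ should be the strict transforms of the separatrices of $\fcurl$, whose images recover $Z$, while the remaining components are contracted into $\Singr\fcurl$. I expect this bookkeeping to be the only mild obstacle; otherwise the corollary is essentially a repackaging of a step internal to the proof of \thref{bigthm3}, and no substantial new work is required.
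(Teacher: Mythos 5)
Your proposal is correct and follows exactly the paper's intended route: the corollary is extracted from the converse half of the proof of \thref{bigthm3}, where a scheme $C$ fully tangent to $\fcurl$ is constructed from the total separatrix $E$ of $\hat{\fcurl}$ via \thref{unsat} and \thref{pftang}, and identified with the total separatrix by \thref{fttotsep}. Nothing essentially different is proposed here, and the bookkeeping point you flag about the support being $Z$ is already implicit in the paper's construction of $C$.
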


\begin{definition}\thlabel{trutran}
Let $\fcurl$ be a codimension-$1$ foliation on $X$. A hypersurface $V\subset X$ is said to be \emph{truly transversal} to $\fcurl$ if it is smooth, reduced and irreducible; if it is not tangent to $\fcurl$; and if the restriction foliation $\fcurl\restn{V}$ is saturated.
\end{definition}

The condition that the restriction foliation is saturated means that $V$ does not contain any codimension-$2$ component of the singular locus, and its intersection with any leaf has codimension at least $2$. By taking hypersurfaces of sufficiently high degree, a truly transversal hypersurface can be found through every point $x\in X$.

\begin{proposition}\thlabel{antotsep}
Let $\fcurl$ be a non-dicritical and totally separable foliation on $X$, and suppose that each of its separatrices is analytic. Then the total separatrix is analytic.
\end{proposition}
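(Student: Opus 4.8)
The plan is to reduce to a germ-wise statement at the singular locus, where the established desingularisation results are available, and then to identify the germ of the total separatrix as the image of a normal crossings divisor under an \emph{analytic} resolution. Since being an ordinary (analytic) scheme is a local property, and since away from $\Singr\fcurl$ the total separatrix $C$ coincides with the reduced divisor $Z$ — which is analytic, being a union of finitely many analytic separatrices (finitely many by \thref{finsep}) — it suffices to prove that the germ $C_P:=C\cap\hat{X}_P$ is analytic for every $P\in\Singr\fcurl$. By \thref{tsft2} this $C_P$ is the unique total separatrix of $\fcurl_P$, and by the discussion preceding \thref{tsft2} the germ $\fcurl_P$ is desingularisable.

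Fix $P$. As in the proof of \thref{bigthm3} applied to $\fcurl_P$ (whose total separatrix $C_P$ is resolvable to a normal crossings support divisor), choose a resolution $\pi:X'\rightarrow\hat{X}_P$ that simultaneously resolves $\fcurl_P$, so that $\hat\fcurl:=\sat(\pi^{-1}\fcurl_P)$ has only simple singularities. By \thref{tssnc}, $\hat\fcurl$ is totally separable with total separatrix a normal crossings divisor $E=\vv(g_E)$, fully tangent to $\hat\fcurl$. Write $\pi^{-1}\fcurl_P=h\omega'$, where $\hat\fcurl$ is generated by $\omega'$ and $h$ is a product of exceptional divisors. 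By \thref{unsat} the scheme $E':=\vv(hg_E)$ is fully tangent to $\pi^{-1}\fcurl_P$; by \thref{pftang} its image $\pi(E')$ is fully tangent to $\fcurl_P$, and it is supported on $Z$, since the components of $E$ are the strict transforms of the separatrices of $\fcurl_P$ together with tangent exceptional divisors, all of which $\pi$ maps into $Z$. Hence by \thref{fttotsep} and the uniqueness of the total separatrix, $\pi(E')=C_P$.

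Now, the separatrices of $\fcurl_P$ are analytic by hypothesis, so their strict transforms, the exceptional divisors of $\pi$, and therefore $g_E$ and $h$, are analytic; thus $E'$ is a closed analytic subscheme of $X'$, and its scheme-theoretic image $\pi(E')$ under the proper morphism $\pi$ is a closed analytic subscheme of (a neighbourhood of $P$ in) $X$. Consequently $C_P$ is analytic, and gluing over $\Singr\fcurl$ gives the result. The step I expect to be the main obstacle is the assertion that the resolution $\pi$ may be chosen analytic: a priori \thref{bigthm3} only yields a resolution whose blow-up centres may be defined by the (possibly non-convergent) ideal of the formal scheme $C_P$. One would first resolve the support $Z$ analytically (Hironaka), reducing to the case that $C_P$ is supported on a simple normal crossings divisor, and then argue that the residual non-reduced structure — hence the remaining centres — is analytic. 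The leverage is that $C_P$ is fully tangent to the \emph{algebraic} foliation $\fcurl$, so each jet fibre of $C_P$ is analytic (indeed equal to $J_m(\fcurl)\cap J_m(Z)$), and a formal subscheme supported on $Z$ is determined by its jets (\thref{jetstruc}); this, together with the uniqueness of $C_P$, should force $C_P$ — and the resolution — to be analytic, for instance via an Artin-approximation argument realising $C_P$ as the unique formal solution of an analytic system of conditions. Carrying this out rigorously is the heart of the matter.
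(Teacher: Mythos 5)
Your reduction to the germs $C_P=C\cap\hat{X}_P$ at singular points is reasonable, but the argument then runs into a circularity that you yourself flag at the end, and which is in fact fatal to this route: the only resolution available for $\fcurl_P$ in general dimension is the one obtained (via \thref{bigthm3} and Temkin's theorem) by resolving the \emph{formal} scheme $C_P$, and the centres of that resolution live in the formal completion $\hat{X}_P$. There is no a priori reason they are convergent, so you cannot assert that $\pi$, the strict transforms, $h$, $g_E$, or $E'$ are analytic — that assertion presupposes exactly the analyticity of $C_P$ you are trying to prove. The closing appeal to Artin approximation is only a sketch of a hope: the jet fibres $J_m(\fcurl)\cap J_m(Z)$ being algebraic does not by itself produce a convergent system of equations for $C_P$ of which $C_P$ is a formal solution, and no such system is exhibited. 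So the heart of the matter is indeed left open, and the proof is incomplete.

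The paper avoids this circularity entirely by inducting on $\dimn X$. In the base case $\dimn X=3$ it invokes Cano's theorem (\thref{existres}), which resolves the \emph{foliation itself} on the actual manifold by blow-ups in genuine analytic centres; the total separatrix upstairs is then the union of the strict transforms of the (analytic, by hypothesis) separatrices and the exceptional divisors, hence analytic, and thickening the exceptional components and blowing down preserves analyticity. For the inductive step it slices by a truly transversal hypersurface $V$: the separatrices of $\fcurl\restn{V}$ are analytic, so by induction the total separatrix $C_V$ is analytic; if $C$ were a strict formal scheme, then for sufficiently general $V$ the intersection $C\cap V$ would be a strict formal, fully tangent component of $C_V$, a contradiction. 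If you want to salvage your germ-wise strategy, you would need either an analytic resolution theorem for the foliation in arbitrary dimension (not available) or a genuine convergence/approximation argument for $C_P$; otherwise the dimensional induction via truly transversal hypersurfaces is the mechanism that makes the statement accessible.
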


\begin{proof}[Proof: \nopunct]
First suppose that $\dimn X=3$. Then $\fcurl$ admits a resolution $\pi:X'\rightarrow X$, and the total separatrix $\hat{C}$ of $\sat(\pi^{-1}(\fcurl))$ is the union of the transforms of the separatrices of $\fcurl$ and the exceptional divisors of $\pi$. By assumption, this is analytic. The total separatrix of $\fcurl$ is obtained by thickening certain components of $\hat{C}$ (the exceptional divisors) and then blowing back down. It follows that this too is analytic.

Now suppose that the result holds whenever $\dimn X=k\geq 3$. Let $\fcurl$ be a foliation on $X$ with $\dimn X=k+1$ with each of its separatrices being analytic but with the total separatrix $C$ being a strict formal scheme. Let $V$ be a truly transversal hypersurface. Then the separatrices of $\fcurl\restn{V}$ are either separatrices or general leaves of $\fcurl$ intersected with $V$ -- in particular, they are all analytic, so by induction the total separatix $C_V$ of $\fcurl\restn{V}$ is analytic. However, if $V$ is sufficiently general, $C\cap V$ is a strict formal scheme which is fully tangent, and therefore a component of $C_V$, a contradiction.
\end{proof}

\begin{theorem}\thlabel{anares}
Let $\fcurl$ be a non-dicritical foliation on $X$, and suppose that each of its separatrices is analytic. Then $\fcurl$ admits a resolution to simple singularities.
\end{theorem}

\begin{proof}[Proof: \nopunct]
Let $Z$ be the union of the separatrices of $\fcurl$, and let $\pi:X'\rightarrow X$ be a log resolution of $Z$. Then the separatrices of $\hat{\fcurl}=\sat(\pi^{-1}(\fcurl))$ are the transforms of the separatrices of $\fcurl$ along with the exceptional divisors of $\pi$; their union is an (analytic) SNC divisor. As each component is tangent, their union is strongly tangent to $\hat{\fcurl}$ by \thref{snctang}. Therefore $\hat{\fcurl}$ is totally separable (see \thref{cfmax}). By \thref{antotsep} the total separatrix is analytic, and so admits a resolution.

Therefore, by \thref{bigthm3}, $\hat{\fcurl}$ admits a resolution to simple singularities. By extension $\pi^{-1}(\fcurl)$ admits a resolution, and hence $\fcurl$ itself admits a resolution to simple singularities.
\end{proof}

In the case where $\fcurl$ has a formal separatrix, we begin by considering germs.

\begin{definition}
Let $\fcurl$ be a codimension-$1$ foliation on $X$ given by $\omega$, and suppose in some co-ordinate chart we have $0\in\Singr\fcurl$. The \emph{germ} of $\fcurl$ at $0$, denoted $\fcurl_0$, is given by writing $\omega$ in the germs of the co-ordinate functions at $0$. It is defined on the formal completion $\hat{X}_0$.
\end{definition}

By choosing appropriate co-ordinates, we can define the germ of $\fcurl$ at any singular point.

\begin{lemma}\thlabel{ptfres}
Let $Y$ be a formal scheme supported at a point $P\in X$. Then there exists a desingularisation $f:X'\rightarrow \hat{X}_P$, with $X'$ smooth, such that $f^{-1}(Y)$ has simple normal crossing support.
\end{lemma}

\begin{proof}[Proof: \nopunct]
As the underlying topological space of $Y$ is a point, by \cite[Theorem 6.29]{Car21}, $Y$ is countably indexed, is a classical formal scheme, and is a closed formal subscheme of $\hat{X}_P$. As $\hat{X}_P$ is quasi-compact and quasi-excellent, the desingularisation exists by \cite[Theorem 1.1.9, Theorem 1.1.13]{Tem18}.
\end{proof}

\begin{proposition}\thlabel{ndtotsep}
Every non-dicritical foliation $\fcurl$ on $X$ is totally separable. Moreover, the total separatrix is fully tangent and is unique.
\end{proposition}

\begin{proof}[Proof: \nopunct]
Let $Z$ be the union of the separatrices of $\fcurl$. If $Z$ is analytic, then by \thref{anares} and \thref{tsft}, $\fcurl$ is totally separable, and its total separatrix is fully tangent.

If $Z$ is a strict formal scheme, we consider the germ of $Z$ at some point $P$, which is given by a formal power series. By \thref{ptfres}, the germ $Z_P$ admits a desingularisation $f:X'\rightarrow \hat{X}_P$, with $f^{-1}(Z_P)$ having SNC support. Using the arguments from the proof of \thref{anares}, $f^{-1}(Z_P)$ is strongly tangent to $f^{-1}(\fcurl_P)$, and hence by \thref{pftang} $Z_P$ is strongly tangent to $\fcurl_P$. By taking germs of $Z$ around different points, we see that $Z$ is strongly tangent to $\fcurl$, and hence $\fcurl$ is totally separable.

Now let $C$ be the total separatrix of $\fcurl$, and let $P\in\Singr\fcurl$. Then $C_P=C\cap \hat{X}_P$ is the total separatrix for the germ $\fcurl_P$. By \thref{ptfres} and \thref{bigthm3}, $\fcurl_P$ can be desingularised (as a sheaf on the formal completion), and so by \thref{tsft}, $C\cap \hat{X}_P$ is fully tangent to $\fcurl_P$. Then for all $m\in\nn$
\[
J_m(C,P)=J_m(C\cap \hat{X}_P,P)=J_m(\fcurl_P,P)=J_m(\fcurl,P).
\]
As this holds for any singular point $P$, and $C$ is clearly fully tangent over the smooth locus, it follows that $C$ is fully tangent to $\fcurl$.

Suppose $C,C'$ are two candidates for the total separatrix. They are both fully tangent, and both have the same support. Then for  any $x\in C, m\in\nn$, $J_m(C,x)=J_m(\fcurl,x)=J_m(C',x)$. It follows from \thref{jetstruc} that $C=C'$.
\end{proof}

By these results, we have:
\begin{theorem}\thlabel{bigthm4}
Let $\fcurl$ be a non-dicritical foliation on $X$, and let $\fcurl_P$ be the germ of the foliation about some singular point. Then $\fcurl_P$ admits a resolution to simple singularities. In the case where all separatrices of $\fcurl$ are analytic, this resolution applies globally.
\end{theorem}

In the case where all separatrices are analytic, the global resolution from \thref{anares} can be recovered from the germ resolution by adding back in the separatrices. Similarly, in the case where the union of separatrices, and hence the total separatrix, is analytic outside of a single point, taking a resolution on the level of germs we can attain a global resolution by adding in the convergent separatrices. If the locus where the union of separatrices is a strict formal schemes is positive dimensional, taking resolutions at multiple germs may be required.

\section{Results on Dicritical Foliations}
\subsection{The Hull of Jets}
We now present an alternate characterisation of dicriticality.

\begin{lemma}\thlabel{thickpt}
Let $Y$ be a formal scheme supported on a point $P\in X$, and let $r\in\nn$. Suppose that $J_k(Y,P)=\aaf^{kn}$ for all $k\leq r$. Then $P^{r+1}\subset Y$.
\end{lemma}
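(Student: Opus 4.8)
The plan is to reduce the statement to a single elementary jet computation with a fat point. Fix local holomorphic coordinates $x_1,\ldots,x_n$ on a neighbourhood of $P$ with $P=0$, so that $Y$, being a formal subscheme of $X$ supported on $P$, is a closed formal subscheme of $\hat X_P=\Spf\ccx[[x_1,\ldots,x_n]]$; write it as a limit $Y=\varinjlim Y_\lambda$ of Artinian schemes $Y_\lambda=\vv(I_\lambda)$ supported at $P$, with $I_\mu\subset I_\lambda$ whenever $\lambda\le\mu$. By \thref{jetform}, for each $k$ we have $J_k(Y,P)=\bigcup_\lambda J_k(Y_\lambda,P)$, realised inside $J_k(X,P)\cong\aaf^{kn}$ as an increasing union of closed subschemes. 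Since $\aaf^{kn}$ is irreducible, its generic point lies in some member of this union, and that member, being closed, must then equal $\aaf^{kn}$; so the hypothesis $J_k(Y,P)=\aaf^{kn}$ yields, for each $k$ with $1\le k\le r$, an index $\lambda_k$ with $J_k(Y_{\lambda_k},P)=\aaf^{kn}$. (Equivalently, this is immediate from the paper's convention on inclusions of formal schemes applied to the ordinary scheme $\aaf^{kn}$.) Using that the index poset is directed, I would pick a single index $\lambda^\ast\ge\lambda_1,\ldots,\lambda_r$, so that $J_k(Y_{\lambda^\ast},P)=\aaf^{kn}$ for all $1\le k\le r$. It then suffices to prove $P^{r+1}\subset Y_{\lambda^\ast}$, that is, $I_{\lambda^\ast}\subset\hat{\mm}_n^{r+1}$.

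For the latter I would argue by contradiction: suppose $f\in I_{\lambda^\ast}$ with $s:=\ord_P f\le r$. As $Y_{\lambda^\ast}$ is non-empty, $f$ is a non-unit, so $s\ge 1$; write $f=f_s+f_{s+1}+\cdots$ with $f_s\ne 0$ homogeneous of degree $s$. Substituting $x_i=\sum_{j\ge 1}a_{ij}t^j$ into $f$, every monomial of $f$ of degree $d$ contributes only to powers $t^d$ and higher; hence the coefficients of $t^1,\ldots,t^{s-1}$ in $f(x(t))$ vanish identically, and the coefficient of $t^s$ is exactly $f_s(a_{11},\ldots,a_{n1})$. Thus $J_s(\vv(f),P)=\vv\bigl(f_s(a_{11},\ldots,a_{n1})\bigr)$ is a \emph{proper} closed subscheme of $\aaf^{sn}$, since $f_s$ is a non-zero polynomial of positive degree. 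Because $Y_{\lambda^\ast}\subset\vv(f)$, functoriality of jets under closed immersions gives $J_s(Y_{\lambda^\ast},P)\subset J_s(\vv(f),P)\subsetneq\aaf^{sn}$, contradicting $J_s(Y_{\lambda^\ast},P)=\aaf^{sn}$ (recall $s\le r$). Hence every element of $I_{\lambda^\ast}$ has order $\ge r+1$, so $I_{\lambda^\ast}\subset\hat{\mm}_n^{r+1}$, i.e. $P^{r+1}\subset Y_{\lambda^\ast}\subset Y$.

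The genuinely delicate step is the first paragraph: passing from the formal scheme $Y$ to a single genuine Artinian scheme $Y_{\lambda^\ast}$ whose jet fibres up to order $r$ are already everything, which requires care with the limit presentation and with the meaning of "$P^{r+1}\subset Y$" (where $P^{r+1}$ denotes the ordinary scheme $\vv(\hat{\mm}_n^{r+1})$ regarded as a singleton-indexed formal scheme, so that the inclusion is literally the ideal containment above). Everything after that is the degree-$s$ analogue of the computations already carried out in \thref{jetex2}, and is routine.
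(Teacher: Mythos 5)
Your proposal is correct and takes essentially the same approach as the paper: reduce the formal scheme to a single ordinary Artinian subscheme $Y_{\lambda^*}$ whose jets up to order $r$ are still all of $\aaf^{kn}$, then argue that every element of its ideal must have order at least $r+1$, so $P^{r+1}\subset Y_{\lambda^*}\subset Y$. Your version just spells out both steps more explicitly --- the paper cites the proof of \cite[Theorem 6.29]{Car21} for the reduction to an ordinary scheme and compresses the ideal computation into ``by looking at the generators'' --- but the key ideas are the same.
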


\begin{proof}[Proof: \nopunct]
Suppose $Y$ is an ordinary scheme, and consider the intersection of all schemes satisfying the condition on the jets. This is contained in $Y$. $P^{r+1}$ also satisfies the condition; by looking at the generators we see that any proper subscheme of $P^{r+1}$ does not. Therefore $P^{r+1}$ is equal to the intersection, and is contained in $Y$.

Now suppose $Y=\varinjlim Y_{\lambda}$. By the proof of \cite[Theorem 6.29]{Car21}, one of the $Y_{\lambda}$ satisfies the condition on the jets; hence $P^{r+1}\subset Y_{\lambda}\subset Y$.
\end{proof}

\begin{proposition}\thlabel{thickhs}
Let $Y$ be a formal scheme supported on a smooth hypersurface $H\subset X$, and let $r\in\nn$. Suppose that $J_k(Y,x)=\aaf^{kn}$ for all $k\leq r$ and all $x\in H$. Then $H^{r+1}\subset Y$.
\end{proposition}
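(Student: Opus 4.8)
The plan is to obtain this as a ``parametrised'' version of \thref{thickpt}: first reduce to the case where $Y$ is an ordinary scheme and the statement is local, then run a direct jet computation.

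Since every notion involved is local, I would work in a chart with holomorphic coordinates $x_1,\dots,x_n$ in which $H$ is cut out by $x_n$, so that $H^{r+1}=\vv(x_n^{r+1})$, and prove the following ordinary-scheme statement: if $S$ is a closed subscheme of the chart with $\supp S\supseteq H$ and $J_k(S,x)=\aaf^{kn}$ for every $x\in H$ and every $k\le r$, then $\icurl_S\subseteq(x_n^{r+1})$, i.e.\ $H^{r+1}\subseteq S$. I would argue by contraposition: if some $g\in\icurl_S$ is not divisible by $x_n^{r+1}$ then, since $g$ vanishes on the reduced divisor $H=\vv(x_n)$, we may write $g=x_n^{a}\tilde g$ with $a\le r$ and $x_n\nmid\tilde g$; choose $x_0=(x_0',0)\in H$ with $\tilde g(x_0)\neq0$. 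The ``vertical'' $a$-jet $\tau$ at $x_0$, defined by $x_i\mapsto x_{0,i}$ for $i<n$ and $x_n\mapsto t$, then satisfies $\tau^{*}g=\tilde g(x_0)\,t^{a}\neq0$ in $\ccx[t]/(t^{a+1})$, so $\tau\notin J_a(S,x_0)$, contradicting $J_a(S,x_0)=\aaf^{an}$ (valid since $a\le r$).

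Still working in the chart, for a general formal scheme $Y=\varinjlim Y_\lambda$ each $Y_\lambda$ is a closed subscheme with $\supp Y_\lambda=H$, and by \thref{jetform} $\bigcup_\lambda J_k(Y_\lambda,x)=J_k(Y,x)=\aaf^{kn}$ for all $x\in H$, $k\le r$. For a fixed $x$, the underlying sets of the $J_k(Y_\lambda,x)$ form a directed family of closed subsets of the Noetherian space $\aaf^{kn}$ whose union is everything, so one of them equals $\aaf^{kn}$; directedness over the finitely many $k\le r$ then gives a single index with $J_k(Y_\lambda,x)=\aaf^{kn}$ for all $k\le r$. Hence the sets $T_\lambda=\{x\in H:J_k(Y_\lambda,x)=\aaf^{kn}\text{ for all }k\le r\}$ form a directed family of closed analytic subsets of $H$ (closed by upper semicontinuity of fibre dimension of $J_k(Y_\lambda)\to X$) whose union is $H$. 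Using that $Y$ may be assumed locally countably indexed (cf.\ the treatment of \thref{thickpt} via \cite[Theorem 6.29]{Car21}) together with the fact that the (irreducible) chart portion of $H$ is not a countable union of proper subvarieties (Baire category), I would conclude $T_{\lambda_0}=H$ for some $\lambda_0$; then $Y_{\lambda_0}$ satisfies the hypotheses of the ordinary-scheme statement, so $H^{r+1}\subseteq Y_{\lambda_0}\subseteq Y$. Finally, patching over a cover of $H$ by such charts and using directedness of the index set gives the result globally.

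The jet computation in the second paragraph is routine. The main obstacle is the last step: upgrading ``for each $x\in H$, some $Y_\lambda$ is jet-full at $x$'' to ``one $Y_\lambda$ is jet-full along all of $H$''. This is precisely where the hypersurface case goes beyond \thref{thickpt} (there $H$ is a single point, so no spreading-out is needed), and it forces the use of upper semicontinuity, the irreducibility of $H$, and the countable indexing of $Y$.
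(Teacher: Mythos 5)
Your ordinary‑scheme computation in the second paragraph is correct and is in fact essentially the content of \thref{thickpt}, in a different guise. The trouble is in the reduction from the formal case, where you take a genuinely different route from the paper and it develops two gaps. First, the claim that ``the underlying sets of the $J_k(Y_\lambda,x)$ form a directed family of closed subsets of the Noetherian space $\aaf^{kn}$ whose union is everything, so one of them equals $\aaf^{kn}$'' is not a consequence of Noetherianity: a Noetherian space has the descending chain condition on closed sets, not the ascending one, so an increasing directed family of proper closed subsets can perfectly well exhaust an irreducible Noetherian space (for instance, all finite subsets of $\aaf^1$). So the step ``for a fixed $x$, some single $Y_\lambda$ is already jet‑full at $x$'' already needs an extra input. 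Second, the Baire‑category uniformisation along $H$ requires $Y$ to be (locally) countably indexed, but your justification for this assumption does not apply here: \cite[Theorem~6.29]{Car21} gives countable indexing only for formal schemes whose underlying topological space is a single point, and that is precisely why it is invoked in the proof of \thref{thickpt}, where the support \emph{is} a point. A general formal scheme supported on a hypersurface need not be countably indexed, so you cannot import that hypothesis for free.

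The paper sidesteps both difficulties with a short reduction that your proposal misses: for each $x\in H$, intersect $Y$ with the formal completion $\hat X_x$. The result $Y\cap\hat X_x$ is a formal scheme supported on the single point $x$, so \cite[Theorem~6.29]{Car21} applies to it and \thref{thickpt} gives $x^{r+1}\subset Y\cap\hat X_x\subset Y$; the jet hypothesis transfers because $J_k(\hat X_x,x)=J_k(X,x)=\aaf^{kn}$ and jets of intersections are intersections of jets. Doing this for every $x\in H$ and then assembling (a pointwise thickening along a smooth $H$ being the divisorial power) yields $H^{r+1}\subset Y$ without ever needing a single $Y_\lambda$ to be jet‑full along all of $H$, and without any countable‑indexing hypothesis on $Y$. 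In short: the pointwise reduction to \thref{thickpt} is the key idea here, and replacing it with a direct jet computation plus a Baire‑type spreading‑out argument is where the proposal breaks down.
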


\begin{proof}[Proof: \nopunct]
Let $x\in H$. Then for $k\leq r$, $J_k(Y\cap \hat{X}_x,x)=J_k(Y,x)\cap J_k(\hat{X}_x,x)=\aaf^{kn}$. Then $Y\cap \hat{X}_x$ satisfies the assumptions of \thref{thickpt}, and so contains $x^{r+1}$. This holds for every point $x\in H$, so it follows that $Y$ contains $H^{r+1}$.
\end{proof}

\begin{lemma}\thlabel{ftsing}
Let $\fcurl$ be a foliation, and suppose there is a fully tangent formal scheme $Y$ supported on the singular locus. Then for any sequence of blow-ups $\pi:X'\rightarrow X$, $\pi^{-1}(\fcurl)$ has a fully tangent formal scheme supported on its singular locus.
\end{lemma}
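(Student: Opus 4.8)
The plan is to show that the ordinary pullback $\pi^{-1}(Y)$ is \emph{already} a fully tangent formal scheme supported on the singular locus of $\pi^{-1}(\fcurl)$, so that no new scheme need be constructed: one simply transports $Y$ up along $\pi$ and checks the two required properties via the change of variables formulas.

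First I would check the support. Writing $Y=\varinjlim Y_{\lambda}$, the pullback $\pi^{-1}(Y)=\varinjlim \pi^{-1}(Y_{\lambda})$ is a formal scheme with underlying space $\pi^{-1}(\supp Y)$. Since $\supp Y\subseteq \Singr\fcurl$, it is enough to observe that $\pi^{-1}(\Singr\fcurl)\subseteq \Singr(\pi^{-1}(\fcurl))$: in local co-ordinates $\pi^{*}\omega=\sum_i(b_i\circ\pi)\,d(x_i\circ\pi)$, so if $\omega$ vanishes at $\pi(q)$ then $\pi^{*}\omega$ vanishes at $q$. Hence $\pi^{-1}(Y)$ is supported on the singular locus of $\pi^{-1}(\fcurl)$.

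Next comes full tangency, which is a direct chase. By \thref{jetform} and \thref{jetco} (which, per the remark following \thref{jetform}, applies to formal subschemes as well), $J_m(\pi^{-1}(Y))=\pi_m^{-1}(J_m(Y))$ for all $m$. Since $Y$ is fully tangent to $\fcurl$ and supported on $\Singr\fcurl$, we have $J_m(Y)=J_m(\fcurl)\cap\bigl(J_m(X)\restn{\supp Y}\bigr)$. Applying $\pi_m^{-1}$ and using \thref{folco} gives $\pi_m^{-1}(J_m(\fcurl))=J_m(\pi^{-1}(\fcurl))$, while the compatibility of the truncation maps $J_m(X')\to X'$ and $J_m(X)\to X$ with $\pi_m$ (the base point of $\pi_m(\tau)$ is the $\pi$-image of that of $\tau$) yields $\pi_m^{-1}\bigl(J_m(X)\restn{\supp Y}\bigr)=J_m(X')\restn{\pi^{-1}(\supp Y)}$. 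Combining, $J_m(\pi^{-1}(Y))=J_m(\pi^{-1}(\fcurl))\restn{\pi^{-1}(\supp Y)}$ for every $m$, which is exactly full tangency. Note that, unlike in \thref{pftang}, surjectivity of $\pi_m$ is not used here, as we only pull jets back along $\pi_m$.

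The routine points that still need spelling out are that \thref{jetco} and \thref{folco} hold for formal subschemes and for the \emph{unsaturated} pullback foliation $\pi^{-1}(\fcurl)$ (both covered in Section 4), together with the truncation compatibility. The one place where genuine care is required is the meaning of ``supported on the singular locus'': the argument above produces a scheme supported on $\pi^{-1}(\Singr\fcurl)$, which is contained in, but may be strictly smaller than, $\Singr(\pi^{-1}(\fcurl))$ (and a fortiori smaller than that of $\sat(\pi^{-1}(\fcurl))$, where the exceptional divisors are divided out). If the statement is intended with equality of supports, one would have to enlarge (or, for the saturated transform, shrink) $\pi^{-1}(Y)$ without disturbing full tangency; since uniqueness of a fully tangent scheme on a fixed support follows from \thref{jetstruc}, this amounts to identifying that unique scheme directly, and is the part I expect to be the real obstacle.
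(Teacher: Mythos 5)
Your proof is correct and is essentially the paper's: the paper simply takes $\pi^{-1}(Y)$ and invokes \thref{tanginv} (invariance of tangency under pullback) to conclude it is fully tangent to $\pi^{-1}(\fcurl)$ and supported on its singular locus, which is exactly your jet-chase made explicit. The support worry in your final paragraph is not an obstacle in practice — the paper reads ``supported on'' as containment and applies the lemma only to blow-ups centred in the singular locus, where $\pi^{-1}(\supp Y)$ already contains the relevant exceptional components.
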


\begin{proof}[Proof: \nopunct]
By \thref{tanginv}, $\pi^{-1}(Y)$ is fully tangent to $\pi^{-1}(\fcurl)$, and is supported on its singular locus.
\end{proof}

\begin{proposition}\thlabel{dicritchar}
Let $\fcurl$ be a foliation on $X$. Then $\fcurl$ is non-dicritical if and only if there is a (unique) fully tangent formal scheme $Y$ supported on the singular locus $\Sigma=\Singr\fcurl$.
\end{proposition}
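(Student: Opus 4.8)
The plan is to prove the two implications and uniqueness separately.

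\emph{Uniqueness.} If $Y_1,Y_2$ are both fully tangent and supported on $\Sigma$, then $J_m(Y_i,x)=J_m(\fcurl,x)$ for every $x\in\Sigma$ and every $m$, so $J_m(Y_1)=J_m(Y_2)$ for all $m$; since both have underlying space $\Sigma$, \thref{jetstruc} forces $Y_1=Y_2$. This needs no hypothesis on dicriticality and settles the parenthetical.

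\emph{Non-dicritical $\Rightarrow$ existence.} Assuming \thref{Ztang}, the union $Z$ of the separatrices is strongly tangent, so $\Cscr(\fcurl)\neq\emptyset$ by \thref{cfnempt} and $\fcurl$ is totally separable by \thref{cfmax}; by \thref{tsft2} its total separatrix $C$ is fully tangent. Applying the germ form of the separatrix theorem \cite[Theorem 5]{CM92} at each singular point (the germ of a non-dicritical foliation is again non-dicritical), every point of $\Sigma$ lies on a separatrix, so $\Sigma\subseteq\supp C$. I then take $Y=\hat{C}_\Sigma$, the formal completion of $C$ along $\Sigma$, i.e. the formal direct limit $\varinjlim_i C^{(i)}$ of the infinitesimal neighbourhoods of $\Sigma$ inside $C$; these all have underlying space $\Sigma$, so the limit exists as a formal scheme. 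Each $C^{(i)}$ is a closed subscheme of $C$, hence strongly tangent, so $Y$ is strongly tangent by \thref{tangform}; and for $x\in\Sigma$ every $m$-jet of $C$ at $x$ kills $\mm_x^{m+1}$, hence factors through $C^{(i)}$ once $i\ge m+1$, so $J_m(Y,x)=J_m(C,x)=J_m(\fcurl,x)$. Thus $Y$ is fully tangent with $\supp Y=\Sigma$.

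\emph{Dicritical $\Rightarrow$ non-existence.} Suppose $\fcurl$ is dicritical and fix a composition of blow-ups $\pi\colon X'\to X$ with centres in the singular locus after which some exceptional component $E_0$ is \emph{not} a separatrix of $\hat{\fcurl}:=\sat(\pi^{-1}\fcurl)$. Let $r=\ord_{E_0}(\pi^{*}\omega)$, where $\omega$ generates $\fcurl$; since the centres lie in $\Singr\fcurl$ one has $r\ge 1$, and near any point of $E_0$, in coordinates with $E_0=\{x_1=0\}$, the unsaturated pull-back is $\pi^{-1}\fcurl=x_1^{r}\hat\omega$ with $\hat\omega$ a generator of $\hat{\fcurl}$. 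A $t$-order count on $\tau^{*}(x_1^{r}\hat\omega)$ gives $J_k(\pi^{-1}\fcurl,x)=\aaf^{kn}$ for all $k\le r$ and all $x\in E_0$; moreover at a generic $x\in E_0$ the form $\hat\omega$ is nonsingular and transversal to $E_0$, so (straightening $\hat\omega$ and using that multiplying a form by a unit does not change jet spaces) $\pi^{-1}\fcurl=x_1^{r}\,dx_2$ there. Now suppose a fully tangent $Y$ supported on $\Sigma$ existed. By \thref{ftsing} (via \thref{folco}, \thref{tanginv}) $\pi^{-1}(Y)$ is fully tangent to $\pi^{-1}(\fcurl)$ with support containing $E_0$, hence strongly tangent. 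Restricting to an open subset of $X'$ on which $E_0$ is the whole of $\Singr(\pi^{-1}\fcurl)$ and is smooth, $\pi^{-1}(Y)$ is there supported on the smooth hypersurface $E_0$ and satisfies $J_k(\pi^{-1}Y,x)=J_k(\pi^{-1}\fcurl,x)=\aaf^{kn}$ for $k\le r$, $x\in E_0$; so \thref{thickhs} gives $E_0^{\,r+1}\subseteq\pi^{-1}(Y)$. Near a generic point of $E_0$ this says $\vv(x_1^{r+1})\subseteq\pi^{-1}(Y)$, while $\vv(x_1^{r+1})$ is not strongly tangent to $x_1^{r}\,dx_2$: with $m=r^{2}+2r$, the jet $x_1=t^{\,r+1}$, $x_2=t$ (other coordinates zero) lies in $J_m(\vv(x_1^{r+1}),0)$ because $(r+1)^{2}=m+1$, but $\tau^{*}(x_1^{r}\,dx_2)=t^{\,r(r+1)}\,dt\neq 0$ since $r(r+1)<m$. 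This contradicts the strong tangency of $\pi^{-1}(Y)$, so $\fcurl$ must be non-dicritical.

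\emph{Main obstacle.} The work is entirely in the converse: reading off the multiplicity $r$ of the dicritical component $E_0$ in the unsaturated transform, controlling the low-order jets of $\pi^{-1}(\fcurl)$ along \emph{all} of $E_0$ so that \thref{thickhs} forces the thickening $E_0^{\,r+1}$, localising so that proposition applies, and exhibiting the explicit jet witnessing that $\vv(x_1^{r+1})$ breaks strong tangency. The forward implication is a short assembly of \thref{Ztang} and \thref{tsft2}, and uniqueness is immediate from \thref{jetstruc}.
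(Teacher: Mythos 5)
Your proposal is correct and follows essentially the same route as the paper: the forward direction assembles \thref{Ztang} and \thref{tsft2} and takes the completion of the total separatrix along $\Sigma$ (the paper's $C\cap\hat{X}_{\Sigma}$), uniqueness is the jet comparison via \thref{jetstruc}, and the converse is the same blow-up argument reducing to $x_1^r\,dx_2$, applying \thref{ftsing} and \thref{thickhs} to force $E_0^{r+1}$ inside the pulled-back scheme, and exhibiting an explicit jet violating strong tangency (you use $x_1=t^{r+1},x_2=t$ where the paper uses $x_1=t^2,x_2=t$ at order $2r+1$; both work). Your explicit check that $\Sigma\subseteq\supp C$ via the Camacho--Sad/Cano--Cerveau separatrix theorem is a detail the paper leaves implicit.
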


\begin{proof}[Proof: \nopunct]
Suppose $\fcurl$ is non-dicritical. Then by \thref{ndtotsep}, $\fcurl$ admits a total separatrix $C$, which is fully tangent. Then $C\cap \hat{X}_{\Sigma}$ is fully tangent and supported on the singular locus.

If $Y$ is another fully tangent formal scheme supported on $\Sigma$, then for each $m\in\nn$ and $x\in Y$, $J_m(Y,x)=J_m(\fcurl,x)=J_m(C\cap \hat{X}_{\Sigma},x)$. As they have the same support, it follows that $Y=C\cap \hat{X}_{\Sigma}$.

Now suppose $\fcurl$ is dicritical. Then there is a sequence of blow-ups $\pi:X'\rightarrow X$ such that one of the exceptional divisors of $\pi$ is transversal to the leaves of $\sat(\gcurl)$, where $\gcurl=\pi^{-1}(\fcurl)$. On the smooth locus of $\sat(\gcurl)$ we can write a generator as $dx_2$ in some co-ordinate chart; as the exceptional divisor is transversal to this, we can choose holomorphic co-ordinates on some open set $U\subset X'$, contained in the smooth locus of $\sat(\gcurl)$, such that the underlying reduced scheme of the divisor is $\vv(x_1)$. Hence $\gcurl$ is given by the $1$-form $x_1^{r}dx_2$, for some $r\in\nn$.

Suppose there is a fully tangent formal scheme $Y$ supported on $\Singr\fcurl$. Then by \thref{ftsing}, there is a formal scheme $Y'$ fully tangent to $\gcurl\restn{U}$ and supported on $H=\vv(x_1)$. Now for $k\leq r$, $J_k(Y',x)=\aaf^{kn}$, for all $x\in H$. So by \thref{thickhs}, $H^{r+1}=\vv(x_1^{r+1})\subset Y'$. However, $H^{r+1}$ is not strongly tangent to $\gcurl$: The $(2r+1)$-jet given by $x_1=t^2, x_2=t, x_3=\cdots =x_n=0$ is a jet in $J_{2r+1}(\vv(x^{r+1}),0)$ but not in $J_{2r+1}(\gcurl,0)$. This is a contradiction, so the formal scheme $Y$ does not exist.
\end{proof}

A candidate for the formal scheme $Y$ in \thref{dicritchar} can be constructed as follows:

Let $\fcurl$ be a foliation on $X$ with singular locus $\Sigma$. We define $S_m(\fcurl)$ to be the smallest formal subscheme supported on $\Sigma$ such that $J_k(\fcurl,x)\subset J_k(S_m(\fcurl),x)$ for all $k\leq m$ and all $x\in\Sigma$. We define the \emph{hull of jets} to be the formal scheme $\Sscr(\fcurl)=\varinjlim S_m(\fcurl)$. As $S_m(\fcurl)\subset \Sigma^{m+1}$, we have $\Sscr(\fcurl)\subset \hat{X}_{\Sigma}$.

\begin{proposition}\thlabel{sfmin}
$\Sscr(\fcurl)$ is the smallest formal scheme supported on $\Sigma$ such that $J_k(\fcurl,x)\subset J_k(\Sscr(\fcurl),x)$ for all $k\in\nn$ and all $x\in\Sigma$.
\end{proposition}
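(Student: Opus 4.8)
The plan is to verify the two halves of the statement in turn: that $\Sscr(\fcurl)$ does satisfy $J_k(\fcurl,x)\subset J_k(\Sscr(\fcurl),x)$ for all $k\in\nn$ and $x\in\Sigma$, and that it is contained in every formal scheme supported on $\Sigma$ with this property. First a preliminary observation. Since $S_{m+1}(\fcurl)$ already satisfies the conditions defining $S_m(\fcurl)$ -- it controls the jets of $\fcurl$ of every order $k\le m+1$, in particular of every order $k\le m$ -- minimality of $S_m(\fcurl)$ forces $S_m(\fcurl)\subset S_{m+1}(\fcurl)$. So $(S_m(\fcurl))_{m\in\nn}$ is an ascending chain of formal schemes, all supported on $\Sigma$ and hence sharing the underlying topological space $\Sigma_{\redn}$; the formal scheme $\Sscr(\fcurl)=\varinjlim_m S_m(\fcurl)$ is therefore well-defined, and by the corollary to \thref{jetform} we have $J_k(\Sscr(\fcurl),x)=\bigcup_m J_k(S_m(\fcurl),x)$ for each $k\in\nn$ and $x\in\Sigma$.

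The first half is then immediate: for fixed $k$ and $x$, the defining property of $S_k(\fcurl)$ gives $J_k(\fcurl,x)\subset J_k(S_k(\fcurl),x)$, and since $S_k(\fcurl)$ occurs in the direct system this is contained in $\bigcup_m J_k(S_m(\fcurl),x)=J_k(\Sscr(\fcurl),x)$. For the second half, let $Y$ be a formal scheme supported on $\Sigma$ with $J_k(\fcurl,x)\subset J_k(Y,x)$ for all $k\in\nn$ and $x\in\Sigma$, and set $Z=\Sscr(\fcurl)\cap Y$, which is again a formal scheme supported on $\Sigma$. For each $k$ and $x$, a $k$-jet of $\fcurl$ at $x$ lies in $J_k(\Sscr(\fcurl),x)$ by the first half and in $J_k(Y,x)$ by hypothesis, so (viewed as a morphism to $X$) it factors through both $\Sscr(\fcurl)$ and $Y$, hence through their scheme-theoretic intersection $Z$; thus $J_k(\fcurl,x)\subset J_k(Z,x)$ for all $k$ and $x$. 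In particular $Z$ satisfies the conditions defining $S_m(\fcurl)$ for every $m$, so minimality gives $S_m(\fcurl)\subset Z\subset Y$ for every $m$, and passing to the direct limit yields $\Sscr(\fcurl)\subset Y$, as required.

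The step needing the most care -- the one I would single out as the main obstacle -- is the manipulation of jet fibres of formal schemes under intersection: one needs $J_k(A,x)\cap J_k(B,x)\subset J_k(A\cap B,x)$ for formal subschemes $A,B$ of $X$, together with the fact that $A\cap B$ is itself a formal subscheme supported on the intersection of the supports. For ordinary schemes both are routine, the first being the fibre-product argument (a special case of \cite[Proposition 6.9]{Car21}); for formal schemes one writes $A=\varinjlim A_\alpha$ and $B=\varinjlim B_\beta$, observes that a jet of bounded order factoring through $A$ and through $B$ must factor through some $A_\alpha$ and some $B_\beta$, hence through $A_\alpha\cap B_\beta$, and appeals to \thref{jetform} to compute the jet fibres of the limits. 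The remaining points -- that the ``smallest formal subscheme supported on $\Sigma$'' appearing in the definition of $S_m(\fcurl)$ exists (the relevant families being closed under intersection) and that the inclusion $S_m(\fcurl)\subset Z$ passes correctly to $\varinjlim_m S_m(\fcurl)$ under the inclusion relation for formal schemes of Section \ref{Formal} -- are straightforward with that machinery in place.
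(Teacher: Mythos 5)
Your proof is correct and rests on the same core observation as the paper's: any $Y$ supported on $\Sigma$ with $J_k(\fcurl,x)\subset J_k(Y,x)$ for all $k$ already satisfies the defining property of $S_m(\fcurl)$ for every $m$, so minimality gives $S_m(\fcurl)\subset Y$ and hence $\Sscr(\fcurl)\subset Y$. The detour through $Z=\Sscr(\fcurl)\cap Y$ is unnecessary -- $Y$ itself witnesses the condition defining each $S_m(\fcurl)$, which is exactly how the paper argues -- though your explicit verification that the $S_m(\fcurl)$ form an ascending chain and that $\Sscr(\fcurl)$ itself satisfies the jet condition fills in details the paper leaves implicit.
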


\begin{proof}[Proof: \nopunct]
Let $Y$ be another such formal scheme. For each $m\in\nn$, we have $J_k(\fcurl,x)\subset J_k(Y,x)$ for all $k\leq m$ and all $x\in\Sigma$. Hence $S_m(\fcurl)\subset Y$, for all $m\in\nn$, and so $\Sscr(\fcurl)\subset Y$.
\end{proof}

We can then reformulate \thref{dicritchar} as follows:
\begin{proposition}\thlabel{hulldicrit}
Let $\fcurl$ be a foliation on $X$ with singular locus $\Sigma$. Then $\fcurl$ is dicritical if and only if $\Sscr(\fcurl)$ is not strongly tangent to $\fcurl$.
\end{proposition}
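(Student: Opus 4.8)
The plan is to derive the proposition formally from \thref{dicritchar} and \thref{sfmin}; no new geometric input is needed, and the whole content is to check that ``$\Sscr(\fcurl)$ is strongly tangent to $\fcurl$'' is precisely the condition ``there exists a fully tangent formal scheme supported on $\Sigma$'' that \thref{dicritchar} tests. The first step is therefore to make this identification. Note that $\Sscr(\fcurl)$ is supported on $\Sigma$: its defining property $J_k(\fcurl,x)\subset J_k(\Sscr(\fcurl),x)$ for all $x\in\Sigma$ forces every point of $\Sigma$ into its support (each $J_k(\Sscr(\fcurl),x)$ must be non-empty), while $\Sscr(\fcurl)\subset\hat{X}_{\Sigma}$ gives the reverse containment of supports. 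Strong tangency of $\Sscr(\fcurl)$ means $J_m(\Sscr(\fcurl),x)\subset J_m(\fcurl,x)$ for all $x$ and all $m$; combining this with the reverse inclusion supplied by \thref{sfmin} yields $J_m(\Sscr(\fcurl),x)=J_m(\fcurl,x)$ for every $x\in\Sigma$ and every $m$, i.e.\ $\Sscr(\fcurl)$ is fully tangent to $\fcurl$ and supported on $\Sigma$ in the sense used in \thref{dicritchar}. The converse implication is immediate, since $J_m(\Sscr(\fcurl))=\bigcup_{x\in\Sigma}J_m(\fcurl,x)\subset J_m(\fcurl)$. Hence $\Sscr(\fcurl)$ is strongly tangent if and only if it witnesses the existence clause of \thref{dicritchar}.

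Given this equivalence, the forward direction is immediate: if $\Sscr(\fcurl)$ is strongly tangent, then by the above it is a fully tangent formal scheme supported on $\Sigma$, so \thref{dicritchar} gives that $\fcurl$ is non-dicritical. For the converse, suppose $\fcurl$ is non-dicritical; \thref{dicritchar} then produces a fully tangent formal scheme $Y$ supported on $\Sigma$. Full tangency gives $J_k(\fcurl,x)\subset J_k(Y,x)$ for all $k$ and all $x\in\Sigma$, so $Y$ is one of the formal schemes over which \thref{sfmin} takes its minimum, whence $\Sscr(\fcurl)\subset Y$. A fully tangent scheme is strongly tangent, and any subscheme of a strongly tangent (formal) scheme is strongly tangent, so $\Sscr(\fcurl)$ is strongly tangent to $\fcurl$. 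Taking contrapositives in both directions gives the stated ``$\fcurl$ dicritical $\iff$ $\Sscr(\fcurl)$ not strongly tangent'' form.

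The only point requiring genuine care — rather than a real obstacle — is the bookkeeping around the phrase ``fully tangent'' for a formal scheme whose support is the singular locus $\Sigma$, which may be singular and reducible: one must make sure the notion in play throughout (namely $J_m(Y,x)=J_m(\fcurl,x)$ for all $x\in\Sigma$, together with $\supp(Y)=\Sigma$) is exactly the one used in the statement and proof of \thref{dicritchar}, and verify that $\Sscr(\fcurl)=\varinjlim S_m(\fcurl)$, being a limit of schemes all supported on $\Sigma$, has support equal to $\Sigma$ so that ``supported on $\Sigma$'' applies verbatim. Once these conventions are aligned, everything reduces to manipulating the jet-fibre inclusions already assembled in \thref{sfmin} and \thref{dicritchar}.
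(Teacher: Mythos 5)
Your proof is correct and takes essentially the same route as the paper: reduce via \thref{dicritchar} to the existence of a fully tangent formal scheme supported on $\Sigma$, and use \thref{sfmin} together with the defining containment $J_k(\fcurl,x)\subset J_k(\Sscr(\fcurl),x)$ to identify that condition with strong tangency of $\Sscr(\fcurl)$. The paper phrases both implications in the contrapositive (dicritical $\Rightarrow$ $\Sscr$ not fully tangent $\Rightarrow$ not strongly tangent, and $\Sscr$ not strongly tangent $\Rightarrow$ every candidate $Y$ not strongly tangent), whereas you argue the positive direction, but the two key observations — that for $\Sscr(\fcurl)$ strong tangency already forces full tangency, and that $\Sscr(\fcurl)\subset Y$ transfers strong tangency downward — are the same; your write-up makes the first of these more explicit than the paper does.
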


\begin{proof}[Proof: \nopunct]
If $\fcurl$ is dicritical, then by \thref{dicritchar} there is no formal scheme supported on $\Sigma$ which is fully tangent. In particular, $\Sscr(\fcurl)$ is not fully tangent, and hence not strongly tangent.

Conversely, if $\Sscr(\fcurl)$ is not strongly tangent, then by \thref{sfmin}, any formal scheme $Y$ supported on $\Sigma$ with $J_m(\fcurl,x)\subset J_m(Y,x)$ for all $m\in\nn$ and all $x\in\Sigma$ is not strongly tangent, and hence none of them is fully tangent. By \thref{dicritchar}, $\fcurl$ is dicritical.
\end{proof}

If $\Sscr(\fcurl)$ is strongly tangent, then $\fcurl$ is non-dicritical. We have $\Sscr(\fcurl)=C\cap \hat{X}_{\Sigma}$, where $C$ is the total separatrix.
The codimension-$1$ components of $\Sscr(\fcurl)$ are the germs of the separatrices, so if we start with $\Sscr(\fcurl)$ we can reconstruct the total separatrix by taking the union with those separatrices which are ordinary schemes.

\subsection{Jets of Dicritical Foliations}
\begin{proposition}
Let $\fcurl$ be a dicritical foliation on $X$ which has a resolution to simple singularities. Then there is a family of strongly tangent subschemes $(C_{\alpha})_{\alpha\in A}$ of $X$ such that $J_m(\fcurl,x)=\bigcup_{\alpha\in A}J_m(C_{\alpha},x)$, for all $m\in\nn$ and all $x\in\Singr\fcurl$.
\end{proposition}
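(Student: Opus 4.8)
The plan is to pull everything back through a resolution and to exploit the complete description of simple singularities in \thref{bigthm2}. Fix a resolution $\pi\colon X'\to X$ of $\fcurl$ (which exists by hypothesis), so that $\hat\fcurl:=\sat(\pi^{-1}(\fcurl))$ has only simple singularities; locally on $X'$ write $\omega'$ for a generator of $\hat\fcurl$, so that $\pi^{-1}(\fcurl)$ is generated by $h\omega'$ for some holomorphic function $h$ (the common factor removed by saturation, supported on the exceptional locus). The key observation is that $J_m(\pi^{-1}(\fcurl))$ can be covered, fibre by fibre, by jet spaces of schemes strongly tangent to $\pi^{-1}(\fcurl)$ on $X'$, and that this property descends to $X$ along $\pi$.

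For the first step I would attach to each point $\tilde y\in X'$ a reduced scheme $L_{\tilde y}$, defined on a neighbourhood of $\tilde y$ and fully tangent to $\hat\fcurl$ there. If $\tilde y\in\Singr\hat\fcurl$ this singularity is simple, so \thref{bigthm2} provides a chart around $\tilde y$ in which the union of the local separatrices is $L_{\tilde y}=\vv(x_1\cdots x_t)$, $t=\tau(\hat\fcurl,\tilde y)$, and this divisor is fully tangent throughout the chart --- exactly the concluding assertion of that corollary. If instead $\tilde y$ is a smooth point of $\hat\fcurl$, Frobenius gives coordinates in which $\hat\fcurl$ is generated by $dx_n$, and I take $L_{\tilde y}=\vv(x_n-\tilde y_n)$; it is fully tangent near $\tilde y$ because the pullback of $dx_n$ along a jet $\tau$ vanishes precisely when $x_n\circ\tau$ is constant. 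In either case \thref{unsat}, applied with $f=h$, shows that the scheme $D_{\tilde y}$ obtained by multiplying the defining equations of $L_{\tilde y}$ by $h$ is fully tangent --- a fortiori strongly tangent --- to $\pi^{-1}(\fcurl)$ near $\tilde y$; in particular $J_m(D_{\tilde y},\tilde y)=J_m(\pi^{-1}(\fcurl),\tilde y)$ for every $m$.

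Next I would push these schemes down. Set $C_{\tilde y}:=\pi(D_{\tilde y})$; by \thref{pftang} it is strongly tangent to $\fcurl$, so the family $(C_{\tilde y})_{\tilde y\in\pi^{-1}(\Singr\fcurl)}$ is legitimate, and the inclusion $\bigcup_{\tilde y}J_m(C_{\tilde y},x)\subseteq J_m(\fcurl,x)$ is automatic for each $x\in\Singr\fcurl$ and each $m$. For the reverse inclusion, take $\tau\in J_m(\fcurl,x)$. By \thref{jetsurj} there is $\tilde\tau\in J_m(X')$ with $\pi_m(\tilde\tau)=\tau$; by \thref{folco} we have $\tilde\tau\in J_m(\pi^{-1}(\fcurl))$, and its base point $\tilde y:=\tilde\tau(0)$ lies over $x$. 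By the previous step $\tilde\tau\in J_m(D_{\tilde y},\tilde y)$, so by \thref{pfjet} (using surjectivity of $\pi_m$) $\tau=\pi_m(\tilde\tau)\in\pi_m(J_m(D_{\tilde y}))=J_m(\pi(D_{\tilde y}))$; since $\tau$ has base point $x\in\pi(D_{\tilde y})$, in fact $\tau\in J_m(C_{\tilde y},x)$. The family was fixed before choosing $m$ or $x$, so this establishes the stated equality for all $m$ and all $x\in\Singr\fcurl$ simultaneously.

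The one genuinely delicate point is set-theoretic rather than conceptual: over a dicritical centre the smooth leaves $L_{\tilde y}$, and hence $D_{\tilde y}$ and $C_{\tilde y}$, need not be closed subschemes of $X$ --- a dicritical foliation may have non-algebraic leaves accumulating at the centre --- so the $C_{\tilde y}$ should be read as local analytic or formal subschemes in the sense already admitted for separatrices in \thref{tangent}. This causes no difficulty, because only the jets of $C_{\tilde y}$ at the single point $x$ are ever invoked, and \thref{pftang}, \thref{pfjet} and \thref{unsat} all involve only local notions; everything else is bookkeeping with the change-of-variables formulae \thref{jetco}, \thref{folco} and the surjectivity \thref{jetsurj} already established.
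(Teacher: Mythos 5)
Your proof is correct and follows the same overall strategy as the paper: pull back along a resolution, cover each upstairs jet fibre by the jets of a fully tangent scheme, restore the exceptional factor via \thref{unsat}, and descend by \thref{pftang}, \thref{pfjet} and the surjectivity of $\pi_m$. The one genuine variation is where the fully tangent schemes upstairs come from: the paper stops at a \emph{partial} resolution that is merely non-dicritical and invokes the global total separatrix supplied by \thref{bigthm3} and \thref{tsft} at singular points, while you go all the way to simple singularities and read off the local union of separatrices $\vv(x_1\cdots x_t)$ directly from \thref{bigthm2}, using the leaf at smooth points exactly as the paper does. Your route avoids tracking a possibly-formal total separatrix for the intermediate foliation and is a little more explicit about multiplicities (you form $\vv(hg_i)$ rather than writing $L_x\cup D$); the covering and blow-down steps are otherwise identical, so this is best viewed as a clean variant of the paper's argument rather than a different one.
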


\begin{proof}[Proof: \nopunct]
We can take $\pi$ to be a partial resolution of $\fcurl$, such that $\gcurl=\sat(\pi^{-1}(\fcurl))$ is non-dicritical. Then $\gcurl$ has a resolution to simple singularities (by completing the resolution of $\fcurl$), and so by \thref{bigthm3} is totally separable, with fully tangent total separatrix. Let $D$ be the exceptional divisor of $\pi$. As $\fcurl$ is dicritical, at least one of the components of $D$ is transversal to the leaves of $\gcurl$.

For each $x\in D$, let $L_x$ be either the unique leaf of $\gcurl$ through $x$, or the total separatrix of $\gcurl$, as appropriate. Then for each $m\in\nn$, $J_m(\gcurl,x)=J_m(L_x,x)$. So by \thref{unsat}, $J_m(\pi^{-1}(\fcurl),x)=J_m(L_x\cup D,x)$. Using \thref{pftang} we blow back down to get $J_m(\fcurl,x)=\bigcup J_m(\pi(L_x\cup D),x)$, which yields the result.
\end{proof}

\begin{example}
Let $X=\aaf^2$, and let $\fcurl$ be given by $\omega=y^2dx-x^2dy$. This has a single dicritical singularity at the origin. The leaves of the foliation are $\{x=0\}$, $\{y=0\}$, $\{y=x\}$, and $\{xy+kx-ky=0\},k\in\ccx^*$, all of which are separatrices.

We blow up at the origin. In the first chart we set $y=xv$, and get a new foliation given by $x^2((v^2-v)dx-xdv)$. This has two singularities: $(0,0)$, which is reduced, and $(0,1)$, which is again dicritical.

We blow up this second singularity. In one chart we set $v-1=xt$, which leaves us with $x^4(t^2dx-dt)$. The saturation of this foliation is smooth, and has leaves $\vv(t)$ and $\vv(xt+kt+1),k\in \ccx$. In the other chart we set $x=(v-1)s$, which gives $(v-1)^4 s^2(vds+sdv)$, the saturation of which has a single simple singularity.

So the jets of the unsaturated foliation along the exceptional divisor are of the form $J_m(\vv(x^4(xt+kt+1)),(0,-1/k))$ and $J_m(\vv(x^4t),(0,0))$ in one chart, and $J_m(\vv(v(v-1)^4 s^3),(0,1))$ in the other. Blowing down, the jets are of the form $J_m(\vv(x^3v(v-1)))$ and $J_m(\vv(x^3(xv+kv-k)))$.

In the second chart of the initial blow-up, we set $x=yb$ to get the form $y^2((b-b^2)dy+ydb)$. This also has two singularities: $(0,0)$, which is reduced, and corresponds to the reduced singularity in the first chart, and $(1,0)$, which is dicritical. Applying the same method as before, we see that the jets are of the form $J_m(\vv(b(b-1)y^3))$ and $J_m(\vv(y^3(yb+kb-k)))$.

Blowing down again, we have
\begin{multline}
J_m(\fcurl,0)=J_m(\vv(x^2y-xy^2),0)\cup \nonumber \\
\bigcup_{k\in\ccx^*}J_m(\vv(x^2(xy+kx-ky),y^2(xy+kx-ky)),0).
\end{multline}
\end{example}

\bibliographystyle{myplain}

\bibliography{citations}

\end{document}